\newcommand{\EE}{\mathbb{E}}
\newcommand{\RR}{\mathbb{R}}
\newcommand{\CC}{\mathbb{C}}
\newcommand{\NN}{\mathbb{N}}
\newcommand{\ZZ}{\mathbb{Z}}
\newcommand{\QQ}{\mathbb{Q}}
\newcommand{\FF}{\mathbb{F}}
\newcommand{\C}{\mathcal{C}}
\newcommand{\D}{\mathcal{D}}
\newcommand{\F}{\mathcal{F}}
\newcommand{\Q}{\mathcal{Q}}
\newcommand{\PP}{\mathcal{P}}
\renewcommand{\P}{\mathcal{P}}
\newcommand{\floor}[1]{\left\lfloor #1 \right\rfloor}
\newcommand{\Lip}{{\rm{Lip}}}
\newcommand{\poly}{{\rm{poly}}}
\newcommand{\Span}{{\rm{Span}}}
\theoremstyle{theorem}
\newtheorem{theorem}{Theorem}[section]
\newtheorem{definition}[theorem]{Definition}
\newtheorem{proposition}[theorem]{Proposition}
\newtheorem{corollary}[theorem]{Corollary}
\newtheorem{conjecture}[theorem]{Conjecture}
\newtheorem{lemma}[theorem]{Lemma}
\theoremstyle{definition}
\title{True complexity of polynomial progressions in finite fields}
\author{Borys Kuca}
\address{University of Manchester}
\email{borys.kuca@manchester.ac.uk}
\date{}
\begin{document}

\maketitle

\begin{abstract}
    The true complexity of a polynomial progression in finite fields corresponds to the smallest-degree Gowers norm that controls the counting operator of the progression over finite fields of large characteristic. We give a conjecture that relates true complexity to algebraic relations between the terms of the progression, and we prove it for a number of progressions, including $x, x+y, x+y^2, x+y+y^2$ and $x, x+y, x+2y, x+y^2$. As a corollary, we prove an asymptotic for the count of certain progressions of complexity 1 in subsets of finite fields. In the process, we obtain an equidistribution result for certain polynomial progressions, analogous to the counting lemma for systems of linear forms proved by Green and Tao in \cite{green_tao_2010a}.
    

\end{abstract}

\section{Introduction}


Let $P_1, ..., P_{t-1}$ be distinct polynomials in $\ZZ[y]$ with zero constant terms. A finite field version of the polynomial Szemeredi theorem states that for any $\alpha>0$, there exists $p_0=p_0(\alpha)\in\NN$ with the following property: if $p>p_0$ is prime and $A\subset\FF_p$ has size $|A|\geqslant \alpha p$, then $A$ contains a polynomial progression
\begin{align}\label{progressions of a special form}
   x,\; x+P_1(y),\; ...,\; x+P_{t-1}(y) 
\end{align}
for some $y\neq 0$. This theorem follows from a multiple recurrence result of Bergelson and Leibman in ergodic theory \cite{bergelson_leibman_1996}. Recently, there have been several attempts at proving a quantitative version of the theorem using ideas from analytic number theory \cite{bourgain_chang_2017}, algebraic geometry \cite{dong_li_sawin_2017} or Fourier analysis \cite{peluse_2018, peluse_2019b, kuca_2019}, which gave explicit estimates for the quantity $p_0$ for certain families of progressions (\ref{progressions of a special form}).\ A recurrent idea in these recent approaches is to estimate the number of progressions (\ref{progressions of a special form}) in an arbitrary subset $A\subset\FF_p$. In the paper, we prove qualitative estimates for the counts of certain polynomial configurations by relating them to the counts of certain linear forms. Throughout, we let $p$ denote a (large) prime. 


\begin{theorem}\label{asymptotic count for systems of complexity 1}
Let $A\subseteq\FF_p$.
\begin{enumerate}
    \item The count of $x,\; x+y,\; x+y^2,\; x+y+y^2$ in $A$ is given by
    \begin{align*}
    &|\{(x,\; x+y,\; x+y^2,\; x+y+y^2)\in A^4: x,y\in\FF_p\}|\\ 
    &= \frac{1}{p}|\{(x,y,u,z)\in A^4: x+y=u+z\}| + o(p^2).
\end{align*}
More generally, this estimate holds whenever $x,\; x+y,\; x+y^2,\; x+y+y^2$ is replaced by $x,\; x+Q(y),\; x+R(y),\; x+Q(y)+R(y)$ for any polynomials $Q,R\in\ZZ[y]$ with $1\leqslant\deg Q < \deg R$ and zero constant terms.
\item The count of $x,\; x+y,\; x+2y,\; x+y^3,\; x+2y^3$ in $A$ is given by
\begin{align*}
    &|\{(x,\; x+y,\; x+2y,\; x+y^3,\; x+2y^3)\in A^5: x,y\in\FF_p\}|\\
    &= \frac{1}{p}|\{(x,\; x+y,\; x+2y,\; x+z,\; x+2z)\in A^5: x,y,z\in\FF_p\}| + o(p^2).
\end{align*}
More generally, this estimate holds whenever $x,\; x+y,\; x+2y,\; x+y^3,\; x+2y^3$ is replaced by $x,\; x+Q(y),\; x+2Q(y),\; x+R(y),\; x+2R(y)$ for any polynomials $Q,R\in\ZZ[y]$ with $1\leqslant\deg Q<(\deg R)/2$ and zero constant terms.
\end{enumerate}
\end{theorem}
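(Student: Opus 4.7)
The plan is to reduce each asymptotic to the Fourier structure of $1_A$ via a two-stage strategy: first a \emph{generalized von Neumann theorem} bounding the relevant multilinear counting operator by a low-degree Gowers norm, then a Fourier-analytic equidistribution argument matching the nonlinear count to its linear analogue. Since each right-hand side in Theorem~\ref{asymptotic count for systems of complexity 1} is itself the count of a linear system of Cauchy--Schwarz complexity $1$, the natural target is control by $\|\cdot\|_{U^2}$.

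For part (i), I would consider the counting operator
\[
T(f_1,f_2,f_3,f_4) \;=\; \EE_{x,y\in\FF_p}\, f_1(x)\, f_2(x+Q(y))\, f_3(x+R(y))\, f_4(x+Q(y)+R(y)),
\]
and record the linear dependence $x-(x+Q(y))-(x+R(y))+(x+Q(y)+R(y))=0$ among the four shifts, which is the algebraic footprint of complexity $1$. The first step is a PET (polynomial exhaustion) induction, iterating Cauchy--Schwarz together with van der Corput differencing in the $y$-variable, to reduce $|T(f_1,\ldots,f_4)|$ to a bound of the form $\|f_j\|_{U^2}+o(1)$ in some slot $j$, assuming $\|f_i\|_\infty\leqslant 1$. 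The hypothesis $\deg Q<\deg R$ is used at each stage to rule out accidental algebraic coincidences between the differenced shifts that would otherwise force termination at $U^s$ with $s>2$. For part (ii), the analogous five-linear operator carries two 3-term APs sharing the base point $x$; a parallel PET scheme, now relying on $2\deg Q<\deg R$ at the decisive inductive step (to suppress parasitic intermediate shifts of degree $2\deg Q$ competing with the leading $R$), yields the same $U^2$ control.

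Once $U^2$-control is in place, I would expand $1_A(x)=\sum_{\xi\in\FF_p}\widehat{1_A}(\xi)\,e_p(\xi x)$ in every slot of $T$. After collecting exponentials, the inner average over $y$ becomes
\[
\EE_{y\in\FF_p}\; e_p\!\bigl((\xi_2+\xi_4)\,Q(y)+(\xi_3+\xi_4)\,R(y)\bigr),
\]
which, by Weyl's inequality in $\FF_p$ (valid because $\deg Q<\deg R$ and both $Q,R$ have zero constant term), is $O(p^{-c})$ unless $\xi_2+\xi_4=0$ and $\xi_3+\xi_4=0$. Combined with the $x$-orthogonality $\xi_1+\xi_2+\xi_3+\xi_4=0$, the surviving Fourier configurations reduce to the one-parameter family $(\xi,-\xi,-\xi,\xi)$, and the sum collapses to $\sum_\xi|\widehat{1_A}(\xi)|^4$, which after rescaling is precisely $p^{-1}$ times the additive-quadruple count of $A$. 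A parallel Fourier computation for part (ii), using Weyl under $2\deg Q<\deg R$, extracts exactly the Fourier modes compatible with the linear system $x,x+y,x+2y,x+z,x+2z$ and yields the claimed identity.

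The main obstacle is the PET induction: one must choose the Cauchy--Schwarz scheme carefully so that it terminates with control by $U^2$ rather than by some higher Gowers norm. This is precisely where the degree hypotheses $\deg Q<\deg R$ and $2\deg Q<\deg R$ are essential — a naive induction would produce $U^s$ control with $s\geqslant 3$, which is insufficient to match the linear-form counts on the right-hand side. Once the correct scheme is in hand, the Fourier step is conceptually routine but still requires uniform Weyl-type bounds in $\FF_p$ to pin the error down to $o(p^2)$.
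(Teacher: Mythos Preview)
Your proposal has a genuine gap at the very point you identify as the ``main obstacle'': obtaining $U^2$ control of the counting operator by a Cauchy--Schwarz/PET scheme. You acknowledge that a naive PET induction only yields control by some $U^s$ with $s\geqslant 3$, and then assert that ``one must choose the Cauchy--Schwarz scheme carefully so that it terminates with control by $U^2$''. No such scheme is supplied, and none is known for these progressions. Establishing that the $U^2$ norm controls $x,\,x+Q(y),\,x+R(y),\,x+Q(y)+R(y)$ is precisely Theorem~\ref{True complexity of x, x+Q(y), x+R(y), x+Q(y)+R(y)}, and the paper proves it not by any differencing scheme but by the arithmetic regularity lemma together with the equidistribution Theorem~\ref{equidistribution theorem in the intro} on the Leibman nilmanifold $G^P/\Gamma^P$. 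The degree hypotheses $\deg Q<\deg R$ and $2\deg Q<\deg R$ enter in the structural analysis of the spaces $\PP_{i,j}$ (Lemmas~\ref{structure of P for x, x+Q(y), x+R(y), x+Q(y)+R(y)} and~\ref{structure of P for x, x+Q(y), x+2Q(y), x+R(y), x+2R(y)}), not in any van der Corput iteration. A direct PET route to $U^2$ would amount to a quantitative true-complexity theorem with polynomial bounds, which is well beyond what is currently known for progressions of this type.

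There is a second, more technical gap in your Fourier step. Even granting the Weil bound $|\EE_y e_p(aQ(y)+bR(y))|\ll p^{-1/2}$ for $(a,b)\neq(0,0)$, you cannot simply sum this error over all off-diagonal frequency configurations: after imposing $\xi_1+\xi_2+\xi_3+\xi_4=0$ one still has a sum over $\Theta(p^2)$ pairs $(a,b)$, and the resulting bound on $|T_P-T_\Psi|$ is of order $p^{1/2}$, not $o(1)$. The correct way to use $U^2$ control here is to first decompose $1_A$ into a structured part (a short Fourier sum over the large spectrum) and a part with small $U^2$ norm; the latter contributes $o(1)$ to \emph{both} $T_P$ and $T_\Psi$ by $U^2$ control, and the former is a sum over $O_\epsilon(1)$ characters on which the Weyl bound can be applied term by term. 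This is in spirit what the paper does in Section~\ref{section on the asymptotic count}, except that it works on the nilmanifold side (showing $G^P=G^\Psi$) rather than the Fourier side. Had you actually possessed $U^2$ control, this repaired Fourier argument would be a legitimate and somewhat more elementary alternative to the paper's Section~\ref{section on the asymptotic count}; but without it, neither step of your outline goes through.
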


We obtain results like Theorem \ref{asymptotic count for systems of complexity 1} by analysing counting operators of the form
\begin{align}\label{counting operator for polynomial progressions}
    \EE_{x,y\in\FF_p}f_0(x)f_1(x+P_1(y))\cdots f_{t-1}(x+P_{t-1}(y))
\end{align}
for distinct polynomials $P_1, ..., P_{t-1}\in\ZZ[y]$ and functions $f_0, ..., f_{t-1}:\FF_p\to\CC$ that are \emph{1-bounded}, i.e.\ satisfy $\|f_i\|_\infty\leqslant 1$. A useful tool to study polynomial progressions is a family of norms on functions $f:\FF_p\to\CC$ defined by
\begin{align}\label{Gowers norm}
    \|f\|_{U^s}&=\left(\EE_{x, h_1, ..., h_s\in\FF_p}\prod_{w\in\{0,1\}^s} \C^{|w|}f(x+w_1 h_1 + ... + w_s h_s)\right)^\frac{1}{2^s},
\end{align}
where $\C: z\mapsto \overline{z}$ is the conjugacy operator and $|w|=w_1+...+w_s$. We call $\|f\|_{U^s}$ the \emph{Gowers norm} of $f$ of {degree} $s$, and we discuss its properties in Section \ref{section on higher order Fourier analysis}. It was proved in \cite{gowers_2001} that Gowers norms control arithmetic progressions, in the sense that 
\begin{align}\label{Gowers norms control arithmetic progression}
    |\EE_{x,y\in\FF_p}f_0(x)f_1(x+y)\cdots  f_{s}(x+sy)|\leqslant\min_{0\leqslant i\leqslant s} \|f_i\|_{U^{s}}
\end{align}
for all 1-bounded $f_0, ..., f_{s}:\FF_p\to\CC$. 
A similar argument has been used to show that Gowers norms control any system of linear forms that are pairwise linearly independent (Proposition 7.1 of \cite{green_tao_2010b}). Finally, Gowers norms are also known to control polynomial progressions of the form (\ref{progressions of a special form}) for distinct nonzero polynomials $P_1, ..., P_{t-1}\in\ZZ[y]$ with zero constant terms (Proposition 2.2 of \cite{peluse_2019b}), in that there exist $s\in\NN_+$ and $c>0$ depending only on $P_1, ..., P_{t-1}$ such that
\begin{align*}
    |\EE_{x,y\in\FF_p}f_0(x)f_1(x+P_1(y)) \cdots f_{t-1}(x+P_{t-1}(y))|\leqslant \min_{0\leqslant i\leqslant t-1}\|f_i\|_{U^{s}}^c+O(p^{-c})
\end{align*}
for all 1-bounded $f_0, ..., f_{t-1}:\FF_p\to\CC$.

In the light of the monotonicity property of Gowers norms
\begin{align*}
    \|f\|_{U^1}\leqslant \|f\|_{U^2}\leqslant \|f\|_{U^3}\leqslant ...,
\end{align*}
derived e.g. in Section 1 of \cite{green_tao_2008}, it is natural to ask what is the smallest-degree Gowers norm controlling a given configuration. The smallest $s$ such that $U^{s+1}$ controls the configuration is called its \emph{true complexity}; the precise definition shall be given in Section \ref{subsection on true complexity}. The question of determining true complexity has been posed and partially resolved for linear configurations in \cite{gowers_wolf_2010, gowers_wolf_2011a, gowers_wolf_2011b, gowers_wolf_2011c, green_tao_2010a}, where the authors relate true complexity to algebraic relations between the linear forms in the configuration. It remains largely open for general polynomial progressions (\ref{progressions of a special form}).

In the paper, we determine the true complexity of several polynomial progressions. Our main results are the following theorems.
\begin{theorem}[True complexity of $x,\; x+y,\; x+y^2,\; x+y+y^2$]\label{True complexity of x, x+y, x+y^2, x+y+y^2}
For any $\epsilon>0$, there exist $\delta>0$ and $p_0\in\NN$ such that for all primes $p>p_0$ and for all 1-bounded functions $f_0, f_1, f_2, f_3:\FF_p\to\CC$, at least one of which satisfies $\|f_i\|_{U^2}\leqslant \delta$, we have
\begin{align}\label{control of x, x+y, x+y^2, x+y+y^2}
    |\EE_{x,y\in\FF_p}f_0(x)f_1(x+y)f_2(x+y^2)f_3(x+y+y^2)|\leqslant \epsilon.
\end{align}
\end{theorem}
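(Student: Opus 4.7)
The four terms $x, x+y, x+y^2, x+y+y^2$ satisfy the linear identity $x + (x+y+y^2) = (x+y) + (x+y^2)$ and thus form an additive quadruple in $\FF_p$ for every choice of $(x,y)$. Since the $U^2$ norm is the natural norm controlling sums over additive quadruples --- one has $\|f\|_{U^2}^4 = \EE_{a+d=b+c}\, f(a)\overline{f(b)}\,\overline{f(c)}\,f(d)$ --- this identity is both the source of true complexity $1$ and the algebraic fact that the argument must exploit. My plan has two stages: first obtain control by a potentially high-degree Gowers norm via Peluse's existing result, then perform an iterative degree-lowering argument to descend to $U^2$.

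For the initial control, Proposition~2.2 of \cite{peluse_2019b} yields $s \geqslant 2$ and $c>0$ such that
\begin{equation*}
|\EE_{x,y\in\FF_p} f_0(x) f_1(x+y) f_2(x+y^2) f_3(x+y+y^2)| \leqslant \min_{0\leqslant i\leqslant 3} \|f_i\|_{U^{s}}^{c} + O(p^{-c}).
\end{equation*}
If $s=2$ the theorem follows immediately; otherwise I would establish the inductive step that control by $U^{s}$ for some $s\geqslant 3$ implies control by $U^{s-1}$, and iterate down to $U^{2}$.

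The inductive step is the technical heart. Suppose the counting operator has magnitude at least $\epsilon$; after one or two applications of Cauchy--Schwarz to symmetrize and isolate the ``parallelogram vertex'' function, assume $\|f_3\|_{U^s}\geqslant \delta$. The $U^{s}$ inverse theorem of Green--Tao--Ziegler produces a degree-$(s-1)$ nilsequence $\chi$ with $|\EE_z f_3(z)\overline{\chi(z)}|\geqslant \delta'$. Substituting this correlation back into the counting operator and using the identity to rewrite $\chi(x+y+y^2)$ in terms of nilsequences evaluated at the other three progression points, I would invoke a Leibman-type equidistribution theorem for the polynomial orbit $(x,\,x+y,\,x+y^2,\,x+y+y^2)$ on the underlying nilmanifold (this is essentially the equidistribution result flagged in the abstract); the parallelogram identity then forces the top-degree symbol of $\chi$ to vanish on a dense set of fibres, so $\chi$ is effectively a degree-$(s-2)$ nilsequence. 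Applying the direct direction of the inverse theorem gives $\|f_3\|_{U^{s-1}}\gg_\epsilon 1$, closing the induction.

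The main obstacle is the Leibman-type equidistribution step: one must show that any nontrivial degree-$(s-1)$ nilsequence $\chi$, composed with the map $(x,y)\mapsto x+y+y^2$ and paired against products of functions of $x,\,x+y,\,x+y^2$, equidistributes sufficiently that its leading coefficient is forced to vanish whenever the counting operator is large. This requires a polynomial Leibman theorem accommodating the quadratic term $y^2$, together with sufficiently large characteristic $p$. A secondary difficulty is quantitative control of the Cauchy--Schwarz losses, so that the final $\delta$ in the statement depends only on $\epsilon$ and not on the (possibly large) $s$ produced by Peluse's lemma.
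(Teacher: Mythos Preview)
Your proposal identifies the right ingredients --- Peluse's Gowers controllability, the parallelogram identity $x + (x+y+y^2) = (x+y) + (x+y^2)$, and an equidistribution theorem on a Leibman-type nilmanifold --- but it packages them into an iterative degree-lowering scheme $U^s \Rightarrow U^{s-1} \Rightarrow \cdots \Rightarrow U^2$ that differs from the paper's one-shot argument, and whose crucial step is not well-specified.

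The paper instead applies the arithmetic regularity lemma \emph{once}, simultaneously decomposing all four functions as $f_i = f_{i,\mathrm{nil}} + f_{i,\mathrm{sml}} + f_{i,\mathrm{unf}}$ with respect to a common degree-$s_0$ nilmanifold $G/\Gamma$ and a common highly irrational $p$-periodic sequence $g$. After discarding the small and uniform pieces, the main term becomes $\int_{G^P/\Gamma^P} F_0\otimes F_1\otimes F_2\otimes F_3$ by the equidistribution theorem for $g^P$ on the Leibman nilmanifold $G^P/\Gamma^P$. The parallelogram identity now enters structurally rather than by any ``rewriting of $\chi$'': algebraic independence of degree $2$ at the index $l$ means the standard basis vector $\vec{e}_l$ lies in $\mathcal{Q}_{2,1}$, so $G^P$ contains the full single-coordinate subgroup $\{1\}^{l-1}\times G_2\times\{1\}^{t-l}$. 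Averaging the integral over this normal subgroup replaces $F_l$ by its average $F_{l,\leqslant 1}$ over $G_2$-cosets, which is a degree-$1$ object; one then bounds everything by a correlation of $f_l$ against a degree-$1$ nilsequence and invokes the converse to the inverse theorem to get $U^2$ control directly, with no iteration.

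The gap in your sketch is the step ``using the identity to rewrite $\chi(x+y+y^2)$ in terms of nilsequences evaluated at the other three progression points''. A linear identity among the \emph{arguments} does not translate into a pointwise identity among \emph{values} of a degree-$(s-1)$ nilsequence when $s\geqslant 3$; that is precisely what distinguishes nilsequences from linear phases. The mechanism through which the linear relation actually acts is the subgroup inclusion above, and it drops the effective degree all the way to $1$, not merely by one. An iterative $U^s\to U^{s-1}$ argument in the style of Gowers--Wolf is not impossible, but it would require a genuinely different device at each step (controlling dual functions, or Cauchy--Schwarz differencing of the configuration), not the substitution you describe.
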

The $U^2$ norm is related to Fourier analysis via ${\|\hat{f}\|_{\infty}\leqslant\|f\|_{U^2}\leqslant \|\hat{f}\|_\infty^\frac{1}{2}}$ for 1-bounded functions $f$, and so Theorem \ref{True complexity of x, x+y, x+y^2, x+y+y^2} can be informally rephrased as follows: if at least one of $f_0, f_1, f_2, f_3$ has no large Fourier coefficient, then the operator in (\ref{control of x, x+y, x+y^2, x+y+y^2}) is small. We can similarly interpret the next three results.

\begin{theorem}[True complexity of $x,\; x+Q(y),\; x+R(y),\; x+Q(y)+R(y)$]\label{True complexity of x, x+Q(y), x+R(y), x+Q(y)+R(y)}
Let $Q,R\in\ZZ[y]$ be polynomials of zero constant terms satisfying $1\leqslant\deg Q < \deg R$. For any $\epsilon>0$, there exist $\delta>0$ and $p_0\in\NN$ such that for all primes $p>p_0$ and for all 1-bounded functions $f_0, f_1, f_2, f_3:\FF_p\to\CC$, at least one of which satisfies $\|f_i\|_{U^2}\leqslant \delta$, we have
\begin{align}\label{control of x, x+Q(y), x+R(y), x+Q(y)+R(y)}
    |\EE_{x,y\in\FF_p}f_0(x)f_1(x+Q(y))f_2(x+R(y))f_3(x+Q(y)+R(y))|\leqslant \epsilon.
\end{align}
\end{theorem}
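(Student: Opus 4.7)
My plan is to adapt the strategy for Theorem~\ref{True complexity of x, x+y, x+y^2, x+y+y^2}, exploiting the key observation that for each fixed $y$ the four shifts $0, Q(y), R(y), Q(y)+R(y)$ form a $U^2$ parallelogram. I would rewrite
\[
    T = \EE_y\, I(Q(y), R(y)), \qquad I(a,b) := \EE_x f_0(x) f_1(x+a) f_2(x+b) f_3(x+a+b),
\]
recognising $I(a,b)$ as the standard $U^2$ inner product of $f_0, f_1, f_2, f_3$ at shifts $(a,b)$. If the curve $y \mapsto (Q(y), R(y))$ were uniformly distributed on $\FF_p^2$, then $T$ would agree with the uniform average $\EE_{a,b}\, I(a,b)$; the main task is thus to bound both the uniform average and the discrepancy $T - \EE_{a,b}\, I(a,b)$.

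For the uniform average, Fourier-expanding each $f_i$ and integrating over $x$, $a$, $b$ collapses it to $\sum_\xi \hat f_0(\xi)\,\hat f_1(-\xi)\,\hat f_2(-\xi)\,\hat f_3(\xi)$, which via $\|\hat f\|_\infty \le \|f\|_{U^2}$ combined with H\"older and Parseval is bounded by $\min_i \|f_i\|_{U^2}$ (independently of which slot is chosen as the small one). For the discrepancy, Fourier inversion in $(a,b)$ writes it as $\sum_{(\alpha,\beta)\ne(0,0)} \hat I(\alpha,\beta)\, G(\alpha,\beta)$ with $G(\alpha,\beta) = \EE_y e_p(\alpha Q(y) + \beta R(y))$. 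The hypothesis $1 \le \deg Q < \deg R$ ensures $\alpha Q + \beta R$ is non-constant in $\FF_p[y]$ whenever $(\alpha,\beta)\ne (0,0)$ and $p$ is sufficiently large, so Weil's bound gives $|G(\alpha,\beta)| = O(p^{-1/2})$. Since $\|\hat I\|_{\ell^2}$ can be as large as $1$, direct Cauchy--Schwarz on the Fourier side is insufficient; instead I would apply a van der Corput / Cauchy--Schwarz iteration in $y$ to replace each $f_i$ by its multiplicative differences $\Delta_h f_i$, obtaining control by a higher-order Gowers norm $\|f_i\|_{U^s}^c$ with $s = s(\deg R) \ge 3$. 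This is essentially Peluse's Proposition~2.2 of \cite{peluse_2019b} specialised here.

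The main obstacle will be \emph{degree lowering}: iteratively reducing $U^{s+1}$-control to $U^s$-control down to $s = 2$. Assuming $|T| \ge \epsilon$ with $U^{s+1}$-control in slot $i$, I would dualize to produce a dual function with which $f_i$ correlates, and then use the parallelogram relation
\[
[x] - [x+Q(y)] - [x+R(y)] + [x+Q(y)+R(y)] = 0
\]
together with an inverse theorem for the $U^{s+1}$ norm over $\FF_p$ to force this dual function to have polynomial-phase structure of degree at most $s-1$. Equidistribution of the resulting polynomial phases in $\FF_p$, established via Weil's bound and the strict inequality $\deg Q < \deg R$ (which prevents algebraic cancellation between $Q$ and $R$ after successive shifting and differencing), yields the required $U^s$-control. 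Iterating brings us down to $s = 2$. The scheme must be carried out for each of the four distinguished slots, which is facilitated by the symmetries $Q \leftrightarrow R$ and $x \mapsto x - Q(y)$ (or $x - R(y)$) relating the slots $f_1, f_2$ to $f_0, f_3$.
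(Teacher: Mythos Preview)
Your approach is genuinely different from the paper's, and the decomposition $T=\EE_y I(Q(y),R(y))$ together with the observation that $\EE_{a,b}I(a,b)$ is the $U^2$ inner product is a natural starting point. However, the proof as written has a substantial gap, and the initial decomposition turns out to be a red herring.

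The paper does not attempt anything Fourier-analytic or any degree-lowering. Instead it proves an equidistribution statement (Theorem~\ref{equidistribution of x, x+Q(y), x+R(y), x+Q(y)+R(y)}): for every filtered nilmanifold $G/\Gamma$ and every highly irrational $p$-periodic $g\in\poly(\ZZ,G_\bullet)$, the sequence $g^P(x,y)=(g(x),g(x+Q(y)),g(x+R(y)),g(x+Q(y)+R(y)))$ equidistributes on an explicitly constructed Leibman nilmanifold $G^P/\Gamma^P$. This is done by computing the spaces $\PP_{i,j}$ of Taylor coefficients (Lemma~\ref{structure of P for x, x+Q(y), x+R(y), x+Q(y)+R(y)}), verifying the filtration condition, and then showing that any horizontal character of bounded modulus annihilating $g^P$ would produce an $i$-th level character annihilating some $g_i$, contradicting irrationality. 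Theorem~\ref{True complexity of x, x+Q(y), x+R(y), x+Q(y)+R(y)} then follows from the general Theorem~\ref{true complexity for equidistributing progressions}, which combines equidistribution with the arithmetic regularity lemma. The argument is entirely qualitative and uses no Weil bounds.

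Your gap is the degree-lowering step, which you describe only in outline. You correctly identify that the discrepancy $T-\EE_{a,b}I(a,b)$ cannot be bounded directly by Cauchy--Schwarz on the Fourier side, and you fall back on Peluse's $U^{s+1}$-control. But at that point the decomposition into uniform average plus discrepancy is no longer doing any work: you need to degree-lower $T$ itself from $U^{s+1}$ to $U^2$. Your sketch (``dualize, apply an inverse theorem, use the parallelogram relation to force degree $\le s-1$'') is exactly where all the difficulty lies, and none of it is justified. Peluse's degree-lowering in \cite{peluse_2019b} is designed for \emph{linearly independent} polynomials and terminates at $U^1$; here there is a linear relation and the target is $U^2$, so the mechanism is different. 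In particular, you have not explained how the parallelogram relation interacts with the dual function to drop the degree by one --- the inverse theorem gives a degree-$s$ nilsequence, not a polynomial phase, and the relation $[x]-[x+Q]-[x+R]+[x+Q+R]=0$ does not by itself constrain the degree of a nilsequence correlating with the dual. A quantitative proof along these lines may well be possible (and later work in this direction exists), but it requires substantially more than what you have written.
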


\begin{theorem}[True complexity of $x,\; x+y,\; x+2y,\; x+y^3,\; x+2y^3$]\label{True complexity of x, x+y, x+2y, x+y^3, x+2y^3}
For any $\epsilon>0$, there exist $\delta>0$ and $p_0\in\NN$ such that for all primes $p>p_0$ and for all 1-bounded functions $f_0, f_1, f_2, f_3, f_4:\FF_p\to\CC$, at least one of which satisfies $\|f_i\|_{U^2}\leqslant \delta$, we have
\begin{align*}
    |\EE_{x,y\in\FF_p}f_0(x)f_1(x+y)f_2(x+2y)f_3(x+y^3)f_4(x+2y^3)|\leqslant \epsilon.
\end{align*}
\end{theorem}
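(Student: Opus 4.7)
The plan is to follow the two-stage Peluse--Prendiville degree-lowering strategy: first obtain control by some (possibly high-degree) Gowers norm $U^s$, then iteratively reduce $s$ to 2 by exploiting the algebraic structure of the progression.

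For the initial step, I would invoke Proposition 2.2 of \cite{peluse_2019b} to obtain constants $s \geq 2$ and $c > 0$ such that the counting operator is bounded by $\min_i \|f_i\|_{U^s}^c + O(p^{-c})$. The structural insight driving the rest of the argument is that the 5-term progression consists of two 3-APs sharing the vertex $x$ --- the linear AP $x, x+y, x+2y$ and the cubic AP $x, x+y^3, x+2y^3$ --- and its \emph{linear analog} $x, x+y, x+2y, x+z, x+2z$ (with $z$ independent of $y$) has true complexity 1 by the theory of linear configurations developed in \cite{green_tao_2010a}. The goal of the degree-lowering step is to show that the polynomial progression behaves, at the level of Gowers-norm control, the same way as its linear analog.

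To reduce the controlling degree from $s$ to $s-1$ I would argue as follows. Suppose the counting operator has absolute value at least $\epsilon$; by the initial control step, some $\|f_i\|_{U^s}$ is bounded away from 0. The inverse theorem for the $U^s$ norm over $\FF_p$ (valid in high characteristic) then implies that $f_i$ correlates with a polynomial phase of degree $s-1$. Substituting this correlation back into the counting operator and applying van der Corput / PET-style Cauchy--Schwarz shifts in the variable $y$, one obtains a shorter derived configuration in which the phase of $f_i$ can be absorbed into one of the remaining factors. The key algebraic point is that, after absorption, the pairing of the linear 3-AP with the cubic 3-AP allows the phase to reappear as a polynomial of strictly lower effective degree, so the controlling Gowers norm drops from $U^s$ to $U^{s-1}$. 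Iterating brings the controlling norm down to $U^2$.

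The main obstacle will be managing the two cubic factors $f_3(x+y^3)$ and $f_4(x+2y^3)$ simultaneously, since a van der Corput shift $y \mapsto y+h$ produces cross terms of degree $2$ in $y$ that couple nontrivially with the linear factors $f_1(x+y), f_2(x+2y)$, and one must show that these cross terms do not obstruct the absorption of the polynomial phase. A secondary obstacle is $f_0$, which sits at the shared vertex of both 3-APs and therefore cannot be eliminated by shifts in $y$ alone; for $f_0$ one must first Cauchy--Schwarz in $x$ so that the expression splits into a product over the linear and cubic sides, and then carry out the degree lowering on each side separately. Throughout, the PET manipulations must be performed in characteristic large enough that leading coefficients of $y^3$ and its derivatives do not vanish.
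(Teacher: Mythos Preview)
Your approach is fundamentally different from the paper's. The paper does not use degree lowering at all for this configuration; instead it proves an equidistribution result (Theorem~\ref{equidistribution theorem in the intro}, case (ii), proved in Section~\ref{section on x, x+Q(y), x+2Q(y), x+R(y), x+2R(y)}) showing that for any highly irrational $g\in\poly(\ZZ,G_\bullet)$, the sequence $g^P$ equidistributes on the Leibman nilmanifold $G^P/\Gamma^P$. It then combines this with the arithmetic regularity lemma (Lemma~\ref{regularity lemma}) in Theorem~\ref{true complexity for equidistributing progressions}: decompose each $f_i$ into a nilsequence part plus small and uniform errors, pass to an integral over $G^P/\Gamma^P$, and use that algebraic independence of degree $2$ forces $G_2^{\{e_l\}}\subseteq G^P$, so the integral is controlled by the average of $F_l$ over $G_2$-cosets, a degree-$1$ nilsequence against which the hypothesis $\|f_l\|_{U^2}\leqslant\delta$ gives smallness. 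This is a qualitative structure-theorem argument, not an iterative quantitative one.

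Your proposal, by contrast, has a genuine gap at the degree-lowering step. The Peluse--Prendiville machinery in \cite{peluse_2019b} applies when the polynomials $P_1,\dots,P_{t-1}$ are linearly independent, and the extension in \cite{kuca_2019} requires that any nontrivial linear combination of the ``tail'' polynomials have degree at least the length of the embedded AP. Here $P_3(y)=y^3$ and $P_4(y)=2y^3$ satisfy $2P_3-P_4=0$, so neither result applies. Your sketch acknowledges the obstacles (the cross terms from differencing $y^3$, and the shared vertex $f_0$) but does not resolve them: the sentence ``the pairing of the linear 3-AP with the cubic 3-AP allows the phase to reappear as a polynomial of strictly lower effective degree'' is precisely the step that needs a proof and does not follow from anything you have written. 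In particular, after inserting a phase $e_p(\phi(x+y^3))$ of degree $s-1$ into the operator, a van der Corput shift in $y$ produces phases in $y$ of degree up to $3(s-1)-1$, coupled to $f_1,f_2$ through the linear terms; there is no evident mechanism that lowers the effective degree rather than raising it. The plan to ``Cauchy--Schwarz in $x$ so that the expression splits into a product over the linear and cubic sides'' also does not work as stated: Cauchy--Schwarz in $x$ duplicates the $y$-dependent factors rather than decoupling the two 3-APs. A correct degree-lowering proof for this configuration, if one exists, would require substantially new ideas beyond what you have outlined.
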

\begin{theorem}[True complexity of $x,\; x+Q(y),\; x+2Q(y),\; x+R(y),\; x+2R(y)$]\label{True complexity of x, x+Q(y), x+2Q(y), x+R(y), x+2R(y)}
Let $Q,R\in\ZZ[y]$ be polynomials of zero constant terms satisfying $1\leqslant\deg Q<(\deg R)/2$. For any $\epsilon>0$, there exist $\delta>0$ and $p_0\in\NN$ such that for all primes $p>p_0$ and for all 1-bounded functions $f_0, f_1, f_2, f_3, f_4:\FF_p\to\CC$, at least one of which satisfies $\|f_i\|_{U^2}\leqslant \delta$, we have
\begin{align*}
    |\EE_{x,y\in\FF_p}f_0(x)f_1(x+Q(y))f_2(x+2Q(y))f_3(x+R(y))f_4(x+2R(y))|\leqslant \epsilon.
\end{align*}
\end{theorem}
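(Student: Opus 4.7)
The plan is to adapt the proof of Theorem \ref{True complexity of x, x+y, x+2y, x+y^3, x+2y^3}, which handled the special case $Q(y)=y$, $R(y)=y^3$, to general $Q, R$ with $1 \leq \deg Q < (\deg R)/2$. As an initial step, Proposition 2.2 of \cite{peluse_2019b} provides $s \in \NN_+$ and $c > 0$, depending only on $Q, R$, such that the five-term counting operator is bounded by $\min_i \|f_i\|_{U^s}^c + O(p^{-c})$. This gives $U^s$ control with $s$ possibly much larger than $2$, so the task reduces to showing that this control can be strengthened to $U^2$ control.

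The heart of the proof is a \emph{degree lowering} argument reducing $U^{s}$ control step-by-step to $U^2$ control. As motivation, the five linear forms $x, x+a, x+2a, x+b, x+2b$, viewed in three independent variables $x, a, b$, have pairwise linearly independent squares, which in the linear setting corresponds to $U^2$ true complexity. For the polynomial progression, the analogous descent proceeds as follows: suppose $U^{s'}$ controls the operator for some $s' \geq 3$ and the operator has size $\geq \epsilon$. Apply PET-type Cauchy--Schwarz to isolate a dual function associated to $f_i$, and use the $U^{s'}$ inverse theorem to extract a degree-$(s'-1)$ nilsequence correlating with this dual. Then, exploiting the two three-term arithmetic progressions $(x, x+Q(y), x+2Q(y))$ and $(x, x+R(y), x+2R(y))$ via a further Cauchy--Schwarz --- during which the hypothesis $\deg R > 2\deg Q$ ensures that the $Q$-terms remain of strictly lower degree than the $R$-terms produced by iterated differencing --- one shows that the controlling nilsequence can actually be taken of degree $s' - 2$, yielding a one-degree gain. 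Iterating terminates at $U^2$ control, so that $|\EE_{x,y} f_0 f_1 f_2 f_3 f_4| \leq C \min_i \|f_i\|_{U^2}^{c'} + O(p^{-c'})$ for some $C, c' > 0$ depending only on $Q, R$. The claimed $(\epsilon, \delta)$-statement then follows by choosing $\delta$ and $p_0$ appropriately.

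The main obstacle will be executing the degree lowering step uniformly in $Q$ and $R$. In the special case of Theorem \ref{True complexity of x, x+y, x+2y, x+y^3, x+2y^3}, explicit computations with the monomials $y$ and $y^3$ make it straightforward to track degrees after each Cauchy--Schwarz. In the present generality, one needs to verify that the relevant bookkeeping is governed by the single invariant $\deg R - 2\deg Q > 0$, and that this strict inequality is preserved through every iteration of the PET descent --- so that $R$ always plays the role of the dominant polynomial and the $Q$-terms can be absorbed as lower-order perturbations. Once this is set up, the algebraic identities underlying the two three-term APs carry through essentially unchanged from the special case, and the rest of the argument is bookkeeping.
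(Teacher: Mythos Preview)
Your proposal takes a fundamentally different route from the paper, and as written it is not a proof but a sketch with the central step left unjustified.

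The paper does \emph{not} prove Theorem~\ref{True complexity of x, x+y, x+2y, x+y^3, x+2y^3} (or the present theorem) by degree lowering. Its argument is structural: it constructs the Leibman nilmanifold $G^P/\Gamma^P$ for the progression (Section~\ref{section on Leibman nilmanifold}), proves directly that if $g$ is $A$-irrational on $G/\Gamma$ then $g^P$ is $o_{A\to\infty}(1)$-equidistributed on $G^P/\Gamma^P$ (Theorem~\ref{equidistribution of x, x+Q(y), x+2Q(y), x+R(y), x+2R(y)}), and then combines this with the arithmetic regularity lemma and the algebraic independence of degree~$2$ of the progression via the general Theorem~\ref{true complexity for equidistributing progressions}. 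The hypothesis $\deg Q < (\deg R)/2$ enters in the equidistribution step: it is used to show that a specific coefficient in $\eta\circ g^P$ has no contribution from $g_{j/d_2+1}$, which is what allows one to disentangle the $Q$- and $R$-contributions and contradict irrationality. So your premise --- ``adapt the proof of Theorem~\ref{True complexity of x, x+y, x+2y, x+y^3, x+2y^3}'' --- already rests on a guess about that proof that does not match what the paper does.

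As for your proposed degree-lowering route itself: the outline is plausible in spirit, but the decisive step (``one shows that the controlling nilsequence can actually be taken of degree $s'-2$'') is asserted, not argued. You have not specified what the dual function is for this five-term operator, nor how the two embedded $3$-APs concretely force a degree drop after Cauchy--Schwarz; and invoking the $U^{s'}$ inverse theorem at each stage already makes the argument ineffective, so you would gain nothing over the paper's regularity-lemma approach while carrying more moving parts. Peluse-style degree lowering for configurations with genuine linear relations among the terms (as here) is substantially harder than the linearly independent case, and the proposal gives no indication of how to overcome that. Until the degree-lowering step is actually executed, this remains a plan rather than a proof.
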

In the aforementioned configurations, each term can be controlled by the same Gowers norm, the $U^2$ norm. However, there exist configurations where different Gowers norms control different terms. The following is but one example. The norm $u^3$ appearing below belongs to a family of norms called \emph{polynomial bias norms} which satisfy $\|f\|_{u^s}\leqslant \|f\|_{U^s}$ and will be discussed more broadly in Section \ref{section on higher order Fourier analysis}. 
\begin{theorem}[True complexity of $x,\; x+y,\; x+2y,\; x+y^2$]\label{True complexity of x, x+y, x+2y, x+y^2}
For any $\epsilon>0$, there exist $\delta>0$ and $p_0\in\NN$ such that for all primes $p>p_0$ and for all 1-bounded functions ${f_0, f_1, f_2, f_3:\FF_p\to\CC}$, where at least one of $f_0, f_1, f_2$ satisfies $\|f_i\|_{u^3}\leqslant \delta$ or $f_3$ satisfies $\|f_3\|_{U^2}\leqslant \delta$, we have
\begin{align}\label{control of x, x+y, x+2y, x+y^2}
    |\EE_{x,y\in\FF_p}f_0(x)f_1(x+y)f_2(x+2y)f_3(x+y^2)|\leqslant \epsilon.
\end{align}
\end{theorem}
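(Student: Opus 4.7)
The plan is to adapt the two-step scheme developed by Peluse for controlling counting operators of polynomial progressions: first establish control by some Gowers norm $U^s$ for large $s$ via iterated Cauchy--Schwarz (a PET-type argument), then apply a degree-lowering argument to bring $s$ down to the target value. Since the theorem assigns different target norms to $f_3$ (namely $U^2$) and to $f_0, f_1, f_2$ (namely $u^3$), the scheme is applied separately to each function. Write $\Lambda := \EE_{x,y} f_0(x) f_1(x+y) f_2(x+2y) f_3(x+y^2)$ for the counting operator throughout.

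For the PET step, I would perform a bounded sequence of Cauchy--Schwarz manipulations in $x$ and in $y$ to produce a bound of the form $|\Lambda|^{O(1)} \leq \|f_i\|_{U^{s_i}} + O(p^{-c})$ for some explicit $s_i$ and $c>0$, for each $i \in \{0,1,2,3\}$. For $f_3$, a van der Corput differencing in $y$ (replacing $y$ by $y+h$ and averaging over $h$) lowers the polynomial degree attached to $f_3$ from $2$ to $1$, after which one or two further Cauchy--Schwarz steps in $x$ place $f_3$ inside a $U^{s_3}$-box. For $f_0$ (and symmetrically for $f_1, f_2$), a parallel sequence starting with Cauchy--Schwarz in $x$ to peel off $f_0$ achieves the same.

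For the degree-lowering step, consider the dual functions
\begin{align*}
F_3(x) &:= \EE_y f_0(x - y^2) f_1(x - y^2 + y) f_2(x - y^2 + 2y),\\
F_0(x) &:= \EE_y f_1(x+y) f_2(x+2y) f_3(x+y^2),
\end{align*}
so that $\Lambda = \EE_x f_3(x) F_3(x) = \EE_x f_0(x) F_0(x)$, and analogously for $f_1, f_2$. Assuming $|\Lambda| \geq \epsilon$, the PET step gives $\|f_i\|_{U^{s_i}} \gg_\epsilon 1$, and the $U^{s_i}$ inverse theorem produces a polynomial phase $e_p(P(x))$ of degree $s_i - 1$ correlating with $f_i$. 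Substituting this correlation into the identity $\Lambda = \EE_x f_i(x) F_i(x)$ and exploiting the shift-structure of $F_i$ lets one replace $P$ by a polynomial of strictly smaller degree, with polynomial loss in $\epsilon$. Iterating, for $f_3$ the degree of $P$ descends all the way to $1$ (the shifts $-y^2, -y^2+y, -y^2+2y$ in $F_3$ can be absorbed into a linear phase after averaging), yielding $\|f_3\|_{U^2} \gg_\epsilon 1$. For $f_0, f_1, f_2$ the iteration halts at a genuinely quadratic $P$, because the $y^2$ shift appearing in $F_0$ (and the analogous duals for $f_1, f_2$) obstructs the final reduction step, giving $\|f_i\|_{u^3} \gg_\epsilon 1$.

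The main obstacle I anticipate is making the degree-lowering step both quantitative and halting at exactly the right degree. For $f_0, f_1, f_2$ one must establish a quantitative equidistribution result for the polynomial progression --- in the spirit of the linear-forms counting lemma of Green and Tao \cite{green_tao_2010a} --- in order to drive the iteration from degree $s_i-1$ down to degree $2$, and simultaneously show that a genuine quadratic phase can appear, so that the iteration cannot push past degree $2$ to degree $1$. The algebraic root of the quadratic obstruction is that $y^2$ cannot be written as a linear combination of the linear shifts $0, y, 2y$ modulo constants, whereas all four shifts collapse to linear phases when viewed from the perspective of the $f_3$ slot.
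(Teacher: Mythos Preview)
Your proposal takes a genuinely different route from the paper. The paper does \emph{not} use Peluse-style degree lowering for this configuration. Instead, it proceeds in two separate pieces. For the $U^2$ control of $f_3$, the paper applies the arithmetic regularity lemma to $f_3$ alone (replacing it by a nilsequence $F(g(\cdot)\Gamma)$), then applies Cauchy--Schwarz three times to strip away $f_0,f_1,f_2$ and land on a $5$-parameter polynomial progression whose Leibman nilmanifold can be analysed directly; equidistribution on that nilmanifold forces the relevant integral to be bounded by $\|f_3\|_{U^2}$. For the $u^3$ control of $f_0,f_1,f_2$, the paper uses a Hahn--Banach decomposition of $f_3$ with respect to the $U^2$ norm, and then for the structured piece exploits the explicit quadratic identity
\[
x+y^2 = Q_0(x)+Q_1(x+y)+Q_2(x+2y)
\]
to absorb a linear phase $e_p(\alpha(x+y^2))$ into quadratic twists of $f_0,f_1,f_2$, reducing at once to a $u^3$ bound. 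Both arguments are ineffective (they pass through regularity), whereas yours aims to be quantitative.

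Your sketch has two genuine gaps. First, Peluse's degree-lowering machinery, as developed in \cite{peluse_2018, peluse_2019b}, relies on the polynomials $P_1,\dots,P_{t-1}$ being linearly independent; here $y$ and $2y$ are dependent, and the paper explicitly notes that the case $d\leqslant m-1$ of $x,x+y,\dots,x+(m-1)y,x+y^d$ falls outside the scope of \cite{kuca_2019} for exactly this reason. You acknowledge a difficulty here but do not explain how the iteration survives the dependence; without that, the degree-lowering step for $F_3$ is not justified. Second, your claim that ``the $U^{s_i}$ inverse theorem produces a polynomial phase $e_p(P(x))$'' is false over cyclic $\FF_p$ for $s_i\geqslant 3$: the inverse theorem yields nilsequences, not classical polynomial phases, and $u^3$ is genuinely weaker than $U^3$ in this setting. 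Consequently, even if your iteration halted at a degree-$2$ obstruction, you would obtain only $U^3$ control of $f_0,f_1,f_2$, not the stronger $u^3$ control that the theorem asserts. The paper's route to $u^3$ goes through the explicit quadratic identity above, which is precisely what converts a $U^2$-dual bound on $f_3$ into a $u^3$ bound on $f_i$; your proposal contains no analogue of this step.
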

What Theorem \ref{True complexity of x, x+y, x+2y, x+y^2} is saying is that if all Fourier coefficients of $f_3$ are small, i.e. if the correlation $$\EE_{x\in\FF_p}f_3(x)e_p(\alpha x)$$ is small for all $\alpha\in\FF_p$, or if the correlation $$\EE_{x\in\FF_p}f_i(x)e_p\left(\alpha x^2 +\beta x\right)$$ is small for all $\alpha, \beta\in\ZZ$ and $i\in\{0,1,2\}$, then the operator (\ref{control of x, x+y, x+2y, x+y^2}) is small as well. The function $e_p$ used here is $e_p(x):=e^{\frac{2\pi i}{p}x}.$


\begin{theorem}[True complexity of $x,\; x+y,\;...,\; x+(m-1)y,\; x+y^d$]\label{True complexity of x, x+y, ..., x+(m-1)y, x+y^d}
Let $m,d\in\NN_+$ satisfy $2\leqslant d\leqslant m-1$. For any $\epsilon>0$, there exist $\delta>0$ and $p_0\in\NN$ such that for all primes $p>p_0$ and for all 1-bounded functions $f_0, ...,f_{m}:\FF_p\to\CC$, we have
\begin{align}\label{control of x, x+y,..., x+(m-1)y, x+y^d}
    |\EE_{x,y\in\FF_p}f_0(x)f_1(x+y) \cdots f_{m-1}(x+(m-1)y)f_m(x+y^d)|\leqslant \epsilon
\end{align}
if $f_m$ satisfies $\|f_m\|_{U^{\floor{\frac{m-1}{d}}+1}}\leqslant \delta$ or at least one of $f_0, ..., f_{m-1}$ satisfies $\|f_i\|_{U^s}\leqslant \delta$ for
\[ s = \begin{cases}
m, \; &d\ |\ m-1\\
m-1, \; &d \nmid m-1.
\end{cases}
\]
\end{theorem}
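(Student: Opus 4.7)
The approach is PET induction via iterated Cauchy-Schwarz with shifts in $y$, in the spirit of Peluse \cite{peluse_2019b} and the earlier theorems of this paper. I would separately establish control by a Gowers norm of $f_m$ and by one of $f_i$ with $i < m$.

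For control by $f_m$, substitute $x \mapsto x - y^d$ so that $f_m(x)$ is independent of $y$, and apply Cauchy-Schwarz in $x$ to remove the $|f_m|$ factor. This leaves an expression involving $f_i$ at shifts $iy - y^d$ for $i = 0, \ldots, m-1$. Apply $k := \floor{(m-1)/d}$ iterated Cauchy-Schwarz/van der Corput steps in $y$ with shifts $y \mapsto y + h_j$; at each step the polynomial contribution $(y+h_j)^d - y^d$ reduces the effective $y$-degree of the polynomial part, while the linear terms $iy$ contribute clean translates by $ih_j$. Undoing the substitution re-introduces $2^k$ copies of $f_m$ at $h$-shifted arguments, and a final Cauchy-Schwarz in $x$ assembles these into a $U^{k+1}$ Gowers box for $f_m$. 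The Gowers-Cauchy-Schwarz inequality then yields the bound $\|f_m\|_{U^{\floor{(m-1)/d}+1}}^{c} + O(p^{-c})$ for some $c>0$.

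For control by $f_i$ with $i < m$, substitute $x \mapsto x - iy$ to isolate $f_i(x)$, apply Cauchy-Schwarz in $x$ to peel off $f_i$, and then iterate Cauchy-Schwarz/van der Corput in $y$ to difference away the remaining $f_j$. The length-$m$ AP structure alone requires $m-2$ differencing steps, standardly yielding $U^{m-1}$ control, and the extra term $x + y^d$ must be absorbed within these steps. When $d \nmid m-1$, the degrees of the AP and polynomial parts interact compatibly so the polynomial term is consumed at no extra cost and $U^{m-1}$ control persists. When $d \mid m-1$, an algebraic alignment appears: after $(m-1)/d$ differencing steps the shifts of $y^d$ reduce to forms proportional to shifts already present in the AP, creating a degeneracy in the general-position requirement that forces one additional Cauchy-Schwarz step and hence $U^m$ control.

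The principal obstacle is the PET bookkeeping, namely verifying that after each Cauchy-Schwarz step the linear forms appearing in the function arguments remain in \emph{general position}, so that the final Gowers-Cauchy-Schwarz inequality yields a genuine Gowers norm of the claimed degree rather than a degenerate product. For control by $f_m$ this is where the hypothesis $2 \leq d \leq m-1$ is used, ensuring that after $\floor{(m-1)/d}$ differencing steps in $y$ the Gowers box is genuinely $(\floor{(m-1)/d}+1)$-dimensional. For control by $f_i$ with $i < m$, the divisibility case distinction is exactly the point at which a polynomial-AP coincidence forces an extra step. A secondary technical point is the $o(1)$ error from diagonal contributions ($h_j = 0$), handled via the hypothesis $p > p_0$.
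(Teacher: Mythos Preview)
Your proposal has a genuine gap: a pure Cauchy--Schwarz/PET scheme does not yield the sharp Gowers norm degrees claimed in the theorem, and the paper's proof proceeds by a fundamentally different route.

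For the $f_m$ bound, your description is internally inconsistent: once you apply Cauchy--Schwarz in $x$ to ``remove'' $f_m$, it is gone, and there is no mechanism for ``undoing the substitution'' to recover $2^k$ copies of $f_m$. More importantly, even a correctly executed PET argument that Cauchy--Schwarzes away $f_0,\dots,f_{m-1}$ (which requires $m$ steps) leaves $2^m$ copies of $f_m$ at arguments $x+(y+\sum_i w_ih_i)^d-\sum_i(i-1)w_ih_i$; this is not a $U^{\lfloor(m-1)/d\rfloor+1}$ Gowers box, and there is no further Cauchy--Schwarz that collapses it to one. The paper instead first applies the arithmetic regularity lemma to replace $f_m$ by a nilsequence $F(g(\cdot)\Gamma)$, performs the $m$ Cauchy--Schwarz steps to obtain this $2^m$-fold product of $F$, and then proves that the resulting $(m+2)$-variable polynomial sequence equidistributes on its Leibman nilmanifold $G^P/\Gamma^P$. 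The sharp exponent $\lceil m/d\rceil=\lfloor(m-1)/d\rfloor+1$ arises because $G_{\lceil m/d\rceil}^{\{0,1\}^m}\subseteq G^P$, which allows averaging $F$ over $G_{\lceil m/d\rceil}$-cosets; the result then follows from the converse to the inverse theorem.

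For the $f_i$ bound with $i<m$, your explanation of the divisibility dichotomy (``algebraic alignment'', ``degeneracy in general position'') does not correspond to anything that occurs in a PET argument, and I do not see how to make it precise. The paper's mechanism is entirely different: having established control by $\|f_m\|_{U^{\lceil m/d\rceil}}$, one replaces $f_m$ by a nilsequence of degree at most $\lceil m/d\rceil-1$, so that $f_m(x+y^d)$ becomes a nilsequence in $(x,y)$ of degree $d\lfloor(m-1)/d\rfloor$. A twisted von Neumann lemma then shows that an $m$-term arithmetic progression multiplied by a degree-$(m-2)$ nilsequence is controlled by $U^{m-1}$, while a degree-$(m-1)$ twist requires $U^m$. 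The case split is thus exactly whether $d\lfloor(m-1)/d\rfloor$ equals $m-1$ (i.e.\ $d\mid m-1$) or is at most $m-2$, not a PET bookkeeping phenomenon.
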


The main technical innovation used in proving Theorems \ref{True complexity of x, x+y, x+y^2, x+y+y^2}, \ref{True complexity of x, x+Q(y), x+R(y), x+Q(y)+R(y)}, \ref{True complexity of x, x+y, x+2y, x+y^3, x+2y^3} and \ref{True complexity of x, x+Q(y), x+2Q(y), x+R(y), x+2R(y)} is the following equidistribution result which should be seen as an extension of (the periodic version of) Theorem 1.2 from \cite{green_tao_2010a}, which itself generalizes classical equidistribution theorems by Weyl and van der Corput. All the concepts appearing in Theorem \ref{equidistribution theorem in the intro} shall be defined and discussed in Sections \ref{section on higher order Fourier analysis} and \ref{section on Leibman nilmanifold}. Mimicking the notation of \cite{green_tao_2010a}, we say that an expression $E(A,M)$ depending on parameters $A,M>0$ satisfies $o_{A\to\infty,M}(1)$ if $\lim\limits_{A\to\infty}E(A,M)=0$ for each fixed $M>0$, and similarly for other choices of parameters.
\begin{theorem}\label{equidistribution theorem in the intro}
Let $M>0$, and let $\vec{P}\in\ZZ[x,y]^t$ with $\vec{P}(0,0)=\vec{0}$ take one of the following forms:
\begin{enumerate}
    \item $\vec{P}(x,y) = (x, \; x+Q(y), \; x+R(y), \; x+Q(y)+R(y))$ for $1\leqslant \deg Q<\deg R$;
    \item $\vec{P}(x,y) = (x, \; x+Q(y),\; x+2Q(y), \; x+R(y), \; x+2R(y))$ for $1\leqslant \deg Q<(\deg R)/2$.
\end{enumerate}
Given a filtered nilmanifold $G/\Gamma$  of complexity $M$, there exists a filtered nilmanifold $G^P/\Gamma^P\subseteq G^t/\Gamma^t$ of complexity $O_{M}(1)$ such that for any $p$-periodic, $A$-irrational sequence $g\in\poly(\ZZ,G_\bullet)$ satisfying $g(0)=1$, the sequence $g^P\in\poly(\ZZ^2,G^P_\bullet)$ given by
\begin{align*}
    g^P(x,y) = (g(P_1(x,y)), ..., g(P_t(x,y)))
\end{align*}
satisfies
\begin{align*}
    \EE_{x,y\in\FF_p}F(g^P(x,y)\Gamma^P) = \int_{G^P/\Gamma^P} F + o_{A\to\infty, M}(1)
\end{align*}
uniformly for any $M$-Lipschitz function $F:G^P/\Gamma^P\to\CC$.
\end{theorem}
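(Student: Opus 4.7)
The plan is to adapt the template of \cite{green_tao_2010a}, developed there for systems of linear forms, to the polynomial setting. The strategy is a Leibman-style dichotomy: either $g^P$ equidistributes on the Leibman nilmanifold $G^P/\Gamma^P$, or there is a nontrivial horizontal character $\eta : G^P \to \RR/\ZZ$ such that $\eta \circ g^P$ is smooth. The nilmanifold is to be constructed so that this second alternative contradicts the $A$-irrationality of $g$. Concretely, I would take $G^P$ to be the smallest rational filtered closed subgroup of $G^t$ containing the image of $g^P$ for generic polynomial sequences $g \in \poly(\ZZ,G_\bullet)$, set $\Gamma^P := G^P \cap \Gamma^t$, and extract a Mal'cev basis for $G^P/\Gamma^P$ from one for $G^t/\Gamma^t$ via bounded linear algebra, yielding the claimed complexity bound $O_M(1)$.

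Assuming equidistribution fails, a quantitative Leibman equidistribution theorem in two variables (the $\ZZ^2$-analogue of the standard nilmanifold criterion) produces a nontrivial horizontal character $\eta$ on $G^P$ whose smoothness norm $\|\eta \circ g^P\|_{C^\infty[p]^2}$ is not $o_{A\to\infty,M}(1)$. Decomposing $\eta$ coordinate-wise yields horizontal characters $\eta_1,\ldots,\eta_t$ on $G$ such that
\[
\eta \circ g^P(x,y) = \sum_{i=1}^t \eta_i(g(P_i(x,y))).
\]
Each $\eta_i \circ g$ is a real polynomial on $\ZZ$ modulo $1$ of degree bounded by the step of $G$, so the right-hand side is a polynomial in $(x,y)$, and the smoothness hypothesis forces each of its monomial coefficients to lie within $o_{A\to\infty}(1)$ of an integer.

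The core step is a monomial-matching argument exploiting the algebraic structure of the $P_i$. In case (i), the $\ZZ$-module of linear relations $\sum_i a_i P_i = 0$ in $\ZZ[x,y]$ is one-dimensional, spanned by $(1,-1,-1,1)$. Using $\deg Q < \deg R$, one isolates monomials in $y$ of degree $\deg R$ (which arise only through $P_3, P_4$) and of degree $\deg Q$ (which arise only through $P_2, P_4$), and combines these with the $A$-irrationality of the coefficients of $\eta_i \circ g$ to force $(\eta_1,\ldots,\eta_4)$ to be proportional to the relation vector modulo the character lattice of $\Gamma^P$. By the definition of $G^P$ this forces $\eta$ to be trivial, contradicting the dichotomy. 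Case (ii) is analogous, with a two-dimensional relation lattice spanned by $(1,-2,1,0,0)$ and $(1,0,0,-2,1)$.

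The main obstacle will be the monomial analysis in case (ii), especially when $G$ has higher step. There the expansion of $\eta_i(g(P_i(x,y)))$ produces cross terms $Q(y)^a R(y)^b$ whose $y$-degrees can collide for distinct pairs $(a,b)$, so extracting information about a single $\eta_i$ is delicate. The hypothesis $\deg Q < (\deg R)/2$ is used precisely to guarantee that the top-$y$-degree monomials come only from pure $R$-contributions, thereby isolating $\eta_4$ and $\eta_5$; a further induction on the step of $G$ is then needed to propagate the trivialization of $\eta$ down the lower central series of $G^P$ and complete the contradiction.
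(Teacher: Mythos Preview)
Your overall template---contradict $A$-irrationality via a nontrivial horizontal character $\eta$ on $G^P$---matches the paper, but the decomposition step contains a genuine gap. You claim that ``decomposing $\eta$ coordinate-wise yields horizontal characters $\eta_1,\ldots,\eta_t$ on $G$'' and then that ``each $\eta_i\circ g$ is a real polynomial on $\ZZ$ modulo $1$ of degree bounded by the step of $G$''. These are inconsistent: a horizontal character on $G$ annihilates $[G,G]\supseteq G_2$, so $\eta_i\circ g$ would be \emph{linear} in $n$. More fundamentally, a horizontal character on $G^P\subset G^t$ need only vanish on $[G^P,G^P]$, which is in general strictly smaller than $G^P\cap[G,G]^t$; hence $\eta$ does \emph{not} decompose as a sum of horizontal characters on the coordinate copies of $G$. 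In particular $\eta$ can be nontrivial on elements $g_i^{\vec v}$ with $i\geqslant 2$, and your linear relation-vector argument ignores exactly this.

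The paper handles this by first constructing $G^P$ explicitly together with a filtration $G^P_\bullet$, built from vector spaces $\PP_{i,j}$ spanned by the degree-$j$ coefficients of $\binom{\vec P(x,y)}{i}$. Given a nontrivial $\eta$, one takes the largest $j$ with $\eta|_{G^P_j}\neq 0$; then for each pair $(i,\vec v)$ with $\vec v\in\PP_{i,j}\setminus\PP_{i,j+1}$ the map $h\mapsto\eta(h^{\vec v})$ is an \emph{$i$-th level character} on $G$ (Definition~\ref{i-th level characters}), not a horizontal one. The coefficient analysis of $\eta\circ g^P$ is done monomial-by-monomial in the Taylor basis $\binom{x}{k}\binom{y}{l}$, using the explicit structure of the $\PP_{i,j}$ to show each such $i$-th level character sends $g_i$ to $\ZZ$. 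Your intuition that $\deg Q<(\deg R)/2$ separates the $Q$- and $R$-contributions is correct, but it is used at this finer level---specifically to ensure that certain monomials such as $\binom{x}{2}\binom{y}{j-2d_1}$ receive no contribution from $g_{j/d_2+1}$---rather than merely to identify the lattice of linear relations among the $P_i$. Also note that $p$-periodicity upgrades the conclusion of the Leibman dichotomy: $\eta\circ g^P$ is exactly $\ZZ$-valued, not just smooth, which is what makes the integrality arguments clean.
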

What Theorem \ref{equidistribution theorem in the intro} is saying is that if $g$ is a ``highly irrational" sequence on $G/\Gamma$ in the sense of Definition \ref{irrational sequences}, then the sequence $g^P$ is ``close to being equidistributed" on the nilmanifold $G^P/\Gamma^P$. Combining Theorem \ref{equidistribution theorem in the intro} with Theorem \ref{regularity lemma}, a version of the celebrated arithmetic regularity lemma (Theorem 1.2 of \cite{green_tao_2010a}), we can then approximate sums of the form (\ref{counting operator for polynomial progressions}) by integrals of Lipschitz functions on nilmanifolds.

Although we only prove Theorem \ref{equidistribution theorem in the intro} for two specific families of polynomial progressions, the construction of the nilmanifold $G^P/\Gamma^P$ mentioned in the statement of Theorem \ref{equidistribution theorem in the intro} is quite general. This nilmanifold, definable for any polynomial progression, has originally appeared in Section 5 of \cite{leibman_2007}. Its versions for linear forms have been used in \cite{green_tao_2010a}, where the authors call it ``Leibman nilmanifold", and we shall stick to this terminology. In Section \ref{section on Leibman nilmanifold}, we show that Leibman nilmanifold admits a natural filtration $G^P_\bullet$ such that $g^P\in\poly(\ZZ^D,G^P_\bullet)$ whenever ${g\in\poly(\ZZ,G_\bullet)}$. 

Interestingly, there exist progressions for which Theorem \ref{equidistribution theorem in the intro} fails. In Lemma \ref{example for failure of equidistribution}, we give an example of a torus $G/\Gamma$ and a sequence ${g\in\poly(\ZZ,G_\bullet)}$ that is ``highly irrational" on $G/\Gamma$, yet the corresponding sequence $g^P$ for $\vec{P}(x,y)=(x, \; x+y, \; x+2y, \; x+y^2)$ is contained in a ``low-rank" subtorus of $G^P/\Gamma^P$ and is ``far from being equidistributed" on $G^P/\Gamma^P$. This seems to be a novel phenomenon, unregistered in the existing literature, and it is essentially connected with the fact that the terms of the progression satisfy an algebraic relation
\begin{align}\label{quadratic relation satisfied by x, x+y, x+2y, x+y^2}
    \left(\frac{1}{2}x^2+x\right) - \left(x+y\right)^2 + \frac{1}{2}\left(x+2y\right)^2 - (x+y^2) = 0
\end{align}
that is ``inhomogeneous", in the sense that the polynomials in $x, \; x+y$ and $x+2y$ are quadratic, but the polynomial in $x+y^2$ is linear. This phenomenon does not appear for linear forms, where such inhomogeneous relations are impossible, but it is an important feature of the polynomial world. It also does not show up in the ergodic work on this configuration \cite{frantzikinakis_2008} since the author only has to deal with linear sequences $g(n)=a^n$ as opposed to more general polynomial sequences. The natural analogue of Theorem \ref{equidistribution theorem in the intro} cannot therefore be used to prove Theorems \ref{True complexity of x, x+y, x+2y, x+y^2} and \ref{True complexity of x, x+y, ..., x+(m-1)y, x+y^d}, and we apply a different method to handle these configurations, which essentially comes down to homogenizing the progression $\vec{P}$ using the Cauchy-Schwarz inequality. 


 We have not been able to extend Theorems \ref{True complexity of x, x+y, x+y^2, x+y+y^2}, \ref{True complexity of x, x+Q(y), x+R(y), x+Q(y)+R(y)}, \ref{True complexity of x, x+y, x+2y, x+y^3, x+2y^3}, \ref{True complexity of x, x+Q(y), x+2Q(y), x+R(y), x+2R(y)}, \ref{True complexity of x, x+y, x+2y, x+y^2} and \ref{True complexity of x, x+y, ..., x+(m-1)y, x+y^d} to an arbitrary polynomial progression of the form (\ref{progressions of a special form}) for at least three reasons. First, not all progressions satisfy Theorems \ref{equidistribution theorem in the intro}, as evidenced by the aforementioned example of $x,\; x+y,\; x+2y,\; x+y^2$. Second, some progressions fail to satisfy ``filtration condition" (Definition \ref{filtration condition}), which makes the corresponding nilmanifolds $G^P/\Gamma^P$ harder to analyse. Sections \ref{section on Leibman nilmanifold} and \ref{section on a progression not satisfying filtration condition} will explain why this condition is useful; it is unclear at the moment if this is a mere technical annoyance or a genuine obstruction. Third, even though our arguments in the proof of Theorem \ref{equidistribution theorem in the intro} follow the same general strategy for the two families of configurations that the theorem concerns, we have to resort to different tricks in dealing with arising technicalities, which makes it hard to generalize the arguments. 
 
\subsection{True complexity: formal definition, conjecture and known results}\label{subsection on true complexity}
Our primary object of study are \emph{integral polynomial maps}, i.e. configurations of the form $\vec{P}=(P_1, ..., P_t)\in\QQ[\textbf{x}]^t$ for integer-valued polynomials $P_1, ..., P_t$ with zero constant terms. 
Following the convention of \cite{green_tao_2010a}, we use $\vec{v}$ to denote $t$-dimensional vectors and $\textbf{x}$ to denote $D$-dimensional vectors. We are now ready to state our main definition.

\begin{definition}[True complexity and Gowers controllability]\label{true complexity}
Let $\vec{P}=(P_1, ..., P_t)\in\QQ[\textbf{x}]^t$ be an integral polynomial map. We say that $\vec{P}$ has \emph{true complexity $s$ at an index $1\leqslant i\leqslant t$} if $s$ is the smallest natural number such that for every $\epsilon>0$, there exist $\delta>0$ and $p_0\in\NN$ such that for all primes $p>p_0$ and all 1-bounded functions $f_1, ..., f_t:\FF_p\to\CC$, we have
\begin{align*}
    |\EE_{\textbf{x}\in\FF_p^D}f_1(P_1(\textbf{x})) \cdots f_t(P_t(\textbf{x}))| < \epsilon
\end{align*}
whenever $\|f_i\|_{U^{s+1}}<\delta$. If no such $s$ exists, we say that the true complexity of $\vec{P}$ at $i$ is $\infty$. We say $\vec{P}$ has \emph{true complexity} $s$ if it has true complexity $s$ at the index $i$ for all $1\leqslant i\leqslant t$. We call $\vec{P}$ \emph{Gowers controllable} if its true complexity at every index is finite.
\end{definition}
The true complexity question could also be posed over the integers; however, the problem becomes much harder in this context because each variable would be drawn from an interval of different length depending on the degree of the polynomial map. For instance, when studying the true complexity of $x,\; x+y,\; x+y^2$, the variable $x$ would be drawn from an interval of length $N$ while $y$ would be taken from an interval of length only $\Theta(\sqrt{N})$ to ensure that the term $x+y^2$ lies inside the interval $\{1, ..., N\}$. As a consequence, not every term of the progression would be globally controlled by a Gowers norm. We refer the reader to \cite{peluse_prendiville_2019, peluse_prendiville_2020, peluse_2019a} for an in-depth discussion of these issues.

Because of the issues highlighted above, we study true complexity over finite fields as opposed to integers.
In the language of Definition \ref{true complexity}, Theorems \ref{True complexity of x, x+y, x+y^2, x+y+y^2}, \ref{True complexity of x, x+Q(y), x+R(y), x+Q(y)+R(y)}, \ref{True complexity of x, x+y, x+2y, x+y^3, x+2y^3}, \ref{True complexity of x, x+Q(y), x+2Q(y), x+R(y), x+2R(y)}, \ref{True complexity of x, x+y, x+2y, x+y^2} and \ref{True complexity of x, x+y, ..., x+(m-1)y, x+y^d} can be restated as follows.
\begin{theorem}\label{corrolary for true complexity}
~\
\begin{enumerate}
    \item The configuration $x,\; x+Q(y),\; x+R(y),\; x+Q(y)+R(y)$ has true complexity 1 for any $Q,R\in\ZZ[y]$ of zero constant terms satisfying $1\leqslant \deg Q<\deg R$.
    \item The configuration $x,\; x+Q(y),\; x+2Q(y),\; x+R(y),\; x+2R(y)$ has true complexity 1 for any $Q,R\in\ZZ[y]$ of zero constant terms satisfying $1\leqslant \deg Q<(\deg R)/2$.
    \item The configuration $x,\; x+y,\; ..., x+(m-1)y,\; x+y^{d}$ has true complexity $m-1$ at $i\in\{0, 1, ...,m-1\}$ and $\frac{m-1}{d}$ at $i = m$ whenever $2\leqslant d\leqslant m-1$ and $d\ |\ m-1$.
    \item The configuration $x,\; x+y,\; ..., x+(m-1)y,\; x+y^d$ has true complexity $m-2$ at $i\in\{0, 1, ...,m-1\}$ and $\left\lfloor\frac{m-1}{d}\right\rfloor$ at $i = m$ whenever $2\leqslant d<m-1$ and $d \nmid m-1$.
\end{enumerate}
\end{theorem}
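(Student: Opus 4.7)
The theorem packages Theorems \ref{True complexity of x, x+y, x+y^2, x+y+y^2}--\ref{True complexity of x, x+y, ..., x+(m-1)y, x+y^d} into the terminology of Definition \ref{true complexity}. Because that definition demands the \emph{smallest} $s$ for which $U^{s+1}$ controls the counting operator at the index in question, each assertion $\mathrm{TC}(\vec{P},i)=s$ splits into an upper bound (that $U^{s+1}$ controls) and a matching lower bound (that $U^{s}$ does not). The upper bounds are mechanical restatements of the cited theorems, with the role of ``the'' $U^2$-small function played by each index in turn: part (i) comes from Theorem \ref{True complexity of x, x+Q(y), x+R(y), x+Q(y)+R(y)}, part (ii) from Theorem \ref{True complexity of x, x+Q(y), x+2Q(y), x+R(y), x+2R(y)}, and parts (iii)--(iv) from Theorem \ref{True complexity of x, x+y, ..., x+(m-1)y, x+y^d}, whose statement already distinguishes the admissible Gowers norm at the linear and nonlinear indices.

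For the lower bounds I would use polynomial phase functions. To show $\mathrm{TC}(\vec{P},i)\geqslant s$ it suffices to exhibit $1$-bounded $f_0,\dots,f_{t-1}:\FF_p\to\CC$ with the counting operator bounded below by a positive absolute constant while $\|f_i\|_{U^{s}}=o_{p\to\infty}(1)$. Taking $f_j(x)=e_p(g_j(x))$ for polynomials $g_j$ reduces this to finding a polynomial identity $\sum_{j}g_j(P_j(x,y))\equiv 0$ in $\FF_p[x,y]$ with $\deg g_i\geqslant s$ and leading coefficient nonzero mod $p$; the counting operator is then identically $1$, and a direct computation using the definition (\ref{Gowers norm}) in terms of iterated finite differences gives $\|e_p(g_i)\|_{U^{s}}=O(p^{-1/2^{s}})=o(1)$. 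For parts (i) and (ii) a linear relation suffices: the identity $x-(x+Q(y))-(x+R(y))+(x+Q(y)+R(y))\equiv 0$ handles every index of part (i), and the relations with coefficient vectors $(1,-2,1,0,0)$ and $(1,0,0,-2,1)$, together with their index-isolating variants, handle every index of part (ii).

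Parts (iii) and (iv) require a ``quasi-homogeneous'' identity. I would take $g_j(z)=(-1)^{m-1-j}\binom{m-1}{j}z^{k}+(\text{correction})$ for $j<m$ and $g_m(z)=c_m z^{\lfloor k/d\rfloor}$, choosing $k=m-1$ when $d\mid m-1$ (for part (iii)) and $k=m-2$ otherwise (for part (iv)). The top-degree cancellation then follows from the finite-difference identity $\sum_{j=0}^{m-1}(-1)^{m-1-j}\binom{m-1}{j}j^{\ell}$, which equals $(m-1)!$ at $\ell=m-1$ and $0$ for $0\leqslant\ell\leqslant m-2$; the single surviving monomial $y^{m-1}$ on the linear side is then matched against the $y^{m-1}$ coefficient in $c_m(x+y^d)^{\lfloor k/d\rfloor}$, pinning down $c_m$. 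The main obstacle is bookkeeping: each lower-degree monomial $x^{\lfloor k/d\rfloor-r}y^{dr}$ with $r<\lfloor k/d\rfloor$ produced by $g_m$ must also be cancelled, which I would achieve by adding to the $g_j$'s corrections of the specific degree $k'=\lfloor k/d\rfloor+r(d-1)\leqslant m-d$. Existence of such corrections reduces to the surjectivity of the Vandermonde evaluation map $(a_j)_{j=0}^{m-1}\mapsto(\sum_{j=0}^{m-1}a_j j^{\ell})_{\ell=0}^{k'}$ over $\FF_p$, which holds whenever $p>m-1$ because $k'+1\leqslant m-d+1\leqslant m$ and the Vandermonde matrix on $\{0,1,\dots,m-1\}$ has full row rank. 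The entire lower-bound argument is algebraic and self-contained, so I would present it as a short appendix separate from the analytic core of the paper.
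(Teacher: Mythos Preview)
Your overall strategy is the paper's own: upper bounds come from Theorems \ref{True complexity of x, x+Q(y), x+R(y), x+Q(y)+R(y)}, \ref{True complexity of x, x+Q(y), x+2Q(y), x+R(y), x+2R(y)} and \ref{True complexity of x, x+y, ..., x+(m-1)y, x+y^d}, and lower bounds come from exhibiting algebraic relations and feeding them into polynomial phase functions. The paper packages the latter as Theorem \ref{lower bound for true complexity} and then simply says ``Theorem \ref{corrolary for true complexity} follows from combining Theorems \ref{True complexity of x, x+y, x+y^2, x+y+y^2}--\ref{True complexity of x, x+y, ..., x+(m-1)y, x+y^d} with Theorem \ref{lower bound for true complexity} and remarks above it'', the remarks being the explicit relations listed just before Conjecture \ref{conjecture for true complexity}. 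So you are not doing anything different in spirit; you are just unpacking what the paper compresses into one sentence.

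There is, however, a genuine slip in your construction for part (iv). With $k=m-2$ and leading coefficients $(-1)^{m-1-j}\binom{m-1}{j}$, the sum $\sum_{j=0}^{m-1}(-1)^{m-1-j}\binom{m-1}{j}(x+jy)^{m-2}$ is the $(m-1)$-th finite difference of a polynomial of degree $m-2$ in $j$, hence identically zero. There is no ``single surviving monomial $y^{m-1}$'' to match against $c_m(x+y^d)^{\lfloor k/d\rfloor}$; your own identity $\sum_j(-1)^{m-1-j}\binom{m-1}{j}j^{\ell}=0$ for $\ell\leqslant m-2$ says exactly this. Consequently the relation you describe forces $c_m=0$ and gives no information at index $m$. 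The fix, which is what the paper actually records, is to use \emph{two} relations in case (iv): the degree-$(m-2)$ relation among the linear terms alone (your $k=m-2$ identity with $g_m=0$) handles the lower bound at $i\in\{0,\dots,m-1\}$, while the separate relation
\[
Q_0(x)+Q_1(x+y)+\cdots+Q_{m-1}(x+(m-1)y)=(x+y^d)^{\lfloor (m-1)/d\rfloor},
\]
with $\deg Q_j=d\lfloor(m-1)/d\rfloor\leqslant m-2$, handles the lower bound at $i=m$. Your Vandermonde surjectivity remark is exactly what guarantees the existence of such $Q_j$, so the repair is painless; you just need to stop trying to force a single relation to do both jobs.
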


True complexity turns out to be intimately connected with the algebraic relations between the terms of a polynomial progression. We first state the following definition.
\begin{definition}[Algebraic independence]
Let $\vec{P}=(P_1, ..., P_t)\in\QQ[\textbf{x}]^t$ be an integral polynomial map and fix $1 \leqslant i \leqslant t$. The progression $\vec{P}$ is \emph{algebraically independent of degree $s+1$ at $i$} if, whenever we have
\begin{align*}
    Q_1(P_1(\textbf{x}))+ ... + Q_t(P_t(\textbf{x})) = 0,
\end{align*}
for some $Q_1, ..., Q_t\in\ZZ[y]$, the polynomial $Q_i$ has degree at most $s$. We moreover say that $\vec{P}$ is \emph{algebraically independent of degree $s+1$} if it is algebraically independent of degree $s+1$ at $i$ for all $1\leqslant i\leqslant t$.
\end{definition}
\begin{conjecture}[Conjecture for true complexity]\label{conjecture for true complexity}
Let $\vec{P}=(P_1, ..., P_t)\in\QQ[\textbf{x}]^t$ be a Gowers controllable integral polynomial map and fix $1 \leqslant i \leqslant t$. The true complexity of $\vec{P}$ at $i$ is the smallest natural number $s$ for which $\vec{P}$ is algebraically independent of degree $s+1$ at $i$.
\end{conjecture}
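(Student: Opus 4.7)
Let $s^\star$ denote the smallest natural number for which $\vec{P}$ is algebraically independent of degree $s^\star+1$ at $i$. My plan is to prove both inequalities $\text{true complexity at }i\geqslant s^\star$ and $\text{true complexity at }i\leqslant s^\star$.

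For the lower bound, minimality of $s^\star$ supplies a nontrivial relation $Q_1(P_1(\textbf{x}))+\cdots+Q_t(P_t(\textbf{x}))=0$ with $\deg Q_i\geqslant s^\star$. Taking $f_j(x):=e_p(\alpha Q_j(x))$ for a generic $\alpha\in\FF_p^\times$ forces $\prod_j f_j(P_j(\textbf{x}))\equiv 1$, so the counting operator is identically $1$, while $f_i$ is a polynomial phase of degree $\geqslant s^\star$; a standard Weyl-type computation of $s^\star$ iterated discrete derivatives gives $\|f_i\|_{U^{s^\star}}=O(p^{-c})$ for some $c>0$. This rules out control by $U^{s^\star}$.

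For the upper bound the plan is to run the nilpotent higher-order Fourier machinery along the lines of the paper's special cases. Pick an auxiliary degree large enough that Gowers controllability provides a $U^{s+1}$ estimate, and apply the arithmetic regularity lemma (Theorem~\ref{regularity lemma}) to write $f_j = f_j^{\mathrm{nil}}+f_j^{\mathrm{sml}}+f_j^{\mathrm{unf}}$, where $f_j^{\mathrm{nil}}=F_j(g_j(\cdot)\Gamma_j)$ is an $M$-Lipschitz function of an $A$-irrational polynomial sequence on a filtered nilmanifold $G_j/\Gamma_j$ of complexity $O_M(1)$, $f_j^{\mathrm{sml}}$ is $L^2$-small, and $f_j^{\mathrm{unf}}$ is $U^{s+1}$-small. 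Gowers controllability kills every term carrying an $f_j^{\mathrm{unf}}$ factor and a routine Cauchy--Schwarz absorbs $f_j^{\mathrm{sml}}$, leaving the purely structured average $\EE_{\textbf{x}}\prod_j F_j(g_j(P_j(\textbf{x}))\Gamma_j)$. Assuming a general-progression analogue of Theorem~\ref{equidistribution theorem in the intro}, this equals $\int_{G^P/\Gamma^P}F\,d\mu$ up to $o_{A\to\infty,M}(1)$, where $G^P/\Gamma^P\subseteq G^t/\Gamma^t$ is the Leibman nilmanifold of $\vec{P}$ and $F(z_1,\dots,z_t):=\prod_j F_j(z_j)$. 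A vertical-character Fourier expansion on each $G_j/\Gamma_j$ reduces the integral to a sum over tuples of characters whose restrictions to $G^P$ are trivial; by the identification of the dual of $G^P$, such a tuple corresponds to an integer-coefficient relation $\sum_j Q_j(P_j)=0$, and algebraic independence of degree $s^\star+1$ at $i$ forces $\deg Q_i\leqslant s^\star$. The $i$-th coordinate of each surviving mode is therefore a polynomial phase of degree at most $s^\star$, whose correlation with $f_i^{\mathrm{nil}}$ is $O(\|f_i\|_{U^{s^\star+1}}^c)$ via a standard Gowers--Cauchy--Schwarz; summing over the finitely many surviving modes completes the upper bound.

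The central obstacle is exactly the one the author flags: the equidistribution step on $G^P/\Gamma^P$ is known to fail in general, as witnessed by the inhomogeneous algebraic relation~(\ref{quadratic relation satisfied by x, x+y, x+2y, x+y^2}) satisfied by $x,\,x+y,\,x+2y,\,x+y^2$, and it also requires the filtration condition (Definition~\ref{filtration condition}). A proof of the full conjecture must either enlarge $G^P/\Gamma^P$ to record inhomogeneous relations, or homogenize $\vec{P}$ via Cauchy--Schwarz, in the spirit of Theorems~\ref{True complexity of x, x+y, x+2y, x+y^2} and~\ref{True complexity of x, x+y, ..., x+(m-1)y, x+y^d}, before invoking equidistribution. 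I expect the construction of the correct filtered Leibman nilmanifold in full generality, together with the matching quantitative equidistribution statement, to be the genuine technical heart of the problem.
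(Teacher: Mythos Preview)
The statement is a \emph{conjecture}; the paper does not prove it in general. What the paper does establish is the lower bound (Theorem~\ref{lower bound for true complexity}) and the upper bound under the additional hypothesis that $\vec{P}$ equidistributes (Theorem~\ref{true complexity for equidistributing progressions}), together with ad hoc arguments for the family in Section~\ref{section on x, ..., x+(m-1)y, x+y^d}. Your proposal reflects this state of affairs accurately: your lower bound is essentially the paper's proof of Theorem~\ref{lower bound for true complexity}, and your final paragraph correctly identifies the equidistribution step as the open gap. So as a ``proof'' of the full conjecture the proposal is, by its own admission, incomplete --- just as the paper is.

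That said, two points in your upper-bound sketch diverge from the paper's argument in ways worth noting. First, you apply the regularity lemma with a separate nilmanifold $G_j/\Gamma_j$ and sequence $g_j$ for each $j$; the paper's Lemma~\ref{regularity lemma} is specifically a \emph{simultaneous} version that produces a single $g$ on a single $G/\Gamma$ for all $j$, and this is essential for the equidistribution input (one needs $g^P$ to be a polynomial sequence on a fixed $G^P$). Second, your mechanism for exploiting algebraic independence --- a vertical-character expansion and an identification of the dual of $G^P$ with integer relations $\sum_j Q_j(P_j)=0$ --- is not what the paper does and is not obviously correct as stated: characters of $G^P$ do not in general correspond to such relations. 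The paper's route (Theorem~\ref{true complexity for equidistributing progressions}) is more direct: algebraic independence of degree $s^\star+1$ at $i$ forces the basis vector $\vec{e}_i$ into $\Q_{s^\star+1,1}$, hence the subgroup $\{1\}^{i-1}\times G_{s^\star+1}\times\{1\}^{t-i}$ sits inside $G^P$, and averaging the integral over this normal subgroup replaces $F_i$ by a $G_{s^\star+1}$-invariant function, so that the $i$-th factor becomes a degree-$\leqslant s^\star$ nilsequence, controlled by $\|f_i\|_{U^{s^\star+1}}$ via the converse to the inverse theorem.
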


Theorems \ref{True complexity of x, x+y, x+y^2, x+y+y^2}, \ref{True complexity of x, x+Q(y), x+R(y), x+Q(y)+R(y)}, \ref{True complexity of x, x+y, x+2y, x+y^3, x+2y^3}, \ref{True complexity of x, x+Q(y), x+2Q(y), x+R(y), x+2R(y)}, \ref{True complexity of x, x+y, x+2y, x+y^2} and \ref{True complexity of x, x+y, ..., x+(m-1)y, x+y^d} confirm  Conjecture \ref{conjecture for true complexity} in special instances. The terms of $x,\; x+y,\; x+y^2,\; x+y+y^2$ satisfy one linear relation (up to scaling) $$x - (x+y) - (x+y^2) + (x+y+y^2) = 0, $$
and so the configuration has complexity 1. Analogously, there is a unique linear relation (up to scaling)
\begin{align*}
    x - (x+Q(y)) - (x+R(y)) + (x+Q(y)+R(y)) = 0
\end{align*}
between the terms of $x, \; x+Q(y),\; x+R(y),\; x+Q(y)+R(y)$ whenever $Q,R\in\ZZ[y]$ have zero constant terms and satisfy $1\leqslant \deg Q<\deg R$. For $x,\; x+y,\; x+2y,\; x+y^3,\; x+2y^3$, there are two linearly independent relations
\begin{align*}
    x - 2(x+y) + (x+2y) = 0 \quad {\rm{and}} \quad x - 2(x+y^3) - (x+2y^3)=0,
\end{align*}
and similarly for  $x,\; x+Q(y),\; x+2Q(y),\; x+R(y),\; x+2R(y)$ whenever $Q,R\in\ZZ[y]$ have zero constant terms and satisfy $1\leqslant \deg Q<(\deg R)/2$
\begin{align*}
    x - 2(x+Q(y)) + (x+2Q(y)) = 0 \quad {\rm{and}} \quad x - 2(x+R(y)) - (x+2R(y))=0.
\end{align*}
One can check by hand that none of these progressions satisfy a higher order relation. By contrast, the terms of  $x,\; x+y,\; x+2y,\; x+y^2$ satisfy a quadratic relation
\begin{align*}
    \left(\frac{1}{2}x^2+x\right) - \left(x+y\right)^2 + \frac{1}{2}\left(x+2y\right)^2 - (x+y^2) = 0
\end{align*}
in addition to the linear relation $x - 2(x+y) + (x+2y) = 0$, which explains why this configuration has true complexity 2 at $i = 0, 1, 2$ and 1 at $i=3$. For $x,\; x+y,\; ...,\; x+(m-1)y,\; x+y^d$ with $2\leqslant d\leqslant m-1$, there exist polynomials $Q_0,\; ...,\; Q_{m-1}$ of degree $d\left\lfloor\frac{m-1}{d}\right\rfloor$ satisfying
\begin{align*}
    Q_0(x) + Q_1(x+y) + ... + Q_{m-1}(x+(m-1)y) = (x+y^d)^{\left\lfloor\frac{m-1}{d}\right\rfloor},
\end{align*}
in addition to lower-degree relations and an algebraic relation of degree $m-2$ between the terms of $x,\; x+y,\; ...,\; x+(m-1)y$.

The lower bound in Conjecture \ref{conjecture for true complexity} is straightforward to settle. The difficulty lies in proving the upper bound.
\begin{theorem}[Lower bound for true complexity]\label{lower bound for true complexity}
Let $\vec{P}=(P_1, ..., P_t)\in\QQ[\textbf{x}]^t$ be an integral polynomial map and fix $1 \leqslant i \leqslant t$. Suppose that $\vec{P}$ is not algebraically independent of degree $s$ at index $i$. Then the true complexity of $\vec{P}$ at $i$ is at least $s$.
\end{theorem}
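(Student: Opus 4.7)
The plan is to produce explicit witness functions certifying that $U^s$ fails to control the counting operator at index $i$. By hypothesis, I may fix polynomials $Q_1, \ldots, Q_t \in \ZZ[y]$ satisfying the identity $\sum_{j=1}^t Q_j(P_j(\textbf{x})) = 0$ in $\QQ[\textbf{x}]$ with $\deg Q_i \geq s$. For each large prime $p$, define $f_j : \FF_p \to \CC$ by $f_j(y) := e_p(Q_j(y))$ (reducing the coefficients of $Q_j$ modulo $p$). Each $f_j$ is trivially $1$-bounded, and the algebraic relation gives
\begin{equation*}
    \EE_{\textbf{x} \in \FF_p^D} \prod_{j=1}^{t} f_j(P_j(\textbf{x})) \;=\; \EE_{\textbf{x}} e_p\!\left(\sum_{j=1}^t Q_j(P_j(\textbf{x}))\right) \;=\; 1,
\end{equation*}
so the counting operator is bounded away from zero uniformly in $p$.

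The main step is then to verify that $\|f_i\|_{U^s} = o_{p\to\infty}(1)$, after which no choice of $\delta > 0$ can force $U^s$-control and the true complexity at $i$ is at least $s$. Writing $d := \deg Q_i \geq s$ with nonzero integer leading coefficient $a_d$, for $p > |a_d|$ the reduction of $Q_i$ modulo $p$ preserves the degree. Unfolding the definition (\ref{Gowers norm}) reduces the task to bounding
\begin{equation*}
    \EE_{x, h_1, \ldots, h_s \in \FF_p} e_p\!\left((-1)^s \Delta_{h_1} \cdots \Delta_{h_s} Q_i(x)\right),
\end{equation*}
where the iterated finite difference is a polynomial of degree $d - s$ in $x$ with leading coefficient $\tfrac{d!}{(d-s)!}\, a_d \, h_1 \cdots h_s$. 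When $d > s$, for every $\underline{h}$ with $h_1 \cdots h_s \not\equiv 0 \pmod p$ the Weil bound forces the inner $x$-average to be $O(p^{-1/2})$, so the full average is $o(1)$. When $d = s$, the iterated difference is the constant $(-1)^s s!\, a_d\, h_1 \cdots h_s$ in $x$, and a routine multilinear exponential-sum evaluation (condition on $h_3, \ldots, h_s$ and carry out the inner sum in $h_1, h_2$) gives a bound of $O(p^{-1})$. Either way $\|f_i\|_{U^s} \to 0$.

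I do not anticipate any substantial obstacle. The only mild subtlety is ensuring that reduction modulo $p$ preserves both the algebraic identity and the degree of $Q_i$, which is handled by taking $p$ larger than $|a_d|$ and any denominators appearing in the $P_j$. Everything else is a standard application of Weyl/Weil-type exponential-sum bounds, which is why the lower bound is ``straightforward to settle'' as remarked in the paper.
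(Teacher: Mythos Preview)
Your proposal is correct and follows essentially the same approach as the paper: both take the witness functions $f_j(x)=e_p(Q_j(x))$, observe that the counting operator equals $1$, and then show $\|f_i\|_{U^s}=o(1)$ by noting that the $s$-fold multiplicative derivative of $f_i$ is $e_p$ of a nonconstant polynomial, to which exponential-sum bounds apply. The only difference is presentational: where the paper invokes ``properties of exponential sums'' in one line, you spell out the two cases $d>s$ (Weil on the $x$-average) and $d=s$ (direct evaluation of the multilinear sum in the $h_j$), which is a perfectly legitimate and slightly more explicit way to reach the same conclusion.
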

\begin{proof}
By assumption, there exists an algebraic relation 
\begin{align*}
    Q_1(P_1(\textbf{x}))+ ... + Q_t(P_t(\textbf{x})) = 0,
\end{align*}
for some $Q_1, ..., Q_t\in\ZZ[y]$, where $Q_i$ has degree at least $s$. Let $f_j(x) = e_p(Q_j(x))$ for each $1\leqslant j\leqslant t$. The functions $f_j$ are clearly 1-bounded. It follows from the properties of additive characters that 
$$\EE_{\textbf{x}\in\FF_p^D}f_1(P_1(\textbf{x})) \cdots f_t(P_t(\textbf{x})) = \EE_{\textbf{x}\in\FF_p^D}e_p(Q_1(P_1(\textbf{x}))+ ... + Q_t(P_t(\textbf{x}))) = 1.$$
To prove the theorem, we want to show that $\|f_i\|_{U^{s}}$ is small, which will imply that the $U^{s}$ norm cannot control the $P_i$ term of the configuration. The definition (\ref{Gowers norm}) of Gowers norms can be restated as
\begin{align}\label{expansion in Gowers norm}
    \|f_i\|_{U^{s}}^{2^{s}}=\EE_{x, {h}_1,..., {h}_{s}\in\FF_p}\Delta_{{h}_1, ...,{h}_{s}}f_i({x}),
\end{align}
where $\Delta_{h}f({x}):=f({x}+{h})\overline{f({x})}$ and $\Delta_{{h}_1, ...,{h}_{s}}=\Delta_{{h}_1}\cdots\Delta_{{h}_{s}}$. Since $Q_i$ has degree at least $s$ and $e_p(\cdot)$ is an additive character, the function $\Delta_{{h}_1, ...,{h}_{s}}f_i({x})$ is of the form $e_p(Q(x, {h}_1, ..., {h}_{s}))$ for a nonconstant polynomial $Q$. By properties of exponential sums, the sum in (\ref{expansion in Gowers norm}) is of size $O_{s}(p^{-c_{s}})$, and so 
\begin{align*}
    \|f_i\|_{U^{s}} \ll_{s} p^{-c_{s}}.
\end{align*}
Thus $U^{s}$ norm does not control the $P_i$ term of the configuration, implying the theorem.
\end{proof}
The constants appearing in the proof of Theorem \ref{lower bound for true complexity}, like in all other proofs in the paper, are allowed to depend on the choice of the progression $\vec{P}$. We do not record this dependence so as not to clutter the notation more than necessary.

Theorem \ref{corrolary for true complexity} follows from combining Theorems \ref{True complexity of x, x+y, x+y^2, x+y+y^2}, \ref{True complexity of x, x+Q(y), x+R(y), x+Q(y)+R(y)}, \ref{True complexity of x, x+y, x+2y, x+y^3, x+2y^3}, \ref{True complexity of x, x+Q(y), x+2Q(y), x+R(y), x+2R(y)}, \ref{True complexity of x, x+y, x+2y, x+y^2} and \ref{True complexity of x, x+y, ..., x+(m-1)y, x+y^d} with Theorem \ref{lower bound for true complexity} and remarks above it.

Conjecture \ref{conjecture for true complexity} has so far been proved in a number of special cases. The inequality (\ref{Gowers norms control arithmetic progression}), together with the fact that arithmetic progressions of length $t$ satisfy an algebraic relation of degree $t-2$, proves it for arithmetic progressions. The work by Green and Tao \cite{green_tao_2010a} settles it for all linear configurations satisfying a technical condition called flag condition\footnote{The published version of \cite{green_tao_2010a} claims to prove Conjecture \ref{conjecture for true complexity} for all linear configurations. However, it has been announced in November 2020 that there is an error in Green and Tao's argument, and that the argument only works if a linear form satisfies the flag condition. See \cite{tao_2020} for discussion.}, with certain cases having been previously proved by Gowers and Wolf \cite{gowers_wolf_2010, gowers_wolf_2011a, gowers_wolf_2011b, gowers_wolf_2011c}. Green and Tao's results are all but ineffective while the work of Gowers and Wolf gives some quantitative bounds. The best bounds, of polynomial type, have been obtained by Manners for linear configurations of length 6 in 3 variables by a skillful use of the Cauchy-Schwarz inequality \cite{manners_2018}. 

As far as nonlinear configurations are concerned, the work of Peluse \cite{peluse_2019b} proves Conjecture \ref{conjecture for true complexity} in a quantitative manner for $x,\; x+P_1(y),\; ...,\; x+P_{t-1}(y)$ whenever the polynomials $P_1$, ..., $P_{t-1}$ are linearly independent; it thus settles the complexity 0 case. For configurations of the form
\begin{align*}
    x,\; x+y,\; ...,\; x+(m-1)y,\; x+P_{m}(y),\; ...,\; x+P_{m+k-1}(y),
\end{align*}
where any nontrivial linear combination of $P_m,\; ...,\; P_{m+k-1}$ has degree at least $m$, the conjecture has been settled quantitatively in \cite{kuca_2019}; similarly for the systems of linear forms with variables being higher powers.

In the works on the true complexity of linear forms \cite{gowers_wolf_2010, gowers_wolf_2011a, gowers_wolf_2011b, gowers_wolf_2011c, green_tao_2010a}, the authors only look at the case $f_1 = ... = f_t$ and care about the degree $s$ such that $U^{s+1}$ controls all the terms of the configuration. For more general polynomial progressions, it however makes sense to allow these functions to be different. Since the terms of the progression may have different degrees, the polynomials $Q_1, ..., Q_t$ appearing in Conjecture \ref{conjecture for true complexity} may have different degrees as well. As a result, it is reasonable to allow that different terms of the progression be controlled by different Gowers norms. 

Like many other questions surrounding Szemer\'{e}di theorem, finding true complexity has a natural analogue in ergodic theory: the problem of determining the smallest characteristic factor. 
A \emph{characteristic factor} of a measure-preserving dynamical system $(X, \mathcal{X}, \mu, T)$ with respect to (\ref{progressions of a special form}) is a factor $\mathcal{Y}$ of $\mathcal{X}$ such that if $f_0, ..., f_{t-1}\in L^\infty(\mu)$ and the projection of $f_i\in L^\infty(\mu)$ onto $\mathcal{Y}$ satisfies $\EE(f_i|\mathcal{Y})=0$ for some $0\leqslant i\leqslant t-1$, then the product 
\begin{align*}
    f_0(x) f_1(T^{P_1(n)}x) \cdots f_{t-1}(T^{P_{t-1}(n)}x)
\end{align*}
converges to 0 in $L^2(\mu)$. Host and Kra prove in \cite{host_kra_2005a} that there exists a sequence of factors $(\mathcal{Z}_k)_{k\in\NN_+}$ such that $\mathcal{Z}_{t-2}$ is characteristic for arithmetic progressions of length $t$ for $t\geqslant 3$. They show furthermore that each $\mathcal{Z}_k$ is an inverse limit of $k$-step nilsystems; the theory of these factors is fully set up in \cite{host_kra_2018}. In \cite{host_kra_2005b}, it has been shown that for each progression (\ref{progressions of a special form}), there exists $k\in\NN$ such that $\mathcal{Z}_k$ is characteristic, which corresponds to the result of Peluse (Proposition 2.2 of \cite{peluse_2019b}) that (\ref{progressions of a special form}) is controlled by some Gowers norm. Further works \cite{frantzikinakis_kra_2005}, \cite{frantzikinakis_kra_2006}, \cite{frantzikinakis_2008} give the smallest characteristic factors for some specific families of progressions. In particular, Theorems \ref{True complexity of x, x+y, x+y^2, x+y+y^2} and \ref{True complexity of x, x+y, x+2y, x+y^2} are combinatorial, finite-field analogues of the results from \cite{frantzikinakis_2008} that the Kronecker factor $\mathcal{K}$ and the affine factor $\mathcal{A}_2$ are characteristic for $x, \; x+y, \; x+y^2, \; x+y+y^2$ and $x,\; x+y,\; x+2y,\; x+y^2$ respectively. Finally, \cite{leibman_2007} has resolved the question of finding the smallest characteristic factor in the case where the underlying dynamical system is a nilsystem.

\subsection*{Outline of the paper}
We start the paper by presenting necessary definitions and results from higher order Fourier analysis in Section \ref{section on higher order Fourier analysis}. We then proceed in Section \ref{section on Leibman nilmanifold} to define Leibman group for a polynomial progression and describe its properties. In particular, we state a filtration condition on a polynomial progression (Definition \ref{filtration condition}) which makes the corresponding Leibman nilmanifold easier to analyze, and which is satisfied by all the progressions that we are interested in. In Section \ref{section on x, x+y, x+y^2, x+y+y^2}, we deduce Theorem \ref{equidistribution theorem in the intro} for the progression $x, \; x+y,\; x+y^2,\; x+y+y^2$
so as to illustrate our arguments with a specific example. In Sections \ref{section on x, x+Q(y), x+R(y), x+Q(y)+R(y)} and \ref{section on x, x+Q(y), x+2Q(y), x+R(y), x+2R(y)}, we prove Theorem \ref{equidistribution theorem in the intro} for two general families of progressions for which the theorem is stated. We then show in Section \ref{section on linear forms} how our definition of Leibman group extends the definition of Leibman group for linear forms presented in \cite{green_tao_2010a}. 

Having showed that Leibman nilmanifolds are the right thing to look at, we prove in Section \ref{section on true complexity for equidistributing progressions} that Conjecture \ref{conjecture for true complexity} on the connection between true complexity and algebraic relations holds for all progressions that satisfy a variant of Theorem \ref{equidistribution theorem in the intro}. In Section \ref{section on the asymptotic count}, we deduce Theorem \ref{asymptotic count for systems of complexity 1}. 

Our method, however, has certain limitations which we outline in the subsequent two sections. In Section \ref{section on a progression not satisfying filtration condition}, we give an example of a configuration which does not satisfy the filtration condition. In Section \ref{section on failure of x, x+y, x+2y, x+y^2}, we show that our method fails for $x,\; x+y,\; x+2y,\; x+y^2$, which does not equidistribute on the corresponding Leibman nilmanifold. To handle this progression, we therefore develop a different method in Section \ref{section on x, ..., x+(m-1)y, x+y^d}, which proves true complexity for $x,\; x+y,\; x+2y,\; x+y^2$ as a result of a more general Theorem \ref{True complexity of x, x+y, ..., x+(m-1)y, x+y^d}.

\subsection*{Acknowledgements}
The author would like to thank Sean Prendiville for suggesting the problem, directing to relevant literature and help with editing the paper. The author is also indebted to Tuomas Sahlsten for comments on an earlier version of the paper, and to the anonymous referee for helpful suggestions.


\section{Higher order Fourier analysis}\label{section on higher order Fourier analysis}
To understand operators of the form
\begin{align*}
    \EE_{\textbf{x}\in\FF_p^D}f_1(P_1(\textbf{x}))\cdots f_t(P_t(\textbf{x})),
\end{align*}
we need to understand how certain polynomial sequences distribute on nilmanifolds. We use this section to define necessary concepts from higher order Fourier analysis, such as the notions of filtered nilmanifold and polynomial sequence. All of these definitions have appeared in \cite{green_tao_2008, green_tao_2010a, green_tao_2012, candela_sisask_2012}. We then state some classical results that we shall need in the paper.
\begin{definition}[Filtrations]\label{filtration}
A \emph{filtration $G_\bullet=(G_i)_{i=0}^\infty$ of degree at most} $s$ on a group $G$ is a sequence of subgroups
\begin{align*}
    G=G_0 = G_1 \supseteq G_2 \supseteq ... \supseteq G_s \supseteq G_{s+1} = G_{s+2} =... = 1
\end{align*}
satisfying $[G_i, G_j]\subseteq G_{i+j}$ for all $i,j\geqslant 0$.
\end{definition}

A standard example of a filtration is the \emph{lower central series filtration} defined by setting $G_i = [G,G_{i-1}]$ for $i\geqslant 2.$

\begin{definition}[Filtered nilmanifolds]\label{nilmanifolds}
A \emph{filtered nilmanifold} $G/\Gamma$ of degree $s$ and complexity $M$ consists of the following data:
\begin{enumerate}
    \item a quotient $G/\Gamma$, where $G$ is a connected, simply connected, nilpotent Lie group of dimension $m\leqslant M$ with identity 1, and $\Gamma\subseteq G$ is a cocompact lattice;
    \item a filtration $G_\bullet$ of degree at most $s\leqslant M$ such that $G_i$ are closed and connected subgroups of $G$ and $\Gamma_i:=\Gamma\cap G_i$ is a cocompact lattice in $G_i$ for each $i\in\NN_+$;
    \item an $M$-rational Mal'cev basis $\chi=\{X_1, ..., X_m\}$ adapted to $G_\bullet$.
\end{enumerate}
\end{definition}
Mal'cev basis that appears in Definition \ref{nilmanifolds} is a vector space basis for the Lie algebra $\mathfrak{g}$ of $G$ that respects the filtration $G_\bullet$. Its utility comes from the fact that it provides a natural coordinate system on $G$. The definition and properties of Mal'cev basis are discussed in details in  \cite{green_tao_2012}. The important consequence for us is that it induces a \emph{Mal'cev coordinate map}, i.e. a diffeomorphism 
    \begin{align*}
        \psi: G &\to\RR^m\\
        g &\mapsto (t_1, ..., t_m)
    \end{align*}
satisfying $\psi(\Gamma)=\ZZ^m$ and $\psi(G_i)=\{0\}^{m-m_i}\times\RR^{m_i}$ for all $1\leqslant i\leqslant s$, where $m_i = \dim G_i$.

Nilmanifolds turn out to be a proper framework to define polynomial sequences, which can be thought of as generalizations of polynomials on the torus.
\begin{definition}[Polynomial sequences]\label{polynomial sequences}
Let $D\in\NN_+$. A \emph{polynomial sequence on $G$ adapted to the filtration} $G_\bullet$ is a map $g:\ZZ^D\to G$ satisfying $\partial_{{\textbf{h}_1}, ..., {\textbf{h}_i}}g\in G_i$ for each $\textbf{h}_1, ..., \textbf{h}_i\in\ZZ^D$, where $\partial_{\textbf{h}} g(\textbf{n}):=g(\textbf{n}+\textbf{h})g(\textbf{n})^{-1}$ and $\partial_{\textbf{h}_1, ...,\textbf{h}_i}=\partial_{\textbf{h}_1} ...\partial_{\textbf{h}_i}$ for $i>1$. Polynomial sequences adapted to $G_\bullet$ form a group denoted as $\poly(\ZZ^D,G_\bullet)$. The \emph{degree} of $g$ is the degree of the filtration $G_\bullet$. 
\end{definition}

In this paper, we shall primarily be interested in the polynomial sequences arising in problems over finite fields. These sequences are periodic in the sense made clear by the following definition.
\begin{definition}[Periodic sequences]\label{periodic sequences}
Let $D\in\NN_+$. A sequence $g\in\poly(\ZZ^D,G_\bullet)$ is $p$-periodic if $g(\textbf{n}_1+p \textbf{n}_2)\Gamma=g(\textbf{n}_1)\Gamma$ for all $\textbf{n}_1, \textbf{n}_2\in\ZZ^D$. In particular, for a $p$-periodic sequence ${g\in\poly(\ZZ^D,G_\bullet)}$, the map $\textbf{n}\mapsto g(\textbf{n})\Gamma$ can be viewed as a function from $\FF_p^D$ to $G/\Gamma$.
\end{definition}

It turns out that polynomial sequences can be written in a more explicit manner.
\begin{lemma}[Taylor expansion, Lemma A.1 of \cite{green_tao_2010a}]\label{taylor expansion}
Let $D\in\NN_+$. A sequence $g$ is in $\poly(\ZZ^D,G_\bullet)$ if and only if for each multiindex $\textbf{i}=(i_1, ..., i_D)$, there exists $g_{\textbf{i}}\in G_{|\textbf{i}|}$ satisfying
\begin{align}\label{Taylor expansion as a product}
    g(\textbf{n})=\prod_{\textbf{i}}g_{\textbf{i}}^{{\textbf{n}}\choose{\textbf{i}}}
\end{align}
for all $\textbf{n}\in\ZZ^D$. The representation in (\ref{Taylor expansion as a product}) is unique.
The binomial coefficients are defined as
\begin{align*}
    {{\textbf{n}}\choose{\textbf{i}}} := {{n_1}\choose{i_1}}...{{n_D}\choose{i_D}}\quad {\rm{and}}\quad {{n_k}\choose{i_k}}=\frac{n(n-1)...(n-i_k+1)}{i_k!},
\end{align*}
and the size of the vector $\textbf{i}\in\NN^D$ is $|\textbf{i}|:=i_1+...+i_D$.
\end{lemma}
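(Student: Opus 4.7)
The plan is to handle both implications by induction on the degree $s$ of the filtration $G_\bullet$. The base case $s=1$, where $G$ is abelian, is the classical Newton--Gregory finite-difference Taylor theorem: the condition $\partial_{\textbf{h}_1,\textbf{h}_2}g=1$ forces $g$ to be affine, and the coefficients $g_{\textbf{0}}=g(\textbf{0})$ and $g_{\textbf{e}_k}=g(\textbf{e}_k)g(\textbf{0})^{-1}$ are read off directly.

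For the \emph{if} direction, I would first verify that a single factor $g_{\textbf{i}}^{\binom{\textbf{n}}{\textbf{i}}}$ with $g_{\textbf{i}}\in G_{|\textbf{i}|}$ lies in $\poly(\ZZ^D,G_\bullet)$. A direct calculation, using that all integer powers of a fixed element commute, gives $\partial_{\textbf{h}_1,\ldots,\textbf{h}_j}g_{\textbf{i}}^{\binom{\textbf{n}}{\textbf{i}}}=g_{\textbf{i}}^{\partial_{\textbf{h}_1,\ldots,\textbf{h}_j}\binom{\textbf{n}}{\textbf{i}}}$; since the exponent is a polynomial of degree $|\textbf{i}|-j$ that vanishes once $j>|\textbf{i}|$, this element is a power of $g_{\textbf{i}}\in G_{|\textbf{i}|}\subseteq G_j$ when $j\leqslant |\textbf{i}|$, and is the identity otherwise. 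I would then invoke the Lazard--Leibman closure theorem, stating that $\poly(\ZZ^D,G_\bullet)$ is a group under pointwise multiplication, to conclude that the full product over $\textbf{i}$ lies in $\poly(\ZZ^D,G_\bullet)$.

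For the \emph{only if} direction, assume the statement for filtrations of degree at most $s-1$. Given $g\in\poly(\ZZ^D,G_\bullet)$ with $G_\bullet$ of degree $s$, observe that $G_s$ is central in $G$ since $[G,G_s]\subseteq G_{s+1}=1$. Projecting to $\bar{G}=G/G_s$ endowed with the induced filtration of degree $s-1$, the image $\bar{g}$ is a polynomial sequence admitting by induction a unique Taylor expansion $\bar{g}(\textbf{n})=\prod_{|\textbf{i}|<s}\bar{g}_{\textbf{i}}^{\binom{\textbf{n}}{\textbf{i}}}$. Lift each $\bar{g}_{\textbf{i}}$ to some $g_{\textbf{i}}\in G_{|\textbf{i}|}$ and consider the residual
\[
\tilde{g}(\textbf{n})=\Bigl(\prod_{|\textbf{i}|<s}g_{\textbf{i}}^{\binom{\textbf{n}}{\textbf{i}}}\Bigr)^{-1}g(\textbf{n}).
\]
By the \emph{if} direction and Lazard--Leibman closure, $\tilde{g}\in\poly(\ZZ^D,G_\bullet)$, and by construction $\tilde{g}$ takes values in $G_s$. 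Since $G_s$ is abelian and $\partial_{\textbf{h}_1,\ldots,\textbf{h}_{s+1}}\tilde{g}\in G_{s+1}=1$, the map $\tilde{g}$ is a polynomial of degree at most $s$ into the abelian group $G_s$, and the abelian base case produces a Taylor expansion $\tilde{g}(\textbf{n})=\prod_{|\textbf{i}|\leqslant s}\tilde{g}_{\textbf{i}}^{\binom{\textbf{n}}{\textbf{i}}}$ with $\tilde{g}_{\textbf{i}}\in G_s\subseteq G_{|\textbf{i}|}$. Using the centrality of $G_s$ to freely reorder, I would absorb each $\tilde{g}_{\textbf{i}}$ with $|\textbf{i}|<s$ into the corresponding lift $g_{\textbf{i}}$ (which is legal because $g_{\textbf{i}}$ commutes with every central element) to obtain a Taylor expansion of $g$ of the required form.

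Uniqueness is a short induction on $|\textbf{i}|$: evaluating the putative expansion at $\textbf{n}=\textbf{0}$ pins down $g_{\textbf{0}}$, and once all $g_{\textbf{j}}$ with $|\textbf{j}|<k$ are determined, each $g_{\textbf{i}}$ with $|\textbf{i}|=k$ is recovered by evaluating $g$ against the inverse of the lower-order partial product at an $\textbf{n}$ where $\binom{\textbf{n}}{\textbf{j}}$ vanishes for $|\textbf{j}|>k$ and $|\textbf{j}|\neq\textbf{i}$. The main obstacle in the whole argument is the non-commutativity of $G$: it is the centrality of $G_s$ within $G$ together with the filtration property $[G_i,G_j]\subseteq G_{i+j}$, packaged into the Lazard--Leibman closure theorem, that allows one to peel off the top filtration layer as a clean abelian residual without generating uncontrolled commutator corrections.
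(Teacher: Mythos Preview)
The paper does not supply its own proof of this lemma; it is quoted verbatim as Lemma~A.1 of \cite{green_tao_2010a} and used as a black box. So there is nothing in the present paper to compare against.

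Your argument is correct and is essentially the standard one (the same inductive scheme appears in the Green--Tao appendix and in Leibman's original work): peel off the central top layer $G_s$, apply the inductive hypothesis on $G/G_s$, and handle the abelian residual by the classical Newton--Gregory expansion. Two minor points of phrasing: when you invoke ``the abelian base case'' for the residual $\tilde g$, you actually need the abelian statement in degree $s$, not just $s=1$; this is of course still the classical multivariate finite-difference Taylor theorem, but it is worth saying so explicitly. In the uniqueness paragraph you have a small typo (``$|\textbf{j}|\neq\textbf{i}$'' should read ``$\textbf{j}\neq\textbf{i}$''), and the cleanest choice of evaluation point is $\textbf{n}=\textbf{i}$, which kills every $\binom{\textbf{n}}{\textbf{j}}$ with $\textbf{j}\not\leqslant\textbf{i}$ componentwise and leaves $\binom{\textbf{i}}{\textbf{i}}=1$.
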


To examine the distribution of $p$-periodic polynomial sequences on nilmanifolds in a quantitative manner, it is useful to introduce the notion of a nilsequence.

\begin{definition}[Nilsequences]\label{nilsequences}
Suppose $D\in\NN_+$. A function $f:\ZZ^D\to\CC$ is a \emph{nilsequence} of {degree} $s$ and {complexity} $M$ if $f(\textbf{n})=F(g(\textbf{n})\Gamma)$, where $F$ is an $M$-Lipschitz function on a filtered nilmanifold $G/\Gamma$ of degree $s$ and complexity $M$, and $g\in\poly(\ZZ^D,G_\bullet)$. A nilsequence $f$ is $p$-periodic if the underlying polynomial sequence is.
\end{definition}

\begin{definition}[Equidistribution]\label{equidistribution}
Let $D\in\NN_+$ and $\delta>0$. A $p$-periodic sequence $g\in\poly(\ZZ^D,G)$ is $\delta$-\emph{equidistributed} on a nilmanifold $G/\Gamma$ if
\begin{align*}
    \left|\EE_{\textbf{n}\in\FF_p^D}F(g(\textbf{n})\Gamma)-\int_{G/\Gamma}F \right|\leqslant\delta\|F\|_\Lip
\end{align*}
for all Lipschitz functions $F:G/\Gamma\to\CC$, where the integral is taken with respect to the (left-invariant) Haar measure $\mu$ on $G/\Gamma$ normalized so that $\mu(G/\Gamma)=1$.
\end{definition}

It is natural to ask about obstructions to equidistribution. To state them formally, we need the notion of a horizontal character.
\begin{definition}[Horizontal characters]
A \emph{horizontal character} on $G$ is a continuous group homomorphism $\eta:G\to\RR$ such that $\eta(\Gamma)\in\ZZ$. Each horizontal character can be given in the form $\eta(x) = k\cdot\psi(x)$ for some $k\in\ZZ^m$, where $\psi:G\to\RR^m$ is the Mal'cev coordinate map. The \emph{modulus} of $\eta$ is $|\eta|:=|k|=|k_1|+...+|k_m|$.
\end{definition} 
Horizontal characters in fact annihilate $[G,G]\Gamma$ and can be viewed as maps on the quotient $G/[G,G]\Gamma$ which is isomorphic to $(\RR/\ZZ)^{m_{ab}}$ for $m_{ab}=\dim G-\dim[G,G]$.

In Theorem 1.16 of \cite{green_tao_2012}, Green and Tao gave a condition for when a polynomial sequence is close to being equidistributed. We present its periodic version.
\begin{theorem}[Equidistribution theorem for $p$-periodic sequences]\label{equidistribution theorem}
Let  $2< M\leqslant A$ and ${D\in\NN_+}$, and let $G/\Gamma$ be a filtered nilmanifold of complexity $M$. Suppose that the sequence ${g\in\poly(\ZZ^D, G_\bullet)}$ is $p$-periodic. Then at least one of the following holds:
\begin{enumerate}
    \item $(g(\textbf{n})\Gamma)_{n\in\FF_p^D}$ is $A^{-1}$-equidistributed.
    \item There exists a nontrivial horizontal character $\eta$ with $|\eta|\ll_{M,D} A^{C_{M,D}}$ such that $\eta\circ g$ is constant mod $\ZZ$.
\end{enumerate}
\end{theorem}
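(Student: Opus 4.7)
The plan is to reduce this to the general quantitative equidistribution theorem for polynomial sequences on nilmanifolds, namely Theorem 2.9 of \cite{green_tao_2012}, and then use the $p$-periodicity of $g$ to upgrade the approximate smoothness conclusion there to the exact constancy modulo $\ZZ$ required by (2).

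First I would apply that general theorem to the restriction of $g$ to the box $\{0,1,\ldots,p-1\}^D$ with equidistribution parameter of order $A^{-1}$. Its conclusion is a dichotomy: either $(g(\textbf{n})\Gamma)_{\textbf{n}\in[p]^D}$ is $A^{-1}$-equidistributed on $G/\Gamma$, which by $p$-periodicity coincides with the $A^{-1}$-equidistribution over $\FF_p^D$ asserted in (1); or there is a nontrivial horizontal character $\eta$ of modulus $|\eta|\ll_{M,D} A^{O_{M,D}(1)}$ such that the smoothness norm $\|\eta\circ g\|_{C^\infty([p]^D)}$ is at most $A^{O_{M,D}(1)}$. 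Conclusion (1) handles the first alternative, and the remainder of the argument concerns the second.

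In that case, Taylor expansion (Lemma \ref{taylor expansion}) lets me write
\begin{align*}
\eta\circ g(\textbf{n}) \;\equiv\; \sum_{\textbf{i}} \alpha_{\textbf{i}} {\textbf{n}\choose \textbf{i}} \pmod{\ZZ},
\end{align*}
with $\alpha_{\textbf{i}}=\eta(g_{\textbf{i}})\in\RR$. The smoothness bound translates, for each multi-index $\textbf{i}$ with $|\textbf{i}|\geq 1$, into the estimate $\|\alpha_{\textbf{i}}\|_{\RR/\ZZ}\ll_{M,D} p^{-|\textbf{i}|}A^{O_{M,D}(1)}$. Independently, $p$-periodicity of $g\Gamma$ means that $\eta\circ g(\textbf{n}+p\textbf{e}_j)-\eta\circ g(\textbf{n})\in\ZZ$ for every coordinate direction $j$; iterating discrete derivatives in each coordinate and reading off the resulting polynomial identities modulo $\ZZ$, one extracts that every $\alpha_{\textbf{i}}$ with $|\textbf{i}|\geq 1$ is rational with denominator dividing a fixed power of $p$. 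Combining this rationality with the smoothness estimate and taking $p_0$ sufficiently large compared with $A$ (permissible since $M\leq A$ keeps all implicit constants controlled by $M$ and $D$), the only such rationals are the integers, so $\alpha_{\textbf{i}}\in\ZZ$ for all $|\textbf{i}|\geq 1$. Hence $\eta\circ g(\textbf{n})\equiv \alpha_{\textbf{0}}\pmod{\ZZ}$ is constant, which is precisely (2).

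The main obstacle lies in the last reconciliation step: the smoothness bound places each $\alpha_{\textbf{i}}$ within $p^{-|\textbf{i}|}A^{O_{M,D}(1)}$ of an integer, while the periodicity forces $\alpha_{\textbf{i}}$ to lie on a lattice of spacing at most $p^{-|\textbf{i}|}$ times a small explicit constant; verifying that these two pieces of quantitative information force $\alpha_{\textbf{i}}$ to be an honest integer—uniformly in the complexity $M$ and the arity $D$, and for all multi-indices $\textbf{i}$ simultaneously, while still respecting the polynomial bound on $|\eta|$—is the delicate point and is what dictates the choice of $p_0$ and the exponent $C_{M,D}$.
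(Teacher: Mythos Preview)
Your reconciliation step contains a genuine gap, and it is precisely the one you flag as ``the delicate point''. Applying the Green--Tao theorem only on $[p]^D$ gives, for each $|\textbf{i}|\geq 1$, the bound $\|\alpha_{\textbf{i}}\|_{\RR/\ZZ}\ll_{M,D} A^{O_{M,D}(1)}\,p^{-|\textbf{i}|}$. The $p$-periodicity argument you sketch shows (correctly) that $\alpha_{\textbf{i}}$ lies in $p^{-c}\ZZ$ for some $c=c(\textbf{i})$; in fact for $p$ prime larger than the degree one gets $p\,\alpha_{\textbf{i}}\in\ZZ$, so the lattice spacing near an integer is exactly $1/p$. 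But then the smoothness bound only pins $\alpha_{\textbf{i}}$ to within $O_{M,D}(A^{O_{M,D}(1)})$ \emph{lattice steps} of an integer, not to within one step. Concretely, for $|\textbf{i}|=1$ we could have $\alpha_{\textbf{i}}=r/p$ with $r$ any integer of size $\ll A^{O_{M,D}(1)}$, and no choice of $p_0$ large relative to $A$ rules this out: making $p$ bigger shrinks both the smoothness window and the lattice spacing by the same factor. So the step ``the only such rationals are the integers'' simply does not follow. (There is also the secondary issue that the theorem as stated carries no hypothesis $p\geq p_0$, so even a successful version of your argument would prove something weaker.)

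The paper avoids this by exploiting periodicity differently: rather than working on $[p]^D$, one applies the Green--Tao equidistribution theorem on the box $[kp]^D$ for each $k\in\NN_+$. By $p$-periodicity the sequence has the \emph{same} equidistribution quality on $[kp]^D$ as on $[p]^D$, so the same parameter $A$ works for every $k$, and one obtains a horizontal character $\eta_k$ with $|\eta_k|\ll_{M,D} A^{O_{M,D}(1)}$ and $\|\alpha_{\textbf{i}}\|_{\RR/\ZZ}\ll_{M,D} A^{O_{M,D}(1)}(kp)^{-|\textbf{i}|}$. Since there are only boundedly many characters of modulus $\ll_{M,D} A^{O_{M,D}(1)}$, one fixed $\eta$ works for infinitely many $k$; sending $k\to\infty$ along that subsequence forces $\alpha_{\textbf{i}}\in\ZZ$ for every $|\textbf{i}|\geq 1$, with no largeness assumption on $p$. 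The key difference from your approach is that the paper drives the smoothness bound to zero by enlarging the box, instead of trying to beat a fixed $A^{O_{M,D}(1)}$ factor against a $1/p$ lattice.
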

In the case of a general polynomial sequence, Theorem 1.16 of \cite{green_tao_2012} only guarantees that if $g$ is not close to being equidistributed, then the coefficients of the polynomial $\eta\circ g - \eta\circ g(0)$ are major arc for a nontrivial horizontal character $\eta$ of small modulus. However, the rigidity imposed by the $p$-periodicity of $g$ allows us to conclude that $\eta\circ g$ is in fact constant mod $\ZZ$. More precisely, Theorem \ref{equidistribution theorem} can be deduced from Theorem 1.16 of \cite{green_tao_2012} as follows: keeping $p$ fixed, we set $N = k p$ in Theorem 1.16 of \cite{green_tao_2012} for some $k\in\NN_+$. If the sequence $g(1), ..., g(N)$ is not $A^{-1}$-equidistributed, then there exists a nontrivial horizontal character $\eta$ with $|\eta|\ll_{M,D} A^{C_{M,D}}$ such that the nonzero coefficients of the polynomial $\eta\circ g(\textbf{n}) = \sum_{\textbf{i}}a_{\textbf{i}}{{\textbf{n}}\choose{\textbf{i}}}$ satisfy the bound $\|a_{\textbf{i}}\|_{\RR/\ZZ}\ll A^{-C_{M,D}} N^{-|\textbf{i}|}$. Importantly, the $p$-periodicity of $g$ implies that we can take the same $A$ for all $k\in\NN_+$; letting $k\to\infty$, we therefore deduce that each nonzero coefficient $a_{\textbf{i}}$ is an integer.


We want to define the extent to which a polynomial sequence $g$ is irrational. Essentially, irrationality captures how well the sequence $g$ interacts with objects called $i$-th level characters, or how close to being in $\Gamma$ its Taylor coefficients $g_{\textbf{i}}$ are.
\begin{definition}[$i$-th level character]\label{i-th level characters}
Let $G/\Gamma$ be a filtered nilmanifold.
An \emph{$i$-th level character} on $G/\Gamma$ is a continuous group homomorphism from $G$ to $\RR$ that is $\ZZ$-valued on $\Gamma$ and vanishes on $G_{i+1}$ and $[G_j, G_{i-j}]$ for any $0\leqslant j\leqslant i$.  It is \emph{nontrivial} if it is nonzero. Every $i$-th level character can be written in the form $\eta(x)=k\cdot\psi_i(x)$ for a unique $k\in\ZZ^{m_i-m_{i+1}}$, where $\psi_i(g_i)$ is a tuple consisting of the entries of $g_i$ in $\psi(g_i)$ indexed by $m-m_i+1, ...,m- m_{i+1}$.
The \emph{modulus} of $\eta$ is defined to be $|\eta|:=|k|=|k_1|+...+|k_{m_i-m_{i+1}}|$.
\end{definition}

\begin{definition}[$A$-irrationality]\label{irrational sequences}
Let $G/\Gamma$ be a filtered nilmanifold of degree $s$. An element $g_i\in G_i$ is \emph{$A$-irrational} if we have $\eta_i(g_i)\notin\ZZ$ for all nontrivial $i$-th level characters $\eta_i$ of complexity $|\eta_i|\leqslant A$. A sequence $g\in\poly(\ZZ^D,G)$ is \emph{$A$-irrational}  if $g_{\textbf{i}}$ is $A$-irrational for each $\textbf{i}\in\NN^D$ with $0<|\textbf{i}|\leqslant s$. 
\end{definition}
Being highly irrational is a stronger property than being close to equidistributed, as implied by the following lemma.
\begin{lemma}[Irrationality implies equidistribution, Lemma 3.7 of \cite{green_tao_2010a}]\label{irrationality implies equidistribution}
Let $D\in\NN_+$. Let $G/\Gamma$ be a filtered nilmanifold of complexity $M$, and suppose that $g\in\poly(\ZZ^D, G_\bullet)$ is $p$-periodic and $A$-irrational. Then $g$ is $O_{M,D}(A^{-c_{M,D}})$-equidistributed.
\end{lemma}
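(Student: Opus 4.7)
The plan is to prove the contrapositive: if $g$ fails to be $\delta$-equidistributed with $\delta = A^{-c_{M,D}}$ for a suitably small $c_{M,D}>0$, then $g$ is not $A$-irrational. Invoke Theorem \ref{equidistribution theorem} to extract a nontrivial horizontal character $\eta$ of modulus $|\eta|\ll_{M,D}\delta^{-C_{M,D}}$ such that $\eta\circ g$ is constant modulo $\ZZ$. Expanding $g$ via Lemma \ref{taylor expansion} and using that $\eta$ is a homomorphism into the abelian group $\RR$ yields
\[\eta(g(\textbf{n})) = \sum_{\textbf{i}\in\NN^D}{{\textbf{n}}\choose{\textbf{i}}}\eta(g_{\textbf{i}}).\]
Since the binomial coefficients are linearly independent as integer-valued polynomials on $\ZZ^D$, constancy modulo $\ZZ$ forces $\eta(g_{\textbf{i}})\in\ZZ$ for every $\textbf{i}$ with $|\textbf{i}|\geqslant 1$.

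Next, choose $i\in\{1,\ldots,s\}$ to be the largest index for which $\eta|_{G_i}$ is nontrivial; such an $i$ exists because $\eta$ itself, defined on $G_1=G$, is nontrivial. Then $\eta$ vanishes on $G_{i+1}$, so writing $\eta(x)=k\cdot\psi(x)$ in the adapted Mal'cev basis, the coefficient vector $k$ is supported on coordinates $1,\ldots,m-m_{i+1}$. For any $x\in G_i$ the first $m-m_i$ coordinates of $\psi(x)$ vanish, so only the weight-$i$ block $\bar{k}=(k_{m-m_i+1},\ldots,k_{m-m_{i+1}})$ contributes to $\eta(x)$. Thus $\bar\eta(x):=\bar{k}\cdot\psi_i(x)$ is a nontrivial $i$-th level character of modulus $|\bar\eta|\leqslant|\eta|\ll_{M,D}\delta^{-C_{M,D}}$, and crucially $\bar\eta(g_{\textbf{i}})=\eta(g_{\textbf{i}})\in\ZZ$ whenever $|\textbf{i}|=i$.

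Picking any such $\textbf{i}$, for instance $\textbf{i}=i\cdot\textbf{e}_1$, and choosing $c_{M,D}$ small enough that $\delta^{-C_{M,D}}\leqslant A$, the character $\bar\eta$ witnesses that $g_{\textbf{i}}$ is not $A$-irrational, contradicting the hypothesis.

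The main technical step is the passage from the horizontal character to an $i$-th level character of comparable modulus. In the lower-central-series case, where $G_2=[G,G]$, this is essentially trivial because horizontal characters automatically vanish on $G_2$ and one simply takes $i=1$; but for a general filtration one must exploit the block decomposition of the Mal'cev basis along $G_\bullet$ to extract the correct level at which $\eta$ becomes a genuine level character. This decomposition is precisely what renders the definition of $A$-irrationality compatible with the equidistribution theorem.
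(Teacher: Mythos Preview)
The paper does not supply its own proof of this lemma; it simply cites Lemma~3.7 of \cite{green_tao_2010a}. Your argument is correct and is precisely the standard proof from that reference: pass to the contrapositive, apply the Leibman-type equidistribution criterion (Theorem~\ref{equidistribution theorem}) to obtain a small horizontal character $\eta$ with $\eta\circ g$ integer-valued, read off $\eta(g_{\textbf{i}})\in\ZZ$ for every Taylor coefficient via the binomial basis, and then restrict $\eta$ to the deepest $G_i$ on which it is nontrivial to produce an $i$-th level character witnessing the failure of $A$-irrationality.

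One small remark on presentation: the nontriviality of the resulting $i$-th level character $\bar\eta$ follows from the choice of $i$ (since $\eta|_{G_i}\neq 0$ forces $\bar k\neq 0$), and the vanishing on each $[G_j,G_{i-j}]$ is automatic because $\eta$, being a homomorphism into the abelian group $\RR$, already kills $[G,G]\supseteq[G_j,G_{i-j}]$. You note both of these points, but it is worth making the second explicit since Definition~\ref{i-th level characters} lists that vanishing as a separate requirement.
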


In our arguments, we shall want to approximate sums like (\ref{counting operator for polynomial progressions}) by integrals of Lipschitz functions on nilmanifolds. A key step in doing so is to decompose an arbitrary 1-bounded function into a nilsequence of an appropriate degree and two error terms. We do this via the following lemma, which is a simultaneous and periodic version of the celebrated arithmetic regularity lemma (Theorem 1.2 of \cite{green_tao_2010a}).
\begin{lemma}[Simultaneous periodic irrational arithmetic regularity lemma]\label{regularity lemma}
Let $s\geqslant 2$ and $t\geqslant 1$ be integers, $\epsilon>0$, and let $\mathcal{F}:\RR_+\to\RR_+$ be a growth function. There exists $M=O_{s,t,\epsilon,\mathcal{F}}(1)$ with the property that for all 1-bounded functions $f_1, ..., f_t:\FF_p\to\CC$ there exist decompositions
\begin{align*}
    f_i = f_{i, nil} + f_{i,sml} + f_{i,unf}
\end{align*}
such that for each $1\leqslant i\leqslant t$, the functions $f_{i, nil},  f_{i,sml}, f_{i,unf}$ satisfy the following:
\begin{enumerate}
    \item $f_{i,nil}(n)=F_i(g(n)\Gamma)$ for a $M$-Lipschitz function $F_i: G/\Gamma\to\CC$, where $G/\Gamma$ is a filtered nilmanifold of degree $s$ and complexity at most $M$, and $g\in\poly(\ZZ,G_\bullet)$ is a $p$-periodic, $\mathcal{F}(M)$-irrational sequence satisfying $g(0)=1$;
    \item $\|f_{i,sml}\|_2\leqslant \epsilon$;
    \item $\|f_{i,unf}\|_{U^{s+1}}\leqslant \frac{1}{\mathcal{F}(M)}$;
    \item the functions $f_{i,nil}$, $f_{i,sml}$ and $f_{i,unf}$ are 4-bounded.
\end{enumerate}
\end{lemma}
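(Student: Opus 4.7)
The plan is to reduce to the single-function arithmetic regularity lemma (Theorem 1.2 of \cite{green_tao_2010a}) and then to boost it to the simultaneous, $p$-periodic, irrational, normalized version above through three successive refinements: combining into a product nilmanifold, factorizing into an irrational sequence on a sub-nilmanifold, and normalizing at $0$.

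First, I would apply the single-function regularity lemma separately to each of $f_1,\ldots,f_t$, with a growth function $\widetilde{\mathcal{F}}$ chosen sufficiently fast in terms of $\mathcal{F}$, $s$ and $t$ to absorb all subsequent losses. This produces, for each $i$, a filtered nilmanifold $G^{(i)}/\Gamma^{(i)}$ of degree $s$ and complexity $O(1)$, a polynomial sequence $g^{(i)}\in\poly(\ZZ,G^{(i)}_\bullet)$, a Lipschitz function $F_i$ on $G^{(i)}/\Gamma^{(i)}$, and a decomposition with the individual analogues of (i)--(iv). Passing to the product nilmanifold $G=\prod_i G^{(i)}$ (with coordinate-wise filtration and lattice), the diagonal sequence $g=(g^{(1)},\ldots,g^{(t)})\in\poly(\ZZ,G_\bullet)$, and the projection-pulled-back Lipschitz functions $\widetilde{F}_i=F_i\circ\pi_i$, I obtain nil parts $f_{i,nil}(n)=\widetilde{F}_i(g(n)\Gamma)$ defined on a common nilmanifold with a common underlying sequence.

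Second, to gain irrationality of this common $g$, I would invoke the factorization theorem of Green and Tao (Theorem 1.19 of \cite{green_tao_2012}) on $G/\Gamma$. This yields $g=\epsilon g'\gamma$, where $\epsilon$ is slowly varying, $g'$ is $\widetilde{\mathcal{F}}(M)$-irrational on a closed connected sub-nilmanifold $G'/\Gamma'\subseteq G/\Gamma$ of complexity $O_M(1)$, and $\gamma$ is rational with values in a finite subset of $G$. Absorbing $\gamma$ into a finite cover of $G'/\Gamma'$ and $\epsilon$ into a modified Lipschitz function, I rewrite $\widetilde{F}_i(g(n)\Gamma) = F_i'(g'(n)\Gamma') + e_i(n)$ with $\|e_i\|_\infty$ arbitrarily small, so that $e_i$ can be absorbed into $f_{i,sml}$. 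For the $p$-periodicity, I would run the input regularity lemmas on windows $[N]$ with $N=kp$ and let $k\to\infty$, the trick used in the excerpt to derive Theorem \ref{equidistribution theorem} from \cite[Theorem~1.16]{green_tao_2012}; this forces the Taylor coefficients of the resulting $g'$ to be $p$-periodic in the sense of Definition \ref{periodic sequences}. Finally, replacing $g'$ by $g'(0)^{-1}g'$ and composing each $F_i'$ with left translation by $g'(0)$ normalizes $g'(0)=1$ while preserving the Lipschitz constant, the complexity and the irrationality (the last being a statement about the positive-order Taylor coefficients of $g'$ only).

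The main obstacle I expect is parameter bookkeeping: $\mathcal{F}$ has to be fed into a substantially larger $\widetilde{\mathcal{F}}$ so that, after simultaneous combining, factorizing, covering and absorbing, the final nilmanifold still has complexity $O_{s,t,\epsilon,\mathcal{F}}(1)$ and the common $g'$ is $\mathcal{F}(M)$-irrational for $M$ in that range. The $4$-boundedness of the three parts follows from the $1$-boundedness of $f_i$ together with the standard observation that $f_{i,nil}$ arises from a conditional expectation onto the $\sigma$-algebra generated by $g(n)\Gamma$ (a contraction) followed by a Lipschitz approximation which one may take of $L^\infty$-norm at most $2$, so $\|f_{i,nil}\|_\infty\leq 2$ and the triangle inequality gives $\|f_{i,sml}\|_\infty, \|f_{i,unf}\|_\infty \leq 4$.
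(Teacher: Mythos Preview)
Your outline is essentially the paper's approach: apply a single-function regularity lemma to each $f_i$, pass to the product nilmanifold with the diagonal sequence, then upgrade to an irrational sequence on a sub-nilmanifold while tracking parameters. The paper shortcuts two of your steps by quoting the \emph{periodic} single-function regularity lemma of Candela--Sisask (their Theorem~3.4) from the outset, so $p$-periodicity is already present, and then their Proposition~5.2 (run as in their Theorem~5.1) for the irrationality upgrade, which is already formulated for $p$-periodic sequences.

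The one real gap in your proposal is the periodicity step. The $N=kp$, $k\to\infty$ trick that the paper uses to deduce Theorem~\ref{equidistribution theorem} works because one is passing a \emph{single quantitative inequality} to the limit: the same horizontal character $\eta$ witnesses failure of equidistribution for every $k$, and the bound $\|a_{\mathbf{i}}\|_{\RR/\ZZ}\ll A^{C}N^{-|\mathbf{i}|}$ tightens to force integrality. The arithmetic regularity lemma is not of this form. It is an existence statement: for each $N$ one obtains \emph{some} nilmanifold, \emph{some} polynomial sequence, \emph{some} decomposition, with no consistency whatsoever as $k$ varies. There is nothing to take a limit of. Obtaining a genuinely $p$-periodic nilsequence in the structured part is a separate argument (roughly, one works directly over $\FF_p$ rather than over $[N]$ and uses a pigeonhole/compactness step); this is precisely the content of Candela--Sisask's Theorem~3.4, which you should invoke rather than Theorem~1.2 of \cite{green_tao_2010a}.

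A minor point: for the $4$-boundedness the paper does something cruder than your conditional-expectation heuristic. It splits each $1$-bounded complex $f_i$ into four $[0,1]$-valued pieces (real/imaginary, then positive/negative part), applies the real-valued lemma to each, and adds; the bound of $4$ is then immediate. Your argument depends on how $f_{i,nil}$ is actually constructed inside the black-box lemma, which is not part of its statement.
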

The important thing about Lemma \ref{regularity lemma} is that we decompose each $f_1, ..., f_t$ with respect to the same sequence $g$ and the same nilmanifold $G/\Gamma$. We give its proof in Appendix \ref{section on ARL}.

Finally, we shall briefly state the relevant properties of Gowers norms and polynomial bias norms, all of which are discussed more extensively in \cite{green_2007} and \cite{green_tao_2008}. While Gowers norms are the central objects of study in this paper, polynomial bias norms only appear in Theorem \ref{True complexity of x, x+y, x+2y, x+y^2}.
\begin{definition}[Polynomial bias norms]\label{polynomial bias norms}
For $s\in\NN_+$, the \emph{polynomial bias norm} of degree $s$ of a function $f:\FF_p\to\CC$ is given by
\begin{align*}
    \|f\|_{u^s}=\max_{\alpha_1, ..., \alpha_s\in\FF_p}\left|\EE_{x\in\FF_p}f(x)e_p \left(\alpha_{s-1} x^{s-1}+ ... + \alpha_1 x\right)\right|.
\end{align*}
\end{definition}
Gowers norms and polynomial bias norms are seminorms for $s=1$ and genuine norms for $s\geqslant 2$. They satisfy the monotonicity property
\begin{align*}
    \|f\|_{U^1} &\leqslant \|f\|_{U^2}\leqslant \|f\|_{U^3}\leqslant ...\\
    \|f\|_{u^1} &\leqslant \|f\|_{u^2}\leqslant \|f\|_{u^3}\leqslant ...
\end{align*}
for any $f:\FF_p\to\CC$. Gowers norms also bound polynomial bias norms in that
$$\|f\|_{u^s}\leqslant \|f\|_{U^s}.$$
For $s=1$, we in fact have $\|f\|_{u^1}=\|f\|_{U^1}$, and for $s=2$, we have $\|f\|_{u^2}\leqslant \|f\|_{U^2}\leqslant\|f\|_{u^2}^\frac{1}{2}$ for all 1-bounded $f:\FF_p\to\CC$, a result known as \emph{$U^2$ inverse theorem}. For $s>2$, however, there exist functions that have large $U^s$ norms and small $u^s$ norms. This is due to the fact that having a large $u^s$ norm is equivalent to correlating with a polynomial phase of degree $s-1$, while having a large $U^s$ norm corresponds to correlating with a broader category of nilsequences of degree $s-1$. See \cite{green_tao_2008, green_tao_ziegler_2011, green_tao_ziegler_2012} for more details on the relationship between Gowers norms and nilsequences.


\section{Leibman nilmanifold for polynomial progressions}\label{section on Leibman nilmanifold}
Throughout this section, let $\vec{P}(\textbf{x})=(P_1(\textbf{x}), ...,P_t(\textbf{x}))\in\QQ[\textbf{x}]^t$ be an integral polynomial map of degree $d$. Suppose that $G/\Gamma$ is a filtered nilmanifold  of degree $s$, dimension $m$ and complexity $M$. Given a polynomial sequence $g$ adapted to a filtration $G_\bullet$ on $G/\Gamma$, we define 
\begin{align*}
    g^P(\textbf{x}):=(g(P_1(\textbf{x})), ..., g(P_t(\textbf{x}))).
\end{align*}
The main objective of this section is to construct a group $G^P$ with a filtration $G^P_\bullet$ such that $g^P\in\poly(\ZZ^D,G^P_\bullet)$ whenever $g\in\poly(\ZZ,G_\bullet)$. The group $G^P$ originally appeared in Section 5 of \cite{leibman_2007}, but we are not aware of the filtration $G^P_\bullet$ being defined previously except for the relatively simple case of linear forms in \cite{green_tao_2010a}.


 It is a standard fact that each integral polynomial map $\vec{Q}\in\QQ[\textbf{x}]^t$ can be expressed as
\begin{align*}
    \vec{Q}(\textbf{x})=\sum_{\textbf{i}}\vec{b}_{\textbf{i}} {{\textbf{x}}\choose{\textbf{i}}}, 
\end{align*}
for some $\vec{b}_{\textbf{i}}\in\ZZ^t$, and we denote its \emph{degree-$j$ part} by
\begin{align*}
    \D_j \vec{Q}(\textbf{x}):=\sum_{|\textbf{i}|=j}\vec{b}_{\textbf{i}} {{\textbf{x}}\choose{\textbf{i}}}.
\end{align*}
Thus, for instance, $\D_1\left(x + y + {{x}\choose{2}}\right) = x+y$ and $\D_2\left(x + y + {{x}\choose{2}}\right) = {{x}\choose{2}}$. 
To clarify the notation, we denote
\begin{align*}
    {{\textbf{x}}\choose{\textbf{i}}}={{x_1}\choose{i_1}}\cdots {{x_D}\choose{i_D}} \quad {\rm{and}} \quad {{\vec{v}}\choose{\vec{j}}}={{{v}_1}\choose{{j}_1}} \cdots {{{v}_t}\choose{{i}_t}}
\end{align*}
for $D$-dimensional vectors $\textbf{x}$, $\textbf{i}$ and $t$-dimensional vectors $\vec{v}$, $\vec{j}$. If $i$ is a scalar, however, we set
\begin{align*}
    {{\vec{v}}\choose{{i}}}=\left({{{v}_1}\choose{{i}}}, ..., {{{v}_t}\choose{{i}}}\right).
\end{align*}
We endow $\RR^t$ with the structure of a real algebra by letting $$(a_1, ..., a_t)\cdot(b_1, ..., b_t) := (a_1 b_1, ..., a_t b_t)$$ and setting $\vec{1}=(1, ...,1)$ to be the identity vector.

For $i,j\in\NN_+$, we define two families of real vector spaces
\begin{align*}
    \PP_{i,j} &:=\Span\{\D_k{{\vec{P}(\textbf{x})}\choose{l}}: k\geqslant j,\; 1\leqslant l\leqslant i,\; \textbf{x}\in\ZZ^D\} \\
    \Q_{i,j} &:= \sum_{\substack{k, i_1, ..., i_k, j_1, ..., j_k\in\NN_+,\\ i_1+...+i_k=i, j_1+...+j_k=j}}\PP_{i_1,j_1}\cdots \PP_{i_k,j_k} =  \PP_{i,j} + \sum_{\substack{i_1+i_2=i, \\ j_1+j_2=j}}\Q_{i_1,j_1}\cdot\Q_{i_2,j_2}.
\end{align*}
We note several facts about these subspaces.
\begin{lemma}\label{properties of polynomial spaces}
For each $i, i_1, i_2, j, j_1, j_2\in\NN_+$, we have the following inclusions:
\begin{enumerate}
    \item $\PP_{i,j}\subseteq\PP_{i+1,j}$;
    \item $\PP_{i,j+1}\subseteq\PP_{i,j}$;
    \item $\PP_{i,j}\subseteq\Q_{i,j}$;
    \item $\Q_{i,j}\subseteq\Q_{i+1,j}$;
    \item $\Q_{i,j+1}\subseteq\Q_{i,j}$;
    \item $\Q_{i_1, j_1}\cdot\Q_{i_2,j_2}\subseteq\Q_{i_1+i_2,j_1+j_2}$.
\end{enumerate}
\end{lemma}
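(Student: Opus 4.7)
The plan is to handle (i)--(iv) and (vi) by directly unwinding the definitions, and to complete (v) by invoking an auxiliary algebraic claim about products of the $\PP_{\cdot, 1}$ spaces.

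Claims (i) and (ii) are immediate from the $\Span$ description of $\PP_{i,j}$: the index set $\{(k,l): k\geqslant j,\ 1\leqslant l\leqslant i\}$ enlarges when $i$ is incremented or $j$ is decremented. Claim (iii) follows because the recursive form of $\Q_{i,j}$ exhibits $\PP_{i,j}$ as its own summand. For (vi), I would use the first defining expression of $\Q$: a generator of $\Q_{i_1, j_1}$ is a product $\PP_{i_1', j_1'}\cdots\PP_{i_k', j_k'}$ indexed by an ordered partition of $(i_1, j_1)$ into positive integers, and likewise for $\Q_{i_2, j_2}$; concatenating the two partitions produces a partition of $(i_1+i_2, j_1+j_2)$, placing the product in $\Q_{i_1+i_2, j_1+j_2}$. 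Claim (iv) then follows: in any generator $\prod_\alpha \PP_{i_\alpha', j_\alpha'}$ of $\Q_{i,j}$, enlarging a single factor $\PP_{i_{\alpha_0}', j_{\alpha_0}'}$ to $\PP_{i_{\alpha_0}' + 1, j_{\alpha_0}'}$ via (i) yields a partition of $(i+1, j)$.

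For (v), I would handle an arbitrary generator $\prod_\alpha \PP_{i_\alpha', j_\alpha'}$ of $\Q_{i, j+1}$ by cases. The single-factor summand $\PP_{i, j+1}$ sits in $\PP_{i, j} \subseteq \Q_{i, j}$ by (ii) and (iii). In the multi-factor case, if some $j_{\alpha_0}' \geqslant 2$, replacing $\PP_{i_{\alpha_0}', j_{\alpha_0}'}$ with $\PP_{i_{\alpha_0}', j_{\alpha_0}' - 1}$ (allowed by (ii)) produces a partition of $(i, j)$. The remaining case is when every $j_\alpha' = 1$, forcing the number of factors to equal $j+1$; here I would invoke the auxiliary claim $\PP_{i_1, 1} \cdot \PP_{i_2, 1} \subseteq \PP_{i_1+i_2, 1}$ to fuse two factors into one, reducing to a $j$-factor partition summing to $(i, j)$.

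The auxiliary claim is the main obstacle. I would derive it from the universal binomial product identity $\binom{m}{l_1}\binom{m}{l_2} = \sum_{l=1}^{l_1+l_2} c_l \binom{m}{l}$ with integer coefficients (the summation starting at $l=1$ because both sides vanish at $m=0$). Substituting $m = P_r(\textbf{x})$ componentwise places $\binom{\vec{P}}{l_1}\binom{\vec{P}}{l_2}$ in $\PP_{l_1+l_2, 1} \subseteq \PP_{i_1+i_2, 1}$. Passing to individual slices $\D_a \binom{\vec{P}}{l_1} \cdot \D_b \binom{\vec{P}}{l_2}$ then requires matching binomial-degree-$K$ parts on both sides; the standing hypothesis $\vec{P}(0) = \vec{0}$ gives $\D_0 \binom{\vec{P}}{l} = \vec{0}$, which restricts all contributing $(a, b)$-pairs to positive integers. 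Separating the individual $(a,b)$-contributions from the summed $\D_K$-relation is the step I expect to require the most care, possibly via a generating-function argument using auxiliary parameters to weight the two binomial expansions independently.
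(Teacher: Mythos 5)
Your handling of (i)--(iv) and (vi) is correct and amounts to the same unwinding of the definitions that the paper relies on. More importantly, your case analysis for (v) correctly isolates the one case that does \emph{not} follow formally from (i)--(iii): a generator $\PP_{i_1,1}\cdots\PP_{i_{j+1},1}$ of $\Q_{i,j+1}$ with every second index equal to $1$, which forces you to fuse two factors via the auxiliary claim $\PP_{i_1,1}\cdot\PP_{i_2,1}\subseteq\PP_{i_1+i_2,1}$. This is sharper than the paper's own proof, which asserts that (v) follows from (i)--(iii) ``by induction'' and silently skips exactly this base case ($j_1=j_2=1$ in the recursive description of $\Q_{i,j+1}$, where $\Q_{i_r,1}=\PP_{i_r,1}$ and $\Q_{i,1}=\PP_{i,1}$).

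The gap is that the auxiliary claim is not just unproved in your sketch --- it is false for general integral polynomial maps, so the step you flag as ``requiring the most care'' cannot be completed. The identity $\binom{m}{l_1}\binom{m}{l_2}=\sum_l c_l\binom{m}{l}$ only controls the \emph{diagonal} products $\binom{\vec{P}(\textbf{x})}{l_1}\cdot\binom{\vec{P}(\textbf{x})}{l_2}$, i.e.\ the sums $\sum_{a+b=K}\D_a\binom{\vec{P}}{l_1}\cdot\D_b\binom{\vec{P}}{l_2}$, whereas $\PP_{i_1,1}\cdot\PP_{i_2,1}$ is generated by the individual summands (equivalently by $\binom{\vec{P}(\textbf{x})}{l_1}\cdot\binom{\vec{P}(\textbf{y})}{l_2}$ with $\textbf{x}\neq\textbf{y}$), and these are not values of any $\binom{\vec{P}(\textbf{z})}{l}$. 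Concretely, take $D=1$, $t=9$, $P_r(y)=a_ry+b_ry^2+c_ry^3$, with the nine triples $(a_r,b_r,c_r)$ chosen so that the nine componentwise products $\vec{a},\vec{b},\vec{c},\vec{a}^2,\vec{a}\cdot\vec{b},\vec{a}\cdot\vec{c},\vec{b}^2,\vec{b}\cdot\vec{c},\vec{c}^2$ are linearly independent in $\RR^9$ (possible since the corresponding nine monomials are linearly independent functions on $\RR^3$). Then $\PP_{1,1}=\Span\{\vec{a},\vec{b},\vec{c}\}$, while $\PP_{2,1}=\Span\{\vec{a},\vec{b},\vec{c},\vec{a}^2,\vec{a}\cdot\vec{b},\,2\vec{a}\cdot\vec{c}+\vec{b}^2,\vec{b}\cdot\vec{c},\vec{c}^2\}$ is $8$-dimensional and does not contain $\vec{a}\cdot\vec{c}$; hence $\Q_{2,2}\supseteq\PP_{1,1}\cdot\PP_{1,1}\not\subseteq\PP_{2,1}=\Q_{2,1}$, and part (v) itself fails for this $\vec{P}$. (It does hold whenever the filtration condition of the paper is satisfied, since then $\Q_{i,j}=\PP_{i,j}$ and (v) reduces to (ii); this covers every configuration the paper actually analyses.) So the right conclusion is not to look for a cleverer generating-function argument, but to note that (v) must be restricted --- neither your route nor the paper's establishes it in the stated generality.
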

\begin{proof}
Statements (i), (ii), (iii), (vi) follow directly from the definitions. Statements (iv) and (v) follow from properties (i), (ii), (iii) by induction.
\end{proof}

We define groups $G^P_j$ for $j\geqslant 1$ by setting
\begin{align*}
    G^P_j &:= \langle g^{\Vec{v}}: \Vec{v}\in\Q_{i,j},\; g\in G_i,\; i\geqslant 1\rangle
\end{align*}
where we let $g^{\Vec{v}}:=(g^{{v}_1}, ..., g^{{v}_t})$. We moreover set $G^P=G^P_0=G^P_1$ and ${\Gamma^P=\Gamma^t \cap G^P}$. It follows from property $(v)$ of Lemma \ref{properties of polynomial spaces} that 
\begin{align*}
    G^P = G_0^P = G_1^P \supseteq G^P_2 \supseteq G^P_3 \supseteq ...
\end{align*}
Each of the groups $G^P_j$ is normal in $G^t$ because each $G_i$ is normal in $G$.

For instance, if $G_\bullet$ is a filtration of degree 2 and $\vec{P}(x,y) = (x,\; x+y,\; x+2y)$ is the 3-term arithmetic progression, then $\P_{1,1} = \Q_{1,1} = \Span\{(1,1,1), (0,1,2)\}$, $\P_{2,1}=\P_{2,2}=\Q_{2,1}=\Q_{2,2} = \RR^3$ and $\Q_{1,2} = \Q_{2,3} = \{\vec{0}\}$; thus $G^P_\bullet$ is given by
\begin{align*}
    G^P_1 &= \langle g_1^{(1,1,1)},\; g_1^{(0,1,2)},\; g_2^{(0,0,1)}: g_1\in G_1,\; g_2\in G_2\rangle\\
    G^P_2 &= \langle g_2^{(1,1,1)},\; g_2^{(0,1,2)},\; g_2^{(0,0,1)}: g_2\in G_2\rangle\\
    G^P_3 &= G^P_4 = ... = 1.
\end{align*}

\begin{lemma}\label{Filtration on Leibman nilmanifold}
The chain of subgroups $(G^P_j)_{j=0}^\infty$ defines a filtration on $G^P$ of degree $sd$.
\end{lemma}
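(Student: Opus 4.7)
The plan is to verify the three defining conditions for $(G^P_j)_{j=0}^\infty$ to be a filtration on $G^P$: the descending chain condition, the commutator inclusion $[G^P_{j_1}, G^P_{j_2}] \subseteq G^P_{j_1+j_2}$, and the degree bound $G^P_j = 1$ for $j > sd$. The descending chain $G^P = G^P_0 = G^P_1 \supseteq G^P_2 \supseteq \cdots$ is immediate from property (v) of Lemma~\ref{properties of polynomial spaces}, as already noted above the statement. For the degree bound, I would argue by polynomial degrees: since every $P_k$ has degree at most $d$, the polynomial $\binom{\vec{P}(\textbf{x})}{l}$ has degree at most $ld$, which forces $\D_k \binom{\vec{P}(\textbf{x})}{l} = 0$ for all $k > ld$. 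It follows that $\PP_{i,j} = 0$ whenever $j > id$, and a short induction using the recursive formula for $\Q_{i,j}$ extends this to $\Q_{i,j} = 0$ for $j > id$. Combined with $G_i = 1$ for $i > s$, this gives $G^P_j = 1$ for $j > sd$.

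The heart of the proof is the commutator property, which I would establish by passing to the Lie algebra. Let $\mathfrak{g}$ denote the Lie algebra of $G$, with the induced filtration $\mathfrak{g}_i = \log G_i$ satisfying $[\mathfrak{g}_{i_1}, \mathfrak{g}_{i_2}] \subseteq \mathfrak{g}_{i_1+i_2}$. Define a family of subspaces of $\mathfrak{g}^t$ by
\begin{equation*}
    \mathfrak{g}^P_j := \Span\{\vec{v} \cdot X : i \geq 1,\ \vec{v} \in \Q_{i,j},\ X \in \mathfrak{g}_i\},
\end{equation*}
where $\vec{v} \cdot X$ denotes componentwise scalar multiplication in $\mathfrak{g}^t$. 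Because the bracket on $\mathfrak{g}^t$ acts componentwise, bilinearity yields
\begin{equation*}
    [\vec{v}_1 \cdot X_1,\ \vec{v}_2 \cdot X_2] \;=\; (\vec{v}_1 \cdot \vec{v}_2)\,[X_1, X_2],
\end{equation*}
and combining $\vec{v}_1 \cdot \vec{v}_2 \in \Q_{i_1+i_2, j_1+j_2}$ from property (vi) of Lemma~\ref{properties of polynomial spaces} with $[X_1, X_2] \in \mathfrak{g}_{i_1+i_2}$ shows that $[\mathfrak{g}^P_{j_1}, \mathfrak{g}^P_{j_2}] \subseteq \mathfrak{g}^P_{j_1+j_2}$.

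To transfer this back to the group level I would invoke the fact that in a connected, simply connected nilpotent Lie group the exponential map is a diffeomorphism and the Baker--Campbell--Hausdorff formula is a finite polynomial identity. I would first identify $G^P_j$ with $\exp(\mathfrak{g}^P_j)$: one inclusion is clear from $g^{\vec{v}} = \exp(\vec{v} \cdot \log g)$, while the reverse uses BCH to express the exponential of a finite sum $\sum_k \vec{v}_k X_k \in \mathfrak{g}^P_j$ as a finite product of exponentials of the generators $\exp(\vec{v}_k X_k) = (\exp X_k)^{\vec{v}_k}$ together with nested-bracket corrections that, by the Lie-algebra closure just proved, again lie in $\mathfrak{g}^P_j$ and so are logarithms of further generators. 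The inclusion $[G^P_{j_1}, G^P_{j_2}] \subseteq G^P_{j_1+j_2}$ then follows, because for $a \in \exp(\mathfrak{g}^P_{j_1})$ and $b \in \exp(\mathfrak{g}^P_{j_2})$ the element $\log[a,b]$ is a Lie series in $\log a, \log b$ whose terms all sit in $\mathfrak{g}^P_{j_1+j_2}$ by iterating the Lie-algebra inclusion and using the monotonicity of the $\mathfrak{g}^P_\bullet$.

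The main technical burden is this final exponentiation step, which is a matter of careful BCH bookkeeping rather than a conceptual obstruction; the real content is compressed into the bilinearity identity above together with the multiplicative property (vi) of the scalar spaces $\Q_{i,j}$, which are exactly designed so that the componentwise product of exponent vectors tracks the filtration degree correctly.
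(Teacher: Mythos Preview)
Your proposal is correct and complete, but it takes a genuinely different route from the paper. The paper stays entirely at the group level: it reduces to generators $g_1^{\vec{v}_1}\in G^P_{j_1}$, $g_2^{\vec{v}_2}\in G^P_{j_2}$ via a commutator lemma from \cite{green_tao_2012}, then applies the group-level BCH consequence (\ref{Baker-Campbell-Hausdorff 2}) to write $[g_1^{\vec{v}_1}, g_2^{\vec{v}_2}]=[g_1,g_2]^{\vec{v}_1\cdot\vec{v}_2}\prod_\alpha g_\alpha^{Q_\alpha(\vec{v}_1,\vec{v}_2)}$ and checks by hand that each factor lands in $G^P_{j_1+j_2}$, tracking which $\Q_{i,j}$ the exponent vectors $Q_\alpha(\vec{v}_1,\vec{v}_2)$ belong to. You instead linearize: your identity $[\vec{v}_1\cdot X_1,\vec{v}_2\cdot X_2]=(\vec{v}_1\cdot\vec{v}_2)[X_1,X_2]$ in $\mathfrak{g}^t$ is exact with no higher-order corrections, so the filtration property at the Lie-algebra level is a one-line consequence of property~(vi). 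The cost is that you must then establish $G^P_j=\exp(\mathfrak{g}^P_j)$ and push the bracket inclusion back to the group, which is where all the BCH bookkeeping you defer to actually lives; the paper's approach front-loads that bookkeeping into the explicit handling of the $g_\alpha^{Q_\alpha}$ terms and never needs the identification with $\exp(\mathfrak{g}^P_j)$. Both arguments ultimately rest on the same multiplicative property~(vi) of the $\Q_{i,j}$, and neither is materially shorter once written out in full. Your degree-bound argument via $\Q_{i,j}=0$ for $j>id$ is also what the paper uses (stated just after the proof), though the paper does not spell it out inside the proof itself.
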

\begin{proof}
Take generators $g_{1}^{\Vec{v}_{1}}\in G^P_{j_1}$ and $g_{2}^{\Vec{v}_{2}}\in G^P_{j_2}$ for some elements $g_1\in G_1$ and $g_2\in G_2$ as well as vectors $\Vec{v}_{1} \in\Q_{i_1, j_1}$ and $\Vec{v}_{2} \in\Q_{i_2, j_2}$. We want to show that their commutator is  in $G^P_{j_1+j_2}$. If this is true for the generators, then by Lemma 7.3 of \cite{green_tao_2012} it holds for arbitrary two elements of $G_{j_1}$ and $G_{j_2}$, proving the lemma.

By (\ref{Baker-Campbell-Hausdorff 2}), we have
\begin{align*}
    [g_{1}^{\Vec{v}_{1}}, g_{2}^{\Vec{v}_{2}}]=[g_1, g_2]^{\Vec{v}_1 \cdot \Vec{v}_2}\prod_\alpha g_\alpha^{Q_\alpha(\Vec{v}_1, \Vec{v}_2)}
\end{align*}
where each $g_\alpha$ is a commutator of $k_1$ copies of $g_1$ and $k_2$ copies of $g_2$ for some ${k_1, k_2\geqslant 1}$ satisfying $k_1+k_2\geqslant 3$, and $$Q_\alpha(\Vec{u}, \Vec{w}):=(Q_\alpha(u_1,w_1), ..., Q_{\alpha}(u_t, w_t))$$ for some polynomial $Q_\alpha(x_1, x_2)$ that has degree  at most $k_1$ in $x_1$ and at most $ k_2$ in $x_2$, and vanishes when $x_1 = 0$ or $x_2 = 0$.

By the filtration property of $G$, the commutator $[g_1, g_2]$ is in $G_{i_1+i_2}$. We moreover have that
\begin{align*}
    \Vec{v}_1\cdot\Vec{v}_2\in \Q_{i_1, j_1}\cdot\Q_{i_2, j_2}\subseteq\Q_{i_1+i_2,j_1+j_2}
\end{align*}
by part (vi) of Lemma \ref{properties of polynomial spaces}. Consequently, the element $[g_1, g_2]^{\Vec{v}_1 \cdot \Vec{v}_2}$ is contained in $G^P_{j_1+j_2}$.

We handle the terms $g_\alpha^{Q_\alpha(\Vec{v}_1, \Vec{v}_2)}$ in a similar manner. If $g_\alpha$ is a commutator of $k_1$ copies of $g_1$ and $k_2$ copies of $g_2$, then $g_\alpha\in G_{k_1 i_1 + k_2 i_2}$ by the filtration property of $G$. The polynomial $Q_\alpha$ can then be written as $Q_\alpha(x_1, x_2) = \sum_{\substack {1\leqslant l_1\leqslant k_1,\\ 1\leqslant l_2\leqslant k_2}}\beta_{l_1, l_2}x_1^{l_1} x_2^{l_2}$, and so
\begin{align*}
    Q_\alpha(\Vec{v}_1,\Vec{v}_2) &= \sum_{\substack {1\leqslant l_1\leqslant k_1,\\ 1\leqslant l_2\leqslant k_2}}\beta_{l_1, l_2} (\Vec{v}_1)^{l_1} \cdot(\Vec{v}_2)^{l_2}\in \sum_{\substack {1\leqslant l_1\leqslant k_1,\\ 1\leqslant l_2\leqslant k_2}}\Q_{l_1 i_1 + l_2 i_2, l_1 j_1 + l_2 j_2}\\
    &\subseteq \sum_{\substack {1\leqslant l_1\leqslant k_1,\\ 1\leqslant l_2\leqslant k_2}}\Q_{k_1 i_1 + k_2 i_2, l_1 j_1 + l_2 j_2}\subseteq \Q_{k_1 i_1 + k_2 i_2, j_1+j_2}
\end{align*}
by Lemma \ref{properties of polynomial spaces}. Thus $g_\alpha^{Q_\alpha(\Vec{v}_1, \Vec{v}_2)}$  is contained in $G^P_{j_1+ j_2}$ for each $\alpha$ which implies that
$[g_{1}^{\Vec{v}_{1}}, g_{2}^{\Vec{v}_{2}}]\in G^P_{j_1+j_2}$.
\end{proof}

We now aim to prove the topological properties of $G^P$ and $G^P_j$, and we do this by following the arguments presented in \cite{green_tao_2010a} after Lemma 3.5. Our first goal is to show that $G^P$ is a connected, simply connected Lie group. By Lemma \ref{properties of polynomial spaces}, we have a chain of subspaces
\begin{align}\label{chain of subspaces}
    0\subseteq\Q_{1,1}\subseteq\Q_{2,1}\subseteq ...\subseteq \Q_{s,1} \subseteq \RR^t
\end{align}
that can be defined over $\QQ$. Letting $t_i = \dim\Q_{i,1}$, we can find a basis $\Vec{v}_1, ..., \Vec{v}_{t_s}$ for $\Q_{s,1}$ satisfying the following properties:
\begin{enumerate}
    \item (Integrality) $\Vec{v}_1, ..., \Vec{v}_{t_s}$ are all integer-valued,
    \item (Partial span) $\Vec{v}_1, ..., \Vec{v}_{t_i}$ span $\Q_{i,1}$ for each $1\leqslant i\leqslant s$,
    \item (Row echelon form) For each $1\leqslant k\leqslant t_s$ there exists an index $1\leqslant r_k\leqslant t$ such that $\Vec{v}_k(r_k)\neq 0$ but $\Vec{v}_l(r_k) = 0$ for all $k < l\leqslant t_s$.
\end{enumerate}
Fixing such a basis, we let $\deg(\Vec{v}_k)$ to be the smallest $i$ such that $\Vec{v}_k$ is in $\Q_{i,1}$.
We can express each element of $G^P$ as a finite product of $g_k^{\Vec{v}_k}$ where $g_k\in G_{\deg(\Vec{v}_k)}$ and $1\leqslant k\leqslant t_s$. By applying the corollaries (\ref{Baker-Campbell-Hausdorff 1}) and (\ref{Baker-Campbell-Hausdorff 2}) to the Baker-Campbell-Hausdorff formula many times, we can then rewrite an arbitrary element of $G^P$ as 
\begin{align}\label{Taylor representation of Leibman nilmanifold}
    \prod_{k=1}^{t_s}g_k^{\Vec{v}_k},
\end{align}
where $g_k\in G_{\deg\Vec{v}_k}$ for all $1\leqslant k\leqslant t_s$. This representation is unique, implying that $G^P$ is indeed a connected, simply connected Lie subgroup. From the fact that each $\vec{v}_k$ has integer entries, it can be further deduced that $\Gamma^P$ is cocompact in $G^P$.
Similar arguments, where we replace $\Q_{i,1}$ in (\ref{chain of subspaces}) by $\Q_{i,j}$, show that each $G^P_j$ is a closed connected subgroup of $G^P$ and $\Gamma^P_j = \Gamma^t\cap G^P_j$ is cocompact in $G^P_j$. This implies that $G^P/\Gamma^P$ is a filtered nilmanifold. Finally, the same argument combined with the fact that $\PP_{i,j}=\Q_{i,j}=0$ whenever $j>i d$ shows that $G^P$ is a subnilmanifold of $G^t$ when we endow $G^t$ with the filtration $(G^t)'_j = (G_{\lceil\frac{j}{d}\rceil})^t$.

The next lemma explains why we have imposed this particular filtration on $G^P$.
\begin{lemma}\label{g^P is adapted to the filtration}
If $g\in\poly(\ZZ,G_\bullet)$, then $g^P\in\poly(\ZZ^D,G^P_\bullet)$. 
\end{lemma}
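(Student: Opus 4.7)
The plan is to verify the Taylor expansion characterization of Lemma \ref{taylor expansion} directly for $g^P$. Since $g\in\poly(\ZZ,G_\bullet)$, we may write $g(n)=\prod_{i\geqslant 0}g_i^{\binom{n}{i}}$ with $g_i\in G_i$. Substituting $n=P_k(\textbf{x})$ in each coordinate and recalling that multiplication in $G^t$ is coordinate-wise, one gets
\begin{align*}
g^P(\textbf{x}) \;=\; \prod_{i\geqslant 0} g_i^{\binom{\vec{P}(\textbf{x})}{i}}
\end{align*}
(the product taken in the same order as in the Taylor expansion of $g$). Because $\poly(\ZZ^D,G^P_\bullet)$ is a group, it suffices to show that each factor $\textbf{x}\mapsto g_i^{\binom{\vec{P}(\textbf{x})}{i}}$ belongs to $\poly(\ZZ^D,G^P_\bullet)$.

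Fix $i$ and expand the polynomial vector $\binom{\vec{P}(\textbf{x})}{i}\in\ZZ^t$ in the binomial basis, writing $\binom{\vec{P}(\textbf{x})}{i}=\sum_{\textbf{j}}\vec{w}_{i,\textbf{j}}\binom{\textbf{x}}{\textbf{j}}$ for some $\vec{w}_{i,\textbf{j}}\in\ZZ^t$. Every coordinate of $g_i^{\vec{v}}$ is a power of the single element $g_i\in G_i$, so the map $\vec{v}\mapsto g_i^{\vec{v}}$ is an additive group homomorphism from $\ZZ^t$ into $G^t$. Hence
\begin{align*}
g_i^{\binom{\vec{P}(\textbf{x})}{i}} \;=\; \prod_{\textbf{j}}\left(g_i^{\vec{w}_{i,\textbf{j}}}\right)^{\binom{\textbf{x}}{\textbf{j}}},
\end{align*}
and since all factors commute the order is irrelevant. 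By Lemma \ref{taylor expansion}, the desired conclusion $\textbf{x}\mapsto g_i^{\binom{\vec{P}(\textbf{x})}{i}}\in\poly(\ZZ^D,G^P_\bullet)$ follows once we check the level condition $g_i^{\vec{w}_{i,\textbf{j}}}\in G^P_{|\textbf{j}|}$ for each $\textbf{j}$.

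This is where the definition of $\PP_{i,j}$ earns its keep. By construction $\D_{|\textbf{j}|}\binom{\vec{P}(\textbf{x})}{i}=\sum_{|\textbf{j}'|=|\textbf{j}|}\vec{w}_{i,\textbf{j}'}\binom{\textbf{x}}{\textbf{j}'}$, and as $\textbf{x}$ ranges over $\ZZ^D$ the values of this polynomial span the real vector space generated by the coefficients $\{\vec{w}_{i,\textbf{j}'}:|\textbf{j}'|=|\textbf{j}|\}$, by the linear independence of the binomial functions $\binom{\textbf{x}}{\textbf{j}'}$ on $\ZZ^D$. Therefore $\vec{w}_{i,\textbf{j}}\in\PP_{i,|\textbf{j}|}\subseteq\Q_{i,|\textbf{j}|}$, which immediately yields $g_i^{\vec{w}_{i,\textbf{j}}}\in G^P_{|\textbf{j}|}$ by the definition of $G^P_j$. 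Combined with Lemma \ref{Filtration on Leibman nilmanifold}, this proves $g^P\in\poly(\ZZ^D,G^P_\bullet)$.

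The main bookkeeping hurdle is the identification of the Taylor coefficient $\vec{w}_{i,\textbf{j}}$ with an element of $\PP_{i,|\textbf{j}|}$; once one interprets the ``$\textbf{x}\in\ZZ^D$'' quantifier in the definition of $\PP_{i,j}$ via polynomial interpolation, the rest is essentially an exercise in rewriting a Taylor expansion. The key reason the filtration $G^P_\bullet$ was defined using the spaces $\Q_{i,j}$ (and not just $\PP_{i,j}$) is that this is what is required by Lemma \ref{Filtration on Leibman nilmanifold} to ensure $G^P_\bullet$ is an actual filtration; for the present lemma the smaller spaces $\PP_{i,j}$ already suffice.
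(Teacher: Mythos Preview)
Your proof is correct and takes a genuinely simpler route than the paper's. The paper computes the Taylor expansion of $g^P$ directly: starting from $g^P(\textbf{x})=\prod_i g_i^{\binom{\vec{P}(\textbf{x})}{i}}$, it uses the Baker--Campbell--Hausdorff identities (\ref{Baker-Campbell-Hausdorff 1}) and (\ref{Baker-Campbell-Hausdorff 2}) to reorder the factors and collect the coefficient of each monomial $\binom{\textbf{x}}{\textbf{j}}$, then argues that the resulting commutator terms $g_\alpha^{\vec{v}_\alpha}$ land in $G^P_{|\textbf{j}|}$ via the multiplicative closure of the spaces $\Q_{i,j}$ (Lemma \ref{properties of polynomial spaces}(iv)--(vi)). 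You sidestep all of this by invoking the group property of $\poly(\ZZ^D,G^P_\bullet)$ (recorded in Definition \ref{polynomial sequences} and available once $G^P_\bullet$ is known to be a filtration by Lemma \ref{Filtration on Leibman nilmanifold}) and then observing that for a \emph{fixed} $i$ all the elements $g_i^{\vec{v}}$ commute, so the Taylor expansion of each individual factor $\textbf{x}\mapsto g_i^{\binom{\vec{P}(\textbf{x})}{i}}$ is immediate with no commutator bookkeeping. What this buys you is that you never touch Baker--Campbell--Hausdorff; what the paper's approach buys is an explicit description of the Taylor coefficients of $g^P$ itself, which is occasionally useful downstream when one actually inspects those coefficients (as in the equidistribution proofs of Sections \ref{section on x, x+y, x+y^2, x+y+y^2}--\ref{section on x, x+Q(y), x+2Q(y), x+R(y), x+2R(y)}). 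Both approaches ultimately only need $\vec{w}_{i,\textbf{j}}\in\PP_{i,|\textbf{j}|}$; as you correctly observe, the larger spaces $\Q_{i,j}$ are there solely so that $G^P_\bullet$ is closed under commutators.
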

\begin{proof}
We first decompose
\begin{align*}
    {{\vec{P}(\textbf{x})}\choose{i}}=\sum_{\textbf{j}}\vec{v}_{i, \textbf{j}}{{\textbf{x}}\choose{\textbf{j}}}.
\end{align*} and note that $\vec{v}_{i,\textbf{j}}\in\PP_{i, |\textbf{j}|}$. Therefore $g_i^{\vec{v}_{i,\textbf{j}}}\in G^P_{|\textbf{j}|}$ by definition of $G_{|\textbf{j}|}^P$. Using (\ref{Baker-Campbell-Hausdorff 1}) and (\ref{Baker-Campbell-Hausdorff 2}), we regroup the terms of 
\begin{align*}
    g^P(\textbf{x}) = \prod_{i=0}^s g_i^{{\vec{P}(\textbf{x})}\choose{i}} = \prod_{i=0}^s g_i^{\sum_{\textbf{j}}\vec{v}_{i, \textbf{j}}{{\textbf{x}}\choose{\textbf{j}}}}
\end{align*}
to bring all the elements involving the same monomial ${{\textbf{x}}\choose{\textbf{j}}}$ together. Thus,
the Taylor coefficient of ${{\textbf{x}}\choose{\textbf{j}}}$ is of the form
\begin{align}\label{Taylor coefficient of a monomial}
    \prod_{i=0}^s g_i^{\sum_{\textbf{j}}\vec{v}_{i, \textbf{j}}} \prod_\alpha g_\alpha^{\vec{v}_{\alpha}},
\end{align}
where the terms $g_\alpha^{\vec{v}_{\alpha}}$ come from applying (\ref{Baker-Campbell-Hausdorff 1}) and (\ref{Baker-Campbell-Hausdorff 2}).
For each label $\alpha$, the element $g_\alpha$ is a commutator consisting of $k_r$ copies of $g_{i_r}$ for $1\leqslant r\leqslant n$, and so $g_\alpha\in G_{i_1 k_1 + ... + i_n k_n}$ by the filtration property of $G$. The vector $\vec{v}_\alpha$ is a rational multiple of ${\vec{v}_{i_1, |\textbf{j}_1|}^{l_1}} ...{\vec{v}_{i_n, |\textbf{j}_n|}^{l_n}}$  for some $1\leqslant l_1\leqslant k_1$, ..., $1\leqslant l_n\leqslant k_n$ and $|\textbf{j}_1|+ ... + |\textbf{j}_n|\geqslant|\textbf{j}|$. Therefore $\vec{v}_\alpha\in\Q_{i_1 l_1 + ... + i_n l_n, |\textbf{j}_1|+ ... + |\textbf{j}_n|}$ by part (vi) of Lemma \ref{properties of polynomial spaces}. It follows from parts (iv) and (v) of the same lemma that
\begin{align*}
    \Q_{i_1 l_1 + ... + i_n l_n, |\textbf{j}_1|+ ... + |\textbf{j}_n|}\subseteq\Q_{i_1 l_1 + ... + i_n l_n, |\textbf{j}|}\subseteq\Q_{i_1 k_1 + ... + i_n k_n, |\textbf{j}|},
\end{align*}
implying that $g_\alpha^{\vec{v}_\alpha}\in G^P_{|\textbf{j}|}$ for each $\alpha$. Thus, the coefficient (\ref{Taylor coefficient of a monomial}) is in $G^P_{|\textbf{j}|}$, as claimed. 




\end{proof}
Even though the definition of $\Q_{i,j}$ guarantees that $G^P$ is filtration, it is not very handy to work with because of the terms $\Q_{i_1,j_1}\cdot\Q_{i_2,j_2}$ that appear there. However, many of the configurations that we look at satisfy a condition that allows us to work with $\PP_{i,j}$ instead.
\begin{definition}[Filtration condition]\label{filtration condition}
We say that an integral polynomial ${\vec{P}\in\QQ[\textbf{x}]^t}$ satisfies the \emph{filtration condition} if $\PP_{i_1, j_1}\cdot\PP_{i_2, j_2}\subseteq \PP_{i_1+i_2, j_1 + j_2}$ for all $i_1, i_2, j_1, j_2\geqslant 1$.
\end{definition}
\begin{lemma}\label{P_i,j equals Q_i,j}
Suppose an integral polynomial ${\vec{P}\in\QQ[\textbf{x}]^t}$ satisfies the \emph{filtration condition}. Then $\PP_{i,j}=\Q_{i,j}$ for all $i,j\in\NN_+$.
\end{lemma}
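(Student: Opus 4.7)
The plan is to prove $\PP_{i,j}=\Q_{i,j}$ by induction on $i$ with $j$ as a free parameter. The containment $\PP_{i,j}\subseteq\Q_{i,j}$ is immediate from part (iii) of Lemma \ref{properties of polynomial spaces}, so only the reverse containment requires work.

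For the base case $i=1$, note that the recursive definition $\Q_{i,j}=\PP_{i,j}+\sum_{i_1+i_2=i,\,j_1+j_2=j}\Q_{i_1,j_1}\cdot\Q_{i_2,j_2}$ only involves indices $i_1,i_2\in\NN_+$. When $i=1$ there is no way to write $1=i_1+i_2$ with $i_1,i_2\geqslant 1$, so the sum is empty and $\Q_{1,j}=\PP_{1,j}$ for every $j\in\NN_+$.

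For the inductive step, fix $i\geqslant 2$ and suppose $\PP_{i',j'}=\Q_{i',j'}$ for all $1\leqslant i'<i$ and all $j'\in\NN_+$. In the recursive expression for $\Q_{i,j}$, every pair $(i_1,i_2)$ with $i_1+i_2=i$ and $i_1,i_2\geqslant 1$ satisfies $i_1,i_2\leqslant i-1$, so the inductive hypothesis gives $\Q_{i_1,j_1}=\PP_{i_1,j_1}$ and $\Q_{i_2,j_2}=\PP_{i_2,j_2}$. By the filtration condition (Definition \ref{filtration condition}),
\[
\PP_{i_1,j_1}\cdot\PP_{i_2,j_2}\subseteq\PP_{i_1+i_2,j_1+j_2}=\PP_{i,j},
\]
so each summand lies in $\PP_{i,j}$. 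Combined with the leading term $\PP_{i,j}$ of the recursion, this yields $\Q_{i,j}\subseteq\PP_{i,j}$, completing the induction.

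There is no real obstacle here; the only point to be careful about is ensuring the recursion for $\Q_{i,j}$ only pairs strictly smaller indices so that the induction closes. This is guaranteed by the positivity constraint $i_1,i_2\geqslant 1$ in the definition of $\Q_{i,j}$. The filtration condition is used exactly once, to swallow the product of two $\PP$-spaces back into a single $\PP$-space of the correct bidegree.
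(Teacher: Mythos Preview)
Your proof is correct and follows essentially the same approach as the paper's: both argue by induction using the recursive expression $\Q_{i,j}=\PP_{i,j}+\sum \Q_{i_1,j_1}\cdot\Q_{i_2,j_2}$ together with the filtration condition. The paper's version is terser, inducting on the pair $(i,j)$ with base cases $i=1$ or $j=1$, whereas you induct on $i$ alone with $j$ free; since the recursion always splits $i$ into strictly smaller positive parts, your single-variable induction is sufficient and arguably cleaner.
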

\begin{proof}
Whenever $i=1$ or $j=1$, the lemma follows by definition. Other cases follow by induction on $(i,j)$.
\end{proof}

For progressions satisfying the filtration condition, we can relate the group $G^P_j$ to $G^P_{j+1}$ in a handy manner.
\begin{lemma}\label{structure of G^P_j}
Let $j\in\NN_+$, and suppose $\vec{P}$ satisfies the filtration condition. For any $i\in\NN_+$, let $X_{i,j+1}=\{\vec{v}_1, ..., \vec{v}_{l_i}\}\subseteq\ZZ^t$ be a basis for $\PP_{i,j+1}$ that extends to a basis $X_{i,j}=\{\vec{v}_1, ..., \vec{v}_{l_i}, ..., \vec{v}_{l_i+k_i}\}\subseteq\ZZ^t$ for $\PP_{i,j}$. Then
\begin{align*}
    G^P_j = \langle G^P_{j+1},\; g^{\vec{v}_r}: l_i+1 \leqslant r\leqslant l_i+k_i,\; g\in G_i,\; i\in\NN_+ \rangle.
\end{align*}
\end{lemma}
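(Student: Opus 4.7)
The plan is to prove the two inclusions separately, using Lemma \ref{P_i,j equals Q_i,j} to replace $\Q_{i,j}$ by $\PP_{i,j}$ throughout, so that $G^P_j$ is generated by $\{g^{\vec{v}}:\; g\in G_i,\; \vec{v}\in \PP_{i,j},\; i\in\NN_+\}$.

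The inclusion $\supseteq$ is immediate from the definitions: $G^P_{j+1}\subseteq G^P_j$ comes from Lemma \ref{Filtration on Leibman nilmanifold}, while for each $l_i+1\leqslant r\leqslant l_i+k_i$ and each $g\in G_i$ the vector $\vec{v}_r$ lies in $\PP_{i,j}$, so $g^{\vec{v}_r}$ is by construction a generator of $G^P_j$.

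For the reverse inclusion, I would take an arbitrary generator $g^{\vec{v}}$ of $G^P_j$ with $g\in G_i$ and $\vec{v}\in \PP_{i,j}$, and expand it in the basis $X_{i,j}$ as $\vec{v}=\sum_{r=1}^{l_i+k_i} c_r \vec{v}_r$ with $c_r\in\RR$. Since $t\mapsto g^t$ is a one-parameter subgroup of $G$ lying entirely inside $G_i$, the identities $g^{a+b}=g^a g^b$ and $g^{c\vec{v}_r}=(g^c)^{\vec{v}_r}$ hold componentwise in $G^t$, yielding $g^{\vec{v}}= \prod_{r=1}^{l_i} g^{c_r \vec{v}_r} \cdot \prod_{r=l_i+1}^{l_i+k_i} (g^{c_r})^{\vec{v}_r}$. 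The first product lies in $G^P_{j+1}$ because $c_r\vec{v}_r\in\PP_{i,j+1}$ for $r\leqslant l_i$, and the second product is a product of elements of the form $h^{\vec{v}_r}$ with $h=g^{c_r}\in G_i$, exhibiting it as a product of the claimed extra generators. This shows $g^{\vec{v}}$ lies in the right-hand side, giving the reverse inclusion.

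The argument is mostly bookkeeping; the one point to verify carefully is the interpretation of $g^{\vec{v}}$ for real-valued vectors $\vec{v}$ via one-parameter subgroups, together with the observation that $g\in G_i$ implies $g^c\in G_i$ for every $c\in\RR$ because $G_i$ is a closed connected subgroup of the simply connected nilpotent Lie group $G$. The filtration condition enters at the very start through Lemma \ref{P_i,j equals Q_i,j}: without the equality $\PP_{i,j}=\Q_{i,j}$, the generators of $G^P_j$ would also involve elements whose exponent vectors come from products $\Q_{i_1,j_1}\cdot\Q_{i_2,j_2}$, which cannot be reduced to the claimed form by mere linear expansion in a basis of $\PP_{i,j}$.
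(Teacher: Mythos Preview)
Your proof is correct and follows essentially the same approach as the paper: both invoke Lemma~\ref{P_i,j equals Q_i,j} to replace $\Q_{i,j}$ by $\PP_{i,j}$, then take an arbitrary generator $g^{\vec{v}}$ with $\vec{v}\in\PP_{i,j}$, expand $\vec{v}$ in the basis $X_{i,j}$, and split off the $\PP_{i,j+1}$-part into $G^P_{j+1}$ while writing the remainder as a product of elements $(g^{c_r})^{\vec{v}_r}$. Your version is slightly more explicit about the easy inclusion and about why $g^{c_r}\in G_i$, but the argument is the same.
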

\begin{proof}
Since $\vec{P}$ satisfies the filtration condition, we have by Lemma \ref{P_i,j equals Q_i,j} that $\PP_{i,j}=\Q_{i,j}$. By definition of $G^P_j$, the group is generated by elements of the form $g^{\vec{v}}$ for $i\in\NN_+$, $g\in G_i$, $\vec{v}=\sum\limits_{r=1}^{l_i+k_i}a_r\vec{v}_r$ and $a_r\in\RR$. Letting $\vec{w}=\sum\limits_{r=1}^{l_i}a_r\vec{v}_r$, we observe that $g^{\vec{w}}\in G^P_{j+1}$, and so
\begin{align*}
    g^{\vec{v}}=g^{\vec{v}-\vec{w}} = (g^{a_{l_i+1}})^{\vec{v}_{l_i + 1}}... (g^{a_{l_i+k_i}})^{\vec{v}_{l_i + k_i}} \mod\; G^P_{j+1}.
\end{align*}
Thus each generator $g^{\vec{v}}$ in $G^P_{j}$ is a product of an element from $G^P_{j+1}$ and $h_r^{\vec{v}_r}$ for $l_i+1 \leqslant r\leqslant l_i+k_i$ and $h_r=g^{a_r}\in G_i$.
\end{proof}

\subsection{Progressions of a special form}
The technical results presented so far in this section work for arbitrary integral polynomial maps. However, we shall mostly work with polynomial maps of the form
\begin{align}\label{a special form of the integral polynomial map}
    \vec{P}(x,y) = (x,\; x+P_1(y),\; ...,\; x+P_{t-1}(y)).
\end{align}

For configurations of this type, we can relate the coefficients of ${{\vec{P}(x,y)}\choose{i}}$ to the coefficients of ${{\vec{P}(x,y)}\choose{i-k}}$ for $k>0$ in a way that will prove useful in future sections.
\begin{lemma}\label{relating coefficients of different monomials}
Let $i, k,l\in\NN$ satisfy $i>0$, $k\leqslant i$ and $l\leqslant (i-k)d$. Suppose $\vec{P}$ is an integral polynomial map of the form (\ref{a special form of the integral polynomial map}). Then the coefficient of ${{x}\choose{k}}{{y}\choose{l}}$ in ${{\vec{P}(x,y)}\choose{i}}$ is the same as the coefficient of ${{y}\choose{l}}$ in ${{\vec{P}(x,y)}\choose{i-k}}$.
\end{lemma}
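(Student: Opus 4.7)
The plan is to exploit the Vandermonde (or Chu--Vandermonde) identity coordinatewise. Since $\vec{P}(x,y) = (x, x+P_1(y), \ldots, x+P_{t-1}(y))$, the $j$-th coordinate of $\binom{\vec{P}(x,y)}{i}$ is $\binom{x+P_j(y)}{i}$, where we set $P_0 = 0$. Applying Vandermonde's identity to each coordinate yields
\begin{align*}
    \binom{x+P_j(y)}{i} = \sum_{k'=0}^{i}\binom{x}{k'}\binom{P_j(y)}{i-k'}.
\end{align*}
Since $P_j$ has degree at most $d$, the polynomial $\binom{P_j(y)}{i-k'}$ has degree at most $(i-k')d$ in $y$, so it can be expanded uniquely as $\binom{P_j(y)}{i-k'} = \sum_{l'=0}^{(i-k')d} c_{j,k',l'} \binom{y}{l'}$ for scalars $c_{j,k',l'}$.

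Substituting and collecting, one obtains
\begin{align*}
    \binom{x+P_j(y)}{i} = \sum_{k'=0}^{i}\sum_{l'=0}^{(i-k')d} c_{j,k',l'}\binom{x}{k'}\binom{y}{l'},
\end{align*}
so the coefficient of $\binom{x}{k}\binom{y}{l}$ in the $j$-th coordinate of $\binom{\vec{P}(x,y)}{i}$ is exactly $c_{j,k,l}$ (using $k \leqslant i$ and $l \leqslant (i-k)d$ to ensure this coefficient indeed appears in the range of summation).

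Next, I would apply the same expansion to $\binom{\vec{P}(x,y)}{i-k}$: its $j$-th coordinate is
\begin{align*}
    \binom{x+P_j(y)}{i-k} = \sum_{k''=0}^{i-k}\binom{x}{k''}\binom{P_j(y)}{i-k-k''}.
\end{align*}
The ``coefficient of $\binom{y}{l}$'' means the coefficient of $\binom{x}{0}\binom{y}{l}$, which isolates the $k''=0$ summand, namely $\binom{P_j(y)}{i-k}$, and within that polynomial the coefficient of $\binom{y}{l}$ is precisely $c_{j,k,l}$ by the same expansion as before. Comparing the two extractions gives equality in each coordinate $j$, which is exactly the claim.

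Nothing here looks like it will be an obstacle: the entire argument is a bookkeeping exercise once Vandermonde's identity has been applied coordinatewise, and the uniqueness of the binomial-basis expansion of polynomials in $y$ does the rest. The only subtlety worth double-checking is the degree bound $l \leqslant (i-k)d$, which guarantees that the extracted coefficient lies in the valid range and is therefore the same when computed from either side.
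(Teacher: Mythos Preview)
Your proof is correct and follows essentially the same approach as the paper: both apply the Vandermonde identity to split $\binom{\vec{P}(x,y)}{i}$ along the decomposition $\vec{P}(x,y)=\vec{1}\,x+\vec{P}(0,y)$, then compare the bivariate binomial coefficients. The only cosmetic difference is that you work coordinatewise while the paper works vectorwise, but the arguments are identical in substance.
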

\begin{proof}
If $\vec{P}$ is of the form (\ref{a special form of the integral polynomial map}), then $\vec{P}(x,y)=\vec{1}x + \vec{P}(0,y)$. Using the property
\begin{align}\label{identity for binomial coefficients}
    {{a_1+...+a_l}\choose{i}}=\sum_{\substack{0\leqslant k_1, ..., k_l\leqslant i,\\ k_1+...+k_l = i}}{{a_1}\choose{k_1}}...{{a_l}\choose{k_l}},
\end{align}
which can be proved by looking at two ways in which one picks $i$ elements from a union of disjoint sets of size $a_1$, ..., $a_l$ respectively, we can rewrite
\begin{align*}
    {{\vec{P}(x,y)}\choose{i}}={{\vec{1}x+\vec{P}(0,y)}\choose{i}}=  \sum_{n=0}^i{{x}\choose{i-n}}{{\vec{P}(0,y)}\choose{n}}=\sum_{n=0}^i\sum_{l=1}^{n}{{x}\choose{i-n}}{{y}\choose{l}}\vec{v}_{l,n},
\end{align*}
for some $\vec{v}_{l,n}\in\ZZ^t$. Replacing $i$ by $i-k$, we obtain
\begin{align*}
    {{\vec{P}(x,y)}\choose{i-k}}=\sum_{n=0}^{i-k}\sum_{l=1}^{nd}{{x}\choose{i-k-n}}{{y}\choose{l}}\vec{v}_{l,n}.
\end{align*}
The lemma follows by taking $n=i-k$ in both cases and fixing $1\leqslant l\leqslant (i-k)d$.
\end{proof}

\begin{lemma}\label{inclusion in polynomial spaces of special form}
Suppose $\vec{P}$ is an integral polynomial map of the form (\ref{a special form of the integral polynomial map}). Then $\PP_{i,j}\subseteq\PP_{i+1,j+1}$ for all $i,j\in\NN_+$.
\end{lemma}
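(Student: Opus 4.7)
The plan is to apply Lemma \ref{relating coefficients of different monomials} twice in order to transport each generator of $\PP_{i,j}$ into $\PP_{i+1,j+1}$ by simultaneously raising both the binomial index $l$ and the $x$-degree by one.

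First I would rewrite $\PP_{i,j}$ in terms of coefficient vectors: because the polynomials ${{x}\choose{a}}{{y}\choose{b}}$ are linearly independent as functions on $\ZZ^2$, the span appearing in the definition of $\PP_{i,j}$ coincides with the span of the coefficient vectors of ${{x}\choose{a}}{{y}\choose{b}}$ inside ${{\vec{P}(x,y)}\choose{l}}$ taken over all $1\leqslant l\leqslant i$ and $a+b\geqslant j$. Thus it suffices to show that every such coefficient vector $\vec{v}$ lies in $\PP_{i+1,j+1}$.

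Fix $\vec{v}$ arising as the coefficient of ${{x}\choose{a}}{{y}\choose{b}}$ in ${{\vec{P}(x,y)}\choose{l}}$, where $1\leqslant l\leqslant i$ and $a+b\geqslant j$. If $a>l$ then $\vec{v}=0$ and the conclusion is trivial, so assume $a\leqslant l$. Apply Lemma \ref{relating coefficients of different monomials} with lemma-variables $(i,k,l)\mapsto(l,a,b)$ to identify $\vec{v}$ with the coefficient of ${{y}\choose{b}}$ in ${{\vec{P}(x,y)}\choose{l-a}}$. Now apply the lemma again in the reverse direction, this time with $(i,k,l)\mapsto(l+1,a+1,b)$, noting that $(l+1)-(a+1)=l-a$: this identifies the same vector with the coefficient of ${{x}\choose{a+1}}{{y}\choose{b}}$ in ${{\vec{P}(x,y)}\choose{l+1}}$. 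Since $1\leqslant l+1\leqslant i+1$ and $(a+1)+b\geqslant j+1$, this exhibits $\vec{v}$ as a generator of $\PP_{i+1,j+1}$.

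The only bookkeeping point is the degree condition $b\leqslant(l-a)d$ in Lemma \ref{relating coefficients of different monomials}; if it fails then both of the coefficients being compared are automatically $0$, so the identity $\vec{v}=0\in\PP_{i+1,j+1}$ still holds trivially. I do not expect a substantive obstacle, since the entire argument rides on the structural identity $\vec{P}(x,y)=\vec{1}\,x+\vec{P}(0,y)$ already exploited in the preceding lemma; the step requiring care is simply keeping track of the index shifts and verifying that raising $a$ and $l$ in lockstep preserves all required inequalities.
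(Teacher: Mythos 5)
Your proof is correct and follows essentially the same route as the paper: the paper also deduces the inclusion by observing (via Lemma \ref{relating coefficients of different monomials}, applied in both directions as you do) that the coefficient of ${{x}\choose{k}}{{y}\choose{l}}$ in ${{\vec{P}(x,y)}\choose{i}}$ equals that of ${{x}\choose{k+1}}{{y}\choose{l}}$ in ${{\vec{P}(x,y)}\choose{i+1}}$, and then noting that $k+l\geqslant j$ forces $k+1+l\geqslant j+1$. Your extra bookkeeping about the degenerate cases (both coefficients vanishing) is sound and merely makes explicit what the paper leaves implicit.
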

\begin{proof}
By Lemma \ref{relating coefficients of different monomials}, the coefficient $\vec{v}$ of ${{x}\choose{k}}{{y}\choose{l}}$ in ${{\vec{P}(x,y)}\choose{i}}$ is the same as the coefficient of ${{x}\choose{k+1}}{{y}\choose{l}}$ in ${{\vec{P}(x,y)}\choose{i+1}}$ whenever $l\leqslant (i-k)d$. If $k+l\geqslant j$, then $k+1+l\geqslant j+1$, and so if $\vec{v}\in\PP_{i,j}$, then $\vec{v}\in\PP_{i+1,j+1}$.
\end{proof}

We remark that the property $\PP_{i,j}\subset\PP_{i+1,j+1}$ for all $i,j\in\NN_+$ also holds for an arbitrary integral polynomial map $\vec{P}$ that satisfies the filtration condition and $\vec{1}\in\PP_{1,1}$. However, the advantage of assuming (\ref{a special form of the integral polynomial map}) comes from the fact that we do not need $\vec{P}$ to satisfy the filtration condition in this case.

Lemma \ref{inclusion in polynomial spaces of special form} has one important corollary that we shall use several times.
\begin{corollary}\label{horizontal characters reduce to i-th level characters}
Let $\vec{P}$ be of the form (\ref{a special form of the integral polynomial map}), $i,j\in\NN_+$, $\vec{v}\in\PP_{i,j}\cap\ZZ^t$, and let $\eta: G^P\to\RR$ be a horizontal character that vanishes on $G_{j+1}^P$. Then the map
\begin{align*}
    \xi: G_{i}&\to\RR\\
    g &\mapsto \eta(g^{\vec{v}})
\end{align*}
is an $i$-th level character.
\end{corollary}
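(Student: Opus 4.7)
My plan is to verify the four defining properties of an $i$-th level character for $\xi$: continuity, integer-valuedness on $\Gamma \cap G_i$, being a group homomorphism $G_i \to \RR$, and vanishing on $G_{i+1}$ together with the commutator subgroups $[G_a, G_{i-a}]$ for $0 \leqslant a \leqslant i$. Continuity is immediate since $\xi$ is a composition of continuous maps. Integer-valuedness is automatic as well: for $g \in \Gamma \cap G_i$, the integrality of $\vec{v}$ ensures $g^{\vec{v}} \in \Gamma^t \cap G^P = \Gamma^P$, on which $\eta$ takes integer values by hypothesis. Vanishing on $G_{i+1}$ drops out of Lemma \ref{inclusion in polynomial spaces of special form}, since $\vec{v} \in \PP_{i,j} \subseteq \PP_{i+1,j+1} \subseteq \Q_{i+1,j+1}$ places $g^{\vec{v}}$ in $G^P_{j+1}$ whenever $g \in G_{i+1}$, and $\eta$ vanishes there.

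For the homomorphism property, I would apply a componentwise Hall--Petresco expansion $(g_1 g_2)^{\vec{v}} = g_1^{\vec{v}} g_2^{\vec{v}} \prod_{k \geqslant 2} c_k(g_1, g_2)^{\binom{\vec{v}}{k}}$, where $c_k(g_1, g_2) \in G_{ki}$ is an iterated commutator built from $g_1, g_2 \in G_i$. Expressing $\binom{\vec{v}}{k}$ as a polynomial combination of componentwise powers $\vec{v}^m$ for $1 \leqslant m \leqslant k$, one has $\vec{v}^m \in \Q_{mi, mj} \subseteq \Q_{ki, j+1}$ for $m \geqslant 2$ (using $mj \geqslant j+1$ together with part (iv) of Lemma \ref{properties of polynomial spaces}), while for $m = 1$ iterating Lemma \ref{inclusion in polynomial spaces of special form} yields $\vec{v} \in \PP_{ki, j + (k-1)i} \subseteq \Q_{ki, j+1}$. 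In either case $c_k^{\vec{v}^m} \in G^P_{j+1}$, so $\eta$ annihilates the entire correction factor, and $\xi$ is additive.

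For vanishing on $[G_a, G_{i-a}]$, I would use (\ref{Baker-Campbell-Hausdorff 2}) to expand $[g_1, g_2]^{\vec{v}}$ in the form $[g_1^{\vec{u}_1}, g_2^{\vec{u}_2}] \cdot \prod_\alpha g_\alpha^{-Q_\alpha(\vec{u}_1, \vec{u}_2)}$ for a factorization $\vec{v} = \vec{u}_1 \cdot \vec{u}_2$ placing $g_1^{\vec{u}_1}$ and $g_2^{\vec{u}_2}$ inside $G^P$ at filtration levels summing to at least $j+1$. The first factor, being a commutator inside $G^P$, is annihilated by the horizontal character $\eta$, while the BCH corrections $g_\alpha^{Q_\alpha(\vec{u}_1, \vec{u}_2)}$ can be confined to $G^P_{j+1}$ by analysis analogous to the homomorphism step. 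The extremal cases $a \in \{0, i\}$ go through cleanly with $\vec{u}_1 = \vec{1}$ and $\vec{u}_2 = \vec{v}$, using $\vec{1} \in \PP_{a,a}$ from iterating Lemma \ref{inclusion in polynomial spaces of special form}. The main obstacle is the intermediate range $0 < a < i$, where no single diagonal factorization of $\vec{v}$ suffices; here one must exploit the explicit form of $\PP_{i,j}$ under (\ref{a special form of the integral polynomial map}) to decompose $\vec{v}$ into a sum of products lying across a pair of spaces $\Q_{a, s_1}$ and $\Q_{i-a, s_2}$ with $s_1 + s_2 \geqslant j+1$.
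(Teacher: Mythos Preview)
Your approach is far more elaborate than the paper's, which dispatches the entire corollary in four sentences: $\xi$ is ``straightforwardly'' a homomorphism, sends $\Gamma_i$ to $\ZZ$ by integrality of $\vec v$, ``vanishes on $[G,G]\cap G_i$ because its codomain is abelian'', and vanishes on $G_{i+1}$ via Lemma~\ref{inclusion in polynomial spaces of special form}. Your Hall--Petresco expansion for the homomorphism property is correct but heavier than needed: every correction term $c_k$ already lies in $G_{ki}\subseteq G_{i+1}$ for $k\geqslant 2$, so iterating Lemma~\ref{inclusion in polynomial spaces of special form} places $c_k^{\vec v}$ into $G^P_{j+1}$ directly, and the same containment absorbs all the higher-power pieces of $\binom{\vec v}{k}$ without the case split on $m$.

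Your plan for $[G_a,G_{i-a}]$ with $0<a<i$ contains a genuine error. If you decompose $\vec v$ as a sum of products from $\Q_{a,s_1}\cdot\Q_{i-a,s_2}$ with $s_1+s_2\geqslant j+1$, then Lemma~\ref{properties of polynomial spaces}(vi) forces $\vec v\in\Q_{i,j+1}$, so $g^{\vec v}\in G^P_{j+1}$ for every $g\in G_i$ and $\xi\equiv 0$ --- the corollary becomes vacuous rather than proved. The mechanism the paper is invoking (however tersely) is different: since $\eta:G^P\to\RR$ is a homomorphism to an abelian group, it annihilates $[G^P,G^P]$ outright, with no filtration-depth requirement. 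Thus for the leading commutator $[g_1^{\vec u_1},g_2^{\vec u_2}]$ one only needs $g_1^{\vec u_1},g_2^{\vec u_2}\in G^P$, i.e.\ $\vec u_1\in\Q_{a,1}$ and $\vec u_2\in\Q_{i-a,1}$; the filtration depth of the factors is irrelevant for this term. The paper's phrase ``because its codomain is abelian'' compresses exactly this observation (that $\eta$ kills commutators in $G^P$), though admittedly the paper does not spell out why $[g_1,g_2]^{\vec v}$ lands in $[G^P,G^P]$ modulo $G^P_{j+1}$. Your instinct that this intermediate case needs care is right; your proposed resolution overshoots.
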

\begin{proof}
It is straightforward to see that $\xi$ is a group homomorphism. Since $\vec{v}$ has integer entries, the element $g^{\vec{v}}\in\Gamma^P$ for any $g\in\Gamma_i$, and so $\xi(\Gamma_i)\in\ZZ$. It vanishes on $[G,G]\cap G_i$ because its codomain is abelian, and to show that it is an $i$-th level character, it remains to show that $\xi|_{G_{i+1}}=0$. Suppose $g\in G_{i+1}$. From Lemma \ref{inclusion in polynomial spaces of special form}, we have that $\vec{v}\in\PP_{i+1,j+1}$, and so $g^{\vec{v}}\in G_{j+1}^P$. It follows that $\xi(g^{\vec{v}})=0$ from the fact that $\eta$ vanishes on $G_{j+1}^P$. 
\end{proof}


\section{An equidistribution result for $x,\; x+y,\; x+y^2,\; x+y+y^2$}\label{section on x, x+y, x+y^2, x+y+y^2}
The goal of the next few sections is to prove Theorem \ref{equidistribution theorem in the intro} for various configurations for which the theorem holds. We give them a name, so that it is easier to refer to them in later sections.
\begin{definition}[Equidistributing progressions]\label{progressions that equidistribute}
Let $\vec{P}=(P_1, ..., P_t)\in\QQ[\textbf{x}]^t$ be an integral polynomial map. We say that $\vec{P}$ \emph{equidistributes} if for each ${s, D\in\NN_+}$, $M>0$, a filtered nilmanifold $G/\Gamma$  of degree $s$ and complexity $M$, and a $p$-periodic, $A$-irrational sequence $g\in\poly(\ZZ,G_\bullet)$ satisfying $g(0)=1$, the sequence $g^P\in\poly(\ZZ^D,G^P_\bullet)$ is $o_{A\to\infty, M}(1)$-equidistributed on $G^P/\Gamma^P$.
\end{definition}


We start with a seemingly simple example
\begin{align}\label{presentation of x, x+y, x+y^2, x+y+y^2}
    \vec{P}(x,y) &= (x,\; x+y,\; x+y^2,\; x+y+y^2)\\
    \nonumber
    &= (x,\; x+y,\; x+y+2{{y}\choose{2}},\; x+2y+2{{y}\choose{2}}).
\end{align}
It is a special case of the progression discussed in Section \ref{section on x, x+Q(y), x+R(y), x+Q(y)+R(y)}. However, we analyze it separately to give a  concrete example of how our general argument works.

We start by obtaining the formulas for $\PP_{i,j}$ for various values of $i,j\in\NN_+$. From (\ref{presentation of x, x+y, x+y^2, x+y+y^2}), we deduce that
\begin{align*}
    \PP_{1,1}&=\Span\{(1,1,1,1), (0,1,1,2), (0,0,1,1)\}=\Span\{\vec{v}_1, \vec{v}_2, \vec{v}_3\}\\
    \PP_{1,2}&=\Span\{(0,0,1,1)\} = \Span\{\vec{v}_3\}\\
    \PP_{1,j}&= 0 \quad {\rm{for}} \quad j\geqslant 3.
\end{align*}
where
\begin{align*}
    \vec{v}_1 = (1,1,1,1), \quad \vec{v}_2 = (0,1,1,2), \quad \vec{v}_3 = (0,0,1,1)\quad {\rm{and}} \quad \vec{v}_4 = (0,0,0,1).
\end{align*}
Our next goal is to deduce expressions for $\PP_{i,j}$ whenever $i>1$.
\begin{lemma}\label{structure of P for x, x+y, x+y^2, x+y+y^2}
For $i>1$, the following holds:
\[ \PP_{i,j}=\begin{cases}
\RR^4,\; &1\leqslant j\leqslant i\\
\Span\{\vec{v}_3, \vec{v}_4\},\; &i+1\leqslant j\leqslant 2i-1\\
\Span\{\vec{v}_3\},\; &j=2i\\
0,\; &j>2i.
\end{cases}
\]
\end{lemma}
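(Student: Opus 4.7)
The plan is to establish matching upper and lower bounds on $\PP_{i,j}$ for each of the four regions of $(i,j)$ described in the statement. For the upper bounds, three degree observations on the polynomial map $\vec{P}(x,y) = \vec{1}x + (0, y, y^2, y+y^2)$ will do most of the work. First, each $\binom{\vec{P}(x,y)}{l}$ has total degree at most $2l$, so $\PP_{i,j} = 0$ whenever $j > 2i$. Second, the first coordinate $\binom{x}{l}$ and the second coordinate $\binom{x+y}{l}$ each have total degree $l$ in the binomial basis, so for $j > l$ the degree-$j$ part of $\binom{\vec{P}(x,y)}{l}$ vanishes in the first two coordinates; taking $l \leqslant i < j$ gives $\PP_{i,j}\subseteq\Span\{\vec{v}_3,\vec{v}_4\}$ for $j > i$. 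Third, when $j = 2i$, only the monomial $\binom{y}{2i}$ can contribute to the degree-$2i$ part, and only from $l = i$; direct computation of the leading $y^{2i}$ coefficients in $\binom{y^2}{i}$ and $\binom{y+y^2}{i}$ shows this coefficient equals $\tfrac{(2i)!}{i!}\vec{v}_3$, giving $\PP_{i,2i}\subseteq\Span\{\vec{v}_3\}$.

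For the lower bounds, I would exhibit specific monomials in $\binom{\vec{P}(x,y)}{i}$ whose coefficients realize each basis vector $\vec{v}_k$. Applying Lemma \ref{relating coefficients of different monomials} together with the Vandermonde-type identity (\ref{identity for binomial coefficients}) to the decomposition $\vec{P}(x,y) = \vec{1}x + \vec{P}(0,y)$ identifies the coefficient of $\binom{x}{i}$ as $\vec{v}_1$ and that of $\binom{x}{i-1}\binom{y}{1}$ as $\vec{v}_2$ (both of total degree $i$), while the coefficient of $\binom{y}{2i}$ is the nonzero multiple of $\vec{v}_3$ already noted. To produce $\vec{v}_4$ for $i\geqslant 2$, I would examine the coefficient of $\binom{y}{2i-1}$, which vanishes in the first two coordinates since both $\binom{x}{i}$ and $\binom{x+y}{i}$ have total degree only $i < 2i-1$. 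The third and fourth entries can then be compared using the observation that $\binom{y^2}{i}$ has no $y^{2i-1}$ term (being even in $y$), whereas $\binom{y+y^2}{i}$ has $y^{2i-1}$-coefficient equal to $\tfrac{1}{(i-1)!}$; after accounting for the shared contribution from the $\binom{y}{2i}$ basis element in the conversion between the power basis and the binomial basis, the two entries differ by a nonzero amount, so the coefficient vector has a nontrivial $\vec{v}_4$-component. Modulo the $\vec{v}_3$ already in $\PP_{i,2i}\subseteq\PP_{i,2i-1}$, this places $\vec{v}_4$ in $\PP_{i,2i-1}$.

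Assembling these inclusions with the monotonicity $\PP_{i,j+1}\subseteq\PP_{i,j}$ then closes every case: the four basis vectors contained in $\PP_{i,i}$ together with the upper bound $\PP_{i,j}\subseteq\RR^4$ give $\PP_{i,j} = \RR^4$ for $1\leqslant j\leqslant i$; the inclusion $\{\vec{v}_3,\vec{v}_4\}\subseteq\PP_{i,2i-1}$ together with $\PP_{i,j}\subseteq\Span\{\vec{v}_3,\vec{v}_4\}$ covers $i+1\leqslant j\leqslant 2i-1$; and $\vec{v}_3\in\PP_{i,2i}$ covers $j = 2i$. The main obstacle is the subleading computation producing $\vec{v}_4$, where one must carefully track the conversion between the power and binomial bases to compare the $\binom{y}{2i-1}$ coefficients of $\binom{y^2}{i}$ and $\binom{y+y^2}{i}$. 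An alternative that reduces the computational burden is to establish the lemma by direct expansion only at the base case $i = 2$ and propagate it to larger $i$ using Lemma \ref{inclusion in polynomial spaces of special form}, which supplies the inclusion $\PP_{i,j}\subseteq\PP_{i+1,j+1}$.
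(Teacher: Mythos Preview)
Your main argument is correct and essentially identical to the paper's: both proofs use the same degree considerations for the upper bounds, the same monomials $\binom{x}{i}$, $\binom{x}{i-1}y$, $\binom{y}{2i}$, $\binom{y}{2i-1}$ for the lower bounds, and the same mechanism for producing $\vec v_4$ (your parity observation that $\binom{y^2}{i}$ is even in $y$ is exactly the content of the paper's decomposition $\binom{y+y^2}{i}=\binom{y^2}{i}+y\binom{y^2}{i-1}+R(y)$).

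One caution about the alternative you sketch at the end: propagating from $i=2$ via Lemma~\ref{inclusion in polynomial spaces of special form} does not give $\vec v_4\in\PP_{i,2i-1}$ for $i\geqslant 3$. The inclusion $\PP_{i-1,j}\subseteq\PP_{i,j+1}$ shifts both indices by $1$, whereas the index $2i-1$ grows by $2$ as $i$ increases; from $\PP_{2,3}=\Span\{\vec v_3,\vec v_4\}$ you only get $\vec v_4\in\PP_{3,4}$, not $\vec v_4\in\PP_{3,5}$. So the subleading computation at $\binom{y}{2i-1}$ really does need to be done for each $i$, and your main argument is the right one.
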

\begin{proof}
The case $j>2i$ follows from the fact that the polynomial map ${{\vec{P}(x,y)}\choose {i}}$ has degree $2i$. For the case $j=2i$, note that the only monomial of ${{\vec{P}(x,y)}\choose {i}}$ of degree $2i$ is ${{y}\choose{2i}}$, which comes from the term $y^2$ in $\vec{P}(x,y)$, and one can verify directly that the coefficient of ${{y}\choose{2i}}$ in ${{\vec{P}(x,y)}\choose {i}}$ is $(0,0,\frac{(2i)!}{i!}, \frac{(2i)!}{i!})$. Consequently, $\PP_{i,2i}=\Span\{\vec{v}_3\}$.

In the case $i+1\leqslant j\leqslant 2i-1$, we have
\begin{align}\label{inclusion 1 in the proof of x, x+y, x+y^2, x+y+y^2}
    \PP_{i,j}\subseteq\{{0}\}\times\{0\}\times\RR\times\RR
\end{align}
because the polynomials ${{x}\choose{i}}$ and ${{x+y}\choose{i}}$ both have degree $i$, and we claim that (\ref{inclusion 1 in the proof of x, x+y, x+y^2, x+y+y^2}) is an equality. Since $\PP_{i,2i}\subseteq\PP_{i,j}$, we know that $\PP_{i,j}$ contains $\vec{v}_3$, and it remains to show that $\PP_{i,j}$ contains a vector of the form $(0,0,a,b)$ for some $a \neq b$. To this goal, we look at the coefficient of ${{y}\choose{2i-1}}$ in ${{\vec{P}(x,y)}\choose {i}}$. 
Note that
\begin{align*}
    {{y+y^2}\choose{i}}={{y^2}\choose{i}}+y{{y^2}\choose{i-1}}+R(y),
\end{align*}
where $R(y)$ has degree $2i-2$. In particular, the polynomial $y{{y^2}\choose{i-1}}$ has degree $2i-1$, and so has a nonzero coefficient at ${{y}\choose{2i-1}}$. Therefore the coefficient of ${{y}\choose{2i-1}}$ in ${{\vec{P}(x,y)}\choose {i}}$ is of the form $(0,0,a,b)$ for distinct integers $a\neq b$, implying that $\PP_{i,j}=\{0\}\times\{0\}\times\RR\times\RR$, as claimed.

To handle the case $1\leqslant j\leqslant i$, we look at $\PP_{i,i}$ and note that $\PP_{i,j}\supseteq\PP_{i,i}$ for these values of $j$ by Lemma \ref{properties of polynomial spaces}. By the previous case, we already know that $\PP_{i,j}\supseteq \{0\}\times\{0\}\times\RR \times\RR$. The space $\PP_{i,i}$ moreover contains $\vec{v}_1$, which is the coefficient of ${{x}\choose{i}}$ in ${{\vec{P}(x,y)}\choose {i}}$, and $\vec{v}_2$, which is the coefficient of ${{x}\choose{i-1}}y$ by Lemma \ref{relating coefficients of different monomials} and (\ref{presentation of x, x+y, x+y^2, x+y+y^2}). Thus $\PP_{i,i}$ is all of $\RR^4$.
\end{proof}

Having established the structure of $\PP_{i,j}$, it is straightforward to establish the following lemma.
\begin{corollary}\label{x, x+y, x+y^2, x+y+y^2 satisfies the filtration condition}
The polynomial map $\vec{P}$ satisfies the filtration condition. 
\end{corollary}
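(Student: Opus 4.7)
The plan is to verify the inclusion $\PP_{i_1,j_1}\cdot\PP_{i_2,j_2}\subseteq \PP_{i_1+i_2, j_1+j_2}$ directly, using the explicit description of the spaces $\PP_{i,j}$ given by Lemma \ref{structure of P for x, x+y, x+y^2, x+y+y^2} together with the formulas for $\PP_{1,j}$ computed just above it. The upshot of those calculations is that every nontrivial $\PP_{i,j}$ is one of only four subspaces of $\RR^4$: all of $\RR^4$ (in the range $1\leqslant j\leqslant i$ when $i\geqslant 2$), the three-dimensional span $\Span\{\vec{v}_1,\vec{v}_2,\vec{v}_3\}$ (only for $(i,j)=(1,1)$), the two-dimensional span $\Span\{\vec{v}_3,\vec{v}_4\}$ (in the ``middle'' range $i+1\leqslant j\leqslant 2i-1$), and the one-dimensional span $\Span\{\vec{v}_3\}$ (on the top edge $j=2i$). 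It therefore suffices to check the product inclusion on generators.

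The verification rests on a few elementary identities for the componentwise product on $\RR^4$: since $\vec{v}_3=(0,0,1,1)$ and $\vec{v}_4=(0,0,0,1)$ both vanish in the first two coordinates, one has $\RR^4\cdot\vec{v}_3\subseteq\Span\{\vec{v}_3,\vec{v}_4\}$ and $\RR^4\cdot\vec{v}_4\subseteq\Span\{\vec{v}_4\}$, while $\vec{v}_3\cdot\vec{v}_3=\vec{v}_3$, $\vec{v}_3\cdot\vec{v}_4=\vec{v}_4$ and $\vec{v}_4\cdot\vec{v}_4=\vec{v}_4$. Multiplication by $\vec{v}_1=\vec{1}$ is the identity. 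Using these, I would split into cases according to which of the three types (top, middle, edge) each factor $\PP_{i_k,j_k}$ belongs to, and check that $j_1+j_2$ falls in a range making the target $\PP_{i_1+i_2,j_1+j_2}$ contain the computed product. For instance, if both factors are of middle type, then $j_1+j_2$ satisfies $i_1+i_2+2\leqslant j_1+j_2\leqslant 2(i_1+i_2)-2$, which lies in the middle range for $i_1+i_2$, and the product sits in $\Span\{\vec{v}_3,\vec{v}_4\}$ as required.

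The only delicate point, and the place where one must look carefully, is the top edge $j_1+j_2=2(i_1+i_2)$, where the target collapses to the single line $\Span\{\vec{v}_3\}$. Because $j_k\leqslant 2i_k$ always, equality in $j_1+j_2\leqslant 2(i_1+i_2)$ forces $j_k=2i_k$ for both $k=1,2$, so both factors are $\Span\{\vec{v}_3\}$ and the product is $\Span\{\vec{v}_3\}$, matching the target. In every other case the target is strictly larger than what the elementary identities above produce, so the inclusion is automatic. This handful of subcases exhausts the verification and gives the filtration condition.
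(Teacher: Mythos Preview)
Your proposal is correct and takes essentially the same approach as the paper, which simply says the proof ``proceeds by verifying that $\PP_{i_1,j_1}\cdot\PP_{i_2,j_2}\subseteq\PP_{i_1+i_2, j_1+j_2}$ for all $i_1,i_2,j_1,j_2\geqslant 1$ on a case-by-case basis using Lemma \ref{structure of P for x, x+y, x+y^2, x+y+y^2}'' and leaves the details to the reader. You have filled in those details cleanly, and the key reductions --- that $j_1+j_2>i_1+i_2$ forces at least one factor to lie in $\Span\{\vec{v}_3,\vec{v}_4\}$, and that the edge case $j_1+j_2=2(i_1+i_2)$ forces both factors to equal $\Span\{\vec{v}_3\}$ --- are exactly what is needed.
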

\begin{proof}
The proof proceeds by verifying that $\PP_{i_1,j_1}\cdot\PP_{i_2,j_2}\subseteq\PP_{i_1+i_2, j_1+j_2}$ for all ${i_1,i_2,j_1,j_2\geqslant 1}$ on a case-by-case basis using Lemma \ref{structure of P for x, x+y, x+y^2, x+y+y^2}. The details are rather tedious and unsophisticated, therefore we leave them to the reader.
\end{proof}

Using Lemma \ref{structure of P for x, x+y, x+y^2, x+y+y^2}, we get an explicit presentation for $G^P_j$ which tells us how this subgroup distinguishes from $G^P_{j+1}$.
\begin{lemma}\label{structure of G^P_j for x, x+y, x+y^2, x+y+y^2}
For $j=1$, we have
\begin{align*}
    G^P_1 =\langle G^P_{2}, h^{\vec{v}_1}, h^{\vec{v}_2}: h\in G_1 \rangle.
\end{align*}
If $j\geqslant 1$ is even, then
\begin{align*}
    G^P_j =\langle G^P_{j+1}, g^{\vec{v}_3}, h^{\vec{v}_1}, h^{\vec{v}_2}: g\in G_{\frac{j}{2}}, h\in G_j\rangle.
\end{align*}
If $j\geqslant 3$ is odd, then 
\begin{align*}
    G^P_j =\langle G^P_{j+1}, g^{\vec{v}_4}, h^{\vec{v}_1}, h^{\vec{v}_2}: g\in G_{\frac{j+1}{2}}, h\in G_j\rangle.
\end{align*}
\end{lemma}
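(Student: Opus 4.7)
The plan is to apply Lemma \ref{structure of G^P_j}, whose hypothesis (the filtration condition) is satisfied thanks to Corollary \ref{x, x+y, x+y^2, x+y+y^2 satisfies the filtration condition}. That lemma reduces the problem to the following purely linear-algebraic task: for each $i\in\NN_+$, choose a basis for $\PP_{i,j+1}$ and extend it to a basis for $\PP_{i,j}$; the ``new'' basis vectors, together with the corresponding $G_i$, generate $G^P_j$ modulo $G^P_{j+1}$. So the entire argument consists of running through the cases of Lemma \ref{structure of P for x, x+y, x+y^2, x+y+y^2} and recording, for each $i$, the value(s) of $j$ at which $\PP_{i,j+1}\subsetneq \PP_{i,j}$ and which vectors appear.

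Reading off the description of $\PP_{i,j}$ in Lemma \ref{structure of P for x, x+y, x+y^2, x+y+y^2}, the strict drops occur in exactly three places. For $i=1$ they are at $j=1$ (losing $\vec{v}_1,\vec{v}_2$) and at $j=2$ (losing $\vec{v}_3$). For $i\geqslant 2$ they are at $j=i$ (where $\PP_{i,i}=\RR^4$ drops to $\Span\{\vec{v}_3,\vec{v}_4\}$, so the new vectors may be taken to be $\vec{v}_1,\vec{v}_2$), at $j=2i-1$ (losing $\vec{v}_4$), and at $j=2i$ (losing $\vec{v}_3$). In each case the basis extension can be taken to be the subset of $\{\vec{v}_1,\vec{v}_2,\vec{v}_3,\vec{v}_4\}$ just named, since these four vectors form a basis of $\RR^4$.

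Now I would simply collect contributions for each $j$. For $j=1$, only $i=1$ produces new vectors, giving $\vec{v}_1$ and $\vec{v}_2$ with $h\in G_1$; no higher $i$ contributes because $\PP_{i,1}=\PP_{i,2}=\RR^4$ for $i\geqslant 2$. For even $j\geqslant 2$, the vectors $\vec{v}_1,\vec{v}_2$ arise exactly when $i=j$ (since $j$ even forces $i\geqslant 2$), contributing $h\in G_j$; the vector $\vec{v}_4$ never arises (it would require $2i-1=j$, impossible for even $j$); and $\vec{v}_3$ arises from the unique $i$ with $2i=j$ for $j\geqslant 4$, and from $i=1$ when $j=2$, in both cases giving $g\in G_{j/2}$. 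For odd $j\geqslant 3$, $\vec{v}_1,\vec{v}_2$ again come from $i=j$ with $h\in G_j$; $\vec{v}_4$ comes from the unique $i=(j+1)/2$ with $g\in G_{(j+1)/2}$; and $\vec{v}_3$ never appears, since $2i=j$ has no integer solution for odd $j\geqslant 3$ and the $i=1,j=2$ branch is irrelevant. Substituting these collections into Lemma \ref{structure of G^P_j} yields the three formulas in the statement.

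The only part requiring real care is making sure no $i$ is missed and that the exceptional role of $i=1$ (where $\vec{v}_4\notin \PP_{1,j}$ for any $j$, and the transitions happen at $j=1,2$ rather than at $j=i,2i-1,2i$) is handled separately from the uniform pattern for $i\geqslant 2$. This is essentially bookkeeping rather than a substantive obstacle; the computation is routine once Lemma \ref{structure of P for x, x+y, x+y^2, x+y+y^2} and Lemma \ref{structure of G^P_j} are available.
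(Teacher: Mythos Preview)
Your proposal is correct and follows exactly the approach the paper takes: combine Corollary \ref{x, x+y, x+y^2, x+y+y^2 satisfies the filtration condition}, Lemma \ref{structure of G^P_j}, and the structural description of $\PP_{i,j}$ from Lemma \ref{structure of P for x, x+y, x+y^2, x+y+y^2}. The paper's proof simply cites these three results, whereas you have written out the case analysis explicitly; the content is identical.
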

\begin{proof}
The lemma follows from combining Lemmas \ref{x, x+y, x+y^2, x+y+y^2 satisfies the filtration condition} and \ref{structure of G^P_j} with the structural information on $\PP_{i,j}$ that we obtain from Lemma \ref{structure of P for x, x+y, x+y^2, x+y+y^2}.
\end{proof}
Having established the structure of the subgroups $G^P_j$, we are ready to prove that $\vec{P}$ equidistributes.
\begin{theorem}[$x,\; x+y,\; x+y^2,\; x+y+y^2$ equidistributes]\label{equidistribution of x, x+y, x+y^2, x+y+y^2}
Let $G/\Gamma$ be a filtered nilmanifold of degree $s$ and complexity $M$. Suppose $g\in\poly(\ZZ,G_\bullet)$ is $p$-periodic, $A$-irrational, and satisfies $g(0)=1$. Then the sequence $g^P\in\poly(\ZZ^2,G^P_\bullet)$ is $O_{M}(A^{-c_M})$-equidistributed on $G^P/\Gamma^P$ for some $c_M>0$.
\end{theorem}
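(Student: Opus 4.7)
The plan is to argue by contrapositive. Suppose $g^P$ fails to be $O_M(A^{-c_M})$-equidistributed for a suitable $c_M > 0$; applying Theorem~\ref{equidistribution theorem} to $g^P$ on $G^P/\Gamma^P$ (a nilmanifold of complexity $O_M(1)$ by the construction of Section~\ref{section on Leibman nilmanifold}) produces a nontrivial horizontal character $\eta: G^P \to \RR$ of modulus $|\eta| \ll_M A^{C_M}$ such that $\eta \circ g^P$ is constant modulo $\ZZ$. I will show that this contradicts the $A$-irrationality of $g$, so choosing $c_M = 1/C_M$ will then give the theorem.

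Let $j^* := \max\{j \geq 1 : \eta|_{G^P_j} \neq 0\}$, which is positive and finite since $\eta$ is nontrivial. Then $\eta$ vanishes on $G^P_{j^*+1}$ and also on $[G^P, G^P]$ (being $\RR$-valued). Using Lemma~\ref{taylor expansion} to write $g^P(x,y) = \prod_\textbf{j} h_\textbf{j}^{\binom{(x,y)}{\textbf{j}}}$ and comparing with the direct expansion $g^P(x,y) = \prod_i g_i^{\binom{\vec{P}(x,y)}{i}}$ modulo $[G^P, G^P]$, one gets $h_\textbf{j} \equiv \prod_i g_i^{\vec{v}_{i, \textbf{j}}}$, where $\vec{v}_{i, \textbf{j}} \in \PP_{i, |\textbf{j}|}$ is the coefficient of $\binom{x}{a}\binom{y}{b}$ in $\binom{\vec{P}(x,y)}{i}$. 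Consequently, the constancy of $\eta \circ g^P$ modulo $\ZZ$ gives, for every $\textbf{j} = (a,b)$ with $a + b = j^*$,
\begin{equation*}
\sum_{i=1}^{s} \xi_{i, \textbf{j}}(g_i) \in \ZZ, \qquad \xi_{i, \textbf{j}}(g) := \eta(g^{\vec{v}_{i, \textbf{j}}}).
\end{equation*}
By Corollary~\ref{horizontal characters reduce to i-th level characters}, each $\xi_{i, \textbf{j}}$ is an $i$-th level character on $G$ of modulus $\ll_M A^{C_M}$.

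To extract a contradiction I isolate a single nontrivial constraint using Lemmas~\ref{structure of P for x, x+y, x+y^2, x+y+y^2} and \ref{structure of G^P_j for x, x+y, x+y^2, x+y+y^2}. Two reductions are immediate: for $i > j^*$ one has $\vec{v}_{i, \textbf{j}} \in \PP_{i, j^*} = \RR^4 \subseteq \PP_{i, j^*+1}$, so the term vanishes under $\eta$; for $i < \lceil j^*/2 \rceil$ the degree bound $\deg \binom{\vec{P}}{i} = 2i$ forces $\vec{v}_{i, \textbf{j}} = 0$. Within the surviving range $\lceil j^*/2 \rceil \leq i \leq j^*$, the explicit form of $\PP_{i, j^*}$ together with Lemma~\ref{structure of G^P_j for x, x+y, x+y^2, x+y+y^2} identifies the nontrivial part of $\eta|_{G^P_{j^*}}$ with a nontrivial level character on one of the basis vectors $\vec{v}_1, \vec{v}_2, \vec{v}_3, \vec{v}_4$; I then select $\textbf{j}_0$ tailored to that generator ($(j^*, 0)$ or $(j^* - 1, 1)$ for $\vec{v}_1$ or $\vec{v}_2$; $(0, j^*)$ for $\vec{v}_3$ or $\vec{v}_4$), and use Lemma~\ref{relating coefficients of different monomials} with an explicit expansion of $\binom{y^2}{k}$ and $\binom{y+y^2}{k}$ in the $\binom{y}{l}$-basis to compute the relevant coefficients. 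In the $\vec{v}_1, \vec{v}_2$ cases this gives $\eta(g_{j^*}^{\vec{v}_r}) \in \ZZ$ directly, since all other $i$'s contribute zero; in the $\vec{v}_3, \vec{v}_4$ cases a downward induction on $i$ is needed to cancel the intermediate $g_i$-contributions (which lie in $\Span\{\vec{v}_3, \vec{v}_4\}$) against previously-established integrality constraints. Either way, we produce a nontrivial $i_0$-th level character of modulus $\ll_M A^{C_M}$ sending $g_{i_0}$ into $\ZZ$, contradicting $A$-irrationality once $c_M$ is small enough that $A^{C_M} \leq A$.

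The main obstacle is the downward induction in the $\vec{v}_3, \vec{v}_4$ case: the intermediate coefficients $\vec{v}_{i, (0, j^*)}$ for $\lceil j^*/2 \rceil < i < j^*$ are explicit but nontrivial combinations of $\vec{v}_3, \vec{v}_4$, and one must verify at each stage of the induction that the relevant linear combination has already been shown to lie in $\ZZ$. The simple structure of $\PP_{i, j^*}$ for this progression afforded by Lemma~\ref{structure of P for x, x+y, x+y^2, x+y+y^2} keeps the orchestration manageable, but the same scheme for the more general progressions of Sections~\ref{section on x, x+Q(y), x+R(y), x+Q(y)+R(y)} and \ref{section on x, x+Q(y), x+2Q(y), x+R(y), x+2R(y)} will require more bookkeeping and is the reason those sections are separated out.
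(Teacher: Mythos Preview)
Your contrapositive setup and the choice of the maximal $j^*$ with $\eta|_{G^P_{j^*}}\neq 0$ match the paper exactly, and the treatment of the $\vec{v}_1,\vec{v}_2$ generators via $\textbf{j}_0=(j^*,0)$ and $(j^*-1,1)$ is correct. But your plan for the $\vec{v}_3,\vec{v}_4$ case both misidentifies the difficulty and, as written, does not close.

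First, no downward induction is needed. For the intermediate range $\lceil j^*/2\rceil < i < j^*$ the coefficient $\vec{v}_{i,(0,j^*)}$ indeed lies in $\Span\{\vec{v}_3,\vec{v}_4\}$, but the point you miss is that by Lemma~\ref{structure of P for x, x+y, x+y^2, x+y+y^2} this span \emph{equals} $\PP_{i,j^*+1}$ for precisely these $i$; hence $g_i^{\vec{v}_{i,(0,j^*)}}\in G^P_{j^*+1}$ and $\eta$ annihilates it outright. There are no ``previously-established integrality constraints'' to cancel against: these terms are zero, not merely integers. The only contributions to the $\binom{y}{j^*}$-coefficient of $\eta\circ g^P$ that survive are from $i=\lceil j^*/2\rceil$ and $i=j^*$.

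Second, your case-split (identify which $\vec{v}_r$ is nontrivial, then treat only that case) breaks here: the $i=j^*$ contribution to the $\binom{y}{j^*}$-coefficient has nonzero second coordinate (from $\binom{x+y}{j^*}$), so it is \emph{not} in $\Span\{\vec{v}_3,\vec{v}_4\}$ and cannot be dismissed via your $\vec{v}_3,\vec{v}_4$ reasoning. The paper therefore establishes $\eta(g_{j^*}^{\vec{v}_1}),\eta(g_{j^*}^{\vec{v}_2})\in\ZZ$ \emph{unconditionally} first; combined with $\eta(g_{j^*}^{\vec{v}_3})=\eta(g_{j^*}^{\vec{v}_4})=0$ (since $\vec{v}_3,\vec{v}_4\in\PP_{j^*,j^*+1}$) this gives $\eta(g_{j^*}^{\vec{w}})\in\ZZ$ for all $\vec{w}\in\ZZ^4$, which handles the $i=j^*$ term. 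The remaining $i=\lceil j^*/2\rceil$ term is then a nonzero integer multiple of $\eta(g_{\lceil j^*/2\rceil}^{\vec{v}_3})$ (respectively $\vec{v}_4$ for odd $j^*$), and Lemma~\ref{integer multiples don't matter}---which you do not mention---is required to strip that multiple. Only after all three constraints are in hand does one invoke Lemma~\ref{structure of G^P_j for x, x+y, x+y^2, x+y+y^2} to conclude that at least one of the associated level characters is nontrivial.
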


In the proof of Theorem \ref{equidistribution of x, x+y, x+y^2, x+y+y^2}, we shall use the following useful lemma. 
\begin{lemma}[Integer multiples do not matter]\label{integer multiples don't matter}
Let $j\geqslant 1$, and suppose that $\eta: G \to\RR$ is a horizontal character such that $\eta\circ g^P$ is $\ZZ$-valued and $\eta|_{G_{j+1}^P}=0$. 
Suppose moreover that for some $\vec{v}\in\PP_{i,j}\cap\ZZ^4$ there exists a nonzero integer $a$ such that $\eta(g_i^{a\vec{v}})\in\ZZ$. Then $\eta(g_i^{\vec{v}})\in\ZZ$ assuming that $p$ is sufficiently large with respect to $a$.

\end{lemma}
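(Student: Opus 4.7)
The plan is to use Corollary \ref{horizontal characters reduce to i-th level characters} to reduce the statement to an integrality condition on an $i$-th level character, and then to extract that condition from the $p$-periodicity of $g$ alone, without any further appeal to the Leibman nilmanifold.

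Under the hypotheses $\vec{v}\in\PP_{i,j}\cap\ZZ^4$ and $\eta|_{G^P_{j+1}}=0$, Corollary \ref{horizontal characters reduce to i-th level characters} supplies an $i$-th level character $\xi:G\to\RR$ satisfying $\xi(h)=\eta(h^{\vec{v}})$ for every $h\in G_i$. In particular, $\xi$ is a group homomorphism valued in $\RR$, it is $\ZZ$-valued on $\Gamma$, and it vanishes on $G_{i+1}$. The hypothesis $\eta(g_i^{a\vec{v}})\in\ZZ$ then rewrites as $a\xi(g_i)\in\ZZ$, while the desired conclusion becomes $\xi(g_i)\in\ZZ$. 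The goal therefore reduces to producing a second, coprime integrality condition on $\xi(g_i)$.

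For this second condition I would use the $p$-periodicity of $g$ together with $g(0)=1$: Definition \ref{periodic sequences} applied with $n_1=0$ gives $g(pn)\in\Gamma$ for every $n\in\ZZ$, so $\xi(g(pn))\in\ZZ$. Expanding $g$ via the Taylor representation from Lemma \ref{taylor expansion}, $g(n)=\prod_{l\geqslant 1}g_l^{{n\choose l}}$, and exploiting that $\xi$ is a homomorphism (so it linearises the non-commutative product by killing the relevant commutators, per Definition \ref{i-th level characters}) as well as that $\xi$ annihilates $G_{i+1}$, the expression collapses to
$$P(n):=\xi(g(pn))=\sum_{l=1}^{i}{{pn}\choose{l}}\xi(g_l),$$
which is a polynomial of degree exactly $i$ in $n$ taking values in $\ZZ$. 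By the classical integer-valued polynomial theorem, $P(n)=\sum_m\beta_m{{n}\choose{m}}$ with each $\beta_m\in\ZZ$. Only the $l=i$ summand contributes to the coefficient of ${{n}\choose{i}}$, and matching the leading coefficients $p^i n^i/i!$ of ${{pn}\choose{i}}$ against $n^i/i!$ of ${{n}\choose{i}}$ yields $\beta_i=p^i\xi(g_i)\in\ZZ$.

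At that point the two relations $a\xi(g_i)\in\ZZ$ and $p^i\xi(g_i)\in\ZZ$ combine via B\'{e}zout's identity into $\gcd(a,p^i)\xi(g_i)\in\ZZ$. Since $p$ is a prime with $p>|a|$, we have $\gcd(a,p^i)=1$, forcing $\xi(g_i)=\eta(g_i^{\vec{v}})\in\ZZ$, as required. The only delicate point in the plan is to confirm that the corollary's $\xi$, although written there as a map on $G_i$, really is a homomorphism on all of $G$ (so that the expansion of $\xi(g(pn))$ above is valid for every Taylor coefficient $g_l$ with $l\leqslant i$); this is precisely what the Mal'cev parametrisation and the vanishing of $\xi$ on the commutator subgroups $[G_r,G_{i-r}]$ from Definition \ref{i-th level characters} provide. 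Notably, the hypothesis that $\eta\circ g^P$ is $\ZZ$-valued is not needed in this argument; it is absorbed into the corollary and the periodicity of the scalar sequence $g$.
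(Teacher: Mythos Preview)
Your overall strategy matches the paper's: both aim to establish $p^i\eta(g_i^{\vec v})\in\ZZ$ from periodicity and then combine it with $a\,\eta(g_i^{\vec v})\in\ZZ$ via B\'ezout. The difference is in how the first relation is obtained, and here your argument has a genuine gap.

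The character $\xi$ produced by Corollary~\ref{horizontal characters reduce to i-th level characters} is a homomorphism on $G_i$, not on $G$; for $h\in G\setminus G_i$ the element $h^{\vec v}$ need not even lie in $G^P$ (for the progression at hand, take $\vec v=\vec v_4=(0,0,0,1)\in\PP_{2,3}\setminus\PP_{1,1}$), so $\eta(h^{\vec v})$ is not defined. Your expansion $\xi(g(pn))=\sum_{l\le i}\binom{pn}{l}\xi(g_l)$ therefore has no meaning as written: for $l<i$ the Taylor coefficients $g_l$ lie outside the domain of $\xi$. The fix you propose --- vanishing of $\xi$ on $[G_r,G_{i-r}]$ --- does not produce the required extension. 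On the Heisenberg group with its lower central series filtration, every homomorphism $G\to\RR$ vanishes on $G_2=[G,G]$, so no nontrivial $2$-level character on $G_2$ extends to a homomorphism on $G$ at all; hence there is simply no $\ZZ$-valued-on-$\Gamma$ homomorphism on $G$ to plug $g(pn)$ into.

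The paper avoids this by invoking Lemma~\ref{scaling a polynomial sequence}: periodicity together with $g(0)=1$ forces the $i$-th Taylor coefficient of $n\mapsto g(pn)$ to lie in $\Gamma_i$, and that lemma identifies it with $g_i^{p^i}$ modulo $G_{i+1}$. One then raises to the power $\vec v$ and applies $\eta$ entirely inside $G^P_j$, where everything is well-defined, to conclude $p^i\eta(g_i^{\vec v})\in\ZZ$. Your argument becomes correct if you replace the direct computation of $\xi(g(pn))$ by an appeal to Lemma~\ref{scaling a polynomial sequence} --- but at that point it coincides with the paper's proof.
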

There is nothing special about our particular progression here - Lemma \ref{integer multiples don't matter} works for any polynomial progression, therefore we shall also use it for progressions examined in next sections.
\begin{proof}

By Lemma \ref{scaling a polynomial sequence}, $g_i^{p^i}\in\Gamma_i$ mod $G_{i+1}$, and so $g_i^{\vec{v}} = (\gamma_i h)^{\vec{v}}$ for some $\gamma_i\in \Gamma_i$ and $h\in G_{i+1}$. Using (\ref{Baker-Campbell-Hausdorff 1}), Lemma \ref{properties of polynomial spaces} and Lemma \ref{inclusion in polynomial spaces of special form}, we note that $h^{\vec{v}}\in G_{j+1}^P$, and similarly for commutators emerging from applying (\ref{Baker-Campbell-Hausdorff 1}). We therefore have $g_i^{p^i\vec{v}}\in\Gamma_j^P\mod G^P_{j+1}$, from which we deduce that $p^i\eta(g_i^{\vec{v}})\in\ZZ$. The assumption that ${\eta(g_i^{a\vec{v}})\in\ZZ}$ for some nonzero integer $a$ further implies that $\gcd(a, p^i)\eta(g_i^{\vec{v}})\in\ZZ$. Taking $p$ sufficiently large guarantees that $\gcd(a, p^i)=1$, and so $\eta(g_i^{\vec{v}})\in\ZZ$.


\end{proof}

The general strategy of our proof of Theorem \ref{equidistribution of x, x+y, x+y^2, x+y+y^2}, as well as Theorems \ref{equidistribution of x, x+Q(y), x+R(y), x+Q(y)+R(y)}, \ref{equidistribution of x, x+Q(y), x+2Q(y), x+R(y), x+2R(y)} and \ref{equidistribution of multiparameter sequence coming from x, x+y, x+y^2, x+y+y^2}, follows the methods used in \cite{green_tao_2010a} to prove Theorem 1.11, the counting lemma. However, the technical details are quite different due to the fact that progressions dealt with in these theorems are no longer homogeneous. The main difficulty comes from the fact that the coefficients of polynomial maps of the form $\eta\circ g^P(x,y)$ for some horizontal character $\eta$ have contributions coming from ${{\vec{P}(x,y)}\choose{i}}$ for several values of $i$. This difficulty is not present when $\vec{P}$ is a linear form in several variables, as then each power ${{\vec{P}}\choose{i}}$ is a homogeneous polynomial map of a different degree.

 \begin{proof}[Proof of Theorem \ref{equidistribution of x, x+y, x+y^2, x+y+y^2}]
 Suppose that the sequence $g^P\in\poly(\ZZ^2,G^P_\bullet)$ is not $O_{M}(A^{-c_M})$-equidistributed. By Theorem \ref{equidistribution theorem}, there exists a nontrivial horizontal character $\eta:G^P\to\RR$ of complexity at most $cA$ for an appropriately chosen $c>0$, for which the polynomial $\eta\circ g^P$ is $\ZZ$-valued. Let $j$ be the largest natural number such that $\eta|_{G^P_j}\neq 0$. By assumption, $\eta$ annihilates ${G^P_{j+1}}$. 
 
 We use the maximality of $j$, the properties of $\eta$, and the structural information on $G^P$ contained in Lemma \ref{structure of G^P_j for x, x+y, x+y^2, x+y+y^2} to contradict the $A$-irrationality of $g$. We do this by inspecting the coefficients of $\eta\circ g^P$.
 
  We first do the model case $j=1$; it is different from and less complicated than the case $j>1$, and it can be used to illustrate the argument for the latter. Assuming $j=1$, we have
\begin{align*}
    \eta\circ g^P(x,y)=\eta(g_1^{\vec{v}_1}) x + \eta(g_1^{\vec{v}_2}) y,
\end{align*}
as all the other terms are annihilated by $\eta$.
Using the fact that $\eta\circ g^P(x,y)\in\ZZ$ for all $x,y\in\FF_p$, we deduce that $\eta(g_1^{\vec{v}_1})$ and $\eta(g_1^{\vec{v}_2})$  are both in $\ZZ$.

We define
\begin{align*}
    \xi_i(h)=\eta(h^{\vec{v}_i})
\end{align*}
for each $i = 1,2$ and $h\in G_1$. By Corollary \ref{horizontal characters reduce to i-th level characters}, the functions $\xi_i$ are 1-st level characters that annihilate $g_1$. By Lemma \ref{structure of G^P_j for x, x+y, x+y^2, x+y+y^2}, if both of them are trivial, then so is $\eta$, therefore at least one of them is non-trivial. The bound on the modulus of $\eta$ and the fact that the vectors $\vec{v}_1$ have entries of size $O(1)$ imply that $|\xi_{i}|\leqslant A$, provided that the constant $c$ is appropriately chosen. This contradicts the $A$-irrationality of $g$, implying that $g^P$ is $O_{M}(A^{-c_M})$-equidistributed.

For the rest of the proof, we assume that $j>1$. We split into two cases based on the parity of $j$. We shall only give the proof when $j$ is even, as the other case follows similarly.
First, the assumption that $\eta\circ g^P$ is $\ZZ$-valued implies that $\eta(g_j^{\vec{v}_1})\in\ZZ$, since this is the coefficient of ${{x}\choose{j}}$. Second, we have $\eta(g_j^{\vec{v}_2})\in\ZZ$, as this is the coefficient of ${{x}\choose{j-1}}y$ by Lemma \ref{relating coefficients of different monomials} and (\ref{presentation of x, x+y, x+y^2, x+y+y^2}). By Lemma \ref{structure of P for x, x+y, x+y^2, x+y+y^2}, we have $g_j^{\vec{v}_3}, g_j^{\vec{v}_4}\in G^P_{j+1}$, implying  $\eta(g_j^{\vec{v}_3})=\eta(g_j^{\vec{v}_4})=0$. Using the fact that each vector in $\ZZ^4$ is an integral linear combination of $\vec{v}_1, \vec{v}_2, \vec{v}_3, \vec{v}_4$, we obtain that
\begin{align}\label{statement 1 in the proof of true complexity for x, x+y, x+y^2, x+y+y^2}
    \eta(g_j^{\vec{v}})\in\ZZ
\end{align}
for any $\vec{v}\in\ZZ^4$.

Our goal now is to show that $\eta(g_{\frac{j}{2}}^{\vec{v}_3})\in\ZZ$. To this end, we look at the coefficient of ${{y}\choose{j}}$ in $\eta\circ g^P(x,y)$, which is of the form
\begin{align}\label{statement 2 in the proof of true complexity for x, x+y, x+y^2, x+y+y^2}
    \frac{j!}{(\frac{j}{2})!}\eta(g_{\frac{j}{2}}^{\vec{v}_3})+\sum_{i=\frac{j}{2}+1}^s\eta(g_i^{\vec{w}_i})
\end{align}
for some $\vec{w}_i\in\PP_{i,j}$. Note that there is no contribution coming from $g_i$ for $i<\frac{j}{2}$ because $\deg({{P(x,y)}\choose{i}})<j$ for these values of $i$. If $i\neq j$ and $i\geqslant\frac{j}{2}+1$, then $\vec{w}_i\in\Span\{\vec{v}_3,\vec{v}_4\}$ because ${{x}\choose{i}}$ and ${{x+y}\choose{i}}$ are homogeneous polynomials of degree $i$, and so $g_i^{\vec{w}_i}\in G_{j+1}^P$ by Lemma \ref{structure of P for x, x+y, x+y^2, x+y+y^2}. Therefore $\eta(g_i^{\vec{w}_i})=0$ by the property of $j$-th level characters. If $i = j$, then $\vec{w}_i\in\ZZ^4$, but we know from (\ref{statement 1 in the proof of true complexity for x, x+y, x+y^2, x+y+y^2}) that $\eta(g_j^{\vec{w}})\in\ZZ$ for any $\vec{w}\in\ZZ^4$. Thus, the entire contribution of $\sum\limits_{i=\frac{j}{2}+1}^s\eta(g_i^{\vec{w}_i})$ is in $\ZZ$. Using Lemma \ref{integer multiples don't matter}, we deduce that $\eta(g_{\frac{j}{2}}^{\vec{v}_3})\in\ZZ$  for sufficiently large $p$.

We define
\begin{align*}
    \tau(h)=\eta(h^{\vec{v}_3}) \quad {\rm{and}}\quad \xi_i(h)=\eta(h^{\vec{v}_i})
\end{align*}
for $i=1,2$ on $G_\frac{j}{2}$ and $G_j$ respectively. By Lemma \ref{horizontal characters reduce to i-th level characters}, the functions $\tau$ and $\xi$ are $\frac{j}{2}$-th and $j$-th level characters which send $g_{\frac{j}{2}}$ and $g_j$ to $\ZZ$ respectively. By Lemma \ref{structure of G^P_j for x, x+y, x+y^2, x+y+y^2}, if all of them are trivial, then so is $\eta$, and therefore at least one of $\tau, \xi_i$ is non-trivial and of complexity at most $O(|\eta|)\leqslant A$ upon taking $c>0$ sufficienly small. This contradicts the $A$-irrationality of $g$, implying that $g^P$ is $O_{M}(A^{-c_M})$-equidistributed.
 \end{proof}


\section{An equidistribution result for $x,\; x+Q(y),\; x+R(y),\; x+Q(y)+R(y)$}\label{section on x, x+Q(y), x+R(y), x+Q(y)+R(y)}
We now generalize the result of the previous section by considering the configuration
\begin{align}\label{presentation of x, x+Q(y), x+R(y), x+Q(y)+R(y)}
    \vec{P}(x,y) = (x,\; x+Q(y),\; x+R(y),\; x+Q(y)+R(y))
\end{align}
for integral polynomials $Q,R$ of degrees $1\leqslant d_1 < d_2$ respectively. From (\ref{presentation of x, x+Q(y), x+R(y), x+Q(y)+R(y)}), we can deduce that
\begin{align*}
    \PP_{1,1}&=\Span\{(1,1,1,1), (0,1,0,1), (0,0,1,1)\}=\Span\{\vec{v}_1, \vec{v}_2, \vec{v}_3\}\\
    \PP_{1,2} = ... =\PP_{1,d_1}&=\Span\{(0,1,0,1),(0,0,1,1)\} = \Span\{\vec{v}_2, \vec{v}_3\}\\
    \PP_{1,d_1+1} = ... =\PP_{1,d_2}&= \Span\{(0,0,1,1)\} = \Span\{ \vec{v}_3\}\\
    \PP_{1,j} &=     0 \quad {\rm{for}} \quad j\geqslant d_2+1.
\end{align*}
where
\begin{align*}
    \vec{v}_1 = (1,1,1,1), \quad \vec{v}_2 = (0,1,0,1), \quad \vec{v}_3 = (0,0,1,1)\quad {\rm{and}} \quad \vec{v}_4 = (0,0,0,1).
\end{align*}

Our next goal is to deduce expressions for $\PP_{i,j}$ for $i>1$.
\begin{lemma}\label{structure of P for x, x+Q(y), x+R(y), x+Q(y)+R(y)}
For $i>1$, the following holds:
\[ \PP_{i,j}=\begin{cases}
\RR^4 = \Span\{\vec{v}_1, \vec{v}_2, \vec{v}_3, \vec{v}_4\},\; &1\leqslant j\leqslant i\\
0\times \RR\times\RR\times\RR = \Span\{\vec{v}_2, \vec{v}_3, \vec{v}_4\},\; &i+1\leqslant j\leqslant id_1 \\
0\times 0\times \RR\times \RR =\Span\{\vec{v}_3, \vec{v}_4\},\; &i d_1+1\leqslant j\leqslant (i-1)d_2+d_1\\
\Span\{\vec{v}_3\},\; &(i-1)d_2+d_1+1 \leqslant j \leqslant id_2\\
0,\; &j>i d_2.
\end{cases}
\]
\end{lemma}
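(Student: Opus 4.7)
The plan is to follow the same scheme as the proof of Lemma \ref{structure of P for x, x+y, x+y^2, x+y+y^2}, generalised to arbitrary degrees $1 \le d_1 < d_2$. I would work in the ordered basis $\{\vec{v}_1,\vec{v}_2,\vec{v}_3,\vec{v}_4\}$ of $\ZZ^4$, which is upper triangular and therefore a $\ZZ$-basis. Any coefficient vector $(c_1,c_2,c_3,c_4)$ decomposes uniquely as $c_1\vec{v}_1 + (c_2 - c_1)\vec{v}_2 + (c_3 - c_1)\vec{v}_3 + (c_1 - c_2 - c_3 + c_4)\vec{v}_4$. Applying this decomposition entrywise to ${{\vec{P}(x,y)}\choose{l}}$ and expanding each component of $\vec{P}$ via Vandermonde's identity, a short inclusion-exclusion shows that the four directional contributions are
\begin{align*}
\vec{v}_1\text{-part} &= {{x}\choose{l}},\\
\vec{v}_2\text{-part} &= \sum_{a+b=l,\, b\ge 1}{{x}\choose{a}}{{Q(y)}\choose{b}},\\
\vec{v}_3\text{-part} &= \sum_{a+c=l,\, c\ge 1}{{x}\choose{a}}{{R(y)}\choose{c}},\\
\vec{v}_4\text{-part} &= \sum_{a+b+c=l,\, b,c\ge 1}{{x}\choose{a}}{{Q(y)}\choose{b}}{{R(y)}\choose{c}}.
\end{align*}

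For the upper bound on $\PP_{i,j}$, I would observe that as $l$ ranges over $\{1,\dots,i\}$ the maximum total $(x,y)$-degrees of the four parts above are $i$, $id_1$, $id_2$ and $(i-1)d_2 + d_1$ respectively; the cross-term maximum is attained at $(l,a,b,c) = (i,0,1,i-1)$, using $d_1 < d_2$ to strictly dominate all $b \ge 2$ configurations. A monomial of total degree at least $j$ can contribute only to those $\vec{v}_r$-directions whose associated maximum is $\ge j$, which yields the claimed upper inclusion in each of the four ranges of $j$, as well as $\PP_{i,j} = 0$ for $j > id_2$.

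For the lower bound, the monotonicity $\PP_{i,j+1} \subseteq \PP_{i,j}$ from Lemma \ref{properties of polynomial spaces} reduces the task to exhibiting each basis vector inside $\PP_{i,j}$ at the largest $j$ where the lemma claims it. Taking $l = i$, the coefficient of ${{y}\choose{id_2}}$ in ${{\vec{P}(x,y)}\choose{i}}$ lies in $\Span\{\vec{v}_3\}$ by the upper bound and is nonzero (the scalar being, up to a factorial, the $i$-th power of the leading coefficient of $R$), giving $\vec{v}_3 \in \PP_{i,id_2}$. The coefficient of ${{y}\choose{(i-1)d_2+d_1}}$ lies in $\Span\{\vec{v}_3,\vec{v}_4\}$ and has nonzero $\vec{v}_4$-component equal to the leading coefficient of $Q(y){{R(y)}\choose{i-1}}$, so $\vec{v}_4 \in \PP_{i,(i-1)d_2+d_1}$. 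Similarly, the coefficient of ${{y}\choose{id_1}}$ has nonzero $\vec{v}_2$-component from the leading term of ${{Q(y)}\choose{i}}$, so $\vec{v}_2 \in \PP_{i,id_1}$, and $\vec{v}_1 \in \PP_{i,i}$ is immediate from ${{x}\choose{i}}$. Monotonicity extends each of these inclusions to the full asserted range. No serious obstacle is foreseen; the argument is careful bookkeeping parallel to the proof of Lemma \ref{structure of P for x, x+y, x+y^2, x+y+y^2}, with the only mild subtlety being the $d_1 < d_2$-based domination of the $b = 1$ cross-term configuration over those with $b \ge 2$.
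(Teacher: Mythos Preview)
Your proposal is correct and follows essentially the same approach as the paper: both arguments rest on the Vandermonde-type expansion of ${{\vec{P}(x,y)}\choose{l}}$ into $\vec{v}_1,\vec{v}_2,\vec{v}_3,\vec{v}_4$-components (the paper's equations (\ref{rewriting polynomial map in Taylor basis 2})--(\ref{rewriting polynomial map in Taylor basis 3}) are exactly your four directional parts), followed by the same degree-counting for the upper bound and the same choice of top-degree monomials ${{y}\choose{id_2}}$, ${{y}\choose{(i-1)d_2+d_1}}$, ${{y}\choose{id_1}}$, ${{x}\choose{i}}$ for the lower bound. Your presentation is slightly more systematic in that you write down the full basis decomposition at the outset rather than deriving it case by case, but the substance is identical.
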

\begin{proof}
The case $j>id_2$ follows trivially from the fact that the polynomial ${{\vec{P}(x,y)}\choose{i}}$ has degree $id_2$.

In the process of deducing the other cases, we shall use the fact that
\begin{align}\label{rewriting polynomial map in Taylor basis 1}
    \vec{P}(x,y)=\vec{v}_1 x + \vec{P}(0,y)= \vec{v}_1 x + \vec{v}_2 Q(y) + \vec{v}_3 R(y)
\end{align}
and (\ref{identity for binomial coefficients}).
Combining (\ref{rewriting polynomial map in Taylor basis 1}) and (\ref{identity for binomial coefficients}), we rewrite ${{\vec{P}(x,y)}\choose{i}}$ as
\begin{align}\label{rewriting polynomial map in Taylor basis 2}
  {{\vec{P}(x,y)}\choose{i}} = \vec{v}_1{{x}\choose{i}}+\sum_{l=1}^{i-1}{{x}\choose{l}}{{\vec{P}(0,y)}\choose{i-1}}+\left(0, {{Q(y)}\choose{i}}, {{R(y)}\choose{i}}, {{Q(y)+R(y)}\choose{i}}\right).
\end{align}
We can further rewrite the last vector in the sum as
\begin{align}\label{rewriting polynomial map in Taylor basis 3}
    &\left(0, {{Q(y)}\choose{i}}, {{R(y)}\choose{i}}, {{Q(y)+R(y)}\choose{i}} \right)\\ \nonumber
    &= \vec{v}_2 {{Q(y)}\choose{i}} + \sum_{l=1}^{i-1}\vec{v}_4{{Q(y)}\choose{l}}{{R(y)}\choose{i-l}}+\vec{v}_3{{R(y)}\choose{i}}.
\end{align}

From (\ref{rewriting polynomial map in Taylor basis 2}) and (\ref{rewriting polynomial map in Taylor basis 3}), we see that the only monomials in ${{\vec{P}(x,y)}\choose{i}}$ of degree greater than $(i-1)d_2+d_1$ in ${{\vec{P}(x,y)}\choose{i}}$ have coefficients of the form $a\vec{v}_3$ for some $a\in\ZZ$. In particular, the value $a$ will be nonzero for the coefficient of ${{y}\choose{i d_2}}$, implying the case $(i-1)d_2+d_1+1 \leqslant j \leqslant id_2$ by Lemma \ref{integer multiples don't matter}.

To deduce the case $i d_1+1\leqslant j\leqslant (i-1)d_2+d_1$, we note that the projection of ${{\vec{P}(x,y)}\choose{i}}$ onto the first coordinate has degree $i$, while its projection onto the second coordinate has degree $id_1$, and so 
\begin{align*}
    \PP_{i,j}\subseteq 0\times 0 \times \RR \times \RR
\end{align*}
for these values of $j$. We claim this is an equality. We already know that ${\vec{v}_3\in\PP_{i,j}}$ since its multiple is the coefficient of ${{y}\choose{i d_2}}$. We claim that the coefficient of the monomial ${{y}\choose{(i-1)d_2+d_1}}$ is of the form $a\vec{v}_3+b\vec{v}_4$ for integers $a,b$ such that $b\neq 0$, which will imply imply this case. This follows from (\ref{rewriting polynomial map in Taylor basis 3}), the assumption $d_1<d_2$, and the observation that the polynomial ${Q(y)}{{R(y)}\choose{i-1}}$ has degree $(i-1)d_2+d_1$, thus contributing to the coefficient of ${{y}\choose{(i-1)d_2+d_1}}$.

The next case, $i+1\leqslant j\leqslant id_1$, only happens if $d_1>1$, and so we make this assumption. Since the projection of ${{\vec{P}(x,y)}\choose{i}}$ onto the first coordinate has degree $i$, we deduce that 
\begin{align*}
    \PP_{i,j}\subseteq 0\times \RR \times \RR \times \RR.
\end{align*}
To prove that this is an equality, it remains to show in the light of the previous cases that a vector of the form $a\vec{v}_2+b\vec{v}_3+c\vec{v}_3$ is in $\PP_{i,j}$ for some $a\neq 0$. Since the projection of  ${{\vec{P}(x,y)}\choose{i}}$ onto the second coordinate has degree $i d_1$, the coefficient of ${{y}\choose{i d_1}}$ is of this form, implying this case. 

If $d_1 =1$, then the same argument implies that $\vec{v}_2\in\PP_{i,i}$.

Finally, the case $1\leqslant j\leqslant i$ follows from combining the previous case, Lemma \ref{properties of polynomial spaces} and the observation that $\vec{v}_1$ is the coefficient of ${{x}\choose{i}}$.
\end{proof}

Having established the structure of $\PP_{i,j}$, it is straightforward to deduce the following lemma.
\begin{corollary}\label{x, x+Q(y), x+R(y), x+Q(y)+R(y) satisfies the filtration condition}
The polynomial map $P$ satisfies the filtration condition. 
\end{corollary}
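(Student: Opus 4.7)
The plan is to reduce the inclusion $\PP_{i_1,j_1}\cdot\PP_{i_2,j_2}\subseteq\PP_{i_1+i_2,j_1+j_2}$ to a finite collection of elementary inequalities, exactly as Corollary \ref{x, x+y, x+y^2, x+y+y^2 satisfies the filtration condition} does in the simpler setting. First I would record the componentwise multiplication table of $\vec{v}_1,\vec{v}_2,\vec{v}_3,\vec{v}_4$: each $\vec{v}_k$ has entries in $\{0,1\}$ and hence is idempotent, $\vec{v}_1$ is the multiplicative identity, the only nontrivial cross-product among the non-identity generators is $\vec{v}_2\cdot\vec{v}_3=\vec{v}_4$, and $\vec{v}_4$ absorbs every other non-identity generator. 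Consequently $\PP_{i_1,j_1}\cdot\PP_{i_2,j_2}$ is spanned by the products $\vec{v}_k\cdot\vec{v}_l$ of the generating vectors of the two factor spaces.

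Next I would restate Lemma \ref{structure of P for x, x+Q(y), x+R(y), x+Q(y)+R(y)} (together with the formulas for $\PP_{1,j}$ listed before it) as four uniform membership criteria valid for all $i\geqslant 1$: $\vec{v}_1\in\PP_{i,j}$ iff $j\leqslant i$; $\vec{v}_2\in\PP_{i,j}$ iff $j\leqslant id_1$; $\vec{v}_3\in\PP_{i,j}$ iff $j\leqslant id_2$; and $\vec{v}_4\in\PP_{i,j}$ iff $j\leqslant (i-1)d_2+d_1$. Under this reformulation the filtration condition reduces to verifying that whenever a generator $\vec{v}_k\cdot\vec{v}_l$ arises as a product of vectors from $\PP_{i_1,j_1}$ and $\PP_{i_2,j_2}$, its corresponding threshold at $i_1+i_2$ dominates $j_1+j_2$.

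The resulting case analysis is straightforward additive arithmetic. The products yielding $\vec{v}_1$, $\vec{v}_2$ and $\vec{v}_3$ all reduce immediately using $d_1,d_2\geqslant 1$. The only case requiring the full hypothesis $d_1<d_2$ is the appearance of $\vec{v}_4$ via $\vec{v}_2\cdot\vec{v}_3$: here the constraints $j_1\leqslant i_1 d_1$ and $j_2\leqslant i_2 d_2$ combine to give $j_1+j_2\leqslant i_1 d_1+i_2 d_2$, and the target inequality $j_1+j_2\leqslant(i_1+i_2-1)d_2+d_1$ amounts to $(i_1-1)d_1\leqslant(i_1-1)d_2$, which is our hypothesis. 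The remaining ways of producing $\vec{v}_4$ (from $\vec{v}_4\cdot\vec{v}_1$, $\vec{v}_4\cdot\vec{v}_2$, $\vec{v}_4\cdot\vec{v}_3$, $\vec{v}_4\cdot\vec{v}_4$) are similar one-line calculations using $d_1\leqslant d_2$ and $d_2\geqslant 1$.

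The main obstacle here is not conceptual but bookkeeping: one must be careful not to overlook the edge cases $i_1=1$ or $i_2=1$, where $\PP_{1,j}$ is described separately in the text preceding Lemma \ref{structure of P for x, x+Q(y), x+R(y), x+Q(y)+R(y)}, and to confirm that the unified membership criteria above still apply in those cases. Once this is checked, every combination of generators yields a trivial inequality, and the filtration condition follows.
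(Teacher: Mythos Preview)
Your approach is correct and is exactly the case-by-case verification the paper intends; the paper gives no proof for this corollary, and for the analogous Corollary~\ref{x, x+y, x+y^2, x+y+y^2 satisfies the filtration condition} it explicitly leaves the ``tedious and unsophisticated'' check to the reader. One small correction: your membership criterion for $\vec{v}_4$ fails at $i=1$ (in fact $\vec{v}_4\notin\PP_{1,j}$ for any $j$, since $\PP_{1,1}=\Span\{\vec{v}_1,\vec{v}_2,\vec{v}_3\}$), but this is harmless --- it only causes you to verify a few extra, vacuous inequalities.
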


We come to the main result of this section, which is case (i) of Theorem \ref{equidistribution theorem in the intro}.
\begin{theorem}[$x,\; x+Q(y),\; x+R(y),\; x+Q(y)+R(y)$ equidistributes]\label{equidistribution of x, x+Q(y), x+R(y), x+Q(y)+R(y)}
Let $G/\Gamma$ be a filtered nilmanifold of degree $s$ and complexity $M$. Suppose $g\in\poly(\ZZ,G_\bullet)$ is $p$-periodic, $A$-irrational, and satisfies $g(0)=1$. Then the sequence $g^P\in\poly(\ZZ^2,G^P_\bullet)$ is $O_{M}(A^{-c_M})$-equidistributed on $G^P/\Gamma^P$ for some $c_M>0$.
\end{theorem}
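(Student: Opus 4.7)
The plan is to mirror the strategy used in Theorem \ref{equidistribution of x, x+y, x+y^2, x+y+y^2}, but with the case analysis dictated by the richer regime structure established in Lemma \ref{structure of P for x, x+Q(y), x+R(y), x+Q(y)+R(y)}. First I would establish an analogue of Lemma \ref{structure of G^P_j for x, x+y, x+y^2, x+y+y^2} by combining Corollary \ref{x, x+Q(y), x+R(y), x+Q(y)+R(y) satisfies the filtration condition} with Lemma \ref{structure of G^P_j} and Lemma \ref{structure of P for x, x+Q(y), x+R(y), x+Q(y)+R(y)}, giving explicit generators of $G^P_j$ modulo $G^P_{j+1}$ in each of the four regimes
\begin{equation*}
    j\leqslant i,\quad i<j\leqslant id_1,\quad id_1<j\leqslant (i-1)d_2+d_1,\quad (i-1)d_2+d_1<j\leqslant id_2.
\end{equation*}
Specifically, the ``new'' generators that appear at level $j$ will be $h^{\vec{v}_1}$ for $h\in G_j$, together with $h^{\vec{v}_2}$ for $h\in G_{\lceil j/d_1\rceil}$ (when $j$ is divisible by $d_1$ or lies in an appropriate range), $h^{\vec{v}_4}$ for $h\in G_i$ with $(i-1)d_2+d_1=j$, and $h^{\vec{v}_3}$ for $h\in G_i$ with $id_2=j$.

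Next, assuming for contradiction that $g^P$ is not $O_M(A^{-c_M})$-equidistributed, Theorem \ref{equidistribution theorem} produces a nontrivial horizontal character $\eta:G^P\to\RR$ of modulus $\ll A$ for which $\eta\circ g^P$ is $\ZZ$-valued, and I take $j$ maximal with $\eta|_{G^P_j}\neq 0$. The central task is to show that for at least one generator $h^{\vec{v}_k}$ of $G^P_j/G^P_{j+1}$ from the list above, with $h$ running over the appropriate $G_i$, the map $\xi(h):=\eta(h^{\vec{v}_k})$ is a nontrivial $i$-th level character on $G_i$ (by Corollary \ref{horizontal characters reduce to i-th level characters}) which sends $g_i$ into $\ZZ$, violating $A$-irrationality and yielding the desired contradiction. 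Nontriviality on at least one generator is forced by the maximality of $j$ together with the structural lemma for $G^P_j$. The subtle point is the integrality of $\xi(g_i)$.

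To secure integrality I would read off coefficients of $\eta\circ g^P(x,y)$ in the Taylor basis ${{x}\choose{k}}{{y}\choose{l}}$. The coefficient of ${{x}\choose{j}}$ is $\eta(g_j^{\vec{v}_1})$, which is automatically in $\ZZ$. Using Lemma \ref{relating coefficients of different monomials} together with the expansion of ${{\vec{P}(x,y)}\choose{i}}$ via (\ref{rewriting polynomial map in Taylor basis 2}) and (\ref{rewriting polynomial map in Taylor basis 3}), I would then extract the coefficients of pure ${{y}\choose{l}}$-monomials at the boundaries $l=id_1$, $l=(i-1)d_2+d_1$ and $l=id_2$. Lemma \ref{structure of P for x, x+Q(y), x+R(y), x+Q(y)+R(y)} guarantees that in these coefficients, the contributions from indices $i'>i$ all lie in $\PP_{i',j+1}$, hence vanish under $\eta$, while the contribution from $i'=j$ lies in $\ZZ^4$, hence is swept into $\ZZ$ by the previously-established integrality of $\eta(g_j^{\vec{v}})$ for all integer $\vec{v}$. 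This isolates a single leading term of the form $a\,\eta(g_i^{\vec{v}_k})\in\ZZ$ with $a\in\ZZ\setminus\{0\}$ of size $O_M(1)$; applying Lemma \ref{integer multiples don't matter} for $p$ sufficiently large upgrades this to $\eta(g_i^{\vec{v}_k})\in\ZZ$ itself.

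The main obstacle I anticipate is the bookkeeping across the four regimes of Lemma \ref{structure of P for x, x+Q(y), x+R(y), x+Q(y)+R(y)}: one must descend from the largest $i$ contributing to level $j$ down to the smallest, peeling off integrality of $\eta(g_i^{\vec{v}_3})$, $\eta(g_i^{\vec{v}_4})$, $\eta(g_i^{\vec{v}_2})$ in that order as $i$ decreases, because each such statement is needed to annihilate corresponding contributions from larger $i$ when later reading off coefficients associated with the smaller-degree generators $\vec{v}_2$ and $\vec{v}_1$. The inductive order is dictated by which generator first becomes new at level $j$, namely $\vec{v}_3$ at $j=id_2$ (largest relevant $i$), then $\vec{v}_4$ at $j=(i-1)d_2+d_1$, then $\vec{v}_2$ at $j=id_1$, and finally $\vec{v}_1$ at $j=i$. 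Once this descent is carried out, the same conclusion as in Theorem \ref{equidistribution of x, x+y, x+y^2, x+y+y^2} applies: some $i$-th level character of modulus $\ll A$ sends $g_i$ to an integer, contradicting $A$-irrationality, and therefore $g^P$ must be $O_M(A^{-c_M})$-equidistributed.
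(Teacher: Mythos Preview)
Your overall strategy---contrapose via Theorem \ref{equidistribution theorem}, take $j$ maximal with $\eta|_{G^P_j}\neq 0$, build $i$-th level characters from Corollary \ref{horizontal characters reduce to i-th level characters}, and read off Taylor coefficients to force integrality---matches the paper exactly. But there is a genuine gap in the execution.

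Looking only at pure $\binom{y}{l}$-monomials does not suffice. For the relevant indices $i$ at level $j$, all three of your ``boundaries'' $l=id_1,\;(i-1)d_2+d_1,\;id_2$ collapse to the \emph{same} value $l=j$, so you are reading off a single coefficient. When both $d_1\mid j$ and $d_2\mid j$, this coefficient is
\[
a\,\eta\!\left(g_{j/d_2}^{\vec v_3}\right)+b\,\eta\!\left(g_{j/d_1}^{\vec v_2}\right)+(\text{terms in }G^P_{j+1})
\]
with $a,b\neq 0$, and your claim that ``contributions from indices $i'>i$ all lie in $\PP_{i',j+1}$'' fails precisely for $i'=j/d_1$: there $\vec w_{i'}$ has a nonzero $\vec v_2$-component, and $\vec v_2\in\PP_{i',j}\setminus\PP_{i',j+1}$. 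So one equation in two unknowns---you cannot isolate either term. Your descent order is also reversed: $\vec v_3$ sits at the \emph{smallest} relevant $i=j/d_2$, not the largest.

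The paper breaks this deadlock by using a \emph{mixed} monomial to isolate $\vec v_2$ first. The coefficient of $\binom{x}{j/d_1-1}\binom{y}{d_1}$ in $\eta\circ g^P$ has contributions only from $g_i$ with $i\geqslant j/d_1$ (by Lemma \ref{relating coefficients of different monomials}), and for $i\geqslant j/d_1+1$ one has $j+1\leqslant id_1$, forcing $\vec w_i\in\Span\{\vec v_2,\vec v_3,\vec v_4\}\subseteq\PP_{i,j+1}$. For $i=j/d_1$ itself, the $\vec v_3$-piece already lies in $\PP_{j/d_1,j+1}$. This leaves $a\,\eta(g_{j/d_1}^{\vec v_2})\in\ZZ$ with $a\neq 0$, and Lemma \ref{integer multiples don't matter} finishes. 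Only after establishing this does the paper return to $\binom{y}{j}$ to extract $\eta(g_{j/d_2}^{\vec v_3})$ (or the $\vec v_4$-analogue when $d_2\mid(j-d_1)$; these two cases are mutually exclusive since $d_1<d_2$). So the correct descent is $\vec v_1$, then $\vec v_2$, then $\vec v_3$ or $\vec v_4$---opposite to what you wrote---and the key device you are missing is the mixed monomial $\binom{x}{j/d_1-1}\binom{y}{d_1}$.
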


 \begin{proof}
 Suppose that the sequence $g^P\in\poly(\ZZ^2,G^P_\bullet)$ is not $O_{M}(A^{-c_M})$-equidistributed. By Theorem \ref{equidistribution theorem}, there exists a nontrivial horizontal character $\eta:G^P\to\RR$ of complexity at most $cA$ for some $c>0$ to be chosen later, such that $\eta\circ g^P\in\ZZ$. Let $j$ be the largest natural number such that $\eta|_{G^P_j}\neq 0$. By assumption, $\eta$ annihilates ${G^P_{j+1}}$. 
 
If $j=1$, we proceed exactly as in Theorem \ref{equidistribution of x, x+y, x+y^2, x+y+y^2}, and so we assume that $j>1$.  For any $i\geqslant 1$, we define
\begin{align*}
    \xi_{i,k}(h)=\eta(h^{\vec{v}_k})
\end{align*}
for $h\in G_i$ and each $k\in\{1,2,3,4\}$ such that $\vec{v}_k\in\PP_{i,j}$ but $\vec{v}_k\notin\PP_{i,j+1}$. The maps $\xi_{i,k}$ define $i$-th level characters on $G$ by Corollary \ref{horizontal characters reduce to i-th level characters}. By Lemma \ref{structure of G^P_j}, if all of $\xi_{i,k}$ were trivial, so would be $\eta$, implying that at least one of $\xi_{i,k}$ is nontrivial. The bound on the modulus of $\eta$ and the fact that the vectors $\vec{v}_k$ have entries of size $O(1)$ imply that $|\xi_{i,k}|\leqslant A$, provided that the constant $c$ is appropriately chosen.

Our goal is to show that for all pairs $(i,k)$ as above, we have $\xi_{i,k}(g_i)\in\ZZ$. Since at least one of these $\xi_{i,k}$ is nontrivial and of modulus at most $A$, we obtain a contradiction of the $A$-irrationality of $g$, implying that $g^P$ is $O_{M}(A^{-c_M})$-equidistributed. By enumerating all such pairs $(i,k)$, we observe that all we have to show is that $\eta$ sends the following elements to $\ZZ$:
\begin{enumerate}
    \item $g_j^{\vec{v}_1}$;
    \item $g_\frac{j}{d_1}^{\vec{v}_2}$, if $d_1|j$;
    \item $g_\frac{j+d_2-d_1}{d_2}^{\vec{v}_4}$, if $d_2| (j-d_1)$;
    \item $g_\frac{j}{d_2}^{\vec{v}_3}$, if $d_2|j$.
\end{enumerate}
That $\eta(g_j^{\vec{v}_1})\in\ZZ$ follows from observing that this is precisely the coefficient of ${{x}\choose{j}}$ is $\eta(g_j^{\vec{v}_1})$; showing that other elements are sent to $\ZZ$ is a bit more involved.

We assume $d_1|j$, and we claim that $\eta(g_\frac{j}{d_1}^{\vec{v}_2})\in\ZZ$. From Lemma \ref{relating coefficients of different monomials} we know that the coefficient of ${{x}\choose{\frac{j}{d_1}-1}}{{y}\choose{d_1}}$ is of the form
\begin{align*}
    a\eta(g_{\frac{j}{d_1}}^{\vec{v}_2})+b\eta(g_{\frac{j}{d_1}}^{\vec{v}_3}) + \sum_{i=\frac{j}{d_1}+1}^s \eta(g_i^{\vec{w}_i})
\end{align*}
for some integers $a,b$ such that $a\neq 0$, and some integer vectors $\vec{w}_i\in\Span\{\vec{v}_2, \vec{v}_3, \vec{v}_4\}$. If $i\geqslant\frac{j}{d_1}+1$, then $j\leqslant id_1 - d_1$, and so $j+1\leqslant id_1$. It follows from this that $\vec{w}_i\in\PP_{i,j+1}$, therefore $\eta(g_i^{\vec{w}_i})=0$. We moreover have that $\vec{v}_3\in\PP_{\frac{j}{d_1},j+1}$, and so $\eta(g_{\frac{j}{d_1}}^{\vec{v}_3})=0$ as well. From this, Lemma \ref{integer multiples don't matter} and the vanishing of the coefficient of ${{x}\choose{\frac{j}{d_1}-1}}{{y}\choose{d_1}}$ mod $\ZZ$, we conclude that $\eta(g_{\frac{j}{d_1}}^{\vec{v}_2})\in\ZZ$.

We are left with showing that the elements in (iii) and (iv) are sent to $\ZZ$ by $\eta$. Note that since $1\leqslant d_1<d_2$, the number $d_2$ cannot simultaneously divide $j-d_1$ and $j$, therefore only one of these cases at a time is available. We assume that $d_2$ divides $j$, and the other case will follow similarly. To show that $\eta(g_\frac{j}{d_2}^{\vec{v}_3})\in\ZZ$, we look at the coefficient of ${{y}\choose{d_2}}$, which is of the form
\begin{align}\label{equation in Section 5}
    a\eta(g_\frac{j}{d_2}^{\vec{v}_3}) + \sum_{i=\frac{j}{d_2}+1}^s \eta(g_i^{\vec{w}_i})
\end{align}
for some nonzero integer $a$ and vectors $w_i\in\Span\{\vec{v}_2, \vec{v}_3, \vec{v}_4\}\cap\PP_{i,j}\cap\ZZ^4$. By Lemma \ref{structure of P for x, x+Q(y), x+R(y), x+Q(y)+R(y)}, we have $w_i\in\PP_{i,j+1}$ unless $j = id_1$ or $j=(i-1)d_2+d_1$. If $d_1$ divides $j$, then we have already shown that $\eta(g_\frac{j}{d_1}^{\vec{v}_2})\in\ZZ$, and we moreover have $\eta(g_\frac{j}{d_1}^{\vec{v}_3})=0$ and $\eta(g_\frac{j}{d_1}^{\vec{v}_4})=0$ since $\vec{v}_3, \vec{v}_4\in\PP_{\frac{j}{d_1}, j+1}$. Therefore $\eta(g_\frac{j}{d_1}^{\vec{w}})\in\ZZ$ for any $w\in\Span\{\vec{v}_2, \vec{v}_3, \vec{v}_4\}\cap\PP_{i,j}\cap\ZZ^4$. The case $j=(i-1)d_2+d_1$ does not happen by our assumption that $d_2$ does not divide $j-d_1$. Thus the sum in (\ref{equation in Section 5}) vanishes mod $\ZZ$, implying that $a\eta(g_\frac{j}{d_2}^{\vec{v}_3})\in\ZZ$. That $\eta(g_\frac{j}{d_2}^{\vec{v}_3})\in\ZZ$ follows by Lemma \ref{integer multiples don't matter}.


 \end{proof}

\section{An equidistribution result for $x,\; x+Q(y),\; x+2Q(y),\; x+R(y),\; x+2R(y)$}\label{section on x, x+Q(y), x+2Q(y), x+R(y), x+2R(y)}

We now turn our attention to the configuration 
\begin{align}\label{presentation of x, x+Q(y), x+2Q(y), x+R(y), x+2R(y)}
    \vec{P}(x,y) &= (x,\; x+Q(y),\; x+2Q(y),\; x+R(y),\; x+2R(y))
\end{align}
for polynomials $Q, R\in\ZZ[y]$ with zero constant terms of degrees $d_1, d_2$ respectively that moreover satisfy $1\leqslant d_1 <d_2 / 2$. Letting
\begin{align*}
    \vec{v}_1 = (1,1,1,1,1), \quad \vec{v}_2 = (0,1,2,0,0), \quad \vec{v}_3 = (0,0,0,1,2),
\end{align*}
we observe that
\begin{align*}
    \PP_{i,1} &= \Span\{(1,1,1,1,1), (0,1,2,0,0),  (0,0,0,1,2)\} = \Span\{\vec{v}_1, \vec{v}_2, \vec{v}_3\}, \\
    \PP_{i,2} = ... = \PP_{i, d_1} &= \Span\{(0,1,2,0,0),  (0,0,0,1,2)\} = \Span\{\vec{v}_2, \vec{v}_3\},\\
    \PP_{i, d_1+1} = ... = \PP_{i, d_2} &= \Span\{(0,0,0,1,2)\} = \Span\{\vec{v}_3\}\\
    \PP_{1,j} &= 0 \quad {\rm{for}} \quad j\geqslant d_2 + 1, 
\end{align*}
and we prove the following lemma giving the structure of the spaces $\PP_{i,j}$ for $i>1$
 \begin{lemma}\label{structure of P for x, x+Q(y), x+2Q(y), x+R(y), x+2R(y)}
Let $i>1$. Then 
\[ \PP_{i,j}=\begin{cases}
\Span\{\vec{v}_1, \vec{v}_2^{i-1}, \vec{v}_2^i, \vec{v}_3^{i-1}, \vec{v}_3^i\} = \RR^5,\; &1\leqslant j\leqslant i\\
\Span\{\vec{v}_2^{i-1}, \vec{v}_2^i, \vec{v}_3^{i-1}, \vec{v}_3^i\} = 0\times \RR\times\RR\times\RR \times \RR,\; &i+1\leqslant j\leqslant (i-1)d_1+1 \\
\Span\{\vec{v}_2^i, \vec{v}_3^{i-1}, \vec{v}_3^i\},\; &(i-1) d_1+2\leqslant j\leqslant i d_1\\
\Span\{\vec{v}_3^{i-1}, \vec{v}_3^i\} = 0\times 0\times 0\times\RR \times \RR,\; &i d_1+1\leqslant j\leqslant (i-1)d_2+1 \\
\Span\{\vec{v}_3^i\},\; &(i-1)d_2+2 \leqslant j \leqslant id_2\\
0,\; &j>i d_2
\end{cases}
\]
where $\vec{v}^k = (\vec{v}(1)^k, \vec{v}(2)^k, \vec{v}(3)^k, \vec{v}(4)^k, \vec{v}(5)^k)$  for any $k\in\RR\setminus{\{0\}}$ and $\vec{v}\in\RR^5$.

\end{lemma}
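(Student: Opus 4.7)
The strategy closely mirrors that of Lemma \ref{structure of P for x, x+Q(y), x+R(y), x+Q(y)+R(y)}. I would first rewrite
\begin{align*}
\vec{P}(x,y) = \vec{v}_1 x + \vec{v}_2 Q(y) + \vec{v}_3 R(y)
\end{align*}
and apply the binomial identity \eqref{identity for binomial coefficients} to expand ${{\vec{P}(x,y)}\choose{i}}$ into a sum over triples $(k_1,k_2,k_3)$ with $k_1+k_2+k_3=i$. The key algebraic observation is that $\vec{v}_2 \cdot \vec{v}_3 = \vec{0}$ in the coordinate-wise algebra on $\RR^5$, since the supports of $\vec{v}_2$ and $\vec{v}_3$ are disjoint; consequently every mixed term with both $k_2,k_3\geq 1$ vanishes and the expansion collapses to
\begin{align*}
{{\vec{P}(x,y)}\choose{i}} = \vec{v}_1 {{x}\choose{i}} + \sum_{k=1}^{i} {{x}\choose{i-k}} \left[ {{\vec{v}_2 Q(y)}\choose{k}} + {{\vec{v}_3 R(y)}\choose{k}} \right].
\end{align*}

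Next I would determine the coefficient structure of each block. Using the falling factorial expansion $(aQ)_k = \sum_{m=1}^{k} s(k,m)\, a^m Q(y)^m$, with signed Stirling numbers of the first kind satisfying $s(k,k)=1$, one gets
\begin{align*}
{{\vec{v}_2 Q(y)}\choose{k}} = \sum_{m=1}^{k} \frac{s(k,m)}{k!}\, Q(y)^m\, \vec{v}_2^m,
\end{align*}
and analogously for ${{\vec{v}_3 R(y)}\choose{k}}$. Since $Q(y)^m$ has degree exactly $m d_1$ with nonzero leading coefficient (and similarly $R(y)^m$ has degree $m d_2$), the coefficient of each monomial ${{x}\choose{i-k}} y^\ell$ appearing in the $\vec{v}_2$-block lies in $\Span\{\vec{v}_2^m : \lceil \ell/d_1 \rceil \leq m \leq k\}$, with a symmetric statement for the $\vec{v}_3$-block involving $d_2$. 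I would also record the simple linear-algebra fact that for $i\geq 2$, $\Span\{\vec{v}_2^m : 1\leq m\leq i\} = \Span\{\vec{v}_2^{i-1}, \vec{v}_2^i\}$ fills the full two-dimensional slice $0\times\RR\times\RR\times 0\times 0$, and likewise for $\vec{v}_3$.

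The proof then proceeds by a descending case analysis on $j$. The breakpoints $id_2$, $(i-1)d_2+1$, $id_1$, $(i-1)d_1+1$, $i$ arise naturally as the maximal $(x,y)$-degrees of the various contributions ${{x}\choose{i-k}} Q(y)^m$ and ${{x}\choose{i-k}} R(y)^m$, and the assumption $d_1 < d_2/2$ guarantees a clean separation of these ranges (in particular no $\vec{v}_2$-monomial can reach degree $(i-1)d_2+2$, so the $\vec{v}_2$- and $\vec{v}_3$-blocks never compete in the top ranges). For each range I would (a) verify the upper bound by showing every contributing coefficient lies in the claimed span, and (b) exhibit the lower bound by locating a specific monomial whose coefficient is a known nonzero multiple of each basis vector listed, exploiting $s(k,k)=1$ and the nonvanishing leading coefficients of $Q$ and $R$. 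The main technical obstacle will be ruling out the ``subleading'' vector $\vec{v}_2^{i-1}$ in the range $(i-1)d_1+2 \leq j \leq id_1$, and symmetrically $\vec{v}_3^{i-1}$ in $(i-1)d_2+2 \leq j \leq id_2$: for every $k_1 \geq 1$ one must check that the constraint $m \leq i-k_1$ forced by $k=i-k_1$ is incompatible with the degree lower bound $m \geq \lceil (j-k_1)/d_1 \rceil$ in this range, which is precisely where the boundary inequality $j \geq (i-1)d_1 + 2$ (respectively $j \geq (i-1)d_2+2$) is used. Finally, the case $j > id_2$ is immediate from $\deg {{\vec{P}(x,y)}\choose{l}} \leq ld_2 \leq id_2$ for $l\leq i$, and the case $1\leq j\leq i$ follows by combining the previous cases with the observation that $\vec{v}_1$ is exactly the coefficient of ${{x}\choose{i}}$.
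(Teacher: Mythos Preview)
Your proof plan is correct and follows essentially the same route as the paper. Both arguments reduce to identifying, via degree considerations, which of the vectors $\vec{v}_2^{i-1}, \vec{v}_2^i, \vec{v}_3^{i-1}, \vec{v}_3^i$ (together with $\vec{v}_1$) lie in $\PP_{i,j}$ for each range of $j$; your use of the disjoint-support identity $\vec{v}_2\cdot\vec{v}_3=\vec 0$ and the Stirling expansion makes explicit what the paper compresses into its two terse observations about the coefficients of ${x\choose k}{y\choose l}$.
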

\begin{proof}

The statement is trivial for $j>id_2$.
To obtain the expressions for $\PP_{i,j}$ for other values of $j$, we make two observations. First, we note from Lemma \ref{relating coefficients of different monomials} that for $0\leqslant k\leqslant i-1$, the coefficient of ${{x}\choose{k}}{{y}\choose{l}}$
\begin{align*}
    a_{k,l}\vec{v}_2^{i-k} + b_{k,l}\vec{v}_3^{i-k}
\end{align*}
for some integers $a_{k,l}$ which satisfy $a_{k,l}=0$ if $l>(i-k)d_1$ and $b_{k,l}=0$ if $l>(i-k)d_2$. Moreover, the numbers $a_{k,(i-k)d_1}$ and $b_{k, (i-k)d_2}$ are nonzero since $Q$ and $R$ have degrees $d_1$, $d_2$ respectively. By substituting $k=0,1$ and using $d_1<d_2$, we deduce that
\begin{align*}
    \vec{v}_2^i\in\PP_{i,i d_1}, \quad \vec{v}_2^{i-1}\in\PP_{i,(i-1)d_1+1}, \quad \vec{v}_3^i\in\PP_{i,i d_2}, \quad \vec{v}_3^{i-1}\in\PP_{i,(i-1)d_2+1},
\end{align*}
but
\begin{align*}
    \vec{v}_2^i\notin\PP_{i,i d_1+1}, \quad \vec{v}_2^{i-1}\notin\PP_{i,(i-1)d_1+2}, \quad \vec{v}_3^i\notin\PP_{i,i d_2 + 1}, \quad \vec{v}_3^{i-1}\notin\PP_{i,(i-1)d_2+2}.
\end{align*}

Second, we observe that for $k>1$, we have $\vec{v}_2^{i-k}\in\Span\{\vec{v}_2^i, \vec{v}_2^{i-1}\}$ and $\vec{v}_3^{i-k}\in\Span\{\vec{v}_3^i, \vec{v}_3^{i-1}\}$, and moreover $(i-k')d_r+k'\leqslant (i-k)d_r + k$ for any $k'<k$ and $r\in\{1, 2\}$. From this we see that to specify a basis for $\PP_{i,j}$, it is sufficient to look at whether $\vec{v}_2^{i-1}$, $\vec{v}_2^{i}$, $\vec{v}_3^{i-1}$ and $\vec{v}_3^{i}$ are in $\PP_{i,j}$ or not. The statements for $j>i$ follow by combining these observations. 

Lastly, we note that $\vec{v}_1$ is the coefficient of ${{x}\choose{i}}$, which together with $\PP_{i,j}\supseteq\PP_{i,j+1}$ implies the case $1\leqslant j\leqslant i$.

\end{proof}
\begin{corollary}\label{x, x+Q(y), x+2Q(y), x+R(y), x+2R(y) satisfies the filtration condition}
The polynomial map $P$ satisfies the filtration condition. 
\end{corollary}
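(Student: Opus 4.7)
The plan is to verify $\PP_{i_1,j_1}\cdot\PP_{i_2,j_2}\subseteq\PP_{i_1+i_2,j_1+j_2}$ by bilinearity on pairs of basis generators of the two factors, using the explicit description provided by Lemma \ref{structure of P for x, x+Q(y), x+2Q(y), x+R(y), x+2R(y)}. That lemma guarantees that every basis generator of any $\PP_{i,j}$ is one of $\vec{v}_1$, $\vec{v}_2^a$, or $\vec{v}_3^a$ with $a\in\{i-1,i\}$ (with only $a=1$ available when $i=1$), so only a small number of product types need to be analysed. This mirrors the strategy behind Corollary \ref{x, x+Q(y), x+R(y), x+Q(y)+R(y) satisfies the filtration condition}.

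The crucial simplification is that the pointwise multiplication table is essentially trivial. The vectors $\vec{v}_2=(0,1,2,0,0)$ and $\vec{v}_3=(0,0,0,1,2)$ have disjoint supports $\{2,3\}$ and $\{4,5\}$, so every mixed product $\vec{v}_2^a\cdot\vec{v}_3^b$ vanishes and is vacuously in the target space. The vector $\vec{v}_1=(1,1,1,1,1)$ is the multiplicative identity, hence $\vec{v}_1\cdot\vec{w}=\vec{w}$. The only non-degenerate products are $\vec{v}_2^a\cdot\vec{v}_2^b=\vec{v}_2^{a+b}$ and $\vec{v}_3^a\cdot\vec{v}_3^b=\vec{v}_3^{a+b}$, and these belong to the two-dimensional coordinate planes spanned by $\vec{v}_2^{i_1+i_2-1},\vec{v}_2^{i_1+i_2}$ and by $\vec{v}_3^{i_1+i_2-1},\vec{v}_3^{i_1+i_2}$ respectively.

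What remains is to check, in each of the finitely many cases determined by the five $j$-intervals of Lemma \ref{structure of P for x, x+Q(y), x+2Q(y), x+R(y), x+2R(y)}, that the upper bound on $j_1+j_2$ coming from the individual factor constraints is consistent with $\vec{v}_k^{a+b}$ lying in $\PP_{i_1+i_2,j_1+j_2}$. Every such inequality reduces to $d_1\geqslant 1$ or $d_2\geqslant 1$, both of which are assumed. The tightest case, $\vec{v}_2^{i_1-1}\cdot\vec{v}_2^{i_2-1}=\vec{v}_2^{i_1+i_2-2}$, is typical: the factor bounds give $j_1+j_2\leqslant(i_1+i_2-2)d_1+2$, and this is at most $(i_1+i_2-1)d_1+1$ precisely when $d_1\geqslant 1$. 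The $\vec{v}_1$-involved products use the bounds $j_r\leqslant i_r$ and the same elementary inequalities.

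The main obstacle is therefore not conceptual but organisational: the partition into five $j$-intervals produces roughly two dozen pairings to verify, each a one-line inequality. As in Corollary \ref{x, x+Q(y), x+R(y), x+Q(y)+R(y) satisfies the filtration condition}, the full case enumeration is mechanical and can safely be omitted in the write-up.
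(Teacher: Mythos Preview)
Your proposal is correct and follows the same strategy the paper intends: a case-by-case verification of $\PP_{i_1,j_1}\cdot\PP_{i_2,j_2}\subseteq\PP_{i_1+i_2,j_1+j_2}$ based on the structural Lemma \ref{structure of P for x, x+Q(y), x+2Q(y), x+R(y), x+2R(y)}, exactly as in the proof of Corollary \ref{x, x+y, x+y^2, x+y+y^2 satisfies the filtration condition}. The paper itself gives no proof for this corollary, relying on the reader to repeat that routine; your write-up in fact supplies more of the argument than the paper does, and the organisational observations you make (that $\vec{v}_2$ and $\vec{v}_3$ have disjoint supports, that $\vec{v}_1$ is the pointwise identity, and that the tightest inequality is the $\vec{v}_2^{i_1-1}\cdot\vec{v}_2^{i_2-1}$ case reducing to $d_1\geqslant 1$) are exactly the right ones.
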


We are now ready to show that if $g$ is irrational on $G/\Gamma$, then $g^P$ equidistributes on $G^P/\Gamma^P$. The proof follows the same logic as the proofs of Theorem \ref{equidistribution of x, x+y, x+y^2, x+y+y^2}, \ref{equidistribution of x, x+Q(y), x+R(y), x+Q(y)+R(y)} and \ref{equidistribution of x, x+Q(y), x+2Q(y), x+R(y), x+2R(y)}, however the technical details are different, as we are working with a different configuration 
\begin{theorem}[$x,\; x+Q(y),\; x+2Q(y),\; x+R(y),\; x+2R(y)$ equidistributes]\label{equidistribution of x, x+Q(y), x+2Q(y), x+R(y), x+2R(y)}
Let $G/\Gamma$ be a filtered nilmanifold of degree $s$ and complexity $M$. Suppose $g\in\poly(\ZZ,G_\bullet)$ is $p$-periodic, $A$-irrational, and satisfies $g(0)=1$. Then the sequence $g^P\in\poly(\ZZ^2,G^P_\bullet)$ is $O_{M}(A^{-c_M})$-equidistributed on $G^P/\Gamma^P$ for some $c_M>0$.
\end{theorem}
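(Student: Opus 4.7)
The plan is to follow the strategy from Theorems \ref{equidistribution of x, x+y, x+y^2, x+y+y^2} and \ref{equidistribution of x, x+Q(y), x+R(y), x+Q(y)+R(y)} almost verbatim, adapting the coefficient bookkeeping to the progression (\ref{presentation of x, x+Q(y), x+2Q(y), x+R(y), x+2R(y)}). Suppose for contradiction that $g^P$ is not $O_M(A^{-c_M})$-equidistributed on $G^P/\Gamma^P$. By Theorem \ref{equidistribution theorem} there exists a nontrivial horizontal character $\eta: G^P \to \RR$ of modulus at most $cA$, for a sufficiently small constant $c>0$ to be chosen later, such that $\eta\circ g^P$ takes values in $\ZZ$. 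Let $j$ be the largest integer with $\eta|_{G^P_j}\neq 0$; by maximality $\eta$ annihilates $G^P_{j+1}$.

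Combining Lemma \ref{structure of G^P_j} with Lemma \ref{structure of P for x, x+Q(y), x+2Q(y), x+R(y), x+2R(y)}, the quotient $G^P_j/G^P_{j+1}$ is generated by elements $g_i^{\vec{w}}$ where $\vec{w}$ ranges over those basis vectors of $\PP_{i,j}$ that do not already lie in $\PP_{i,j+1}$. A direct inspection of Lemma \ref{structure of P for x, x+Q(y), x+2Q(y), x+R(y), x+2R(y)} shows that the \emph{new} basis vectors at level $j$ are $\vec{v}_1$ when $j=i$, $\vec{v}_2^{i-1}$ when $j=(i-1)d_1+1$, $\vec{v}_2^i$ when $j=id_1$, $\vec{v}_3^{i-1}$ when $j=(i-1)d_2+1$, and $\vec{v}_3^i$ when $j=id_2$. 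By Corollary \ref{horizontal characters reduce to i-th level characters} each such pair $(i,\vec{w})$ yields an $i$-th level character $\xi(h):=\eta(h^{\vec{w}})$ on $G_i$ of modulus $O(|\eta|)\leqslant A$, and since $\eta$ is nontrivial on $G^P_j$ at least one of these characters $\xi$ is nontrivial.

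The goal is to show that $\eta(g_i^{\vec{w}})\in\ZZ$ for every new generator $(i,\vec{w})$, which will contradict the $A$-irrationality of $g$. For $\vec{v}_1$ (the case $j=i$) this is immediate from the coefficient of ${x\choose j}$ in $\eta\circ g^P(x,y)$. For the remaining four families I proceed case by case, processing the generators in order of decreasing $i$ and examining the coefficient of a carefully chosen monomial: the coefficient of ${y\choose i d_2}$ for $\vec{v}_3^i$, of ${x\choose 1}{y\choose (i-1)d_2}$ for $\vec{v}_3^{i-1}$, of ${y\choose i d_1}$ for $\vec{v}_2^i$, and of ${x\choose 1}{y\choose (i-1)d_1}$ for $\vec{v}_2^{i-1}$. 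In each case Lemma \ref{relating coefficients of different monomials} expresses the coefficient as a sum $\sum_{i'}\eta(g_{i'}^{\vec{w}_{i'}})$, and Lemma \ref{structure of P for x, x+Q(y), x+2Q(y), x+R(y), x+2R(y)} together with the strict inequality $d_1<d_2/2$ shows that all but one term either has $\vec{w}_{i'}\in\PP_{i',j+1}$ (so is annihilated by $\eta$) or corresponds to a pair already processed and therefore lies in $\ZZ$. What remains is a nonzero integer multiple of $\eta(g_i^{\vec{w}})$, which Lemma \ref{integer multiples don't matter} converts into the desired integrality provided $p$ is taken large.

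The main obstacle is precisely this bookkeeping: the families $\vec{v}_2^k$ and $\vec{v}_3^k$ coexist in $\PP_{i,j}$ over overlapping ranges of $j$, so one must verify that the chosen monomial truly isolates the desired generator after accounting for all contributions from indices $i'\neq i$. The hypothesis $d_1<d_2/2$, which is strictly stronger than the $d_1<d_2$ used in Theorem \ref{equidistribution of x, x+Q(y), x+R(y), x+Q(y)+R(y)}, is essential here: it ensures that the \emph{activation levels} $id_1,\,(i-1)d_1+1,\,(i-1)d_2+1,\,id_2$ for different families and different $i$ are sufficiently separated that a given $j$ picks out at most one family at a time. Without this separation a $\vec{v}_2$-generator and a $\vec{v}_3$-generator could become new at the same $j$, forcing a delicate joint analysis that the current scheme does not accommodate.
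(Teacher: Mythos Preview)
Your overall strategy matches the paper's, but the claim in your final paragraph is false and this is a genuine gap. The hypothesis $d_1<d_2/2$ does \emph{not} ensure that a given $j$ activates at most one family of generators: whenever $\mathrm{lcm}(d_1,d_2)\mid j$ (for instance $d_1=1$, $d_2=3$, $j=3$) both $\vec{v}_2^{\,j/d_1}$ and $\vec{v}_3^{\,j/d_2}$ become new at level $j$. In that situation your scheme prescribes the \emph{same} monomial ${y\choose j}$ for both generators, and its coefficient is $a\,\eta\!\bigl(g_{j/d_1}^{\,\vec{v}_2^{j/d_1}}\bigr)+b\,\eta\!\bigl(g_{j/d_2}^{\,\vec{v}_3^{j/d_2}}\bigr)$ with $a,b\neq 0$. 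Neither term has been ``already processed'', so your induction cannot separate them.

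The paper resolves this collision by an indirect route: it first shows that $\eta\!\bigl(g_{j/d_1}^{\,\vec{v}_2^{j/d_1-1}}\bigr)$ and $\eta\!\bigl(g_{j/d_1}^{\,\vec{v}_2^{j/d_1-2}}\bigr)$ lie in $\ZZ$ by examining the coefficients of $x{y\choose j-d_1}$, ${x\choose 2}{y\choose j-2d_1}$, and ${x\choose 2}{y\choose j-d_1}$, and it is precisely in the analysis of ${x\choose 2}{y\choose j-2d_1}$ that the assumption $d_2>2d_1$ is used (to guarantee that the $\vec{v}_3$-contribution from $i=j/d_2+1$ is absent). Since $\vec{v}_2^{\,j/d_1}$ is an integer combination of $\vec{v}_2^{\,j/d_1-1}$ and $\vec{v}_2^{\,j/d_1-2}$, one then deduces $\eta\!\bigl(g_{j/d_1}^{\,\vec{v}_2^{j/d_1}}\bigr)\in\ZZ$, after which the coefficient of ${y\choose j}$ yields $\eta\!\bigl(g_{j/d_2}^{\,\vec{v}_3^{j/d_2}}\bigr)\in\ZZ$. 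Your proposal is missing this entire detour, which is where the hypothesis $d_1<d_2/2$ is actually consumed.
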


 \begin{proof} 
 Suppose that the sequence $g^P\in\poly(\ZZ^2,G^P_\bullet)$ is not $O_{M}(A^{-c_M})$-equidistributed. By Theorem \ref{equidistribution theorem}, there exists a nontrivial horizontal character $\eta:G^P\to\RR$ of complexity at most $cA$ for an appropriately chosen $c>0$, such that $\eta\circ g^P\in\ZZ$. Let $j$ be the largest natural number such that $\eta|_{G^P_j}\neq 0$. By assumption, $\eta$ annihilates ${G^P_{j+1}}$. 
 
Like in Theorems \ref{equidistribution of x, x+y, x+y^2, x+y+y^2} and \ref{equidistribution of x, x+Q(y), x+R(y), x+Q(y)+R(y)}, we use the properties of $\eta$ and the structural information on $G^P$ contained in Lemma \ref{structure of P for x, x+Q(y), x+2Q(y), x+R(y), x+2R(y)} to contradict the $A$-irrationality of $g$. We do this by inspecting the coefficients of $\eta\circ g^P$. For any $i\geqslant 1$, we define
\begin{align*}
    \xi_{i,k,l}(h)=\eta(h^{\vec{v}_k^l})
\end{align*}
for $h\in G_i$ and each $k\in\{1,2,3\}$, $l\in\{i-1,i\}$ such that $\vec{v}_k^l\in\PP_{i,j}$ but $\vec{v}_k^l\notin\PP_{i,j+1}$. The maps $\xi_{i,k,l}$ define $i$-th level characters on $G$ by Corollary \ref{horizontal characters reduce to i-th level characters}. By definition of $G^P_{j}$, the group is generated precisely by $G^P_{j+1}$ and the elements of the form $h^{\vec{v}_k^l}$ for $h\in G_i$, $\vec{v}_k^l\in\PP_{i,j}\setminus{\PP_{i,j+1}}$ and $i\geqslant 1$. Therefore if all of $\xi_{i,k,l}$ were trivial, so would be $\eta$, implying that at least one of $\xi_{i,k,l}$ is nontrivial. The bound on the modulus of $\eta$ and the fact that the vectors $\vec{v}_k^l$ have entries of size $O(1)$ imply that $|\xi_{i,k,l}|\leqslant A$, provided that the constant $c$ is appropriately chosen.

Our goal is to show that for all triples $(i,k,l)$ as above, we have $\xi_{i,k,l}(g_i)\in\ZZ$. Since at least one of these $\xi_{i,k,l}$ is nontrivial and of modulus at most $A$, we obtain a contradiction of the $A$-irrationality of $g$, implying that $g^P$ is $O_{M}(A^{-c_M})$-equidistributed. We are thus left to show that $\eta$ sends the following elements to $\ZZ$:
\begin{enumerate}
    \item $g_j^{\vec{v}_1}$,
    \item $g_{\frac{j-1}{d_1}+1}^{\vec{v}_2^{\frac{j-1}{d_1}}}$, if $d_1|(j-1)$ and $j>1$,
    \item $g_{\frac{j}{d_1}}^{\vec{v}_2^\frac{j}{d_1}}$, if $d_1|j$,
    \item $g_{\frac{j-1}{d_2}+1}^{\vec{v}_3^{\frac{j-1}{d_2}}}$, if $d_2|(j-1)$ and $j>1$,
    \item $g_{\frac{j}{d_2}}^{\vec{v}_3^{\frac{j}{d_2}}}$, if $d_2|j$.
\end{enumerate}

We first look at the model case $j=1$, and then move on to the case $j>1$. Assuming $j=1$, we have to show that $\eta(g_j^{\vec{v}_1})\in\ZZ$, and also $\eta(g_1^{\vec{v}_2})$ if $d_1 = 1$. The first statement follows from inspecting the coefficient of $x$. For the second statement, we assume $d_1 = 1$; then $Q(y)=ay$ for some $a\in\ZZ$, and so the coefficient of $y$ is of the form $a\eta(g_1^{\vec{v}_2})$ plus terms that vanish by assumption that $\eta|_{G^P_2}=0$. Lemma \ref{integer multiples don't matter} thus implies that $\eta(g_1^{\vec{v}_2})\in\ZZ$, which finishes this case.

We assume from now on that $j>1$. The number $\eta(g_j^{\vec{v}_1})$ vanishes mod $\ZZ$ because it is the coefficient of ${{x}\choose{j}}$. We now proceed to show that the elements in (iii) and (v) are in $\ZZ$. To this end, we look at the coefficient of ${{y}\choose{j}}$ and assume that at least one of $d_1$, $d_2$ divides $j$. By evaluating the contributions coming from $\eta\left(g_i^{{\vec{P}(x,y)}\choose{i}}\right)$ for each $i\geqslant 1$, we observe that the coefficient of ${{y}\choose{j}}$ is of the form
\begin{align*}
    \sum_{i=\left\lceil\frac{j}{d_1}\right\rceil}^s a_i \eta(g_i^{\vec{v}_2^i}) + \sum_{i=\left\lceil\frac{j}{d_2}\right\rceil}^s b_i \eta(g_i^{\vec{v}_3^i})
\end{align*}
for integers $a_i, b_i\in\ZZ$. 
If $i>\frac{j}{d_1}$, then $j+1\leqslant i d_1$, and so $\vec{v}_2^i\in\PP_{i,j+1}$ by Lemma \ref{structure of P for x, x+Q(y), x+2Q(y), x+R(y), x+2R(y)}. Similarly, we have $\vec{v}_3^i\in\PP_{i,j+1}$ whenever $i>\frac{j}{d_2}$. Therefore, the coefficient of ${{y}\choose{j}}$ reduces to the rather unfortunate looking
\begin{align}\label{reduced coefficient of y choose j}
    a \eta\left(g_{\left\lceil\frac{j}{d_1}\right\rceil}^{\vec{v}_2^{\left\lceil\frac{j}{d_1}\right\rceil}}\right) + b \eta\left(g_{\left\lceil\frac{j}{d_2}\right\rceil}^{\vec{v}_3^{\left\lceil\frac{j}{d_2}\right\rceil}}\right)
\end{align}
for some integers $a$ and $b$.

We pause for a moment to analyse what happens if $j$ is not divisible by one of $d_1$, $d_2$. If $d_1$ does not divide $j$, then $\left\lceil\frac{j}{d_1}\right\rceil > \frac{j}{d_1}$, implying that $\vec{v}_2^{\left\lceil\frac{j}{d_1}\right\rceil}$ by the argument in the previous paragraph. Then the coefficient of ${{y}\choose{j}}$ is an integer multiple of $\eta\left(g_{\frac{j}{d_2}}^{\vec{v}_3^{\frac{j}{d_2}}}\right)$, and so $\eta\left(g_{\frac{j}{d_2}}^{\vec{v}_3^{\frac{j}{d_2}}}\right)\in\ZZ$ by Lemma \ref{integer multiples don't matter}. We similarly have $\eta\left(g_{\frac{j}{d_1}}^{\vec{v}_2^{\frac{j}{d_1}}}\right)\in\ZZ$ if $d_2$ does not divide $j$. 

The interesting case is when both $d_1$ and $d_2$ divide $j$, which we assume from now on. In this case, both $a$ and $b$ are nonzero due to the fact that $Q$ has degree $d_1$ and $R$ has degree $d_2$. 
The strategy now is this: by looking at the coefficients of $x{{y}\choose{j-d_1}}$, ${{x}\choose{2}}{{y}\choose{j-2d_1}}$ and ${{x}\choose{2}}{{y}\choose{j-d_1}}$, we shall show that $\eta\left(g_{\frac{j}{d_1}}^{\vec{v}_2^{\frac{j}{d_1}-1}}\right)$ and $\eta\left(g_{\frac{j}{d_1}}^{\vec{v}_2^{\frac{j}{d_1}-2}}\right)$ are both in $\ZZ$. Since the vector $\vec{v}_2^{\frac{j}{d_1}}$ is an integer linear combination of $\vec{v}_2^{\frac{j}{d_1}-1}$ and $\vec{v}_2^{\frac{j}{d_1}-2}$, we deduce that $\eta\left(g_{\frac{j}{d_2}}^{\vec{v}_3^{\frac{j}{d_2}}}\right)$ is in $\ZZ$. By Lemma \ref{integer multiples don't matter} and the fact that the coefficient of ${{y}\choose{d_1}}$ takes the form (\ref{reduced coefficient of y choose j}), it follows that $\eta\left(g_{\frac{j}{d_2}}^{\vec{v}_3^{\frac{j}{d_2}}}\right)\in\ZZ$ as well.

We start by analyzing the coefficient of $x{{y}\choose{y-d_1}}$, which is
\begin{align*}
    \sum_{i=\frac{j}{d_1}}^s a_i \eta(g_i^{\vec{v}_2^{i-1}}) + \sum_{i=\frac{j}{d_2}+1}^s b_i \eta(g_i^{\vec{v}_3^{i-1}})
\end{align*} for $a_i, b_i\in\ZZ$. The lower bounds in the range for $i$ come from $d_1<d_2$ and the observation that terms with smaller $i$ do not contribute to this monomial. By rearranging the inequality $i\geqslant\frac{j}{d_1}+1$ and Lemma \ref{structure of P for x, x+Q(y), x+2Q(y), x+R(y), x+2R(y)}, we infer that $\vec{v}_2^{i-1}\in\PP_{i,j+1}$ whenever $i\geqslant\frac{j}{d_1}+1$. Similarly, $\vec{v}_3^{i-1}\in\PP_{i,j+1}$ whenever $i\geqslant\frac{j}{d_2}+1$. Since $\eta$ vanishes on $G^P_{j+1}$, we deduce that the coefficient of $x{{y}\choose{y-d_1}}$ is an integer multiple of $\eta\left(g_{\frac{j}{d_1}}^{\vec{v}_2^{\frac{j}{d_1}-1}}\right)$, implying that $\eta\left(g_{\frac{j}{d_1}}^{\vec{v}_2^{\frac{j}{d_1}-1}}\right)\in\ZZ$ by Lemma \ref{integer multiples don't matter}.

We move on to the coefficient of ${{x}\choose{2}}{{y}\choose{j-2d_1}}$, which takes the form
\begin{align*}
    \sum_{i=\frac{j}{d_1}}^s a_i \eta(g_i^{\vec{v}_2^{i-2}}) + \sum_{i=\frac{j}{d_2}+2}^s b_i \eta(g_i^{\vec{v}_3^{i-2}})
\end{align*} for some $a_i, b_i\in\ZZ$. An important point here is that the second sum starts at $i=\frac{j}{d_2}+2$; this results from the observation that by the assumption $d_2 > 2d_1$, all monomials of ${{x}\choose{2}}{{R(y)}\choose{\frac{j}{d_2}-1}}$ have degree at most $j-d_2 < j-2d_1$ in $y$, therefore they do not contribute to ${{x}\choose{2}}{{y}\choose{j-2d_1}}$ (this is the only point where we are using the assumption). Performing a similar analysis as above, we deduce that all the terms involving $\vec{v}_2$ and $\vec{v}_3$ with $i\geqslant\frac{j}{d_1}+2$ and $i\geqslant\frac{j}{d_2}+2$ respectively vanish mod $\ZZ$. This leaves us with a coefficient of the form 
\begin{align*}
    a \eta\left(g_{\frac{j}{d_1}}^{\vec{v}_2^{\frac{j}{d_1}-2}}\right)+ b \eta\left(g_{\frac{j}{d_1}+1}^{\vec{v}_2^{\frac{j}{d_1}-1}}\right)
\end{align*} for some integers $a,b$ with $a\neq 0$. This is not exactly what we wanted; however, analysing the coefficient of ${{x}\choose{2}}{{y}\choose{j-d_1}}$ and using Lemma \ref{integer multiples don't matter} allows us to conclude that $\eta\left(g_{\frac{j}{d_1}+1}^{\vec{v}_2^{\frac{j}{d_1}-1}}\right)\in\ZZ$. We leave the details on how this is done to the reader; they are no less tedious and no more informative than our analysis of the coefficients of $x{{y}\choose{y-d_1}}$ and ${{x}\choose{2}}{{y}\choose{j-2d_1}}$. As a consequence, we deduce that $\eta\left(g_{\frac{j}{d_1}}^{\vec{v}_2^{\frac{j}{d_1}-2}}\right)\in\ZZ$. 

This is the last missing step needed to show that $\eta\left(g_{\frac{j}{d_1}}^{\vec{v}_2^{\frac{j}{d_1}}}\right), \eta\left(g_{\frac{j}{d_2}}^{\vec{v}_3^{\frac{j}{d_2}}}\right)\in\ZZ$. We have thus showed that the elements in (iii) and (v) in the statement of the proof are sent to integers by $\eta$. The argument showing that $\eta$ sends the elements in (ii) and (iv) to $\ZZ$ is very similar: instead of analyzing the coefficients of ${{y}\choose{j}}$, $x{{y}\choose{j-d_1}}$, ${{x}\choose{2}}{{y}\choose{j-2d_1}}$ and ${{x}\choose{2}}{{y}\choose{j-d_1}}$, we would look at the coefficients of $x{{y}\choose{j-1}}$, ${{x}\choose{2}}{{y}\choose{j-1-d_1}}$, ${{x}\choose{3}}{{y}\choose{j-1-2d_1}}$ and ${{x}\choose{3}}{{y}\choose{j-1-d_1}}$. We leave the details to an interested reader.
\end{proof}


\section{The connection with the Leibman group for a system of linear forms}\label{section on linear forms}
As remarked in the introduction, the construction of the group $G^P$ generalizes the construction of Leibman group $G^\Psi$ for a system of linear forms given in Definition 1.10 of \cite{green_tao_2010a}. In this section, we illustrate how the definition of $G^\Psi$ fits into this framework. Let $\vec{\Psi}=(\Psi_1, ..., \Psi_t)$ be a tuple of $t$ linear forms in  variable ${\textbf{x}=(x_1, ..., x_D)}$. 
We observe that
\begin{align}\label{polynomial spaces for linear forms}
    \P_{i,i} = \Span\left\{\frac{\vec{\Psi^i}(\textbf{x})}{i!}: \textbf{x}\in\RR^D\right\} = \Span\left\{\vec{\Psi^i}(\textbf{x}): \textbf{x}\in\RR^D\right\}.
\end{align}
In \cite{green_tao_2010a}, Green and Tao labelled the space in (\ref{polynomial spaces for linear forms}) as $\Psi^{[i]}$. Green and Tao also defined
\begin{align*}
    G^\Psi_j := \langle g^{\vec{v}}: g\in G_i, \vec{v}\in\Psi^{[i]}, i\geqslant j \rangle
\end{align*}
for $j\geqslant 1$, calling $G^\Psi = G^\Psi_0:=G^\Psi_1$ the \emph{Leibman group} for $\vec{\Psi}$. The property $\PP_{i,j}=0$ for $i<j$ implies that $G^\Psi_j = G^P_j$, and so Leibman group for $\vec{\Psi}$ is a special instance of our construction. 

The system $\vec{\Psi}$ satisfies \emph{flag condition} if $\Psi^{[i]}\subseteq \Psi^{[i+1]}$, or equivalently if $\P_{i,i}\subseteq\P_{i+1,i+1}$, for any $i\in\NN_+$. If $\vec{\Psi}$ satisfies the flag condition, then $\vec{\Psi}$ equidistributes by the periodic version of Theorem 1.11 of \cite{green_tao_2010a}, which has been stated as Theorem 4.1 in \cite{candela_sisask_2012}\footnote{The necessity of the flag condition has only been discovered in November 2020 by Daniel Altman. Therefore the journal versions of \cite{green_tao_2010a} and \cite{candela_sisask_2012} do not mention this condition. See \cite{tao_2020} for an extended discussion of how the flag condition comes into play.}.

The reader might want to know whether the flag condition is related in any way to the filtration condition that we have defined in Definition \ref{filtration condition}. It turns out that \emph{any} $\vec{\Psi}$ satisfies the filtration condition, and so these two conditions are unrelated. We prove this in the next two lemmas.

\begin{lemma}\label{nice expression for polynomial spaces of linear forms}
For any $\vec{\Psi}$ and integers $1\leqslant j\leqslant i$, we have
\begin{align*}
    \P_{i,j} = \Psi^{[i]} + ... + \Psi^{[j]}.
\end{align*}
\end{lemma}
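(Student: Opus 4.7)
The plan is to exploit the fact that for a system of linear forms $\vec{\Psi}$, the components of $\vec{\Psi}(\textbf{x})$ are homogeneous linear polynomials in $\textbf{x}$ with zero constant term, so that $\vec{\Psi}^k(\textbf{x})$ is a vector whose entries are homogeneous polynomials of degree exactly $k$ in $\textbf{x}$. This will reduce the computation of $\D_k {{\vec{\Psi}(\textbf{x})}\choose{l}}$ to something very explicit via Stirling numbers.

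The key computation is to expand, for each $r$ and each $1 \leq l \leq i$,
\begin{equation*}
    {{\Psi_r(\textbf{x})}\choose{l}} = \frac{\Psi_r(\textbf{x})(\Psi_r(\textbf{x})-1) \cdots (\Psi_r(\textbf{x})-l+1)}{l!} = \sum_{k=1}^{l} \frac{s(l,k)}{l!} \Psi_r(\textbf{x})^k,
\end{equation*}
where $s(l,k)$ are the Stirling numbers of the first kind. Assembling these componentwise gives ${{\vec{\Psi}(\textbf{x})}\choose{l}} = \sum_{k=1}^{l} \frac{s(l,k)}{l!}\, \vec{\Psi}^k(\textbf{x})$. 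Since each summand $\vec{\Psi}^k(\textbf{x})$ is entrywise homogeneous of degree $k$ in $\textbf{x}$, extracting the degree-$k$ part yields
\begin{equation*}
    \D_k {{\vec{\Psi}(\textbf{x})}\choose{l}} = \frac{s(l,k)}{l!}\, \vec{\Psi}^k(\textbf{x}),
\end{equation*}
for each $1 \leq k \leq l$, and the expression vanishes for $k > l$.

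From this identity both inclusions follow cleanly. For the inclusion $\PP_{i,j} \subseteq \Psi^{[i]} + \cdots + \Psi^{[j]}$, every generator $\D_k{{\vec{\Psi}(\textbf{x})}\choose{l}}$ with $k \geq j$ and $1 \leq l \leq i$ is a scalar multiple of $\vec{\Psi}^k(\textbf{x})$ with $j \leq k \leq i$, so it lies in $\Psi^{[k]}$. Conversely, taking $l = k$ gives $s(k,k)/k! = 1/k! \neq 0$, so $\vec{\Psi}^k(\textbf{x}) \in \PP_{i,j}$ for every $j \leq k \leq i$ and every $\textbf{x}$, yielding $\Psi^{[k]} \subseteq \PP_{i,j}$ for all such $k$.

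There is no real obstacle here: once one notices that the Stirling expansion separates the binomial coefficient into homogeneous pieces, the whole statement reduces to reading off which pieces appear for which values of $k$. The only mild care needed is to track the range $j \leq k \leq l \leq i$ correctly to ensure that both inclusions use exactly the same set of $\Psi^{[k]}$'s.
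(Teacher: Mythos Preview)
Your proof is correct and follows essentially the same approach as the paper: expand ${{\vec{\Psi}(\textbf{x})}\choose{l}}$ as a polynomial in $\vec{\Psi}(\textbf{x})$ and use that $\vec{\Psi}^k(\textbf{x})$ is homogeneous of degree $k$ to identify the degree parts. The only cosmetic difference is that the paper writes the coefficients as generic rationals $a_0,\dots,a_i$ rather than naming them as Stirling numbers, and for the reverse inclusion the paper implicitly fixes the binomial parameter at $i$ (relying on all $a_l$ being nonzero), whereas you take $l=k$ and use $s(k,k)=1$; your choice is arguably cleaner since it avoids appealing to nonvanishing of the lower Stirling numbers.
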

\begin{proof}
Let $a_0, ..., a_i$ be rational numbers such that ${{n}\choose{i}} = a_i n^i + ... + a_0$. Then 
\begin{align*}
    {{\vec{\Psi}(\textbf{x})}\choose{i}} = a_i \vec{\Psi}(\textbf{x})^i + ... + a_1 \vec{\Psi}(\textbf{x}) + a_0.
\end{align*}
Since $\vec{\Psi}$ is a linear form, each $\vec{\Psi}^l$ is a homogeneous polynomial of degree $l$. It therefore follows that 
\begin{align*}
    \D_l{{\vec{\Psi}(\textbf{x})}\choose{i}} = a_l \vec{\Psi}(\textbf{x})^l \quad {\rm{and}}\quad \Psi^{[l]} = \Span\left\{\D_l{{\vec{\Psi}(\textbf{x})}\choose{i}}: \textbf{x}\in\RR^D\right\}.
\end{align*}
The lemma follows from the observation that since the polynomials $\D_l{{\vec{\Psi}(\textbf{x})}\choose{i}}$ are homogeneous of distinct degrees, we have
\begin{align*}
    \P_{i,j} = \sum_{l=j}^i \Span\left\{\D_l{{\vec{\Psi}(\textbf{x})}\choose{i}}\right\}.
\end{align*}
\end{proof}
\begin{corollary}
Any $\vec{\Psi}$ satisfies the filtration condition.
\end{corollary}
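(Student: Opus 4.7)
The plan is to use the previous lemma, which identifies $\PP_{i,j}$ with $\Psi^{[i]} + \Psi^{[i-1]} + \cdots + \Psi^{[j]}$, so that the filtration condition reduces to the single product statement
\begin{equation*}
\Psi^{[l_1]} \cdot \Psi^{[l_2]} \subseteq \Psi^{[l_1+l_2]} \qquad \text{for all } l_1, l_2 \geqslant 1.
\end{equation*}
Indeed, once this is in hand, expanding bilinearly gives
\begin{equation*}
\PP_{i_1, j_1} \cdot \PP_{i_2, j_2} \subseteq \sum_{l_1 = j_1}^{i_1} \sum_{l_2 = j_2}^{i_2} \Psi^{[l_1]} \cdot \Psi^{[l_2]} \subseteq \sum_{l=j_1+j_2}^{i_1+i_2} \Psi^{[l]} = \PP_{i_1+i_2, j_1+j_2},
\end{equation*}
since the values of $l_1 + l_2$ run precisely over $\{j_1+j_2, \ldots, i_1+i_2\}$.

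To prove the single product inclusion, I would argue by polarization. Since $\Psi^{[l]}$ is the span of $\vec{\Psi}(\textbf{x})^l$ as $\textbf{x}$ varies over $\RR^D$, it suffices to check that $\vec{\Psi}(\textbf{x})^{l_1} \cdot \vec{\Psi}(\textbf{y})^{l_2} \in \Psi^{[l_1 + l_2]}$ for arbitrary $\textbf{x}, \textbf{y} \in \RR^D$. Working coordinate by coordinate, the $r$-th coordinate of this product is $\Psi_r(\textbf{x})^{l_1} \Psi_r(\textbf{y})^{l_2}$. The elementary polarization identity says that for any scalars $v_1, v_2$, the monomial $v_1^{l_1} v_2^{l_2}$ can be written as a linear combination
\begin{equation*}
v_1^{l_1} v_2^{l_2} = \sum_{k} c_k (a_k v_1 + b_k v_2)^{l_1 + l_2}
\end{equation*}
with coefficients $c_k, a_k, b_k \in \RR$ depending only on $l_1, l_2$. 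This in turn follows from the fact that $\{(a v_1 + b v_2)^{l_1+l_2} : a, b \in \RR\}$ spans the $(l_1+l_2+1)$-dimensional space of homogeneous polynomials of degree $l_1+l_2$ in $v_1, v_2$ (a Vandermonde-type calculation).

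Applying this identity to $v_1 = \Psi_r(\textbf{x})$ and $v_2 = \Psi_r(\textbf{y})$ and using the linearity $\Psi_r(a_k \textbf{x} + b_k \textbf{y}) = a_k \Psi_r(\textbf{x}) + b_k \Psi_r(\textbf{y})$, one finds
\begin{equation*}
\vec{\Psi}(\textbf{x})^{l_1} \cdot \vec{\Psi}(\textbf{y})^{l_2} = \sum_k c_k \vec{\Psi}(a_k \textbf{x} + b_k \textbf{y})^{l_1 + l_2} \in \Psi^{[l_1 + l_2]},
\end{equation*}
which finishes the proof. The main (and only) subtlety is the polarization step: one must be careful that the linear combination expressing $v_1^{l_1} v_2^{l_2}$ in terms of powers of $a v_1 + b v_2$ has coefficients independent of $v_1, v_2$, so that the identity survives the passage from scalars to the linear forms $\Psi_r$ applied to different arguments. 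Everything else is a routine manipulation.
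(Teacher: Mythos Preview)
Your proof is correct and follows essentially the same route as the paper's: both use Lemma~\ref{nice expression for polynomial spaces of linear forms} to write $\PP_{i,j}=\sum_{l=j}^{i}\Psi^{[l]}$ and then reduce to the inclusion $\Psi^{[l_1]}\cdot\Psi^{[l_2]}\subseteq\Psi^{[l_1+l_2]}$. The only difference is that the paper cites this last inclusion as Corollary~3.4 of \cite{green_tao_2010a}, whereas you supply a short self-contained polarization argument; the paper also explicitly disposes of the trivial case $i<j$ (where $\PP_{i,j}=0$), which in your write-up is handled implicitly by the empty-sum convention.
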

\begin{proof}
Let $i_1, i_2, j_1, j_2\in\NN_+$. If $i_1 < j_i$ then $\P_{i_1, j_1} = \{0\}$, and so $\P_{i_1, j_1}\cdot\P_{i_2,j_2}\subseteq\P_{i_1+i_2, j_1 +j_2}$ trivially. The same happens if $i_2 < j_2$. We can therefore assume that $i_1\geqslant j_1$ and $i_2\geqslant j_2$. From Lemma \ref{nice expression for polynomial spaces of linear forms} and Corollary 3.4 of \cite{green_tao_2010a}, it follows that
\begin{align*}
    \P_{i_1, j_1}\cdot\P_{i_2, j_2} = \sum_{l_1 = j_1}^{i_1}\Psi^{[l_1]}\cdot \sum_{l_2 = j_2}^{i_2}\Psi^{[l_2]}\subseteq \sum_{l = j_1+j_2}^{i_1+i_2} \Psi^{[l]} = \P_{i_1+i_2, j_1+j_2}.
\end{align*}
\end{proof}

We also record a corollary which describes the spaces $\P_{i,j}$ for systems satisfying the flag condition.
\begin{corollary}
Suppose that $\vec{\Psi}$ satisfies the flag condition. Then
\begin{align*}
    \PP_{i,1} = ... = \PP_{i,i} = \Psi^{[i]}
\end{align*}
for any $i\in\NN_+$.
\end{corollary}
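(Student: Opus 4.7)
The plan is very short: this is an immediate consequence of the preceding Lemma together with the flag condition. First I would invoke the preceding lemma (the one giving $\PP_{i,j} = \Psi^{[i]} + \Psi^{[i-1]} + \cdots + \Psi^{[j]}$ for $1 \leqslant j \leqslant i$), which provides an explicit description of $\PP_{i,j}$ as a sum of the homogeneous pieces $\Psi^{[l]}$.

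Next I would apply the flag condition $\Psi^{[l]} \subseteq \Psi^{[l+1]}$ iteratively, which gives the chain
\begin{align*}
    \Psi^{[j]} \subseteq \Psi^{[j+1]} \subseteq \cdots \subseteq \Psi^{[i]}
\end{align*}
for each $1 \leqslant j \leqslant i$. Consequently every summand in $\Psi^{[i]} + \Psi^{[i-1]} + \cdots + \Psi^{[j]}$ is contained in $\Psi^{[i]}$, so the entire sum collapses to $\Psi^{[i]}$. On the other hand, $\Psi^{[i]} = \PP_{i,i}$ is itself one of the summands, so the reverse inclusion is trivial, and we conclude $\PP_{i,j} = \Psi^{[i]}$ for all $1 \leqslant j \leqslant i$.

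There is essentially no obstacle here; the work was already done in establishing the sum formula for $\PP_{i,j}$. The flag condition is precisely tailored to collapse that sum, so the corollary reduces to one line of manipulation.
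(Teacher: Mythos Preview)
Your proposal is correct and follows essentially the same approach as the paper's proof: invoke the preceding lemma to write $\PP_{i,j} = \Psi^{[i]} + \cdots + \Psi^{[j]}$, then use the flag condition $\Psi^{[l]} \subseteq \Psi^{[i]}$ for all $l \leqslant i$ to collapse the sum to $\Psi^{[i]}$.
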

\begin{proof}
By the flag condition, $\Psi^{[l]}\subseteq\Psi^{[i]} =\P_{i,i}$ for every $1\leqslant l\leqslant i$. Therefore
\begin{align*}
    \P_{i,j} = \Psi^{[i]} + ... + \Psi^{[j]} = \Psi^{[i]}
\end{align*}
for any $1\leqslant j\leqslant i$. 
\end{proof}

\section{True complexity of equidistributing progressions}\label{section on true complexity for equidistributing progressions}
 We have shown in previous sections that many progressions, including $x,\; x+y,\; x+y^2,\; x+y+y^2$ or $x,\; x+y,\; x+2y, \; x+y^3,\; x+2y^3$, equidistribute, i.e. if $g$ is highly irrational on $G$, then the corresponding sequence $g^P$ is close to being equidistributed on $G^P$. 
In this section, we shall prove Conjecture \ref{conjecture for true complexity} for all equidistributing progressions.
\begin{theorem}\label{true complexity for equidistributing progressions}
Let $t\in\NN_+$, and fix $1\leqslant l\leqslant t$. Let $\vec{P}=(P_1, ..., P_t)\in\QQ[\textbf{x}]^t$ be a Gowers controllable integral polynomial map that equidistributes and is algebraically independent of degree $s+1$ at $l$. Then the true complexity of $\vec{P}$ at $l$ is at most $s$.
\end{theorem}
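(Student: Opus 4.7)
The strategy combines the arithmetic regularity lemma (Lemma \ref{regularity lemma}), the equidistribution hypothesis on $G^P/\Gamma^P$, and vertical Fourier analysis on $G/\Gamma$ to convert smallness of $\|f_l\|_{U^{s+1}}$ into smallness of the counting operator.

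Fix $\epsilon > 0$. Since $\vec{P}$ is Gowers controllable, choose $s_0 \geq s$ so that each index of $\vec{P}$ is controlled by $U^{s_0+1}$. Pick a rapidly growing function $\mathcal{F}$ (to be tuned) and apply Lemma \ref{regularity lemma} to $f_1,\dots,f_t$ at degree $s_0$, obtaining a filtered nilmanifold $G/\Gamma$ of complexity $\leq M$, a $p$-periodic $\mathcal{F}(M)$-irrational sequence $g$, and simultaneous decompositions $f_i = f_{i,\mathrm{nil}} + f_{i,\mathrm{sml}} + f_{i,\mathrm{unf}}$ with $f_{i,\mathrm{nil}} = F_i \circ g$. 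Expanding the counting operator into $3^t$ terms, those containing some $f_{i,\mathrm{unf}}$ factor are $O(\mathcal{F}(M)^{-c}) + O(p^{-c})$ by Gowers controllability at $i$, while those with an $f_{i,\mathrm{sml}}$ factor (but no unf) are $O(\epsilon^{c})$ by H\"older and $L^2$-smallness. For $\mathcal{F}$ growing fast enough, these errors sum to at most $\epsilon/2$.

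The main term equals $\EE_\textbf{x} F(g^P(\textbf{x})\Gamma^P)$ where $F = F_1 \otimes \cdots \otimes F_t$ is $O(M)$-Lipschitz on $G^P/\Gamma^P$. By the equidistribution hypothesis on $\vec{P}$ and the $\mathcal{F}(M)$-irrationality of $g$, the sequence $g^P$ is $o(1)$-equidistributed on $G^P/\Gamma^P$, so the main term is $o(1)$-close to $I := \int_{G^P/\Gamma^P} F\,d\mu$. From the regularity decomposition,
\begin{align*}
\|F_l \circ g\|_{U^{s+1}} \leq \|f_l\|_{U^{s+1}} + \|f_{l,\mathrm{sml}}\|_{U^{s+1}} + \|f_{l,\mathrm{unf}}\|_{U^{s+1}} \leq \delta + O(\epsilon^{c}) + \mathcal{F}(M)^{-1},
\end{align*}
reducing the task to showing that $|I|$ is small whenever $F_l \circ g$ has small $U^{s+1}$ norm.

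To handle this last step, perform a vertical Fourier decomposition $F_l = \sum_\xi F_{l,\xi}$ with respect to the $G_{s+1}/\Gamma_{s+1}$-action on $G/\Gamma$. For each nonzero $\xi$, $F_{l,\xi}\circ g$ carries a genuine $(s+1)$-order polynomial phase whose $U^{s+1}$ norm is bounded below by a multiple of $\|F_{l,\xi}\|_2$ (using $\mathcal{F}(M)$-irrationality of $g$ on $G_{s+1}$), so these components are forced to be small; the leftover component $F_{l,0}$ descends to a function on the degree-$s$ quotient $G/(G_{s+1}\Gamma)$. Algebraic independence of $\vec{P}$ of degree $s+1$ at $l$ now enters: horizontal characters of $G^P/\Gamma^P$ that are nontrivial on the $l$-th factor correspond to algebraic relations $\sum_i Q_i(P_i)=0$, and the hypothesis rules out those with $\deg Q_l \geq s+1$; combined with the degree-$s$ reduction of $F_l$, this forces $|I| \leq \epsilon/3$. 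The principal difficulty will be making precise the correspondence between integer horizontal characters of the Leibman nilmanifold and algebraic relations among the $P_i$, iterating the vertical Fourier step down the filtration to absorb levels above $s+1$, and handling the mismatch that $G/\Gamma$ has degree $s_0 \geq s$ rather than exactly $s$.
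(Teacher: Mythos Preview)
Your setup through the reduction to $I=\int_{G^P/\Gamma^P}F$ is correct and matches the paper. The gap is in how you then bound $I$.

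Your vertical Fourier step is misdirected. You claim that for each nonzero vertical frequency $\xi$ on $G_{s+1}/\Gamma_{s+1}$, the sequence $F_{l,\xi}\circ g$ has $U^{s+1}$ norm bounded \emph{below} by $\|F_{l,\xi}\|_2$, and that smallness of $\|F_l\circ g\|_{U^{s+1}}$ then forces each $\|F_{l,\xi}\|_2$ to be small. Neither assertion holds. The filtration on $G$ has degree $s_0$, which may exceed $s+1$; a nonzero $(s+1)$-level vertical frequency does \emph{not} make $F_{l,\xi}\circ g$ behave like a degree-$(s+1)$ phase (it is still a degree-$s_0$ nilsequence and its $U^{s+1}$ norm can be small). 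And even if the lower bound held, the $U^{s+1}$ norm of a sum gives no control on the $U^{s+1}$ norms of the summands, so you cannot conclude anything about individual $\|F_{l,\xi}\|_2$. In fact $\|F_{l,\xi}\|_2$ need not be small at all.

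The paper handles the nonzero-$\xi$ components not by showing they are small, but by showing they contribute \emph{zero} to $I$. This is exactly where algebraic independence enters, and it enters structurally: independence of degree $s+1$ at $l$ implies $\binom{P_l}{s+1}$ is not in the span of $\binom{P_j}{s+1}$ for $j\neq l$, hence $\vec{e}_l\in\Q_{s+1,1}$, hence the group $H=\{1\}^{l-1}\times G_{s+1}\times\{1\}^{t-l}$ is a normal subgroup of $G^P$. Averaging over $H$-cosets therefore leaves the Haar integral unchanged, so $I=\int_{G^P/\Gamma^P}F_{\leqslant s}$, where $F_{\leqslant s}$ replaces $F_l$ by its $G_{s+1}$-coset average $F_{l,\leqslant s}$ (your $F_{l,0}$). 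Your description of algebraic independence via ``horizontal characters of $G^P$ corresponding to algebraic relations'' is in the right spirit but is not the mechanism that is actually needed.

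Finally, to bound the remaining contribution $|I|\leqslant\|F_{l,\leqslant s}\|_2$, the paper writes $\|F_{l,\leqslant s}\|_2^2=\langle F_l,F_{l,\leqslant s}\rangle$, converts this back to an average over $\FF_p$ via equidistribution of $g$, and then invokes the \emph{converse to the inverse theorem for Gowers norms} (Proposition~1.4 of Appendix~G of \cite{green_tao_ziegler_2011}): since $\psi(n)=\overline{F_{l,\leqslant s}}(g(n)\Gamma)$ is a nilsequence of degree at most $s$ and $\|f_l\|_{U^{s+1}}\leqslant\delta$, one gets $|\EE_n f_l(n)\psi(n)|=o_{\delta\to 0,M}(1)$. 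You never invoke this tool, and without it there is no bridge between smallness of $\|f_l\|_{U^{s+1}}$ and smallness of the correlation with the degree-$s$ structured part.
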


The logic of the proof is very similar to the proof of Theorem 7.1 in \cite{green_tao_2010a}, with small modifications that allow us to get a control of weights on different terms of the progression by different  Gowers norms.
\begin{proof}
We let all implied constants in this proof depend on $\vec{P}$, $s$ and $t$ without mentioning the dependence explicitly.

Fix $\epsilon>0$. Since $\vec{P}$ is Gowers controllable, there exists an integer $s_0\geqslant 1$, a threshold $p_0\in\NN$, and a real number $\delta>0$ such that for all primes $p>p_0$,
\begin{align*}
    |\EE_{\textbf{x}\in\FF_p^D}f_1(P_1(\textbf{x}))\cdots f_t(P_t(\textbf{x}))|\leqslant \epsilon
\end{align*}
for all 1-bounded functions $f_1, ..., f_t:\FF_p\to\CC$, at least one of which satisfies $\|f_{i}\|_{U^{s_0+1}}\leqslant\delta$. We let $\mathcal{F}:\RR_+\to\RR_+$ be a growth function depending on $\epsilon$ to be fixed later. If $s\geqslant s_0$, then we are done, so suppose $s<s_0$.

Suppose that $f_1, ..., f_t:\FF_p\to\CC$ are 1-bounded functions, and suppose moreover that $\|f_{l}\|_{U^{s+1}}\leqslant\delta$. We use Lemma \ref{regularity lemma} to find $M=O_{\epsilon, \mathcal{F}}(1)$, a filtered nilmanifold $G/\Gamma$ of degree $s_0$ and complexity $M$, a $p$-periodic, $\mathcal{F}(M)$-irrational sequence $g\in\poly(\ZZ,G_\bullet)$ with $g(0)=1$, and decompositions 
\begin{align}\label{decomposition in the proof of true complexity}
    f_i = f_{i, nil} + f_{i, sml} + f_{i, unf}
\end{align}
satisfying the conditions of Lemma \ref{regularity lemma}. Decomposing each of $f_i$ this way, we get $3^t$ terms. All the terms involving $f_{i,sml}$ can be bounded by $O(\epsilon)$. By choosing $\mathcal{F}$ growing sufficiently fast depending on $\delta$, we can assume that $\|f_{i,unf}\|_{s_0+1}\leqslant\frac{1}{\F(M)}\leqslant\frac{\delta}{4}$, which together with 4-boundedness of $f_{i,unf}$ implies that terms involving $f_{i,unf}$ contribute at most $O(\epsilon)$.
This leaves us with
\begin{align*}
    \EE_{\textbf{x}\in\FF_p^D}f_1(P_1(\textbf{x}))\cdots f_t(P_t(\textbf{x})) &=\EE_{\textbf{x}\in\FF_p^D}f_{1,nil}(P_1(\textbf{x}))\cdots f_{t,nil}(P_t(\textbf{x})) + O(\epsilon)\\
    &= \EE_{\textbf{x}\in\FF_p^D}F(g^P(\textbf{x})\Gamma^P)+O(\epsilon),
\end{align*}
where $F((u_1, ..., u_t)\Gamma^P)=F_1(u_1\Gamma)...F_t(u_t\Gamma)$. Since $\vec{P}$ equidistributes, we have
\begin{align*}
    \EE_{\textbf{x}\in\FF_p^D}f_1(P_1(\textbf{x}))\cdots f_t(P_t(\textbf{x}))
    &=\int_{G^P/\Gamma^P}F + o_{\mathcal{F}(M)\to\infty, M, \epsilon}(1)+O(\epsilon).
\end{align*}
By the assumption of algebraic independence, the polynomial ${{P_l}\choose{s+1}}$ is not a linear combination of ${{P_1}\choose{s+1}}, ..., {{P_{l-1}}\choose{s+1}}, {{P_{l+1}}\choose{s+1}}, ..., {{P_t}\choose{s+1}}$. Consequently, the space $\Q_{s+1,1}$ contains the vector $\vec{e}_l$ that has 1 in the $l$-th coordinate and 0 elsewhere. This implies that the group
\begin{align*}
    H = \langle h^{\vec{e}_l}: h\in G_{s+1}\rangle = \{1\}^{l-1}\times G_{s+1}\times\{1\}^{t-l}
\end{align*}
is contained in $G^P$. In fact, $H$ is a normal subgroup of $G^P$ due to the normality of $G_{s+1}$ in $G$. Therefore,
\begin{align*}
    \int_{G^P/\Gamma^P}F = \int_{G^P/\Gamma^P}F_{\leqslant s},
\end{align*}
where $F_{\leqslant s}((u_1, ..., u_t)\Gamma^P)=\left(\prod\limits_{\substack{1\leqslant i \leqslant t, \\ i\neq l}} F_i(u_i\Gamma)\right)F_{l,\leqslant s}(u_l\Gamma)$ and $F_{l,\leqslant s}$ is the average of $F_l$ over cosets of $G_{s+1}$:
\begin{align*}
    F_{l,\leqslant s}(u\Gamma) = \int_{G_{s+1}/\Gamma_{s+1}}F_l(uw\Gamma) dw.
\end{align*}
It is straightforward to see that $F_{l, \leqslant s}$ is 1-bounded and $M$-Lipschitz. We moreover have the bound
\begin{align*}
    |F_{\leqslant s}((u_1, ..., u_t)\Gamma^P)|\leqslant|F_{l, \leqslant s}(u_{l}\Gamma)|
\end{align*}
which implies that
\begin{align*}
    \left|\int_{G^P/\Gamma^P}F\right|\leqslant\int_{G/\Gamma}|F_{l,\leqslant s}|\leqslant\left(\int_{G/\Gamma}|F_{l,\leqslant s}|^2\right)^\frac{1}{2}.
\end{align*}
The function $F_{l, \leqslant s}$ is invariant on $G_{s+1}$-cosets by construction while $F_{l}-F_{l, \leqslant s}$ vanishes on each coset. As a consequence, the two functions are orthogonal, implying
\begin{align*}
    \int_{G/\Gamma}|F_{l,\leqslant s}|^2 = \int_{G/\Gamma}F_{l} \overline{F_{l,\leqslant s}}. 
\end{align*}
By the $\F(M)$-irrationality of $g$, we have
\begin{align*}
    \int_{G/\Gamma}F_{l} \overline{F_{l,\leqslant s}} = \EE_{n\in\FF_p} (F_{l}\overline{F_{l,\leqslant s}})(g(n)\Gamma) + o_{\F(M)\to\infty, M,\epsilon}(1).
\end{align*}
We let $\psi(n)=\overline{F_{l,\leqslant s}}(g(n)\Gamma)$. By the $G_{s+1}$-invariance of $F_{\leqslant s}$, this is a nilsequence of degree $\leqslant s$ and complexity $M$. By (\ref{decomposition in the proof of true complexity}), we have
\begin{align*}
    F_{l}(g(n)\Gamma) = f_{l}(n)-f_{l, sml}(n)-f_{l, unf}(n).
\end{align*}
We then split $ \EE_{n\in\FF_p} F_{l}(g(n)\Gamma)\psi(n)$ into three terms. Using the Cauchy-Schwarz inequality, the term involving $f_{l,sml}$ can be bounded as
\begin{align*}
    |\EE_{n\in\FF_p}f_{l, sml}(n)\psi(n)|\ll\epsilon.
\end{align*}
To evaluate the contribution coming from $f_{l}$, we use $\|f_{l}\|_{U^{s+1}}\leqslant\delta$ and the converse to the inverse theorem for Gowers norms (Proposition 1.4 of Appendix G of \cite{green_tao_ziegler_2011}) to conclude that
\begin{align*}
    |\EE_{n\in\FF_p}f_{l}(n)\psi(n)|=o_{\delta\to 0, M,\epsilon}(1).
\end{align*}
Similarly, we use $\|f_{l,unf}\|_{U^{s_0+1}}\leqslant\delta$ and $s_0\geqslant s$ to conclude that 
\begin{align*}
    |\EE_{n\in\FF_p}f_{l, unf}(n)\psi(n)|=o_{\mathcal{F}(M)\to\infty, M, \epsilon}(1).
\end{align*}
Combining all these estimates, we have
\begin{align*}
    |\EE_{\textbf{x}\in\FF_p^D}f_1(P_1(\textbf{x}))\cdots f_t(P_t(\textbf{x}))|= O(\epsilon)+o_{\mathcal{F}(M)\to\infty, M, \epsilon}(1)+o_{\delta\to 0, M,\epsilon}(1).
\end{align*}
By choosing $\mathcal{F}$ growing sufficiently fast and $\delta$ sufficiently small depending on $\epsilon$, we obtain
\begin{align*}
    |\EE_{\textbf{x}\in\FF_p^D}f_1(P_1(\textbf{x}))\cdots f_t(P_t(\textbf{x}))| \ll \epsilon,
\end{align*}
which proves the theorem.
\end{proof}


\section{An asymptotic for the count of progressions of complexity 1}\label{section on the asymptotic count}
One of the applications of true complexity is that we can obtain an asymptotic for the count of polynomial progressions of complexity 1 like those in Theorem \ref{asymptotic count for systems of complexity 1}. We rewrite the integral polynomial map $\vec{P}\in\QQ[\textbf{x}]^t$ in the form
\begin{align*}
    \vec{P}(\textbf{x})=\sum_{i=1}^r\vec{v}_i Q_i(\textbf{x})
\end{align*}
for some $\vec{v}_1, ..., \vec{v}_r\in\ZZ^t$ and integer-valued $Q_1, ..., Q_r\in\QQ[\textbf{x}]$. Given such a polynomial map, we define the corresponding linear map
\begin{align*}
 \vec{\Psi}(y_1, ..., y_r) = \sum_{i=1}^r\vec{v}_i y_i
\end{align*}
 The relationship between the two progressions is given by 
\begin{align}\label{relationship between P and Psi}
    \vec{P}(\textbf{x})=\vec{\Psi}(Q_1(\textbf{x}), ..., Q_r(\textbf{x})),
\end{align}
and we aim to understand the relationship between the appropriate counts
\begin{align*}
    \Lambda_P(f_1, ..., f_{t})=\EE_{\textbf{x}\in\FF_p^D}\prod_{k=1}^t f_k \left(P_k(\textbf{x})\right)
\end{align*}
and 
\begin{align*}
    \Lambda_\Psi(f_1, ..., f_{t})=\EE_{y_1, ..., y_r\in\FF_p}\prod_{k=1}^t f_k \left(\Psi_k(y_1, ..., y_r)\right)
\end{align*}
where $P_k$ and $\psi_k$ denote the $k$-th coordinates of $\vec{P}$ and $\vec{\Psi}$ respectively.
\begin{theorem}\label{asymptotic for the count of configurations}
Let $\vec{P}$ and $\vec{\Psi}$ be given as above. Suppose moreover that $\vec{P}$ is Gowers controllable, equidistributes and is algebraically independent of degree 2. Then
\begin{align*}
     \Lambda_P(f_1, ..., f_{t}) = \Lambda_\Psi(f_1, ..., f_{t})  + o(1)
\end{align*}
for an error term $o(1)$ that depends on $\vec{P}$ but not on the choice of 1-bounded functions ${f_1, ..., f_{t}:\FF_p\to\CC}$. 
\end{theorem}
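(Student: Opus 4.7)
The plan is to mirror the regularity-lemma-plus-equidistribution strategy used in Theorem \ref{true complexity for equidistributing progressions}, but now applied in parallel to both $\Lambda_P$ and $\Lambda_\Psi$, and then to identify the two resulting main terms.

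First, I would establish that $\vec{\Psi}$ also has true complexity $1$ at every index. Algebraic independence of degree $2$ of $\vec{\Psi}$ is inherited from $\vec{P}$: substituting $y_i = Q_i(\textbf{x})$ into any hypothetical relation $\sum_k c_k \vec{\Psi}_k^2 = 0$ yields $\sum_k c_k P_k^2 = 0$, contradicting the hypothesis on $\vec{P}$. Since $\vec{\Psi}$ is a linear system, one can invoke the standard complexity-$1$ results for linear systems (or, at the level of this paper, either Theorem \ref{true complexity for equidistributing progressions} together with equidistribution for linear forms from \cite{green_tao_2010a}/\cite{candela_sisask_2012}, or the classical Cauchy--Schwarz/Fourier argument for linear forms of complexity $1$) to get control by $U^2$ at every index.

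Second, I would apply the simultaneous regularity lemma (Lemma \ref{regularity lemma}) with $s = 2$, a small $\epsilon>0$, and a growth function $\F$ to be chosen last, producing decompositions $f_i = f_{i,nil} + f_{i,sml} + f_{i,unf}$ against a common filtered nilmanifold $G/\Gamma$ of complexity $M = O_{\epsilon,\F}(1)$ and a $p$-periodic, $\F(M)$-irrational sequence $g\in\poly(\ZZ,G_\bullet)$ with $g(0) = 1$. Expanding both $\Lambda_P$ and $\Lambda_\Psi$ by multilinearity into $3^t$ terms, every term containing some $f_{i,sml}$ contributes $O(\epsilon)$ via iterated Cauchy--Schwarz applied to each counting operator, while every term containing some $f_{i,unf}$ contributes $o(1)$ by true complexity $1$ of $\vec{P}$ and $\vec{\Psi}$ combined with $\|f_{i,unf}\|_{U^2}\le\|f_{i,unf}\|_{U^3}\le 1/\F(M)$ and the monotonicity of Gowers norms. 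This leaves the nilsequence-on-nilsequence main terms
\[
\Lambda_P(f_{1,nil},\dots,f_{t,nil}) = \EE_{\textbf{x}\in\FF_p^D} F(g^P(\textbf{x})\Gamma^P), \qquad \Lambda_\Psi(f_{1,nil},\dots,f_{t,nil}) = \EE_{\textbf{y}\in\FF_p^r} F(g^\Psi(\textbf{y})\Gamma^\Psi),
\]
where $F(u_1,\dots,u_t)=\prod_k F_k(u_k)$.

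Third, I would apply the equidistribution hypothesis on $\vec{P}$ to the first main term, giving $\int_{G^P/\Gamma^P} F + o(1)$, and apply the analogous equidistribution statement for linear forms (a periodic version of Green--Tao's counting lemma for linear systems) to the second, giving $\int_{G^\Psi/\Gamma^\Psi} F + o(1)$. The final step, which I expect to be the main obstacle, is to verify that these two integrals agree. My plan for this is to note that at the level of the maximal torus quotient $G/[G,G]\Gamma$ one has $\PP_{1,1}(\vec{P}) = \Psi^{[1]} = \Span\{\vec{v}_1,\dots,\vec{v}_r\}$, so the first levels $G_1^P$ and $G_1^\Psi$ project onto the \emph{same} subtorus of $(G/[G,G]\Gamma)^t$. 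The product structure $F = \prod_k F_k$, combined with the absence of any degree-$2$ algebraic relation among the $P_k$'s (i.e.\ algebraic independence of degree $2$ at every index), then forces the higher-level contributions to average out identically on both sides, reducing the comparison of integrals to a computation on this common torus. Choosing $\F$ to grow fast enough and $\epsilon$ small enough at the very end gives $\Lambda_P = \Lambda_\Psi + o(1)$.
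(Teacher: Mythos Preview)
Your overall strategy---regularity lemma, then equidistribution on both sides, then identify the two main integrals---is exactly what the paper does, and your first three steps are fine. The gap is in your final step, where you say that algebraic independence of degree $2$ ``forces the higher-level contributions to average out identically on both sides, reducing the comparison of integrals to a computation on this common torus.'' As written this is not an argument: you have not said which subgroup you are averaging over, nor why that subgroup sits inside both $G^P$ and $G^\Psi$, nor why the quotients coincide.

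The paper replaces this step by a single clean observation that you are circling around but never state: under the hypothesis of algebraic independence of degree $2$, the Leibman groups satisfy $G^P = G^\Psi$ for \emph{every} filtered group $G$. The reason is short. From $\vec{P}(\textbf{x}) = \vec{\Psi}(Q_1(\textbf{x}),\dots,Q_r(\textbf{x}))$ one gets $\PP_{1,1} = \Psi^{[1]}$ and $\PP_{i,1}\subseteq \Psi^{[i]}$ for all $i$. Algebraic independence of degree $2$ means exactly that $P_1^2,\dots,P_t^2$ are linearly independent, so $\PP_{2,1}=\RR^t$; hence $\Psi^{[2]}=\RR^t$ as well, and then $\PP_{i,1}=\Q_{i,1}=\Psi^{[i]}=\RR^t$ for every $i\geqslant 2$. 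Since the Leibman group is generated by $g^{\vec v}$ with $g\in G_i$ and $\vec v$ in these spaces, and the spaces agree level by level, the two groups are literally equal. The two integrals are then over the same nilmanifold with the same integrand, so they coincide trivially---no ``averaging out'' or reduction to a torus is needed. (Incidentally, $\Psi^{[i]}=\RR^t$ for $i\geqslant 2$ together with $\Psi^{[1]}\subseteq\RR^t$ also gives the flag condition for $\vec\Psi$, which is what justifies invoking the linear counting lemma in your third step.) Once you insert this observation, your proof is complete and essentially identical to the paper's; note also that because true complexity is $1$, the paper gets away with taking the nilmanifold in Lemma~\ref{regularity lemma} to be of degree $1$ rather than your $s=2$, though this is cosmetic.
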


\begin{corollary}\label{corollary to asymptotic for the count of configurations}
Let $\vec{P}$ and $\vec{\Psi}$ be given as above, and suppose that $\vec{P}$ is Gowers controllable, equidistributes and is algebraically independent of degree 2. For any $A\subseteq\FF_p$, we have
\begin{align*}
    |\{\vec{P}(\textbf{x})\in A^t: \textbf{x}\in\FF_p^D\}|=p^{D-r}|\{\vec{\Psi}(y_1, ..., y_r)\in A^t: y_1, ..., y_r\in\FF_p\}|+o(p^D),
\end{align*}
and the error term is uniform in all subsets $A$.
\end{corollary}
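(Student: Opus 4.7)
The plan is to derive this corollary as a direct application of Theorem \ref{asymptotic for the count of configurations} with the indicator functions of $A$ as the 1-bounded functions. First, I would specialize the theorem by setting $f_1 = \cdots = f_t = \mathbb{1}_A$. This is a legitimate choice of 1-bounded functions, and crucially the error term in Theorem \ref{asymptotic for the count of configurations} depends only on $\vec{P}$, not on the choice of functions, so it is automatically uniform in $A$.

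Next I would rewrite each count as a normalized average. By unfolding the definitions,
\begin{align*}
    \Lambda_P(\mathbb{1}_A, \ldots, \mathbb{1}_A) &= \EE_{\textbf{x}\in\FF_p^D}\prod_{k=1}^t \mathbb{1}_A(P_k(\textbf{x})) = \frac{|\{\vec{P}(\textbf{x})\in A^t: \textbf{x}\in\FF_p^D\}|}{p^D},\\
    \Lambda_\Psi(\mathbb{1}_A, \ldots, \mathbb{1}_A) &= \EE_{y_1,\ldots,y_r\in\FF_p}\prod_{k=1}^t \mathbb{1}_A(\Psi_k(y_1,\ldots,y_r)) = \frac{|\{\vec{\Psi}(y_1,\ldots,y_r)\in A^t: y_1,\ldots,y_r\in\FF_p\}|}{p^r}.
\end{align*}
Applying Theorem \ref{asymptotic for the count of configurations} gives
\begin{align*}
    \frac{|\{\vec{P}(\textbf{x})\in A^t\}|}{p^D} = \frac{|\{\vec{\Psi}(y_1,\ldots,y_r)\in A^t\}|}{p^r} + o(1),
\end{align*}
and multiplying through by $p^D$ yields the desired identity with error $o(p^D)$.

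Since the whole argument is just a specialization and rescaling, there is no real obstacle; the substantive work has already been done in Theorem \ref{asymptotic for the count of configurations}. The only subtle point worth mentioning in the write-up is that the uniformity of the error over $A \subseteq \FF_p$ is inherited from the uniformity statement in Theorem \ref{asymptotic for the count of configurations} (the error depends on $\vec{P}$ but not on the chosen 1-bounded functions), so no separate argument is needed to obtain it.
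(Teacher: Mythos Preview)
Your proposal is correct and matches the paper's approach: the paper states this corollary immediately after Theorem \ref{asymptotic for the count of configurations} without a separate proof, treating it as the evident specialization to $f_1=\cdots=f_t=\mathbb{1}_A$ followed by clearing the normalizing factors, exactly as you do. Your remark that the uniformity in $A$ is inherited from the function-independent error term in Theorem \ref{asymptotic for the count of configurations} is the only point worth noting, and you have noted it.
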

What Theorem \ref{asymptotic for the count of configurations} is indicating is that for configurations satisfying only linear relations and of true complexity 1, each polynomial $Q_i$ can be thought of as a separate variable. Therefore the counts of $\vec{P}$ and $\vec{\Psi}$ are so strongly related.

We specialize Corollary \ref{corollary to asymptotic for the count of configurations} to two families of polynomial progressions that we have explicitly looked at.
\begin{corollary}\label{asymptotic count for x, x+Q(y), x+R(y), x+Q(y)+R(y)}
Let $Q,R\in\ZZ[y]$ be nonzero polynomials that have zero constant terms and satisfy $1\leqslant\deg Q<\deg R$.
For any 1-bounded functions $f_0, f_1, f_2, f_3:\FF_p\to\CC$, we have 
\begin{align*}
    &\EE_{x,y\in\FF_p}f_0(x)f_1(x+Q(y))f_2(x+R(y))f_3(x+Q(y)+R(y))\\ 
    &= \EE_{x,y,z\in\FF_p}f_0(x)f_1(x+y)f_2(x+z)f_3(x+y+z) + o(1),
\end{align*}
where the error term is independent of the choice of $f_0, f_1, f_2, f_3$. Moreover, for any $A\subseteq\FF_p$, we have
\begin{align*}
    &|\{(x,x+Q(y), x+R(y), x+Q(y)+R(y))\in A^4: x,y\in\FF_p\}|\\ 
    &= \frac{1}{p}\{(x,y,u,z)\in A^4: x+y=u+z\}| + o(p^2)
\end{align*}
uniformly in the choice of $A$.
\end{corollary}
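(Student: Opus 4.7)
The plan is to obtain the corollary as a direct specialization of Theorem \ref{asymptotic for the count of configurations} and Corollary \ref{corollary to asymptotic for the count of configurations} to the polynomial map $\vec{P}(x,y) = (x,\, x+Q(y),\, x+R(y),\, x+Q(y)+R(y))$. First I identify the linearization: writing $\vec{P}(x,y) = \vec{v}_1 x + \vec{v}_2 Q(y) + \vec{v}_3 R(y)$ with $\vec{v}_1 = (1,1,1,1)$, $\vec{v}_2 = (0,1,0,1)$, $\vec{v}_3 = (0,0,1,1)$, the associated linear map is
\[
\vec{\Psi}(y_1,y_2,y_3) = (y_1,\, y_1+y_2,\, y_1+y_3,\, y_1+y_2+y_3),
\]
so that after renaming $(y_1,y_2,y_3) = (x,y,z)$ the average $\Lambda_\Psi$ is exactly the linear four-term average appearing on the right-hand side of the first assertion.

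Next I verify the three hypotheses of Theorem \ref{asymptotic for the count of configurations}. Gowers controllability at every index, via the $U^2$ norm, is the content of Theorem \ref{True complexity of x, x+Q(y), x+R(y), x+Q(y)+R(y)}. The equidistribution property is provided by Theorem \ref{equidistribution of x, x+Q(y), x+R(y), x+Q(y)+R(y)}. Algebraic independence of degree $2$ then comes for free: if it failed at some index $i$, Theorem \ref{lower bound for true complexity} would force the true complexity of $\vec{P}$ at $i$ to be at least $2$, contradicting the $U^2$ control just established. With these three ingredients in hand, Theorem \ref{asymptotic for the count of configurations} immediately delivers
\[
\Lambda_P(f_0,f_1,f_2,f_3) = \Lambda_\Psi(f_0,f_1,f_2,f_3) + o(1)
\]
uniformly in $1$-bounded $f_0,\dots,f_3$, which is the first assertion of the corollary.

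For the counting version I invoke Corollary \ref{corollary to asymptotic for the count of configurations} with $D=2$ and $r=3$, so the prefactor $p^{D-r}$ equals $p^{-1}$. The linearized count $|\{\vec{\Psi}(y_1,y_2,y_3) \in A^4 : y_i \in \FF_p\}|$ parametrizes quadruples $(a_1,a_2,a_3,a_4) \in A^4$ subject to the single relation $a_1 + a_4 = a_2 + a_3$, and the bijection $(a_1,a_2,a_3,a_4) \mapsto (a_1,a_4,a_2,a_3)$ identifies this with $|\{(x,y,u,z) \in A^4 : x+y = u+z\}|$. Combining these identifications produces the stated counting asymptotic, with the error $o(1)$ from the functional statement becoming $o(p^2)$ after multiplying by the $p^2$ choices of $(x,y)$. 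There is no genuine obstacle here, since all the substantive analytic work is already packaged in the equidistribution and true complexity theorems cited above; the rest is routine bookkeeping.
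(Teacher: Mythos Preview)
Your proof is correct and follows essentially the same approach as the paper: both derive the first assertion directly from Theorem \ref{asymptotic for the count of configurations} and the second from Corollary \ref{corollary to asymptotic for the count of configurations}, together with the observation that the linearized system parametrizes Sidon quadruples. You are simply more explicit than the paper in checking the hypotheses (Gowers controllability, equidistribution, algebraic independence of degree $2$), but the argument is the same.
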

We have thus related the number of progressions $x,\; x+Q(y),\; x+R(y),\; x+Q(y)+R(y)$ in an arbitrary subset $A\subseteq\FF_p$ to the number of solutions to the Sidon equation $x+y=u+z$, which is a well-studied quantity known as additive energy. To learn more about Sidon equation or additive energy, consult e.g. \cite{tao_vu_2006}. 
\begin{proof}
The first part of Corollary \ref{asymptotic count for x, x+Q(y), x+R(y), x+Q(y)+R(y)} is a straightforward application of Theorem \ref{asymptotic for the count of configurations}. The second part follows by observing that 2-dimensional cubes $x,\; x+y,\; x+z,\; x+y+z$ parametrize solutions to the Sidon equation.
\end{proof}
\begin{corollary}\label{asymptotic count for x, x+Q(y), x+2Q(y), x+R(y), x+2R(y)}
Let $Q,R\in\ZZ[y]$ be nonzero polynomials that have zero constant terms and satisfy $1\leqslant\deg Q<(\deg R)/2$. For any 1-bounded functions $f_0, f_1, f_2, f_3, f_4:\FF_p\to\CC$, we have 
\begin{align*}
    &\EE_{x,y\in\FF_p}f_0(x)f_1(x+Q(y))f_2(x+2Q(y))f_3(x+R(y))f_4(x+2R(y))\\
    &= \EE_{x,y,z\in\FF_p}f_0(x)f_1(x+y)f_2(x+2y)f_3(x+z)f_4(x+2z) + o(1),
\end{align*}
and the error term is independent of the choice of $f_0, f_1, f_2, f_3, f_4$.
Moreover, for any $A\subseteq\FF_p$, we have
\begin{align*}
    &|\{(x,\; x+Q(y),\; x+2Q(y),\; x+R(y),\; x+2R(y))\in A^5: x,y\in\FF_p\}|\\
    &= \frac{1}{p}\{(x,\; x+ y,\; x+2y,\; x+z,\; x+2z)\in A^5: x,y,z\in\FF_p\}| + o(p^2)
\end{align*}
uniformly in the choice of $A$.
\end{corollary}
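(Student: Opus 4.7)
The plan is to apply Theorem \ref{asymptotic for the count of configurations} directly, mirroring the deduction of Corollary \ref{asymptotic count for x, x+Q(y), x+R(y), x+Q(y)+R(y)}. First I would write
\begin{align*}
\vec{P}(x,y) = \vec{v}_1 x + \vec{v}_2 Q(y) + \vec{v}_3 R(y)
\end{align*}
with $\vec{v}_1 = (1,1,1,1,1)$, $\vec{v}_2 = (0,1,2,0,0)$ and $\vec{v}_3 = (0,0,0,1,2)$, so that in the notation of Section \ref{section on the asymptotic count} the associated three-variable linear system is
\begin{align*}
\vec{\Psi}(a,b,c) = (a,\ a+b,\ a+2b,\ a+c,\ a+2c),
\end{align*}
which is precisely the configuration appearing on the right-hand side of both identities to be proved.

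Next I would verify the three hypotheses of Theorem \ref{asymptotic for the count of configurations} for $\vec{P}$. Gowers controllability is a consequence of Proposition 2.2 of \cite{peluse_2019b}, which applies to any progression of the form $x,\, x+P_1(y),\, \ldots,\, x+P_{t-1}(y)$ with distinct nonzero $P_i$ of zero constant term. Equidistribution is precisely the content of Theorem \ref{equidistribution of x, x+Q(y), x+2Q(y), x+R(y), x+2R(y)} proved in the previous section. For algebraic independence of degree $2$, the only nontrivial linear relations between the five terms (up to scaling) are
\begin{align*}
x - 2(x+Q(y)) + (x+2Q(y)) = 0 \quad \text{and} \quad x - 2(x+R(y)) + (x+2R(y)) = 0,
\end{align*}
and the gap $2\deg Q < \deg R$ prevents any higher-degree relation: the leading contribution to $F_3(x+R(y)) + F_4(x+2R(y))$ in $y$-degree produced by a degree-$\geq 2$ coefficient of $F_3$ or $F_4$ has $y$-degree at least $2 \deg R > \deg R + \deg Q$, hence cannot be cancelled by any contribution from the $Q$-block or the $x$-term. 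In any case, this independence follows immediately from Theorem \ref{corrolary for true complexity}(ii) combined with Theorem \ref{lower bound for true complexity}: if the configuration had a nontrivial degree-$2$ relation, its true complexity would be at least $2$, contradicting the established value of $1$.

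With the hypotheses confirmed, Theorem \ref{asymptotic for the count of configurations} delivers the first asymptotic identity, with an error term $o(1)$ depending only on $\vec{P}$. The counting statement then follows by specialising $f_0 = f_1 = f_2 = f_3 = f_4 = \mathbf{1}_A$: the left-hand expectation multiplied by $p^2$ equals the number of pairs $(x,y) \in \FF_p^2$ with $\vec{P}(x,y) \in A^5$, while the right-hand expectation multiplied by $p^3$ equals the number of triples $(x,y,z) \in \FF_p^3$ with $\vec{\Psi}(x,y,z) \in A^5$, so after multiplying through by $p^2$ one obtains the asymptotic with error term $o(p^2)$; uniformity in $A$ is inherited from the uniformity of the error in Theorem \ref{asymptotic for the count of configurations}. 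The substantive step is therefore the equidistribution input from Section \ref{section on x, x+Q(y), x+2Q(y), x+R(y), x+2R(y)}; the only genuine obstacle in the present corollary is the algebraic-independence check, which as noted is painlessly bypassed via the already established true-complexity output.
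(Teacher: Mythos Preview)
Your proposal is correct and follows exactly the approach the paper intends: the paper gives no separate proof of this corollary, leaving it implicit that one applies Theorem \ref{asymptotic for the count of configurations} just as in the proof of Corollary \ref{asymptotic count for x, x+Q(y), x+R(y), x+Q(y)+R(y)}. Your decomposition $\vec P = \vec v_1 x + \vec v_2 Q(y) + \vec v_3 R(y)$ and verification of Gowers controllability and equidistribution are spot on.

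One caveat on the algebraic-independence check: your ``painless bypass'' via Theorem \ref{corrolary for true complexity}(ii) and Theorem \ref{lower bound for true complexity} is circular. Theorem \ref{corrolary for true complexity}(ii) is itself deduced (through Theorem \ref{true complexity for equidistributing progressions}) using algebraic independence of degree $2$ as a hypothesis, so you cannot cite it to recover that same hypothesis. Your direct degree-counting sketch is the right idea; alternatively, Lemma \ref{structure of P for x, x+Q(y), x+2Q(y), x+R(y), x+2R(y)} already shows $\PP_{2,1}=\RR^5$, which is precisely the linear independence of the squares $P_0^2,\dots,P_4^2$ that the proof of Theorem \ref{asymptotic for the count of configurations} actually uses, and this suffices. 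The paper itself simply asserts the independence can be ``check[ed] by hand'' in the discussion following Conjecture \ref{conjecture for true complexity}.
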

Corollaries \ref{asymptotic count for x, x+Q(y), x+R(y), x+Q(y)+R(y)} and \ref{asymptotic count for x, x+Q(y), x+2Q(y), x+R(y), x+2R(y)} together imply Theorem \ref{asymptotic count for systems of complexity 1}.

\begin{proof}[Proof of Theorem \ref{asymptotic for the count of configurations}]
We adopt the notation from \cite{green_tao_2010a} to set
\begin{align*}
    \Psi^{[i]} = \Span\{\vec{\Psi}^k(y_1, ..., y_r): 1\leqslant k \leqslant i,\; y_1, ..., y_r\in\ZZ\}
\end{align*}
to be the analogue of $\PP_{i,j}$ for the progression $\vec{\Psi}$ for $1\leqslant j \leqslant i$. We also let $G^\Psi$ denote the Leibman group for $\Vec{\Psi}$.

By assumption, the squares $P_1(\textbf{x})^2, ..., P_t(\textbf{x})^2$ are linearly independent, implying that $\PP_{2,1}=\RR^t$. From (\ref{relationship between P and Psi}), it follows that $\PP_{1,1}=\Psi^{[1]}$ and $\PP_{i,1}\subseteq\Psi^{[i]}$ for $i>1$. Together with the fact that $\PP_{2,1} = \RR^t$, this implies that $\Psi^{[2]}=\PP_{2,1}$, and so the groups $G^P=G^\Psi$ are in fact the same for any group $G$.

Given $\epsilon>0$, we take $\delta>0$ and $p_0\in\NN$ that works as in Theorem \ref{true complexity for equidistributing progressions} for both $\vec{P}$ and $\vec{\Phi}$, and we let $\mathcal{F}:\RR_+\to\RR_+$ be a growth function to be fixed later. We moreover assume from now on that $p>p_0$.  By Lemma \ref{regularity lemma}, there exist $M=O_{\epsilon, t,\mathcal{F}}(1)$, a filtered nilmanifold $G/\Gamma$ of degree $1$ and complexity $M$, and a $p$-periodic, $\mathcal{F}(M)$-irrational sequence $g\in\poly(\ZZ,G_\bullet)$ with $g(0)=1$ such that there exist decompositions
\begin{align*}
    f_i = f_{i, nil} + f_{i,sml} + f_{i,unf}
\end{align*}
of functions $f_1, ..., f_{t}:\FF_p\to\CC$ satisfying the conditions of Lemma \ref{regularity lemma}.
By taking $\mathcal{F}$ growing fast enough with respect to $\delta$, we can assume that $\|f_{i,unf}\|_{U^2}\leqslant \frac{1}{\mathcal{F}(M)}\leqslant\delta/4$ for each $i$.

By applying the aforementioned decomposition to $f_1, ..., f_{t}$, each of the operators $\Lambda_P(f_1, ..., f_{t})$ and $\Lambda_\Psi(f_1, ..., f_{t})$ splits into $3^t$ terms. The expressions involving at least one $f_{i, sml}$ can be bounded crudely by $O(\epsilon)$. Using Theorem \ref{true complexity for equidistributing progressions}, the expressions involving at least one $f_{i, unf}$ can be bounded by $O(\epsilon)$ as well. We thus have
\begin{align*}
    \Lambda_P(f_1, ..., f_{t})=\Lambda_P(f_{1,nil}, ..., f_{t, nil})+O(\epsilon),
\end{align*}
and similarly for $\Lambda_\Psi(f_1, ..., f_{t})$. Since both $\vec{P}$ and $\vec{\Psi}$ equidistribute, we have
\begin{align*}
    \Lambda_P(f_{1,nil}, ..., f_{t, nil})=\int_{G^P/\Gamma^P}F + o_{\mathcal{F}(M)\to\infty, M, \epsilon}(1),
\end{align*}
where $F((u_1, ..., u_{t})\Gamma^P)=F_1(u_1\Gamma)...F_{t}(u_{t}\Gamma)$. Likewise, we have
\begin{align*}
    \Lambda_P(f_{1,nil}, ..., f_{t, nil})=\int_{G^\Psi/\Gamma^\Psi}F + o_{\mathcal{F}(M)\to\infty, M, \epsilon}(1).
\end{align*}
Using the fact that $G^P=G^\Psi$ and combining all the estimates so far, we obtain that 
\begin{align*}
    \Lambda_P(f_1, ..., f_{t}) = \Lambda_\Psi(f_1, ..., f_{t}) + O(\epsilon) + o_{\mathcal{F}(M)\to\infty, M, \epsilon}(1).
\end{align*}
The theorem follows by letting $\mathcal{F}$ grow sufficiently fast with respect to $\epsilon$, and by taking $\epsilon\to 0$ as $p\to\infty$.
\end{proof}

Note that the only two facts that we use in the proof of Theorem \ref{asymptotic for the count of configurations} is that the progressions $\vec{P}$ and $\vec{\Psi}$ are controlled by \emph{some} Gowers norm (so that we can apply regularity lemma) and that the Leibman groups $G^P$ and $G^\Psi$ are the same. It is the latter fact that follows from the algebraic independence of degree 2 of $\vec{P}$. We do not strictly require the information that $\vec{P}$ and $\vec{\Psi}$ are controlled by the $U^2$ norm.


\section{A progression not satisfying filtration condition}\label{section on a progression not satisfying filtration condition}
While many naturally defined polynomial progressions satisfy filtration condition, one can also find a configuration for which the condition fails. We present one such example in this section. Let
\begin{align*}
    \vec{P}(x,y) &= (x,\; x+y+y^2+y^3,\; x+y^2+2y^3,\; x+y^2+3y^3,\; x+y^2+4y^3)\\
    &= (1,1,1,1,1) x + (0,3,3,4,5) y + (0,8,14,20,26) {{y}\choose{2}} + (0,6,12,18,24) {{y}\choose{3}}.
\end{align*}
It is straightforward to deduce that 
\[ \PP_{1,j}=\begin{cases}
\Span\{(1,1,1,1,1), (0,1,0,0,0), (0,1,1,1,1), (0,1,2,3,4)\},\; &j=1\\
\Span\{(0,1,1,1,1), (0,1,2,3,4)\},\; &j=2\\
\Span\{(0,1,2,3,4)\},\; &j=3\\
0,\; &j\geqslant 4.
\end{cases}
\]
We claim that $\PP_{1,1}\cdot\PP_{1,3}\notin\PP_{2,4}$. To obtain $\PP_{2,4}$, we write
\begin{align*}
    {{\vec{P}(x,y)}\choose{2}} &=(1,1,1,1,1){{x}\choose{2}} + (0,3,3,6,10) y + (0,3,3,4,5)xy\\ &+(0,85,184,366,610){{y}\choose{2}}+ (0, 8, 14, 20, 26)x{{y}\choose{2}}\\
    &+(0, 477, 1392, 2889, 4926) {{y}\choose{3}}
    + (0, 6, 12, 18, 24) x{{y}\choose{3}}\\
    &+ (0, 1056, 3612, 7752, 13452) {{y}\choose{4}} + (0, 1020, 3840, 8460, 14880) {{y}\choose{5}}\\
    &+ (0, 360, 1440, 3240, 5760) {{y}\choose{6}}.
\end{align*}
From the fact that
\begin{align*}
    (0, 360, 1440, 3240, 5760) &= 360\cdot(0,1,4,9,16) \\ (0, 1020, 3840, 8460, 14880) &= 900\cdot(0,1,4,9,16) + 120\cdot (0,1,2,3,4) \\ (0, 1056, 3612, 7752, 13452) &= 780\cdot(0,1,4,9,16) + 120\cdot (0,1,2,3,4) + 12\cdot (0,3,1,1,1) \\ (0, 6, 12, 18, 24) &=6\cdot (0,1,2,3,4),
\end{align*}
we deduce that 
\begin{align*}
    \PP_{2,4}=\Span\{(0,1,4,9,16), (0,1,2,3,4), (0,3,1,1,1)\}.
\end{align*}
From the description of $\PP_{1,j}$ above, we have that $\vec{v}=(0,1,0,0,0)\in\PP_{1,1}$ and $\vec{w}=(0,1,2,3,4)\in\PP_{1,3}$, however the product $\vec{v}\cdot\vec{w}=(0,1,0,0,0)$ is not contained in $\PP_{2,4}$. Therefore the progression $\vec{P}$ does not satisfy the filtration condition.


\section{Failure of equidistribution for $x,\; x+y,\; x+2y,\; x+y^2$}\label{section on failure of x, x+y, x+2y, x+y^2}

Failure to satisfy filtration condition is one reason why it may be hard to work with Leibman group for more general polynomial progressions. Perhaps more interestingly, we can find progressions that satisfy filtration condition, yet they do not equidistribute on the Leibman nilmanifold. In particular, these arguments break for the configuration
\begin{align*}
    \vec{P}(x,y) = (x,\; x+y,\; x+2y,\; x+y^2) = (x,\; x+y,\; x+2y,\; x+y+2{{y}\choose{2}}).
\end{align*}
For this progression, we have
\[ \PP_{1,j}=\begin{cases}
\Span\{(1,1,1,1), (0,1,2,1), (0,0,0,1)\},\; &j=1\\
0 \times 0 \times 0 \times \RR,\; &j=2\\
0,\; &j \geqslant 3,
\end{cases}
\]
and we can moreover prove the following.
\begin{lemma} \label{structure of P for x, x+y, x+2y, x+y^2}
For $i\geqslant 2$, we have
\[ \PP_{i,j}=\begin{cases}
\RR^4,\; &1\leqslant j\leqslant i\\
0 \times 0 \times 0 \times \RR,\; &i+1 \leqslant j\leqslant 2i \\
0,\; &j > 2i,
\end{cases}
\]
\end{lemma}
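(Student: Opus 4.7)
The plan is to mirror the three-range structure of Lemma \ref{structure of P for x, x+Q(y), x+R(y), x+Q(y)+R(y)}. The case $j > 2i$ is immediate, since every coordinate of $\vec{P}$ has degree at most $2$ in $(x,y)$, so ${{P_k(x,y)}\choose{i}}$ has total degree at most $2i$ and contributes nothing to $\PP_{i,j}$ for $j>2i$.

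For the range $i+1 \leqslant j \leqslant 2i$, I would first establish the inclusion $\PP_{i,j} \subseteq \{0\}\times\{0\}\times\{0\}\times\RR$. The reason is that each of the first three components $x,\; x+y,\; x+2y$ of $\vec{P}$ is linear, so ${{x}\choose{i}}, {{x+y}\choose{i}}, {{x+2y}\choose{i}}$ are polynomials of total degree exactly $i$ in $(x,y)$; hence no monomial ${{x}\choose{a}}{{y}\choose{b}}$ with $a+b>i$ can have a nonzero coefficient in any of the first three coordinates of ${{\vec{P}(x,y)}\choose{i}}$. For the reverse inclusion, by Lemma \ref{properties of polynomial spaces}(ii) it suffices to produce a vector of the form $(0,0,0,c)$ with $c\neq 0$ in $\PP_{i,2i}$. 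The only degree-$2i$ contribution comes from the fourth coordinate, where the top term of ${{x+y^2}\choose{i}}$ is $y^{2i}/i!$; rewriting in the Taylor basis gives coefficient $(0,0,0,(2i)!/i!)$ at ${{y}\choose{2i}}$, exactly as in the analogous step in the proofs of the previous lemmas.

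For the range $1 \leqslant j \leqslant i$, monotonicity reduces the problem to showing $\PP_{i,i} = \RR^4$. I plan to exhibit four linearly independent vectors by invoking Lemma \ref{relating coefficients of different monomials} to identify selected Taylor coefficients of ${{\vec{P}(x,y)}\choose{i}}$ with simpler Taylor coefficients of lower-order expressions ${{\vec{P}(x,y)}\choose{i-k}}$. The coefficient of ${{x}\choose{i}}$ is $(1,1,1,1)$. The coefficient of ${{x}\choose{i-1}}y$ equals the coefficient of $y$ in $\vec{P}(0,y)=(0,y,2y,y^2)$, namely $(0,1,2,0)$. The coefficient of ${{x}\choose{i-2}}y$ (available since $i\geqslant 2$) equals the coefficient of $y$ in ${{\vec{P}(0,y)}\choose{2}}$; a short calculation using ${{2y}\choose{2}} = 4{{y}\choose{2}} + y$ and the observation that ${{y^2}\choose{2}}$ expands in the Taylor basis with no contribution to ${{y}\choose{1}}$ yields $(0,0,1,0)$. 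Finally, $(0,0,0,1) \in \PP_{i,2i} \subseteq \PP_{i,i}$ is already available from the previous range. These four vectors span $\RR^4$.

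The only place where one has to be a bit careful is the extraction of $(0,0,1,0)$ from ${{\vec{P}(0,y)}\choose{2}}$: this is the vector that genuinely distinguishes the $i \geqslant 2$ situation from $i=1$, where $\PP_{1,1}$ has dimension only $3$. In particular, it is crucial that the third coordinate $2y$ is quadratic in $y$ after squaring, producing a nontrivial $y$ term, while the fourth coordinate $y^2$ squared lives entirely in degree $\geqslant 2$ of the Taylor basis. Beyond this minor bookkeeping, the proof is structurally parallel to the lemmas already established in Sections \ref{section on x, x+y, x+y^2, x+y+y^2} and \ref{section on x, x+Q(y), x+R(y), x+Q(y)+R(y)}, and no new ideas are required.
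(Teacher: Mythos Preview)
Your argument has a degree-counting gap in the case $1 \leqslant j \leqslant i$. You reduce to showing $\PP_{i,i} = \RR^4$ and then try to place four vectors in $\PP_{i,i}$, but the monomial ${{x}\choose{i-2}}y$ has total degree $i-1$, not $i$. Its coefficient therefore lies only in $\D_{i-1}{{\vec{P}}\choose{i}}$, hence in $\PP_{i,i-1}$, and you cannot use it to conclude anything about $\PP_{i,i}$. With the remaining three vectors $(1,1,1,1)$, $(0,1,2,1)$, $(0,0,0,1)$ you only span a three-dimensional subspace of $\PP_{i,i}$.

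The fix is easy and close to what the paper does: take instead the coefficient of ${{x}\choose{i-2}}{{y}\choose{2}}$, which has degree $i$. By Lemma \ref{relating coefficients of different monomials} this equals the coefficient of ${{y}\choose{2}}$ in ${{\vec{P}(0,y)}\choose{2}}$, and a short computation (using ${{2y}\choose{2}} = 4{{y}\choose{2}} + y$ and ${{y^2}\choose{2}} = 6{{y}\choose{2}} + 18{{y}\choose{3}} + 12{{y}\choose{4}}$) gives $(0,1,4,6)$. This vector, together with $(1,1,1,1)$, $(0,1,2,1)$, $(0,0,0,1)$, spans $\RR^4$. The paper obtains exactly these four vectors by writing out ${{\vec{P}(x,y)}\choose{2}}$ explicitly and then invoking $\PP_{2,2} \subseteq \PP_{i,i}$ from Lemma \ref{inclusion in polynomial spaces of special form}. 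There is also a minor slip earlier: since $y^2 = y + 2{{y}\choose{2}}$, the coefficient of $y$ in $\vec{P}(0,y)$ is $(0,1,2,1)$, not $(0,1,2,0)$; this does not affect linear independence but is worth correcting, especially since you are careful about the analogous issue for ${{y^2}\choose{2}}$ a few lines later.
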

\begin{proof}
The case $j> 2i$ follows from the fact that $\deg({{\vec{P}(x,y)}\choose{i}})=2i$. For $i+1 \leqslant j\leqslant 2i$, we note that ${{x}\choose{i}}$, ${{x+y}\choose{i}}$ and ${{x+2y}\choose{i}}$ all have degree $i$, and so $\PP_{i,j}\subseteq 0\times 0 \times 0 \times \RR$. That this is equality follows from the fact that the coefficient of ${{y}\choose{2}}$ is a nonzero multiple of the vector $(0,0,0,1)$. The case $1\leqslant j\leqslant i$ follows from the fact that
\begin{align}
    {{\vec{P}(x,y)}\choose{2}} &= (1,1,1,1){{x}\choose{2}} + (0,1,2,1)xy+(0,1,4,6){{y}\choose{2}}+(0,0,0,1)y \\
    \nonumber
    &+ (0,0,0,2)x{{y}\choose{2}} + (0,0,0,18){{y}\choose{3}}+(0,0,0,12){{y}\choose{4}}
\end{align}
and $\PP_{i,j}\supseteq\PP_{2,j}$ for $i\geqslant 2$ by Lemma \ref{properties of polynomial spaces}.
\end{proof}
\begin{corollary}
The polynomial map $\vec{P}$ satisfies the filtration condition.
\end{corollary}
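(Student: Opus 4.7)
The strategy is to verify the inclusion $\PP_{i_1,j_1}\cdot\PP_{i_2,j_2}\subseteq\PP_{i_1+i_2,j_1+j_2}$ for all $i_1,i_2,j_1,j_2\geqslant 1$ directly, using the explicit descriptions of $\PP_{i,j}$ from Lemma \ref{structure of P for x, x+y, x+2y, x+y^2} together with the formulas for $\PP_{1,j}$ stated above it. The key simplification is that, up to the mild peculiarity of $\PP_{1,1}$, every space $\PP_{i,j}$ takes one of only three shapes depending on the relationship between $j$ and $i$: the full space $\RR^4$ when $j\leqslant i$, the coordinate line $\Span\{(0,0,0,1)\}=\{0\}^3\times\RR$ when $i<j\leqslant 2i$, and the zero space when $j>2i$.

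If $j_k>2i_k$ for some $k$, then $\PP_{i_k,j_k}=0$ and the inclusion is trivial, so we may assume $j_1\leqslant 2i_1$ and $j_2\leqslant 2i_2$, which in particular gives $j_1+j_2\leqslant 2(i_1+i_2)$ so that the target $\PP_{i_1+i_2,j_1+j_2}$ is nonzero. We now split into cases according to how many of the inequalities $j_1>i_1$ and $j_2>i_2$ hold. If neither holds, then $j_1+j_2\leqslant i_1+i_2$, the target equals $\RR^4$, and the inclusion is automatic (note that even though $\PP_{1,1}\subsetneq\RR^4$, we still have $\PP_{1,1}\cdot\PP_{1,1}\subseteq\RR^4$, so this subcase causes no trouble). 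If exactly one of the inequalities holds, then the corresponding factor is contained in $\{0\}^3\times\RR$, hence so is the coordinatewise product, and $\{0\}^3\times\RR$ is contained in the target whether the target equals $\RR^4$ or $\{0\}^3\times\RR$. If both inequalities hold, then both factors lie in $\{0\}^3\times\RR$, the product still lies in $\{0\}^3\times\RR$, and since $i_1+i_2<j_1+j_2\leqslant 2(i_1+i_2)$ the target is exactly $\{0\}^3\times\RR$, closing the inclusion.

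Each case reduces to an elementary inequality on $(i_1,i_2,j_1,j_2)$ combined with the trivial observation that the coordinatewise product preserves the subspace $\{0\}^3\times\RR$, so no genuine obstacle arises. The argument is essentially a mechanical case check once the shapes of the spaces $\PP_{i,j}$ provided by Lemma \ref{structure of P for x, x+y, x+2y, x+y^2} are in hand.
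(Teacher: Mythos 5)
Your proof is correct and follows exactly the route the paper intends (the paper omits the verification, as it does for the analogous corollary in Section \ref{section on x, x+y, x+y^2, x+y+y^2}, where it is explicitly left to the reader as a case-by-case check against the structure lemma). Your case analysis is complete, and you correctly flag and dispose of the only subtlety, namely that $\PP_{1,1}$ is a proper subspace of $\RR^4$, which is harmless since in that subcase the target space is all of $\RR^4$.
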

To prove that progressions in Sections \ref{section on x, x+y, x+y^2, x+y+y^2}, \ref{section on x, x+Q(y), x+R(y), x+Q(y)+R(y)} and \ref{section on x, x+Q(y), x+2Q(y), x+R(y), x+2R(y)} equidistribute, we showed that if a polynomial sequence $g$ is highly irrational on a nilmanifold $G/\Gamma$, then $g^{{P}}$ is close to being equidistributed on the nilmanifold $G^P/\Gamma^P$. More precisely, we proved the contrapositive: if there exists a nontrivial horizontal character on $G^P/\Gamma^P$ of small modulus that annihilates $g^{{P}}$, then for some $j\geqslant 1$ there must exist a $j$-th level character on $G/\Gamma$ of small modulus that annihilates the $j$-th Taylor coefficient $g_j$ of $g$. It turns out this is not the case for $x,\; x+y,\; x+2y,\; x+y^2$: we can find a highly irrational sequence $g$ on a nilmanifold $G/\Gamma$ such that $g^{{P}}$ is annihilated by a horizontal character of a small modulus.

That our arguments from previous sections would not work here is already clear from Lemma \ref{structure of P for x, x+y, x+2y, x+y^2}. If $\vec{P}$ equidistributed, then the fact that $\PP_{2,1}$ is all of $\RR^4$ would imply that the sequence $(x,y)\mapsto g^P(x,y)G_2^4$ would be close to being equidistributed on the 1-step nilmanifold $G/G_2\Gamma$ for any highly irrational $p$-periodic sequence $g$, and so we would expect $\vec{P}$ to be of complexity 1. We know by Theorem \ref{lower bound for true complexity} that this cannot possibly happen because of the quadratic relation (\ref{quadratic relation satisfied by x, x+y, x+2y, x+y^2}). In the ergodic theoretic language, this instantiates the fact that the \emph{Vandermonde complexity} and the \emph{Weyl complexity} of the progression are different \footnote{\emph{Vandermonde complexity} of $\vec{P}\in\QQ[\textbf{x}]^t$ is the smallest $i$ such that $\PP_{i,1}=\RR^t$; \emph{Weyl complexity} is the smallest $i$ such that $\vec{P}$ is algebraically independent of degree $i$. In our case, the Vandermonde complexity is 2 but Weyl complexity is 3. Both of these concepts have been defined and discussed in \cite{bergelson_leibman_lesigne_2007}.}.

\begin{lemma}\label{example for failure of equidistribution}
There exists a degree-2 filtered nilmanifold $G/\Gamma$ of complexity $O(1)$, a {$p^\frac{1}{2}$-irrational} sequence $g\in\poly(\ZZ, G_\bullet)$, and a horizontal character $\eta:G^P\to\RR$ of modulus $O(1)$ such that $\eta\circ g^P = 0$.
\end{lemma}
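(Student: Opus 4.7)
The plan is to exhibit an explicit example on an abelian degree-$2$ filtered nilmanifold. I would take $G = \RR^2$ with $\Gamma = \ZZ^2$ and the filtration $G_1 = \RR^2$, $G_2 = \{0\}\times\RR$, $G_3 = 0$; this yields a filtered nilmanifold of complexity $O(1)$, and since $G$ (hence $G^P$) is abelian, any integer-coefficient linear functional on $G^P$ automatically defines a horizontal character. The polynomial sequence I would choose is
\[
g(n) = \bigl(n/p,\ \tbinom{n}{2}/p\bigr),
\]
with Taylor coefficients $g_1 = (1/p, 0) \in G_1$ and $g_2 = (0, 1/p) \in G_2$. For odd $p$ a direct calculation shows $g$ is $p$-periodic modulo $\ZZ^2$, and the only nontrivial $i$-th level characters of $G$ are $\eta_1(u,v) = k u$ (vanishing on $G_2$) and $\eta_2(0,v) = k v$ for $k \in \ZZ$; both send $g_i$ to $k/p \notin \ZZ$ for $0 < |k| < p$. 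Hence $g$ is $(p-1)$-irrational, and in particular $p^{1/2}$-irrational.

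The key step is to construct the annihilating horizontal character from the inhomogeneous quadratic relation (\ref{quadratic relation satisfied by x, x+y, x+2y, x+y^2}). Clearing denominators it becomes
\[
P_1^2 - 2P_2^2 + P_3^2 + 2P_1 - 2P_4 = 0,
\]
and substituting $P_i^2 = 2\binom{P_i}{2} + P_i$ yields the polynomial identity
\[
\sum_{i=1}^4 a_i P_i + \sum_{i=1}^4 b_i \binom{P_i}{2} = 0, \qquad (a_i) = (3, -2, 1, -2),\ (b_i) = (2, -4, 2, 0).
\]
Writing coordinates of $G^4 = \RR^8$ as $(u_1, v_1, \ldots, u_4, v_4)$, where $(u_i, v_i)$ are the two coordinates of the $i$-th copy of $G$, I would define
\[
\eta(u_1, v_1, \ldots, u_4, v_4) = \sum_{i=1}^4 a_i u_i + \sum_{i=1}^4 b_i v_i.
\]
By construction $\eta \circ g^P(x,y) = \tfrac{1}{p}\bigl(\sum a_i P_i + \sum b_i \binom{P_i}{2}\bigr) \equiv 0$, and $|\eta| = O(1)$.

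It remains to verify that $\eta$ restricts nontrivially to $G^P$, which is a direct calculation: taking $h = (1,0) \in G_1$ and $\vec{v} = (0,1,2,1) \in \PP_{1,1}$, the element $h^{\vec{v}} \in G^P$ has $\RR^8$-coordinates $(0,0,1,0,2,0,1,0)$, and $\eta(h^{\vec{v}}) = a_2 + 2a_3 + a_4 = -2 + 2 - 2 = -2 \neq 0$. The essential conceptual point — and the reason the progression $x, x+y, x+2y, x+y^2$ does not equidistribute in the sense of Theorem \ref{equidistribution theorem in the intro} — is the mismatch of degrees in the relation: the degree-$1$ contribution $2P_1 - 2P_4$ must be paired with the degree-$2$ contribution $P_1^2 - 2P_2^2 + P_3^2$. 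The hard part is recognizing that no sequence on a one-dimensional torus can witness both contributions simultaneously; the two-step filtration on the two-dimensional $G = \RR^2$ stores the linear and quadratic data in separate coordinates, which is exactly what allows the single integer character $\eta$ to see the whole inhomogeneous relation at once.
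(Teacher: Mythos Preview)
Your proof is correct and follows essentially the same construction as the paper: the same abelian nilmanifold $G=\RR^2$ with filtration $G_2=\{0\}\times\RR$, a sequence of the form $g(n)=(\alpha n,\alpha\binom{n}{2})$ (you take $\alpha=1/p$ where the paper takes $\alpha=\lfloor\sqrt{p}\rfloor/p$, both giving irrationality well above $p^{1/2}$), and a bounded-modulus character built from the inhomogeneous relation~(\ref{quadratic relation satisfied by x, x+y, x+2y, x+y^2}). Your systematic derivation of the coefficients $(a_i)=(3,-2,1,-2)$, $(b_i)=(2,-4,2,0)$ directly from the identity $\sum a_iP_i+\sum b_i\binom{P_i}{2}=0$ is in fact cleaner than the paper's more ad hoc choice of $\eta$, and your explicit nontriviality check on $h^{(0,1,2,1)}$ is a nice touch.
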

\begin{proof}
We take $G=\RR \times \RR$ and $\Gamma=\ZZ\times\ZZ$ with the degree-2 filtration given by
\begin{align*}
    G_0=G_1=\RR\times\RR, \quad G_2 = 0\times\RR, \quad G_3 = 0\times 0. 
\end{align*}
Let $\alpha = \lfloor \sqrt{p}\rfloor/p$. We define a sequence $g\in\poly(\ZZ, G_\bullet)$ by setting $g_1=(\alpha, 0)$, $g_2 = (0, \alpha)$, so that $g(n) = (\alpha n, \alpha {{n}\choose{2}})$. It is straightforward to see that $g$ is indeed $p^\frac{1}{2}$-irrational.

Having established irrationality of $g$, we shall construct a horizontal character on $G^{P}$ of bounded modulus that annihilates $g^P$. Let $(x,y,u,z)$ denote an arbitrary element of $G^4$, where $x=(x_1, x_2)$ and similarly for $y,u,z$. We define $\eta: G^4\to \RR$ by setting
\begin{align*}
    \eta(x,y,u,z) = (x_1 - y_1 + u_1 - z_1) + (x_2 - 2y_2 + u_2).
\end{align*}
The function $\eta$ defines a horizontal character on $G^4$, and by abuse of notation we use $\eta$ to denote its restriction to $G^{P}$. It is clear that $|\eta|\ll 1$. We claim that $\eta$ annihilates $g^P$.  Expanding $\eta\circ g^P$, we obtain
\begin{align*}
    \eta\circ g^P(x,y) &= \eta(g_1^{(1,1,1,1)}) x + (\eta(g_1^{(0,1,2,1)}) + \eta(g_2^{(0,0,0,1)}))y + \eta(g_2^{(1,1,1,1)}){{x}\choose{2}}\\
    &+ \eta(g_2^{(0,1,2,1)}) xy + (\eta(g_1^{(0,0,0,2)}) + \eta(g_2^{(0,1,4,6)})) {{y}\choose{2}} + \eta(g_2^{(0,0,0,2)}) x{{y}\choose{2}}\\
    &+ \eta(g_2^{(0,0,0,18)}) {{y}\choose{3}} + \eta(g_2^{(0,0,0,12)}) {{y}\choose{4}}.
\end{align*}
Because of the way we defined $\eta$, we see that it annihilates $g_1^{(1,1,1,1)}$ because
\begin{align*}
    \eta(g_1^{(1,1,1,1)}) = \alpha - \alpha + \alpha - \alpha = 0.
\end{align*}
Other terms of the polynomial $\eta\circ g^P$ are annihilated for similar reasons, with one interesting exception: the coefficient of ${{y}\choose{2}}$. The function $\eta$ annihilates neither $g_1^{(0,0,0,2)}$ nor $g_2^{(0,1,4,6)}$, but it does annihilate their product, and from this it follows that $\eta\circ g^P =0$. This is the point where the argument from Theorems \ref{equidistribution of x, x+y, x+y^2, x+y+y^2}, \ref{equidistribution of x, x+Q(y), x+R(y), x+Q(y)+R(y)} and \ref{equidistribution of x, x+Q(y), x+2Q(y), x+R(y), x+2R(y)} breaks; we can no longer conclude that nontriviality of $\eta$ implies irrationality of a Taylor coefficient of $g$, which was a crucial step in obtaining contradictions in Theorems \ref{equidistribution of x, x+y, x+y^2, x+y+y^2}, \ref{equidistribution of x, x+Q(y), x+R(y), x+Q(y)+R(y)} and \ref{equidistribution of x, x+Q(y), x+2Q(y), x+R(y), x+2R(y)}. 


This example illustrates that irrationality of $g$ is in general not sufficient to guarantee equidistribution of $g^P$ on $G^P/\Gamma^P$. The main obstruction in our example is that the sequence $g$ is irrational but not \emph{jointly irrational}; that is, there exist a 1-horizontal character $\eta_1$ and a 2-horizontal character $\eta_2$ satisfying $|\eta_1|, |\eta_2|\ll 1$ such that $\eta_1(g_1) + \eta_2(g_2)\in\ZZ$ but $\eta_1(g_1), \eta_2(g_2)\notin\ZZ$. This type of obstruction does not appear if one works with linear forms since each power of a linear form is a homogeneous polynomial of different degree. In the case of general polynomial maps, however, one may get the same monomial coming from different powers of the same polynomial, like ${{y}\choose{2}}$ in Lemma \ref{example for failure of equidistribution}. Therefore some sort of ``joint irrationality" is necessary.



\end{proof}


\section{True complexity of $x,\; x+y,\; ...,\; x+(m-1)y,\; x+y^d$}\label{section on x, ..., x+(m-1)y, x+y^d}
The reasoning presented in Section \ref{section on failure of x, x+y, x+2y, x+y^2} shows that the arguments used to tackle $x,\; x+y,\; x+y^2,\; x+y+y^2$ or $x, \; x+y,\; x+2y,\; x+y^3,\; x+2y^3$ cannot be used for $x,\; x+y,\; x+2y,\; x+y^2$. However, we can circumvent the difficulties and determine true complexity for this and related configurations via a different method. This method comes down to making the progression more homogeneous by replacing it with a longer progressions involving higher number of variables using several applications of the Cauchy-Schwarz inequality.

In this section, we prove Conjecture \ref{conjecture for true complexity} for
\begin{align*}
    x,\; x+y, \; ...,\; x+(m-1)y, \; x+y^d
\end{align*}
whenever $2\leqslant d\leqslant m-1$, the case $d\geqslant m$ being handled quantitatively in \cite{kuca_2019}. We start by proving true complexity for the nonlinear term at index $m$.
\begin{proposition}\label{U^2 control of x, x+y, x+2y, x+y^2}
Let $m, d\in\NN_+$ satisfy $m\geqslant 3$ and $d\geqslant 2$.
Given $\epsilon>0$, there exists $\delta>0$ and $p_0\in\NN$ s.t. for all $p>p_0$, we have
\begin{align*}
    |\EE_{x,y\in\FF_p}f_0(x)f_1(x+y)\cdots f_{m-1}(x+(m-1)y)f_m(x+y^d)|\ll \epsilon
\end{align*}
uniformly for all 1-bounded functions $f_0, ..., f_m:\FF_p\to\CC$ satisfying $\|f_m\|_{U^{\left\lceil\frac{m}{d}\right\rceil}}\leqslant\delta$.
\end{proposition}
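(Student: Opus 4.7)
The approach is a PET-style iterated Cauchy-Schwarz argument with $s := \lceil m/d \rceil$ rounds of Cauchy-Schwarz in the $y$-variable. In the $i$-th round, one applies Cauchy-Schwarz in $y$ with a fresh shift $h_i$ and then performs a linear substitution in $x$ (of the form $x \mapsto x - (i-1)y$) so that one of the surviving linear-form factors becomes independent of $y$ and can be absorbed via $1$-boundedness. After $s$ rounds the factors $f_0, f_1, \ldots, f_{s-1}$ have all been eliminated; each remaining linear-form factor $f_k$ with $s \leq k \leq m-1$ appears as a multiplicative Gowers derivative over the $2^s$ vertices of a $U^s$-cube in $(h_1, \ldots, h_s)$, which is $1$-bounded and can therefore be discarded; and the nonlinear factor $f_m$ appears as a $2^s$-fold product $\prod_{\sigma \in \{0,1\}^s} \C^{|\sigma|} f_m\bigl(x + (y + \sigma \cdot h)^d\bigr)$.

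The crucial combinatorial input is the inequality $sd \geq m$, which guarantees that $s$ rounds suffice to absorb all of $f_0, \ldots, f_{s-1}$ and to fit the remaining $m-s$ linear factors into a $U^s$-cube structure. This reduces the problem to bounding
\[
|T|^{2^s} \leq \EE_{x, y, h_1, \ldots, h_s} \prod_{\sigma \in \{0,1\}^s} \C^{|\sigma|} f_m\bigl(x + (y + \sigma \cdot h)^d\bigr) + o(1)
\]
in absolute value by $\|f_m\|_{U^s}^{2^s}$. The key identity is that the alternating sum $\sum_{\sigma} (-1)^{|\sigma|} (y + \sigma \cdot h)^d$ equals the mixed difference $\Delta_{h_1, \ldots, h_s} y^d$, a polynomial of degree $d-s$ in $y$. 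For $s \geq d$ this alternating sum is a constant in $y$, and a suitable shift of $x$ recasts the $f_m$-positions as a genuine $U^s$-cube, allowing the generalized von Neumann inequality to conclude. For $s < d$, a change of variable replacing $h_s$ by a polynomial $h_s' = h_s'(y, h_1, \ldots, h_s)$ (chosen so that the $f_m$-positions form a cube in the new variables) accomplishes the same.

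The principal technical obstacle lies in this final change of variables when $s < d$. One must verify that the substitution $h_s \mapsto h_s'$ is generically a bijection on $\FF_p$ for each fixed $(y, h_1, \ldots, h_{s-1})$, which rests on showing that the leading coefficient in $h_s$ of the relevant polynomial is nonzero for generic values of the other parameters; the degenerate set contributes only an $O(p^{-c})$ error. Once the substitution is in place, the right-hand side above becomes exactly $\|f_m\|_{U^s}^{2^s}$, yielding $|T| \leq \|f_m\|_{U^s} + o(1)$. Choosing $\delta$ sufficiently small depending on $\epsilon$ and $p_0$ sufficiently large then gives $|T| \ll \epsilon$, which is the desired conclusion.
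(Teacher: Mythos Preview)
Your proposal has two genuine gaps that prevent the argument from going through.

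\textbf{First gap.} After $s=\lceil m/d\rceil$ rounds of Cauchy--Schwarz you have indeed removed $f_0,\ldots,f_{s-1}$, but the remaining linear factors $f_s,\ldots,f_{m-1}$ still depend on $y$. Their $2^s$-fold product $\prod_\sigma \C^{|\sigma|}f_k(\cdot)$ is of course $1$-bounded, but it is multiplied \emph{inside} the expectation by the $f_m$-product, so bounding it by $1$ in absolute value destroys exactly the cancellation you need to extract $\|f_m\|_{U^s}$. These factors cannot simply be ``discarded''.

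\textbf{Second gap.} Even setting the linear factors aside, the $f_m$-positions $x+(y+\sigma\cdot h)^d$ do \emph{not} form a $U^s$-parallelepiped for $d\geqslant 2$. The parallelepiped constraint requires the alternating sum over any two directions to vanish, i.e.\ $\Delta_{h_i}\Delta_{h_j}(y+\sigma'\cdot h')^d=0$, which fails whenever $d\geqslant 2$. Concretely, for $m=3$, $d=2$, $s=2$ the four positions satisfy $\sum_\sigma(-1)^{|\sigma|}(y+\sigma\cdot h)^2=2h_1h_2$, which is generically nonzero. A change of variable in a single $h_s$ cannot repair this: the cube condition constrains all $2^s$ points simultaneously, not just one direction.

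The paper's proof takes a completely different route. It first applies the arithmetic regularity lemma (Lemma~\ref{regularity lemma}) to replace $f_m$ by a degree-$s_0$ nilsequence $F(g(\cdot)\Gamma)$, then applies $m$ (not $s$) rounds of Cauchy--Schwarz to remove \emph{all} of $f_0,\ldots,f_{m-1}$. This produces a $2^m$-fold product of $F$ along a polynomial map $\vec P$ in $m+2$ variables. The heart of the argument is an equidistribution theorem (Theorem~\ref{equidistribution of multiparameter sequence coming from x, x+y, x+y^2, x+y+y^2}) for the associated sequence $g^P$ on the Leibman nilmanifold $G^P/\Gamma^P$, together with the structural fact that $G_{\lceil m/d\rceil}^{\{0,1\}^m}\subseteq G^P$. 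This inclusion lets one average $F$ over cosets of $G_{\lceil m/d\rceil}$, reducing to a degree-$(\lceil m/d\rceil-1)$ nilsequence, and then the converse to the inverse theorem for Gowers norms gives the $U^{\lceil m/d\rceil}$-control on $f_m$. The inequality $sd\geqslant m$ enters not in counting Cauchy--Schwarz rounds but in proving that $\PP_{s,1}=\RR^{\{0,1\}^m}$, which is what forces the inclusion above.
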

We note that one cannot get a control by a lower-degree Gowers norm here; this follows from
$$ \left\lfloor\frac{m-1}{d}\right\rfloor = \left\lceil\frac{m}{d}\right\rceil - 1 $$
and the fact that the the space of polynomials in $x$ and $y$ of degree at most $m-1$ is spanned by polynomials in $x$, $x+y$, ..., $x+(m-1)y$ of degree at most $m-1$, so in particular it contains the $\left\lfloor\frac{m-1}{d}\right\rfloor$-th power of $x+y^d$.
\begin{proof}
We let all the constants depend on $m$ and $d$ without mentioning the dependence explicitly. We only prove the case $2\leqslant d\leqslant m-1$, as the case $d\geqslant m$ has been handled in \cite{kuca_2019}.

By Proposition 2.2 of \cite{peluse_2019b}, we have
\begin{align*}
    |\EE_{x,y\in\FF_p}f_0(x)f_1(x+y)\cdots f_{m-1}(x+(m-1)y)f_m(x+y^d)|\leqslant\|f_m\|_{U^{s+1}}^c+O(p^{-c})
\end{align*}
for some $c>0$ and $s\in\NN$ independent of the choice of 1-bounded functions $f_0, ..., f_m:\FF_p\to\CC$.

Let $\mathcal{F}:\RR_+\to\RR_+$ be a growth function to be fixed later. By Lemma \ref{regularity lemma}, 
there exist $M=O_{\epsilon,\mathcal{F}}(1)$, a filtered manifold $G/\Gamma$ of degree $s$ and complexity at most $M$, and a $p$-periodic, $\mathcal{F}(M)$-irrational sequence $g\in\poly(\ZZ,G_\bullet)$ with $g(0)=1$, for which there exists a decomposition
\begin{align*}
    f_m = f_{nil} + f_{sml} + f_{unf}
\end{align*}
such that $f_{nil}(n)=F(g(n)\Gamma)$ for an $M$-Lipschitz function $F: G/\Gamma\to\CC$, ${\|f_{sml}\|_2\leqslant \epsilon}$ and $\|f_{unf}\|_{U^{s+1}}\leqslant \frac{1}{\mathcal{F}(M)}$. By picking $\mathcal{F}$ to be growing sufficiently fast, we can assume that $\|f_{unf}\|_{U^{s+1}}\leqslant \epsilon^\frac{1}{c}$. Assuming that $p$ is large enough with respect to $\epsilon$, we thus have
\begin{align}\label{rewriting the operator for x, x+y, x+2y, x+y^2, 1}
    &\EE_{x,y\in\FF_p}f_0(x)f_1(x+y)\cdots f_{m-1}(x+(m-1)y)f_m(x+y^d)\\
    \nonumber
    &= \EE_{x,y\in\FF_p}f_0(x)f_1(x+y)\cdots f_{m-1}(x+(m-1)y)F(g(x+y^d)\Gamma) + O(\epsilon).
\end{align}

By applying the triangle inequality and translating $x\mapsto x-y$ exactly $m$ times to remove $f_0, f_1, ..., f_{m-1}$, we have
\begin{align}\label{Cauchy-Schwarzing weights}
   &|\EE_{x,y\in\FF_p}f_0(x)f_1(x+y)\cdots f_{m-1}(x+(m-1)y)F(g(x+y^d)\Gamma)|^{2^m}\\
   \nonumber
   &\leqslant\EE_{x,y,h_1,...,h_m\in\FF_p}\prod_{w\in\{0,1\}^m}\C^{|w|}F(g({\epsilon_w})\Gamma),
\end{align}
where
\begin{align*}
    \epsilon_w(x, y, h_1, ..., h_m) = x+ \left(y + \sum_{i=1}^m w_i h_i\right)^d - \sum_{i=1}^m (i-1) w_i h_i
\end{align*}
for each $w\in\{0,1\}^m$. Given $w\in\{0,1\}^m$, we let $\vec{e}_w$ denote the basis vector in $\RR^{\{0,1\}^m}$ of the form
\[ \vec{e}_w(w') = \begin{cases}
1,\; w' = w\\
0,\; w' \neq w.
\end{cases}
\]

Let \begin{align*}
\vec{P}(x,y,h_1,...,h_m) = (\epsilon_w(x,y,h_1,...,h_m))_{w\in\{0,1\}^3}    
\end{align*}
and $G^P$ be the corresponding Leibman group. The next lemma gives the structure of the polynomial spaces $\PP_{i,j}$.
\begin{lemma}\label{structure of polynomial space for x, x+y, x+2y, x+y^2}
For each $i\in\NN_+$ and $1\leqslant j\leqslant id$, the space $\PP_{i,j}$ is spanned by the vectors
\begin{align*}
    \sum_{w}(-1)^{|w|}\vec{e}_w, \sum_{w: w_{k_1}=1}(-1)^{|w|}\vec{e}_w, ..., \sum_{w: w_{k_1}=...=w_{k_{id}}=1}(-1)^{|w|}\vec{e}_w
\end{align*}
for all $k_1, ..., k_{id}\in\{1, ..., m\}$. For $j>id$, we have $\PP_{i,j}=0$.
\end{lemma}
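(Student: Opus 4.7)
The plan is to prove this lemma by explicit multinomial expansion of ${\vec{P}(x,y,h_1,\ldots,h_m) \choose l}$ for $1 \le l \le i$, viewed as polynomials in $(x, y, h_1, \ldots, h_m)$ with coefficient vectors in $\RR^{\{0,1\}^m}$. Writing
$$\epsilon_w = x + \Bigl(y + \sum_{k=1}^m w_k h_k\Bigr)^d - \sum_{k=1}^m (k-1) w_k h_k$$
and using $w_k \in \{0,1\}$ (so $w_k^a = w_k$ for $a\geqslant 1$), every coefficient of a monomial in $(x,y,h_1,\ldots,h_m)$ appearing in $\epsilon_w^l$ reduces to a multilinear polynomial in $(w_1,\ldots,w_m)$, i.e.\ a linear combination of products $\prod_{k \in S} w_k$ for subsets $S \subseteq \{1,\ldots,m\}$. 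After the standard diagonal change of basis on the Boolean lattice, such monomials correspond to the signed vectors $\sum_{w \supseteq S}(-1)^{|w|}\vec{e}_w$ that appear in the lemma.

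The vanishing statement $\PP_{i,j}=0$ for $j > id$ is immediate: since $\deg \epsilon_w = d$ as a polynomial in $(x,y,h_1,\ldots,h_m)$, every power $\epsilon_w^l$ with $l \leqslant i$ has total degree at most $id$, so no nonzero coefficient arises for monomials of degree strictly greater than $id$. For the upper inclusion, I would observe that in $\epsilon_w^l$ each variable $w_k$ appears exclusively paired with $h_k$ (either inside $(y+\sum w_k h_k)^d$ or inside the linear correction). Consequently the multilinear $w$-degree of any monomial-coefficient is bounded by the $h$-degree of that monomial, which is itself at most $ld \leqslant id$. Hence every coefficient lies in the span of the claimed vectors.

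For the reverse inclusion, it suffices to exhibit, for each $S \subseteq \{1,\ldots,m\}$ with $|S|\leqslant id$, a monomial of degree at least $j$ in ${\vec{P} \choose i}$ whose coefficient is a nonzero multiple of the corresponding vector. The natural candidate is the top-order part of $\epsilon_w^i$, namely $(y + \sum_k w_k h_k)^{id}$, whose multinomial expansion produces the monomial $y^{id-|S|}\prod_{k \in S} h_k$ with coefficient $\frac{(id)!}{(id-|S|)!}\prod_{k \in S} w_k$, a nonzero scalar multiple of the claimed generator. Since such monomials have degree exactly $id \geqslant j$, they contribute to $\PP_{i,j}$ for every $1\leqslant j\leqslant id$, and a downward induction on $|S|$ (stripping off top-order contributions and invoking the inductive hypothesis for smaller subsets) recovers every generator in the list. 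As all of them arise from degree-$id$ monomials, $\PP_{i,j}$ is in fact independent of $j$ throughout the range $1\leqslant j\leqslant id$, consistent with the statement.

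The main obstacle will be the combinatorial bookkeeping needed to control the lower-order contributions coming from ${\epsilon_w \choose l}$ with $l < i$ and from the linear correction $-\sum_k(k-1)w_k h_k$. These produce additional coefficient vectors, but by the upper inclusion all of them lie in the already identified span, and they can only contribute $\vec{v}_{S'}$ with $|S'|<|S|$ to any particular monomial; carrying the induction through cleanly while verifying that the top-order coefficients are never cancelled by lower-order ones, and that the change of basis between the multilinear products $\prod_{k\in S}w_k$ and the signed basis vectors $\sum_{w \supseteq S}(-1)^{|w|}\vec{e}_w$ is carried out correctly, is the only delicate point.
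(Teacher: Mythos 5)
Your proposal is correct and follows essentially the same route as the paper: the vanishing for $j>id$ is the same degree count, the upper inclusion is the same observation that each $w_k$ enters $\epsilon_w$ only paired with $h_k$ (so coefficients are multilinear in $w$ and supported on $w\supseteq S$ with $|S|$ bounded by the $h$-degree), and the lower inclusion exhibits explicit degree-$id$ witness monomials, with the paper choosing ${{h_{k_1}}\choose{id+1-n}}h_{k_2}\cdots h_{k_n}$ where you choose $y^{id-|S|}\prod_{k\in S}h_k$. Your proposed downward induction on $|S|$ and the worry about cancellation are unnecessary: since these witness monomials have full degree $id$, only the top-order term $(y+\sum_k w_kh_k)^{id}$ of $\epsilon_w^i$ can contribute to them, so each generator is obtained directly from a single coefficient.
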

\begin{proof}
The case $j>id$ is easy to see from the fact that ${{\vec{P}(x,y)}\choose{i}}$ has degree $id$, and so we proceed to the other case. The vector $\sum_{w}(-1)^{|w|}\vec{e}_w$ is in $\PP_{i,id}$ because its integer multiple is the coefficient of ${{y}\choose{id}}$.
To see that each vector of the form $\sum\limits_{\substack{w: \\ w_{k_1} = ...= w_{k_n}=1}}(-1)^{|w|}\vec{e}_w$ is in $\PP_{i,id}$ for $1\leqslant n\leqslant id$ and $k_1, ..., k_n\in\{1, ..., m\}$, we observe that the coefficient of 
\begin{align*}
    {{h_{k_1}}\choose{id+1-n}}h_{k_2}...h_{k_n}
\end{align*}
is a nonzero integer multiple of $\sum\limits_{\substack{w: \\ w_{k_1} = ...= w_{k_n}=1}}(-1)^{|w|}\vec{e}_w$ and use Lemma \ref{integer multiples don't matter}. To show the converse, we note that the coefficient of a monomial of ${{\vec{P}(x,y)}\choose{i}}$ is an integer multiple of $\sum\limits_{\substack{w: w_{k_1} = ...= w_{k_n}=1}}(-1)^{|w|}\vec{e}_w$ for $0\leqslant n\leqslant id$ if and only if the monomial contains the variables $h_{k_1}, ..., h_{k_l}$ but does not contain $h_k$ for $k\in\{1, ..., m\}\setminus{\{k_1, ..., k_n\}}$. 
\end{proof}

\begin{corollary}
The progression $\vec{P}$ satisfies the filtration condition.
\end{corollary}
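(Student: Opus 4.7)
The plan is to verify $\PP_{i_1, j_1} \cdot \PP_{i_2, j_2} \subseteq \PP_{i_1+i_2, j_1+j_2}$ case by case on pairs of generators, exactly in the spirit of Corollary \ref{x, x+y, x+y^2, x+y+y^2 satisfies the filtration condition}. The vanishing $\PP_{i,j} = 0$ for $j > id$ from Lemma \ref{structure of polynomial space for x, x+y, x+2y, x+y^2} reduces everything to the range $j_\ell \leqslant i_\ell d$ for $\ell = 1,2$, which forces $j_1 + j_2 \leqslant (i_1 + i_2)d$ and places both sides of the desired inclusion in the first case of Lemma \ref{structure of polynomial space for x, x+y, x+2y, x+y^2}. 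In particular, it then suffices to check the inclusion on pairs of generators of the form $\vec{v}_B = \sum_{w \geqslant \chi_B}(-1)^{|w|}\vec{e}_w$, where $B \subseteq \{1, \ldots, m\}$ satisfies $|B| \leqslant id$.

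The substantive computation is to show that each such generator is determined by the subset $B \subseteq \{1, \ldots, m\}$ via the indicator condition $w \geqslant \chi_B$, and that the coordinate-wise product of generators $\vec{v}_{B_1}$ and $\vec{v}_{B_2}$ is entirely controlled by the union $B_1 \cup B_2$. Indeed, the sign factors multiply as $(-1)^{|w|} \cdot (-1)^{|w|} = 1$, so the product is the ``unsigned'' indicator vector $\sum_{w \geqslant \chi_{B_1 \cup B_2}}\vec{e}_w$. Since $|B_1 \cup B_2| \leqslant |B_1| + |B_2| \leqslant i_1 d + i_2 d = (i_1+i_2)d$, the supporting subset $B_1 \cup B_2$ is of admissible size for $\PP_{i_1+i_2, j_1+j_2}$, and so the product is at least a candidate for membership in this target space.

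The main obstacle is thus purely bookkeeping: one must verify that every such unsigned indicator vector $\sum_{w \geqslant \chi_B}\vec{e}_w$ with $|B| \leqslant (i_1+i_2)d$ can be expressed as a linear combination of the signed generators $\vec{v}_S$ provided by Lemma \ref{structure of polynomial space for x, x+y, x+2y, x+y^2}. This is settled by revisiting the converse direction of the proof of Lemma \ref{structure of polynomial space for x, x+y, x+2y, x+y^2}: coefficients of monomials in $\binom{\vec{P}}{i_1+i_2}$ involving exactly the variables $\{h_k : k \in B\}$ produce vectors of precisely this indicator form, and by an inclusion–exclusion argument on the supports $B$ one recovers the required expressibility. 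As in Corollary \ref{x, x+y, x+y^2, x+y+y^2 satisfies the filtration condition}, the details are mechanical and unilluminating, so the actual write-up would enumerate the possible $(i_1, j_1, i_2, j_2)$ regimes and leave the remaining verification to the reader.
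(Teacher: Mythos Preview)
Your argument is correct, and in fact more careful than the paper's own proof. The paper's proof consists of the single identity
\[
\Bigl(\sum_{w:\,w_{k_1}=\cdots=w_{k_{n_1}}=1}(-1)^{|w|}\vec{e}_w\Bigr)\cdot\Bigl(\sum_{w:\,w_{k'_1}=\cdots=w_{k'_{n_2}}=1}(-1)^{|w|}\vec{e}_w\Bigr)
=\sum_{\substack{w:\,w_{k_1}=\cdots=w_{k_{n_1}}\\=w_{k'_1}=\cdots=w_{k'_{n_2}}=1}}(-1)^{|w|}\vec{e}_w,
\]
which is literally false: as you observe, the product of the two sign factors is $(-1)^{2|w|}=1$, so the right-hand side should carry no sign. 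Your computation of the product as the unsigned indicator $\sum_{w\geqslant\chi_{B_1\cup B_2}}\vec e_w$ is the correct one.

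The underlying issue is that the factor $(-1)^{|w|}$ in Lemma~\ref{structure of polynomial space for x, x+y, x+2y, x+y^2} appears to be a slip: the coefficients of $\binom{\vec P}{i}$ are actually nonzero multiples of the \emph{unsigned} indicators $\vec u_B=\sum_{w\geqslant\chi_B}\vec e_w$ (for instance, the coefficient of $\binom{y}{id}$ is $\frac{(id)!}{i!}\sum_w\vec e_w$, not its signed analogue). With that correction, the filtration condition follows in one line from $\vec u_{B_1}\cdot\vec u_{B_2}=\vec u_{B_1\cup B_2}$ and $|B_1\cup B_2|\leqslant i_1d+i_2d$, which is exactly the spirit of the paper's intended argument.

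Your longer route---showing that the unsigned product lies in $\PP_{i_1+i_2,j_1+j_2}$ by exhibiting it as an actual coefficient of a monomial of degree $(i_1+i_2)d$ in $\binom{\vec P}{i_1+i_2}$---is a valid workaround that does not depend on whether the generators in Lemma~\ref{structure of polynomial space for x, x+y, x+2y, x+y^2} carry the sign or not. The inclusion--exclusion remark is not really needed: the monomial $\binom{h_{k_1}}{(i_1+i_2)d+1-|B|}\prod_{k\in B\setminus\{k_1\}}h_k$ already has coefficient a nonzero multiple of $\vec u_B$, so no unwinding is required. So your approach and the paper's are the same idea, but yours happens to be the one that is actually written down correctly.
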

\begin{proof}
This follows from Lemma \ref{structure of polynomial space for x, x+y, x+2y, x+y^2} and the observation that
\begin{align*}
    \sum_{w: w_{k_1}=...=w_{k_{n_1}}=1}(-1)^{|w|}\vec{e}_w \cdot \sum_{w: w_{k'_1}=...=w_{k'_{n_2}}=1}(-1)^{|w|}\vec{e}_w = \sum_{\substack{w: w_{k_1}=...=w_{k_{n_1}}\\
    =w_{k'_1}=...=w_{k'_{n_2}} = 1}}(-1)^{|w|}\vec{e}_w
\end{align*}
for any $k_1, ..., k_{n_1}, k'_1, ..., k'_{n_2}\in\{1, ..., m\}$.
\end{proof}

\begin{corollary}
If $i\geqslant\frac{m}{d}$, then $\PP_{i,1}=...=\PP_{i,id}=\RR^{\{0,1\}^m}$.
\end{corollary}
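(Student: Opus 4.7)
The plan is to show that $\PP_{i,id}=\RR^{\{0,1\}^m}$ when $id\geq m$, and then invoke Lemma \ref{properties of polynomial spaces}(ii) to conclude that $\PP_{i,j}=\RR^{\{0,1\}^m}$ for every $1\leq j\leq id$.

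First, for each $S\subseteq\{1,\dots,m\}$, identify the vector
\[ v_S \;:=\; \sum_{w\in\{0,1\}^m:\; w_i=1\,\forall i\in S}(-1)^{|w|}\vec{e}_w. \]
By Lemma \ref{structure of polynomial space for x, x+y, x+2y, x+y^2}, taking $k_1,\dots,k_n$ to be an enumeration of $S$ (with $n=|S|\leq m\leq id$), we have $v_S\in\PP_{i,id}$. So the task reduces to showing that the collection $\{v_S:S\subseteq\{1,\dots,m\}\}$ spans $\RR^{\{0,1\}^m}$.

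The main computation is a Möbius inversion. Fix $w^*\in\{0,1\}^m$ with support $S^*$. I claim that $\sum_{S\supseteq S^*}(-1)^{|S|}v_S=\vec{e}_{w^*}$. Indeed, evaluating at an arbitrary $w\in\{0,1\}^m$ with support $T:=\mathrm{supp}(w)$,
\[ \sum_{S\supseteq S^*}(-1)^{|S|}v_S(w)=(-1)^{|w|}\sum_{S^*\subseteq S\subseteq T}(-1)^{|S|}=(-1)^{|w|+|S^*|}\sum_{U\subseteq T\setminus S^*}(-1)^{|U|}, \]
which vanishes unless $S^*\subseteq T$ and $T\setminus S^*=\emptyset$, i.e.\ unless $T=S^*$, i.e.\ $w=w^*$; in that single case the value equals $(-1)^{2|w^*|}=1$. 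Hence every basis vector $\vec{e}_{w^*}$ lies in the span of the $v_S$'s, so the span is all of $\RR^{\{0,1\}^m}$.

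This gives $\PP_{i,id}=\RR^{\{0,1\}^m}$, and then the chain of inclusions $\PP_{i,1}\supseteq\PP_{i,2}\supseteq\cdots\supseteq\PP_{i,id}$ from Lemma \ref{properties of polynomial spaces}(ii) forces equality throughout. There is no real obstacle here: once the explicit spanning set from Lemma \ref{structure of polynomial space for x, x+y, x+2y, x+y^2} is available and $id\geq m$ ensures that all subsets of $\{1,\dots,m\}$ are allowed, the content is purely the standard Möbius inversion on the Boolean lattice above.
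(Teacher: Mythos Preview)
Your proof is correct and follows essentially the same approach as the paper: both identify the family $\{v_S:S\subseteq\{1,\dots,m\}\}$ inside $\PP_{i,id}$ via Lemma~\ref{structure of polynomial space for x, x+y, x+2y, x+y^2} (using $id\geq m$ so that all subsets are available) and then argue that these $2^m$ vectors span $\RR^{\{0,1\}^m}$. The only difference is that the paper asserts linear independence and counts dimensions, whereas you give the explicit M\"obius inversion producing each $\vec{e}_{w^*}$; your version is if anything more self-contained.
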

\begin{proof}
We first observe that the set
\begin{align}\label{spanning vectors}
    X_i= \left\{\sum_{w: w_{k_1}=...=w_{k_{n}}=1}(-1)^{|w|}\vec{e}_w: {\{k_1, ..., k_n\}\subseteq\{1, ...,m\}},\; n\leqslant id\right\},
\end{align}
spans $\PP_{i,1}=...=\PP_{i,id}$ and consists of linearly independent vectors as long as $id\leqslant m$. If $id\geqslant m$, then $X$ has $2^m$ elements, implying that $\PP_{i,1}=...=\PP_{i,id}=\RR^{\{0,1\}^m}$, as required.
\end{proof}
This leads to the following important corollary which we shall need to prove the that the sequence $g^P$ is close to being equidistributed on $G^P$.
\begin{corollary}
Let $i=\left\lceil\frac{m}{d}\right\rceil$. Then $G_i^{\{0,1\}^m}\subseteq G^P$.
\end{corollary}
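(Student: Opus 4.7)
The plan is to exploit the previous corollary that $\PP_{i,1} = \RR^{\{0,1\}^m}$ for $i = \lceil m/d \rceil$, combined with the definition of $G^P$ and the fact that $\vec{P}$ satisfies the filtration condition (so $\PP_{i,1} = \Q_{i,1}$ by Lemma \ref{P_i,j equals Q_i,j}). Concretely, I want to show that for each standard basis vector $\vec{e}_w \in \RR^{\{0,1\}^m}$ indexed by $w \in \{0,1\}^m$ and each $g \in G_i$, the element $g^{\vec{e}_w}$ (which has $g$ in the $w$-coordinate and the identity elsewhere) lies in $G^P$. Granted this, any tuple $(g_w)_{w \in \{0,1\}^m} \in G_i^{\{0,1\}^m}$ equals the product $\prod_{w} g_w^{\vec{e}_w}$ taken in $G^{\{0,1\}^m}$; since these factors live in distinct coordinates of the direct product, they commute, and the product is visibly in $G^P$, giving the desired inclusion.

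First I would note that since $\PP_{i,1} = \RR^{\{0,1\}^m}$, each $\vec{e}_w$ can be expanded as an integer linear combination $\vec{e}_w = \sum_S c_{S,w} \,v_S$ of the spanning vectors $v_S$ from the set $X_i$ in (\ref{spanning vectors}) (it is a basis since its cardinality equals $\dim \RR^{\{0,1\}^m}$ once $id \geqslant m$; in fact one checks by Möbius inversion that it is a $\ZZ$-basis, but integrality is not strictly needed here). For any fixed $g \in G_i$, the scaled elements $g^{c_{S,w} v_S}$ are all powers of the single element $g$ in each coordinate and therefore commute with one another in $G^{\{0,1\}^m}$, so
\begin{equation*}
g^{\vec{e}_w} \;=\; \prod_S g^{c_{S,w} v_S} \;=\; \prod_S (g^{c_{S,w}})^{v_S}.
\end{equation*}
Each factor $(g^{c_{S,w}})^{v_S}$ is a generator of $G^P$ by definition, since $g^{c_{S,w}} \in G_i$ and $v_S \in \PP_{i,1} = \Q_{i,1}$. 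Hence $g^{\vec{e}_w} \in G^P$.

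The only point that needs care is ensuring the $g^{v_k}$ notation behaves well for non-integer $v_k$, but this is fine because $G$ is a connected, simply connected nilpotent Lie group, so the one-parameter subgroup $t \mapsto g^t = \exp(t \log g)$ is well defined, and the standard power laws $g^{a+b} = g^a g^b$ hold coordinatewise. With this in hand, combining the factorization above across all $w \in \{0,1\}^m$ gives $G_i^{\{0,1\}^m} \subseteq G^P$, as desired. There is no serious obstacle here: once the previous corollary has been established, this inclusion is essentially a bookkeeping consequence of the definition of $G^P$.
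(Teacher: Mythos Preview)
Your proof is correct and matches the (implicit) argument in the paper, which states the corollary without proof as an immediate consequence of the previous corollary and the definition of $G^P$. One minor simplification: the detour through the spanning set $X_i$ is unnecessary, since once you know $\Q_{i,1}=\PP_{i,1}=\RR^{\{0,1\}^m}$, each $\vec{e}_w$ is itself an element of $\Q_{i,1}$, and so $g^{\vec{e}_w}$ is \emph{directly} one of the defining generators of $G^P_1=G^P$ for any $g\in G_i$; there is no need to decompose $\vec{e}_w$ into the $v_S$.
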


\begin{theorem}\label{equidistribution of multiparameter sequence coming from x, x+y, x+y^2, x+y+y^2}
The sequence $g^P\in\poly(\ZZ^{m+2},G^P_\bullet)$ is $O_{M}(\mathcal{F}(M)^{-c_M})$-equidistributed.
\end{theorem}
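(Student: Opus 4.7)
The plan is to follow the same contradiction strategy used in Theorems \ref{equidistribution of x, x+y, x+y^2, x+y+y^2}, \ref{equidistribution of x, x+Q(y), x+R(y), x+Q(y)+R(y)} and \ref{equidistribution of x, x+Q(y), x+2Q(y), x+R(y), x+2R(y)}. Assume the sequence $g^P$ is not $O_M(\mathcal{F}(M)^{-c_M})$-equidistributed. By Theorem \ref{equidistribution theorem}, there is a nontrivial horizontal character $\eta:G^P\to\RR$ of modulus $|\eta|\leqslant c\mathcal{F}(M)^{c_M}$ (with $c$ to be chosen sufficiently small) such that $\eta\circ g^P$ is $\ZZ$-valued. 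Let $j$ be the largest integer with $\eta|_{G^P_j}\neq 0$; then $\eta$ annihilates $G^P_{j+1}$.

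By Lemma \ref{structure of polynomial space for x, x+y, x+2y, x+y^2} and Lemma \ref{structure of G^P_j}, the group $G^P_j$ is generated modulo $G^P_{j+1}$ by elements of the form $h^{\vec{v}_{K,n}}$, where $h\in G_i$, $K=\{k_1,\ldots,k_n\}\subseteq\{1,\ldots,m\}$, $1\leqslant i\leqslant s$, and $\vec{v}_{K,n}=\sum_{w:\,w_{k_1}=\cdots=w_{k_n}=1}(-1)^{|w|}\vec{e}_w$ ranges over a basis extension of $\PP_{i,j+1}$ inside $\PP_{i,j}$. By Corollary \ref{horizontal characters reduce to i-th level characters}, the map $h\mapsto\eta(h^{\vec{v}_{K,n}})$ defines an $i$-th level character on $G$ of modulus $O(|\eta|)$. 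Therefore, to contradict the $\mathcal{F}(M)$-irrationality of $g$, it suffices to prove $\eta(g_i^{\vec{v}_{K,n}})\in\ZZ$ for every such $(i,K,n)$: at least one of these characters is nontrivial (else $\eta|_{G^P_j}=0$), so if all of them annihilate the corresponding $g_i$ modulo $\ZZ$, irrationality is violated.

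The integrality of each $\eta(g_i^{\vec{v}_{K,n}})$ will be extracted by inspecting the coefficient of the monomial $\binom{h_{k_1}}{id+1-n}h_{k_2}\cdots h_{k_n}$ in $\eta\circ g^P$, which by the proof of Lemma \ref{structure of polynomial space for x, x+y, x+2y, x+y^2} carries a nonzero integer multiple of $\vec{v}_{K,n}$ arising from the $i$-th binomial power ${{\vec{P}}\choose{i}}$. The full coefficient is a sum $\sum_{i'\geqslant i}\eta(g_{i'}^{\vec{w}_{i'}})$ with $\vec{w}_{i'}\in\PP_{i',j}\cap\ZZ^{\{0,1\}^m}$. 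Running induction downward from $i'=s$ to $i'=i$, one argues that each $\vec{w}_{i'}$ with $i'>i$ either lies in $\PP_{i',j+1}$ (so $\eta(g_{i'}^{\vec{w}_{i'}})=0$) or can be expressed, via the basis description in Lemma \ref{structure of polynomial space for x, x+y, x+2y, x+y^2}, as an integer combination of vectors $\vec{v}_{K',n'}$ already handled at an earlier stage of the induction. Applying Lemma \ref{integer multiples don't matter} (valid since $p$ is large) then strips the leading integer coefficient and yields $\eta(g_i^{\vec{v}_{K,n}})\in\ZZ$.

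The main technical obstacle is organizing this downward induction so that every monomial examined at level $(i,K,n)$ isolates $\vec{v}_{K,n}$ modulo data already known to be integral. Two features of the setup make this tractable. First, $\vec{P}$ is fully symmetric in the Cauchy--Schwarz variables $h_1,\ldots,h_m$, so every coefficient vector of ${{\vec{P}}\choose{i}}$ is automatically of the form $\vec{v}_{K',n'}$, and the induction naturally decomposes along the lattice of subsets $K\subseteq\{1,\ldots,m\}$ ordered by inclusion. Second, Lemma \ref{structure of polynomial space for x, x+y, x+2y, x+y^2} shows that $\PP_{i,j+1}$ is obtained from $\PP_{i,j}$ by dropping precisely those $\vec{v}_{K,n}$ whose ``defect'' $id+1-n$ corresponds to the monomial we chose, so the relevant vectors are cleanly separated across the degree filtration. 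With these observations the induction closes exactly as in Theorem \ref{equidistribution of x, x+Q(y), x+R(y), x+Q(y)+R(y)}, and the equidistribution claim follows.
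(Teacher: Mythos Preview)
Your approach is the paper's: assume non-equidistribution, obtain a nontrivial horizontal character $\eta$ vanishing on $G^P_{j+1}$, reduce to $i$-th level characters via (a generalization of) Corollary \ref{horizontal characters reduce to i-th level characters}, and read off integrality from the coefficients of the monomials $\binom{h_{k_1}}{id+1-n}h_{k_2}\cdots h_{k_n}$.

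You have, however, missed the structural feature that makes this case \emph{simpler} than Sections \ref{section on x, x+y, x+y^2, x+y+y^2}--\ref{section on x, x+Q(y), x+2Q(y), x+R(y), x+2R(y)}, and as a result you overcomplicate the argument. By Lemma \ref{structure of polynomial space for x, x+y, x+2y, x+y^2}, the space $\PP_{i,j}$ is \emph{independent of $j$} for $1\leqslant j\leqslant id$ and zero for $j>id$. Hence $\PP_{i,j}\neq\PP_{i,j+1}$ forces $j=id$: we must have $d\mid j$, only the single level $i=j/d$ contributes to $G^P_j/G^P_{j+1}$, and $\PP_{i,j+1}=0$ for that $i$. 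For any $i'>i$ the contribution $\vec{w}_{i'}$ to a degree-$j$ monomial lies in $\PP_{i',j}=\PP_{i',j+1}$, so $\eta(g_{i'}^{\vec{w}_{i'}})=0$ outright. Your ``downward induction'' and its second branch are therefore never invoked; each examined coefficient is simply $a\,\eta(g_i^{\vec{v}_{K,n}})$ for a nonzero integer $a$, and Lemma \ref{integer multiples don't matter} finishes directly. The paper's proof records exactly this shortcut.
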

 \begin{proof}
 Suppose that $g^P\in\poly(\ZZ^{m+2},G^P_\bullet)$ is not $O_{M}(\mathcal{F}(M)^{-c_M})$-equidistributed. By Theorem \ref{equidistribution theorem}, there exists a nontrivial horizontal character $\eta:G^P\to\RR$ of complexity at most $c\mathcal{F}(M)$ for some $c>0$ to be chosen later, such that $\eta\circ g^P\in\ZZ$. Let $j$ be the largest natural number such that $\eta|_{G^P_j}\neq 0$. By assumption, $\eta$ annihilates ${G^P_{j+1}}$. 
 
 When $j$ is not divisible by $d$, we have $\PP_{i,j}=\PP_{i,j+1}$ for all $i\geqslant 1$, implying that any $j$-th level character is trivial. We can therefore assume without loss of generality that $d$ divides $j$. Moreover, the only $i$ such that $\PP_{i,j}\neq\PP_{i,j+1}$ is $i=\frac{j}{d}$, in which case we have $\PP_{i,j+1}=0$. We therefore fix $i=\frac{j}{d}$.
 Given $\vec{v}\in X_i$, where $X_i$ is defined as in (\ref{spanning vectors}), we let
\begin{align*}
    \xi_{\vec{v}}(h)=\eta(h^{\vec{v}})
\end{align*}
for $h\in G_i$. The map $\xi_{\vec{v}}$ defines an $i$-th level character on $G$  by a straightforward generalization of Corollary \ref{horizontal characters reduce to i-th level characters}. By Lemma \ref{structure of polynomial space for x, x+y, x+2y, x+y^2}, the nontriviality of $\eta$ implies that $\xi_{\vec{v}}$ is nontrivial for at least one $\vec{v}\in X_i$. The bound on the modulus of $\eta$ and the fact that the vectors $\vec{v}_k$ have entries of size $O(1)$ imply that $|\xi_{\vec{v}}|\leqslant A$, provided that the constant $c$ is appropriately chosen.

We claim that $\xi_{\vec{v}}(g_i)\in\ZZ$ for each $\vec{v}\in X_i$. This follows from inspecting the coefficients of ${{y}\choose{id}}$ and
${{h_{k_1}}\choose{id+1-n}}h_{k_2}...h_{k_n}$ for all $k_1, ..., k_n\in\{1, ..., m\}$. They are integer multiples of the vectors $\sum\limits_{w}(-1)^{|w|}\vec{e}_w$ and $\sum\limits_{w: w_{k_1}=...=w_{k_n}=1}(-1)^{|w|}\vec{e}_w$ respectively, and so the claim follows by Lemma \ref{integer multiples don't matter}. Together with the argument from the previous paragraph, this contradicts the $\mathcal{F}(M)$-irrationality of $g$, implying that $g^P$ is $O_{M}(\mathcal{F}(M)^{-c_M})$-equidistributed.

 \end{proof}
Combining (\ref{Cauchy-Schwarzing weights}) with Theorem \ref{equidistribution of multiparameter sequence coming from x, x+y, x+y^2, x+y+y^2}, we see that 

\begin{align}\label{Cauchy-Schwarzing weights 2}
   &|\EE_{x,y\in\FF_p}f_0(x)f_1(x+y)\cdots f_{m-1}(x+(m-1)y)F(g(x+y^d)\Gamma)|^{2^m}\\
   \nonumber
   &\leqslant\int_{G^P/\Gamma^P}\prod_{w\in\{0,1\}^m}\C^{|w|}F(x_w\Gamma) dx_w + o_{\mathcal{F}(M)\to\infty, M, \epsilon}(1).
\end{align}
The rest of the proof follows the logic of the proofs of Theorem \ref{true complexity for equidistributing progressions} and Theorem 7.1 from \cite{green_tao_2010a}.
We let
\begin{align*}
    F_{\leqslant {\left\lceil\frac{m}{d}\right\rceil-1}}(x\Gamma) = \int_{G_{\left\lceil\frac{m}{d}\right\rceil}/\Gamma_{\left\lceil\frac{m}{d}\right\rceil}}F(xy\Gamma)d(y\Gamma)= \int_{xG_{\left\lceil\frac{m}{d}\right\rceil}/\Gamma_{\left\lceil\frac{m}{d}\right\rceil}}F(y\Gamma)d(y\Gamma)
\end{align*}
to be the average of $F$ over the coset of $G_{\left\lceil\frac{m}{d}\right\rceil}/\Gamma_{\left\lceil\frac{m}{d}\right\rceil}$ containing $x\Gamma$. 
Using the fact that $G_{\left\lceil\frac{m}{d}\right\rceil}^{\{0,1\}^m}\subseteq G^P$ and the crude bound 
\begin{align*}
   \left|\prod_{w\in\{0,1\}^m}\C^{|w|}F_{\leqslant {\left\lceil\frac{m}{d}\right\rceil}-1}(x_w\Gamma)\right|\leqslant \left|F_{\leqslant {\left\lceil\frac{m}{d}\right\rceil}-1}(x_w\Gamma)\right|,
\end{align*}
we obtain 
\begin{align*}
    \left|\int_{G^P/\Gamma^P}\prod_{w\in\{0,1\}^m}\C^{|w|}F(x_w\Gamma)\right|\leqslant\int_{G/\Gamma}\left|F_{\leqslant {\left\lceil\frac{m}{d}\right\rceil}-1}\right|\leqslant\left(\int_{G/\Gamma}\left|F_{\leqslant {\left\lceil\frac{m}{d}\right\rceil}-1}\right|^2\right)^\frac{1}{2}.
\end{align*}

By the $\mathcal{F}(M)$-irrationality of $g$, we have
\begin{align}\label{expression in the last section 1}
    \int_{G/\Gamma}F \overline{F_{\leqslant {\left\lceil\frac{m}{d}\right\rceil}-1}} = \EE_{n\in\FF_p} \left(F\overline{F_{\leqslant {\left\lceil\frac{m}{d}\right\rceil}-1}}\right)(g(n)\Gamma) + o_{\mathcal{F}(M)\to\infty, M,\epsilon}(1).
\end{align}
We let $\psi(n)=\overline{F_{\leqslant {\left\lceil\frac{m}{d}\right\rceil}-1}}(g(n)\Gamma)$. By the $G_{{\left\lceil\frac{m}{d}\right\rceil}}$-invariance of $F_{\leqslant {\left\lceil\frac{m}{d}\right\rceil}-1}$, this is a nilsequence of degree $\leqslant {\left\lceil\frac{m}{d}\right\rceil}-1$ and complexity $M$. By (\ref{decomposition in the proof of true complexity}), we have
\begin{align*}
    F(g(n)\Gamma) = f_m(n)-f_{sml}(n)-f_{unf}(n).
\end{align*}
We then split the average on the right-hand side of (\ref{expression in the last section 1}) into three terms. By Cauchy-Schwarz inequality, we have
\begin{align*}
    |\EE_{n\in\FF_p}f_{sml}(n)\psi(n)|\ll\epsilon.
\end{align*}
To evaluate the contribution coming from $f_m$, we use $\|f_{m}\|_{U^{{\left\lceil\frac{m}{d}\right\rceil}}}\leqslant\delta$ and the converse to the inverse theorem for Gowers norms (Proposition 1.4 of Appendix G of \cite{green_tao_ziegler_2011}) to conclude that
\begin{align*}
    |\EE_{n\in\FF_p}f_{m}(n)\psi(n)|=o_{\delta\to 0, M,\epsilon}(1).
\end{align*}
Similarly, we use $\|f_{unf}\|_{U^{s+1}}\leqslant\frac{1}{\F(M)}$ and monotonicity of Gowers norms to conclude that 
\begin{align*}
    |\EE_{n\in\FF_p}f_{unf}(n)\psi(n)|=o_{\mathcal{F}(M)\to\infty, M, \epsilon}(1).
\end{align*}
Combining all these estimates, we have
\begin{align*}
    &|\EE_{x,y\in\FF_p}f_0(x)f_1(x+y)\cdots f_{m-1}(x+(m-1)y)F(g(x+y^d)\Gamma)|\\
    &= O(\epsilon)+o_{\mathcal{F}(M)\to\infty, M, \epsilon}(1)+o_{\delta\to 0, M,\epsilon}(1).
\end{align*}
By choosing $\mathcal{F}$ growing sufficiently fast and $\delta$ sufficiently small depending on $\epsilon$, we obtain
\begin{align*}
    |\EE_{x,y\in\FF_p}f_0(x)f_1(x+y)\cdots f_{m-1}(x+(m-1)y)F(g(x+y^d)\Gamma)| \ll \epsilon,
\end{align*}
which proves Proposition \ref{U^2 control of x, x+y, x+2y, x+y^2}.
\end{proof}

The control by a low-degree Gowers norm of the nonlinear term $x+y^d$ is useful in that  when combined with the regularity lemma (Lemma \ref{regularity lemma}), it allows us to replace the function $f_m$ by a low-degree nilsequence $\psi$. Lemma \ref{twisted von Neumann} shows how we can deal with $\psi$ if it has sufficiently low degree.

\begin{lemma}[Twisted generalized von Neumann's lemma]\label{twisted von Neumann}
Let $2\leqslant m\leqslant M$. There exists $c_M>0$ such that for any $\delta_1, \delta_2>0$ and any 1-bounded functions $f_0, ..., f_{m-1}:\FF_p\to\CC$ satisfying
\begin{align*}
    \min_{0\leqslant i\leqslant m-1}\|f_i\|_{U^{m-1}}\leqslant\delta_1 \quad {\rm{and}} \quad \min_{0\leqslant i\leqslant m-1}\|f_i\|_{U^{m}}\leqslant\delta_2,
\end{align*}
the following holds:
\begin{enumerate}
    \item if $\psi(x,y)=F(g(x,y)\Gamma)$ is a $p$-periodic nilsequence of complexity $M$ and degree $m-2$, then
\begin{align*}
    \EE_{x,y\in\FF_p}f_0(x)\cdots f_{m-1}(x+(m-1)y)\psi(x,y)\ll_M \delta_1^{c_M};
\end{align*}
\item if $\psi(x,y)=F(g(x,y)\Gamma)$ is a $p$-periodic nilsequence of complexity $M$ and degree $m-1$, then 
\begin{align*}
    \EE_{x,y\in\FF_p}f_0(x)\cdots f_{m-1}(x+(m-1)y)\psi(x,y)\ll_M \delta_2^{c_M}.
\end{align*}
\end{enumerate}

\end{lemma}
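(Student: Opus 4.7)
The plan is to adapt the Cauchy--Schwarz / van der Corput argument underlying the classical generalized von Neumann inequality (cf.\ (\ref{Gowers norms control arithmetic progression})), while carefully tracking how the nilsequence weight $\psi$ transforms at each step. The argument is symmetric in the $f_i$'s in that one can isolate any prescribed $f_i$ by choosing the order of eliminations appropriately, so WLOG I will arrange to isolate $f_{m-1}$ in both parts (choosing, in each part, the index at which the corresponding $\min$ in the hypothesis is attained).

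I would apply $m-1$ successive Cauchy--Schwarz steps, each introducing a fresh shift variable $h_j\in\FF_p$ and eliminating one of $f_0, \ldots, f_{m-2}$ by the standard doubling trick; in the unweighted case these steps convert $T$ into the defining cube of $\|f_{m-1}\|_{U^{m-1}}^{2^{m-1}}$. Carrying along the weight, each shift also replaces $\psi(x,y)$ by a discrete derivative $\psi(\,\cdot\,+\text{shift})\overline{\psi(\,\cdot\,)}$, which by Definition~\ref{polynomial sequences} corresponds to a polynomial sequence on $G$ adapted to the once-shifted filtration $(G_{i+1})_{i\geqslant 0}$, hence of degree $\deg(\psi)-1$. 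After $m-1$ steps the iterated derivative $\Delta_{h_1,\ldots,h_{m-1}}\psi$ has underlying polynomial sequence of degree $\deg(\psi)-(m-1)$.

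For part (i), where $\deg(\psi)=m-2$, the iterated derivative has degree $-1$, i.e.\ is identically the identity in $G$, so $\Delta_{h_1,\ldots,h_{m-1}}\psi\equiv 1$. A linear substitution $u=x+(m-1)y$ then identifies the remaining expression with $\|f_{m-1}\|_{U^{m-1}}^{2^{m-1}}$ (up to a nonzero Jacobian), yielding $|T|^{2^{m-1}}\ll_M\|f_{m-1}\|_{U^{m-1}}^{2^{m-1}}$ and hence the $\delta_1^{c_M}$ bound. For part (ii), where $\deg(\psi)=m-1$, the iterated derivative is a polynomial sequence of degree $0$: it does not depend on $(x,y)$ at all, but depends (in a way bounded by $\|F\|_\infty^{2^{m-1}}=O_M(1)$) on $(h_1,\ldots,h_{m-1})$; call this residual weight $\tilde{\psi}(\vec{h})$. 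One further Cauchy--Schwarz in a fresh shift $h_m$ then collapses $\tilde{\psi}$ into its $L^2$-norm on the $\vec{h}$-average (still $O_M(1)$) and appends $h_m$ as an $m$-th direction of the Gowers cube of $f_{m-1}$, producing $|T|^{2^m}\ll_M\|f_{m-1}\|_{U^m}^{2^m}$ and the $\delta_2^{c_M}$ bound. In both cases one may take $c_M=1$.

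The main bookkeeping task is to arrange the order and direction of the Cauchy--Schwarz shifts so that (a) each step genuinely reduces the degree of $\psi$ by exactly one, rather than producing lower-degree mixed terms that interfere with the isolation of $f_{m-1}$, and (b) the final cube of $m-1$ (or $m$) shifts of $f_{m-1}$ is in sufficiently general position to be identified with a genuine $U^{m-1}$ (resp.\ $U^m$) cube over $\FF_p$. Both points reduce to the polynomial-sequence calculus of Section~\ref{section on higher order Fourier analysis} combined with the non-degeneracy of $h_1,\ldots,h_m$ as elements of $\FF_p$.
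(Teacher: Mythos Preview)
Your argument has a genuine gap at its central step: the claim that the multiplicative derivative $\psi(\cdot+h)\overline{\psi(\cdot)}$ of a degree-$s$ nilsequence is a nilsequence of degree $s-1$. Definition~\ref{polynomial sequences} tells you that $\partial_h g\in\poly(\ZZ^D,(G_{i+1})_{i\geqslant 0})$, i.e.\ the discrete derivative of the \emph{polynomial sequence} drops degree. But $\psi(\cdot+h)\overline{\psi(\cdot)}=F(g(\cdot+h)\Gamma)\overline{F(g(\cdot)\Gamma)}$ is not of the form $\tilde F(\partial_h g(\cdot)\Gamma)$ for any $\tilde F$ on $G/\Gamma$; it is a nilsequence on $G\times G/\Gamma\times\Gamma$ (or the box group $G^\Box$), and the natural filtration there still has degree $s$, not $s-1$. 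Already for $m=2$ and $G/\Gamma=\RR/\ZZ$ with $F(t)=\cos(2\pi t)$, the product $F(\alpha x+\beta y+\beta h)\overline{F(\alpha x+\beta y)}$ visibly depends on $(x,y)$ and is not a degree-$0$ nilsequence in $(x,y)$.

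The missing ingredient is exactly what the paper supplies: one first decomposes $F$ into pieces with a vertical character (for the base case $m=2$, approximation by trigonometric polynomials via the Fej\'er kernel; for the inductive step, the vertical Fourier decomposition from Section~7 of \cite{green_tao_2012}). Only once $F$ has a vertical character does the product $F(g(\cdot+h)\Gamma)\overline{F(g(\cdot)\Gamma)}$ become invariant under the diagonal $G_s$ and hence descend to a genuine nilsequence of degree $s-1$. This decomposition costs a factor $O_M(\delta^{-C_M})$ in the number of pieces, which is why the paper's exponent $c_M$ is strictly less than $1$; your claim that ``one may take $c_M=1$'' is another symptom of the same oversight. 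Once you insert the vertical-character step before each Cauchy--Schwarz, your outline becomes essentially the paper's inductive proof.
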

\begin{proof}
Lemma \ref{twisted von Neumann} is a variation of Lemma 4.2 of \cite{green_tao_2010a}, and our proof follows very closely the proof of Lemma 4.2 of \cite{green_tao_2010a}. We proceed by induction on $m$. For $m=2$, the statement $(i)$ is trivial since $\psi$, being a 0-step nilsequence, is just a constant. 

To prove $(ii)$ for $m=2$, we let $\delta>0$ be a parameter to be fixed later. Since $\psi$ is of the form $\psi(x,y)=F(\alpha x+\beta y)$ for some 1-bounded, $M$-Lipschitz function $F:\RR^M/\ZZ^M\to\CC$ and $\alpha,\beta\in \left(\frac{1}{p}\ZZ/\ZZ\right)^M$, we can convolve $F$ with Fej\'{e}r kernel to find a 1-bounded trigonometric polynomial $F_1:\RR^M/\ZZ^M\to\CC$ of degree $O_M(\delta^{-C_M})$ satisfying $\|F-F_1\|_\infty\leqslant\delta$. Details of how this can be done may be found in the proof of Proposition 3.1 of \cite{green_tao_2012}, for instance. It then follows from the pigeonhole principle  that
\begin{align*}
    |\EE_{x,y\in\FF_p}f_0(x)f_1(x+y)\psi(x,y)|\ll_M \delta^{-C_M} |\EE_{x,y\in\FF_p}f_0(x)f_1(x+y)e_p(ax+by)| + \delta
\end{align*}
for some $a,b\in\ZZ$. Incorporating $e_p(ax)$ into $f_0$ and applying Cauchy-Schwarz inequality twice to remove $f_0(x)$ and $e_p(by)$ respectively allows us to bound 
\begin{align*}
    \EE_{x,y\in\FF_p}f_0(x)f_1(x+y)e_p(ax+by)
\end{align*}
from above by $\|f_1\|_{U^2}$, and similar maneuvers also give a bound by $\|f_0\|_{U^2}$; thus
\begin{align*}
    |\EE_{x,y\in\FF_p}f_0(x)f_1(x+y)\psi(x,y)|\ll_M \delta^{-C_M}\delta_2 + \delta.
\end{align*}
Letting $\delta=\delta_2^{c_M}$ for a sufficiently small $0<c_M<1$, we obtain the claim.  

We now assume $m>2$, and we let $\psi(x,y)=F(g(x,y)\Gamma)$ be a nilsequence of complexity $M$ and degree $s\in\{m-2,m-1\}$. Let $\delta>0$. Using the vertical decomposition of $F$ (Section 3 of \cite{green_tao_2012}), we can find a 1-bounded function $F_1:G/\Gamma\to\CC$ that is a linear combination of $O_{M}(\delta^{-C_M})$ functions with \emph{vertical characters}, i.e. functions $f:G/\Gamma\to\CC$ for which there exists a continuous homomorphism $\xi:G_s/\Gamma_s\to\RR/\ZZ$ satisfying $f(g_s u)=e(\xi(g_s))f(u)$ for any $g_s\in G_s$. Using pigeonhole principle, we can thus find a 1-bounded, $M$-Lipschitz function $F_2:G/\Gamma\to\CC$ with a vertical character $\xi: G_s/\Gamma_s\to\CC$ satisfying
\begin{align*}
    &|\EE_{x,y\in\FF_p}f_0(x)\cdots  f_{m-1}(x+(m-1)y)\psi_2(x,y)|\\
    &\ll_M \delta^{-C_M} |\EE_{x,y\in\FF_p}f_0(x)\cdots  f_{m-1}(x+(m-1)y)\psi_2(x,y)| + \delta,
\end{align*}
where $\psi_2(x,y)=F_2(g(x,y)\Gamma)$.

By the Cauchy-Schwarz inequality and change of variables, we have 
\begin{align*}
    &|\EE_{x,y\in\FF_p}f_0(x)\cdots  f_{m-1}(x+(m-1)y)\psi(x,y)|\\
    &\leqslant |\EE_{x,y,h\in\FF_p}\Delta_h f_1(x+y)\cdots \Delta_{(m-1)h}f_{m-1}(x+(m-1)y) \psi_2(x,y+h)\overline{\psi_2(x,y)}|,
\end{align*}
where we recall that $\Delta_h f(x):= f(x+h)\overline{f(x)}$. A straightforward adaptation of the arguments from Section 7 of \cite{green_tao_2012} shows that the function $\Tilde{\psi}_h(x,y)=\psi_2(x,y+h)\overline{\psi_2(x,y)}$ is a nilsequence of complexity $O_M(1)$ and degree $s-1$. Picking $\delta =\delta_2^{c_M}$ for an appropriate value of $0<c_M<1$ and applying inductive hypothesis, we obtain
\begin{align*}
    |\EE_{x,y\in\FF_p}f_0(x)\cdots f_{m-1}(x+(m-1)y)\psi(x,y)| \ll_M \min_{1\leqslant i\leqslant m-1}\EE_{h\in\FF_p}\|\Delta_{ih}f_i\|_{U^{s-1}}^{c_M}.
\end{align*}
An application of the H\"{o}lder inequality and the recursive definition of Gowers norms give
\begin{align*}
    |\EE_{x,y\in\FF_p}f_0(x)\cdots f_{m-1}(x+(m-1)y)\psi(x,y)|
    \ll_M \min_{1\leqslant i\leqslant m-1}\|f_i\|_{U^{s}}^{c'_M}.
\end{align*}
for some $0<c'_M<1$.
A slight modification of the argument gives the same bound in terms of $\|f_0\|_{U^{s}}$, completing the proof of the lemma.




\end{proof}

Knowing thanks to Lemma \ref{twisted von Neumann} how to proceed in the special case of $f_m$ being a nilsequence, we now prove the general case. Proposition \ref{U^2 control of x, x+y, x+2y, x+y^2} and Proposition \ref{control by higher degree Gowers norms} below together prove Theorem \ref{True complexity of x, x+y, ..., x+(m-1)y, x+y^d}.

\begin{proposition}\label{control by higher degree Gowers norms}
Let $m,d\in\NN_+$ satisfy $2\leqslant d\leqslant m-1$. Given any $\epsilon>0$, there exists $\delta>0$ and $p_0\in\NN$ s.t. for all $p>p_0$, we have
\begin{align*}
    |\EE_{x,y\in\FF_p}f_0(x)f_1(x+y)\cdots f_{m-1}(x+(m-1)y)f_m(x+y^{d})|\ll \epsilon
\end{align*}
uniformly for all 1-bounded functions $f_0, ...,f_m:\FF_p\to\CC$ such that $\|f_i\|_{U^{s}}\leqslant\delta$ for some $i\in\{0,..., m-1\}$, where
\[ s=\begin{cases}
m, \; &d\ |\ m-1\\
m-1, \; &d \nmid m-1
\end{cases}
\]
\end{proposition}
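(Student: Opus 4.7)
The plan is to apply the regularity lemma (Lemma~\ref{regularity lemma}) to the nonlinear term $f_m$ at a carefully chosen degree, then handle the resulting pieces separately. The small and unfortunate parts are absorbed using Proposition~\ref{U^2 control of x, x+y, x+2y, x+y^2}, while the nilsequence part is reduced, via composition, to a two-variable nilsequence of low degree, which is then controlled by Lemma~\ref{twisted von Neumann}.

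Specifically, I would set $s_0 := \lfloor (m-1)/d\rfloor$ and apply Lemma~\ref{regularity lemma} to $f_m$ with parameter $s_0$, obtaining a decomposition $f_m = f_{m,\mathrm{nil}} + f_{m,\mathrm{sml}} + f_{m,\mathrm{unf}}$ where $f_{m,\mathrm{nil}}(n) = F(g(n)\Gamma)$ for an $M$-Lipschitz function $F$ on a filtered nilmanifold $G/\Gamma$ of degree $s_0$ and complexity $O_{\epsilon,\F}(1)$, with $g$ being $p$-periodic and $\F(M)$-irrational, $\|f_{m,\mathrm{sml}}\|_2\leqslant\epsilon$, and $\|f_{m,\mathrm{unf}}\|_{U^{s_0+1}}\leqslant 1/\F(M)$. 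Splitting the operator into three summands, the small part contributes $O(\epsilon)$ by Cauchy--Schwarz. For the unfortunate part, since $s_0+1 = \lceil m/d\rceil$, the smallness of $\|f_{m,\mathrm{unf}}\|_{U^{\lceil m/d\rceil}}$ combined with Proposition~\ref{U^2 control of x, x+y, x+2y, x+y^2} yields a contribution of $O(\epsilon)$ provided $\F$ grows sufficiently fast with respect to $\delta$.

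The main task is therefore the nilsequence term
\[
\EE_{x,y\in\FF_p} f_0(x)f_1(x+y)\cdots f_{m-1}(x+(m-1)y)\, F(g(x+y^d)\Gamma).
\]
The key structural observation is that $(x,y)\mapsto g(x+y^d)$ lies in $\poly(\ZZ^2, G'_\bullet)$, where $G'_j := G_{\lceil j/d\rceil}$ defines a filtration on $G$ of degree $s_0 d$: expanding $g(n) = \prod_i g_i^{\binom{n}{i}}$ via Lemma~\ref{taylor expansion} and substituting $n = x+y^d$, each factor $g_i^{\binom{x+y^d}{i}}$ contributes to Taylor coefficients indexed by multi-indices $\textbf{j}$ with $|\textbf{j}|\leqslant id$, and $g_i\in G_i\subseteq G_{\lceil|\textbf{j}|/d\rceil} = G'_{|\textbf{j}|}$. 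Consequently $F(g(x+y^d)\Gamma)$ is a two-variable nilsequence of complexity $O_M(1)$ and degree $s_0 d$.

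When $d\mid m-1$ we have $s_0 d = m-1$, so Lemma~\ref{twisted von Neumann}~(ii) bounds the nilsequence term by $O_M(\delta^{c_M})$ under the hypothesis $\min_{0\leqslant i\leqslant m-1}\|f_i\|_{U^m}\leqslant\delta$, matching $s=m$. When $d\nmid m-1$, writing $m-1 = qd + r$ with $1\leqslant r\leqslant d-1$ gives $s_0 d = m-1-r \leqslant m-2$, so Lemma~\ref{twisted von Neumann}~(i) yields the same bound under $\min_{0\leqslant i\leqslant m-1}\|f_i\|_{U^{m-1}}\leqslant\delta$, matching $s=m-1$. Choosing $\F$ growing rapidly enough and $\delta$ small enough relative to $\epsilon$ and $M$ completes the proof. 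The principal obstacle is the bookkeeping around the composed filtration $G'_\bullet$ and verifying that $g(x+y^d)$ is adapted to it with precisely the claimed degree; a secondary nuisance is the edge case $s_0 = 1$ (when $m\leqslant 2d$), where Lemma~\ref{regularity lemma} as stated requires $s_0\geqslant 2$, but this can be circumvented either by applying a direct $U^2$ Fourier-analytic argument or by running the regularity lemma at the slightly larger degree $s_0 = 2$ and confining the analysis to the cases where $2d\leqslant m-1$.
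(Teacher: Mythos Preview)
Your approach is essentially identical to the paper's: decompose $f_m$ via the regularity lemma at degree $s_0=\lfloor(m-1)/d\rfloor=\lceil m/d\rceil-1$, dispatch the small and uniform pieces by Cauchy--Schwarz and Proposition~\ref{U^2 control of x, x+y, x+2y, x+y^2} respectively, and handle the nilsequence piece by noting that $F(g(x+y^d)\Gamma)$ has degree $s_0 d\leqslant s-1$ so that Lemma~\ref{twisted von Neumann} applies. Your explicit construction of the filtration $G'_j=G_{\lceil j/d\rceil}$ is a helpful clarification the paper omits, and the edge case $s_0=1$ you flag is a genuine (minor) gap that the paper's proof shares.
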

\begin{proof}
We fix $\epsilon>0$, and we let $\delta>0$, $p_0\in\NN_+$ and a growth function $\mathcal{F}:\RR_+\to\RR_+$ be chosen later. Suppose that $\min\limits_{0\leqslant i\leqslant m-1}\|f_i\|_{U^s}\leqslant\delta$. By Lemma \ref{regularity lemma}, 
there exist $M=O_{\epsilon,\mathcal{F}}(1)$, a filtered manifold $G/\Gamma$ of degree $s_0=\left\lceil\frac{m}{d}\right\rceil - 1$ and complexity at most $M$, and a $p$-periodic sequence $g\in\poly(\ZZ,G_\bullet)$ with $g(0)=1$, for which there exists a decomposition
\begin{align*}
    f_m = f_{nil} + f_{sml} + f_{unf}
\end{align*}
such that $f_{nil}(n)=F(g(n)\Gamma)$ for an $M$-Lipschitz function $F: G/\Gamma\to\CC$, ${\|f_{sml}\|_2\leqslant \epsilon}$ and $\|f_{unf}\|_{U^{s_0+1}}\leqslant \frac{1}{\mathcal{F}(M)}$. Using the bound on $f_{sml}$, we crudely evaluate its contribution by
\begin{align}\label{contribution from f_sml}
    |\EE_{x,y\in\FF_p}f_0(x)f_1(x+y)\cdots f_{m-1}(x+(m-1)y)f_{sml}(x+y^d)|\leqslant \epsilon.
\end{align}

To bound the contribution of $f_{unf}$, we choose $\delta'>0$ and $p_0$ that work for $\epsilon$ as in Proposition \ref{U^2 control of x, x+y, x+2y, x+y^2}. We then pick $\mathcal{F}$ to be growing sufficiently fast so that $\|f_{unf}\|_{U^{s_0+1}}\leqslant \delta'$. Assuming that $p>p_0$ and applying Proposition \ref{U^2 control of x, x+y, x+2y, x+y^2}, we have
\begin{align}\label{contribution from f_unf}
    |\EE_{x,y\in\FF_p}f_0(x)f_1(x+y)\cdots f_{m-1}(x+(m-1)y)f_{unf}(x+y^d)|\ll \epsilon.
\end{align}

Finally, we observe that $f_{nil}(x+y^d)$ is a $p$-periodic nilsequence of complexity $M$ and degree $d\left\lfloor\frac{m-1}{d}\right\rfloor\leqslant s-1$. Using Lemma \ref{twisted von Neumann}, we choose $\delta>0$ in such a way as to guarantee that
\begin{align}\label{contribution from f_nil}
    |\EE_{x,y\in\FF_p}f_0(x)f_1(x+y)\cdots f_{m-1}(x+(m-1)y)f_{nil}(x+y^d)|\leqslant \epsilon.
\end{align}
The Proposition follows from combining (\ref{contribution from f_sml}), (\ref{contribution from f_unf}) and (\ref{contribution from f_nil}).
\end{proof}

In the case of $x,\; x+y,\; x+2y,\; x+y^2$, Proposition \ref{control by higher degree Gowers norms} gives us control of the first three terms by the $U^3$ norm. It turns out, however, that for this specific example we can get control by the $u^3$ norm instead.
\begin{proposition}\label{u^3 control}
Given any $\epsilon>0$, there exists $\delta>0$ and $p_0\in\NN$ s.t. for all $p>p_0$, we have
\begin{align*}
    |\EE_{x,y\in\FF_p}f_0(x)f_1(x+y)f_2(x+2y)f_3(x+y^2)|\ll \epsilon
\end{align*}
uniformly for all 1-bounded functions $f_0, f_1, f_2, f_3:\FF_p\to\CC$ satisfying $\|f_i\|_{u^3}\leqslant\delta$ for some $i\in\{0, 1,2\}$.
\end{proposition}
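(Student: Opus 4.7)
The strategy is to exploit the algebraic identity
\[ y^2 \;=\; \tfrac{1}{2}x^2 - (x+y)^2 + \tfrac{1}{2}(x+2y)^2, \]
which is just a reformulation of the quadratic relation \eqref{quadratic relation satisfied by x, x+y, x+2y, x+y^2} and lets us trade a linear phase in $x+y^2$ for a product of quadratic phases in the three linear terms $x,\,x+y,\,x+2y$. To use it, I Fourier-decompose $f_3$ at a threshold $\delta'>0$, writing $f_3=f_{3,\mathrm{lg}}+f_{3,\mathrm{sm}}$, where $f_{3,\mathrm{lg}}(n)=\sum_{\eta\in S}\hat f_3(\eta)\,e_p(\eta n)$ with $S=\{\eta:|\hat f_3(\eta)|\ge \delta'\}$. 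Parseval gives $|S|\le \delta'^{-2}$ and $\sum_{\eta\in S}|\hat f_3(\eta)|\le \delta'^{-1}$ (via Cauchy--Schwarz), while $\|f_{3,\mathrm{sm}}\|_{U^2}^4=\sum_{\eta\notin S}|\hat f_3(\eta)|^4\le \delta'^2$, so $\|f_{3,\mathrm{sm}}\|_{U^2}\le \delta'^{1/2}$.

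The contribution of $f_{3,\mathrm{sm}}$ is handled by Proposition \ref{U^2 control of x, x+y, x+2y, x+y^2} (applied with $m=3,\,d=2$, so that $\lceil m/d\rceil=2$): once $\delta'$ is small enough in terms of $\epsilon$, this contribution is $O(\epsilon)$. For the $f_{3,\mathrm{lg}}$ part, each Fourier mode is processed using the identity: since $e_p(\eta(x+y^2))=e_p(\eta x+\tfrac{\eta}{2}x^2)\cdot e_p(-\eta(x+y)^2)\cdot e_p(\tfrac{\eta}{2}(x+2y)^2)$, absorbing these quadratic phases into $f_0,f_1,f_2$ converts each mode's contribution into a pure 3-term AP count
\[ \EE_{x,y\in\FF_p}\tilde f_0(x)\,\tilde f_1(x+y)\,\tilde f_2(x+2y), \]
where $\tilde f_j=f_j\cdot(\text{quadratic phase})$. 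This is the point where the algebraic identity does all the real work.

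The only genuine computation is a direct Fourier check that each twisted function satisfies $|\widehat{\tilde f_j}(\xi)|\le \|f_j\|_{u^3}$ for every $\xi\in\FF_p$ (since the defining correlation of $\widehat{\tilde f_j}(\xi)$ is a correlation of $f_j$ with a quadratic phase), which combined with Parseval yields $\|\tilde f_j\|_{U^2}^4=\sum_\xi|\widehat{\tilde f_j}(\xi)|^4\le \|f_j\|_{u^3}^2\,\|\tilde f_j\|_2^2\le \|f_j\|_{u^3}^2$; in particular $\|\tilde f_i\|_{U^2}\le \delta^{1/2}$ for the distinguished index $i$. The classical $U^2$-control of 3-APs \eqref{Gowers norms control arithmetic progression} then bounds each 3-AP count by $\delta^{1/2}$. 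Summing over $\eta\in S$ gives an $f_{3,\mathrm{lg}}$-contribution of $O(\delta^{1/2}/\delta')$, and balancing against the small part (for instance $\delta'=\delta^{1/4}$, with $\delta$ chosen sufficiently small in terms of $\epsilon$) yields $|T|\ll \epsilon$. I do not expect any serious obstacle: once the algebraic identity is invoked, the argument reduces to a quantitative Gowers 3-AP estimate composed with a Fourier--Parseval splitting of $f_3$, with all constants explicit.
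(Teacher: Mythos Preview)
Your overall strategy --- decompose $f_3$, handle the ``small $U^2$'' part via Proposition~\ref{U^2 control of x, x+y, x+2y, x+y^2}, and for the ``structured'' part use the algebraic identity to absorb $e_p(\eta(x+y^2))$ into quadratic phases on $f_0,f_1,f_2$ and then invoke $U^2$-control of 3-APs --- is exactly the paper's strategy. There is, however, a genuine technical gap in the way you split $f_3$.

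The problem is that $f_{3,\mathrm{sm}}=f_3-f_{3,\mathrm{lg}}$ is \emph{not} $1$-bounded: one only has $\|f_{3,\mathrm{sm}}\|_\infty\le 1+\|f_{3,\mathrm{lg}}\|_\infty\le 1+\delta'^{-1}$. Proposition~\ref{U^2 control of x, x+y, x+2y, x+y^2} is stated for $1$-bounded inputs, so you must first normalise by $C\sim\delta'^{-1}$, apply the proposition with target accuracy $\epsilon/C\sim\epsilon\delta'$, and then multiply back by $C$. This forces the constraint $\delta'^{3/2}\lesssim \delta_{\mathrm{Prop}}(\epsilon\delta')$, where $\delta_{\mathrm{Prop}}(\cdot)$ is the threshold function of Proposition~\ref{U^2 control of x, x+y, x+2y, x+y^2}. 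But that proposition is proved via the arithmetic regularity lemma and is therefore purely qualitative (ineffective, in fact tower-type), so $\delta_{\mathrm{Prop}}(\epsilon\delta')$ may decay far faster than any power of $\delta'$ as $\delta'\to 0$. There is no way to choose $\delta'$ to close this loop; a single-parameter Fourier threshold split cannot be balanced against a qualitative black box.

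The paper avoids this by using a Hahn--Banach decomposition (Lemma~\ref{Hahn-Banach decomposition}) $f_3=f_a+f_b+f_c$ with \emph{two} independent parameters $\epsilon',\delta'$: the $L^\infty$-bound $\|f_c\|_\infty\le\epsilon'^{-1/2}$ depends only on $\epsilon'$, while $\|f_c\|_{U^2}\le\delta'\epsilon'^{1/2}$ involves $\delta'$. One fixes $\epsilon'$ in terms of $\epsilon$ first, which pins down the $L^\infty$-blowup of $f_c$, and \emph{then} chooses $\delta'$ via Proposition~\ref{U^2 control of x, x+y, x+2y, x+y^2}; this breaks the circularity. The piece $f_a$ (bounded dual $U^2$-norm) is handled exactly as you handle $f_{3,\mathrm{lg}}$: pass to a single character $e_p(\alpha(x+y^2))$, absorb via the identity, and bound the resulting 3-AP count. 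The paper in fact uses the slightly sharper bound $|\EE_{x,y}\tilde f_0(x)\tilde f_1(x+y)\tilde f_2(x+2y)|\le\|\tilde f_0\|_{u^2}\le\|f_0\|_{u^3}$, which follows from a one-line Fourier/Parseval computation, rather than your $U^2$-bound giving $\|f_0\|_{u^3}^{1/2}$; either suffices.

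In short: your plan is correct in spirit, but replace the naive Fourier threshold split of $f_3$ by a two-parameter decomposition (Hahn--Banach or any variant that decouples the $L^\infty$-blowup from the $U^2$-smallness) so that the appeal to the qualitative Proposition~\ref{U^2 control of x, x+y, x+2y, x+y^2} can be made to work.
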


Propositions \ref{U^2 control of x, x+y, x+2y, x+y^2} and \ref{u^3 control} together prove Theorem \ref{True complexity of x, x+y, x+2y, x+y^2}.
\begin{proof}
Fix $\epsilon>0$, and let $\epsilon'>0$ be chosen later. Given $\epsilon'>0$, we choose $\delta'>0$ and $p_0$ given by Proposition \ref{U^2 control of x, x+y, x+2y, x+y^2}. Suppose that for at least one of $f_0, f_1, f_2$, we have $\|f_i\|_{u^3}\leqslant \delta$ for $\delta>0$ to be chosen later. Without loss of generality, suppose this holds for $f_0$.

We apply the following decomposition based on the Hahn-Banach theorem, a variant of which was used in \cite{gowers_wolf_2010, gowers_wolf_2011a, gowers_wolf_2011b, gowers_wolf_2011c, gowers_2010, peluse_2019b, peluse_prendiville_2019, kuca_2019}.
\begin{lemma}[Hahn-Banach decomposition]\label{Hahn-Banach decomposition}
Let $f:\FF_p\to\CC$ and $\|\cdot\|$ be a norm on the space of $\CC$-valued functions from $\FF_p$. Suppose $\|f\|_{L^2}\leqslant 1$ and $\eta>0$. Then there exists a decomposition
\begin{align*}
    f = f_a + f_b + f_c
\end{align*}
with $\|f_a\|^*\leqslant \delta'^{-2}\epsilon'^{-\frac{1}{2}}$, $\|f_b\|_{1}\leqslant \epsilon'^{\frac{1}{4}}$, $\|f_c\|_{\infty}\leqslant \epsilon'^{-\frac{1}{2}}$, $\|f_c\|\leqslant \delta'\epsilon'^{\frac{1}{2}}$ provided $0<\delta', \epsilon'<\frac{1}{10}$.
\end{lemma}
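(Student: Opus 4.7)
The plan is to prove the lemma by contradiction using Hahn--Banach separation in the finite-dimensional space of functions $\FF_p\to\CC$, in the spirit of the earlier Hahn--Banach decompositions used by Gowers, Gowers--Wolf and Peluse that are cited in the excerpt. I will introduce the closed, symmetric, convex set $K = B_a + B_b + B_c$, where
\begin{align*}
    B_a &= \{f_a : \|f_a\|^*\leqslant \delta'^{-2}\epsilon'^{-1/2}\}, \\
    B_b &= \{f_b : \|f_b\|_1\leqslant \epsilon'^{1/4}\}, \\
    B_c &= \{f_c : \|f_c\|_\infty\leqslant \epsilon'^{-1/2},\ \|f_c\|\leqslant \delta'\epsilon'^{1/2}\}.
\end{align*}
The goal will be to show $f\in K$; assume instead for contradiction that $f\notin K$. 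Hahn--Banach separation then produces $g:\FF_p\to\CC$ with $\mathrm{Re}\langle g,f\rangle > 1$ while $\mathrm{Re}\langle g,h\rangle\leqslant 1$ for every $h\in K$.

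Each summand of $K$ will translate into a bound on $g$ via polar duality. Testing against $B_a$ and using the finite-dimensional identity $\|\cdot\|^{**}=\|\cdot\|$ will give $\|g\|\leqslant \delta'^{2}\epsilon'^{1/2}$; testing against $B_b$ and using $L^1$--$L^\infty$ duality will give $\|g\|_\infty\leqslant\epsilon'^{-1/4}$. The constraint from $B_c$ is the condition that $|\langle g,f_c\rangle|\leqslant 1$ for every $f_c$ with $\|f_c\|_\infty\leqslant\epsilon'^{-1/2}$ and $\|f_c\|\leqslant\delta'\epsilon'^{1/2}$; handling this intersection body will be the main subtlety.

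The key idea is to apply this last constraint to the test function $f_c = c g$ with the optimal scaling $c = \min(\epsilon'^{-1/2}/\|g\|_\infty,\ \delta'\epsilon'^{1/2}/\|g\|)$, which lies in $B_c$ by construction. The inequality $c\|g\|_{L^2}^2 = |\langle g,c g\rangle| \leqslant 1$ will then yield
\begin{align*}
    \|g\|_{L^2}^2 \leqslant \max\bigl(\|g\|_\infty\, \epsilon'^{1/2},\ \|g\|/(\delta'\epsilon'^{1/2})\bigr) \leqslant \max(\epsilon'^{1/4},\delta') < 1,
\end{align*}
where the second inequality substitutes the bounds on $\|g\|$ and $\|g\|_\infty$ obtained above, and the final strict inequality uses the hypothesis $\delta',\epsilon'<1/10$. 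Cauchy--Schwarz will then give $\mathrm{Re}\langle g,f\rangle \leqslant \|g\|_{L^2}\|f\|_{L^2} < 1$, contradicting separation and completing the proof. This test-function trick sidesteps the need for an explicit description of the polar of the intersection body $B_c$, instead folding both defining constraints on $B_c$ into a single clean bound on $\|g\|_{L^2}$.
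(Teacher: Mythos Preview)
Your argument is correct and is exactly the standard Hahn--Banach separation approach; the paper does not give its own proof of this lemma, instead only citing earlier works (Gowers--Wolf, Gowers, Peluse, Peluse--Prendiville, and the author's own \cite{kuca_2019}) where variants of this decomposition appear. Your duality computations all check out---in particular the test function $f_c=cg$ with the optimal scalar $c$ is the clean way to handle the intersection body $B_c$---and the final Cauchy--Schwarz contradiction goes through since $\|g\|_{L^2}^2\leqslant\max(\epsilon'^{1/4},\delta')<1$.
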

We use Lemma \ref{Hahn-Banach decomposition} to split $f_3$ with respect to the $U^2$ norm. The contribution of the term $f_b$ to the counting operator is given by
\begin{align*}
    |\EE_{x,y\in\FF_p}f_0(x)f_1(x+y)f_{2}(x+2y)f_b(x+y^2)|\leqslant\|f_b\|_{L^1}\leqslant\epsilon'^{\frac{1}{4}}.
\end{align*}
Using Proposition \ref{U^2 control of x, x+y, x+2y, x+y^2}, the contribution of $f_c$ is
\begin{align*}
    &|\EE_{x,y\in\FF_p}f_0(x)f_1(x+y)f_2(x+2y)f_c(x+y^2)|\\
    &=\max(\|f_c\|_{\infty},1)\cdot\left|\EE_{x,y\in\FF_p}f_0(x)f_1(x+y)f_2(x+2y)\frac{f_c(x+y^2)}{\max(\|f_c\|_{\infty},1)}\right|\\
    &\ll \epsilon'^{-\frac{1}{2}}\epsilon'=\epsilon'^{\frac{1}{2}}.
\end{align*}
Finally, the contribution coming from $f_a$ can be evaluated using $U^2$ inverse theorem as
\begin{align*}
    &|\EE_{x,y\in\FF_p}f_0(x)f_1(x+y)f_2(x+2y)f_a(x+y^2)|\\
    &\leqslant\|f_a\|_{U^2}^*\max_{\alpha\in\FF_p}|\EE_{x,y\in\FF_p}f_0(x)f_1(x+y)f_2(x+2y)e_p(\alpha(x+y^2))|^{\frac{1}{2}}.
\end{align*}
Since there exist quadratic polynomials $Q_0, Q_1, Q_2$ satisfying 
\begin{align*}
    x+y^{2} = Q_0(x)+Q_1(x+y) +Q_2(x+2y),
\end{align*}
we can bound
\begin{align*}
    &|\EE_{x,y\in\FF_p}f_0(x)f_1(x+y)f_2(x+2y)e_p(\alpha(x+y^2))|\\
    &=|\EE_{x,y\in\FF_p}f_0(x)e_p(\alpha Q_0(x))f_1(x+y)e_p(\alpha Q_1(x+y))f_2(x+2y)e_p(\alpha Q_{2}(x+2y))|\\
    &\leqslant \|f_0 e_p(\alpha Q_0(\cdot))\|_{u^2} \leqslant\|f_0\|_{u^3}.
\end{align*}

Bringing all the bounds together, we have
\begin{align*}
    |\EE_{x,y\in\FF_p}f_0(x)f_1(x+y)f_2(x+2y)f_a(x+y^2)|\leqslant \delta'^{-2}\epsilon'^{-\frac{1}{2}}\|f_0\|_{U^s} + \epsilon'^{\frac{1}{4}} + O(\epsilon'^{\frac{1}{2}}).
\end{align*}
Upon setting $\epsilon'=\epsilon^4$ and $\delta =  \epsilon^3\delta'^2$, and using $\|f_0\|_{U^s}\leqslant\delta$, we obtain
\begin{align*}
    |\EE_{x,y\in\FF_p}f_0(x)f_1(x+y)\cdots f_{m-1}(x+(m-1)y)f_a(x+y^2)|\ll\epsilon,
\end{align*}
as required.

\end{proof}
\appendix
\section{Proof of Lemma \ref{regularity lemma}}\label{section on ARL}
In this section, we give the proof of Lemma \ref{regularity lemma}, which is the simultaneous and periodic version of arithmetic regularity lemma.
\begin{proof}[Proof of Lemma \ref{regularity lemma}]
Fix $\epsilon>0$ and a growth function $\F:\RR_+\to\RR_+$. We pick another growth function $\F_0$ that grows sufficiently slowly with respect to $\F$. By Theorem 3.4 of \cite{candela_sisask_2012}, there exists $0<M_0=O_{ s,\epsilon, \F_0}(1)$ such that for each $i$ there is a filtered nilmanifold $G_i/\Gamma_i$ of complexity $M_0$ and degree $s$, a $p$-periodic sequence $g_i\in\poly(\ZZ,(G_i)_\bullet)$, and an $M_0$-Lipschitz function $F'_i:G_i/\Gamma_i\to\CC$ for which $f_i$ decomposes into 
\begin{align*}
    f_i = f_{i, nil} + f_{i,sml} + f_{i,unf}
\end{align*}
where the properties (ii), (iii), (iv) in Lemma \ref{regularity lemma} hold with $M_0$ in place of $M$ and $\F_0$ in place of $\F$, and moreover $f_{i,nil}(n)=F'_i(g_i(n)\Gamma_i)$. By redefining $F'_i$ and increasing its Lipschitz norm by a factor $O_{M_0}(1)$ if necessary, we can also assume that $g_i'(0)=1$ for all $1\leqslant i\leqslant t$.

We let
\begin{align*}
    G=G_1\times ...\times G_t, \quad \Gamma = \Gamma_1 \times ... \times \Gamma_t, \quad {\rm{and}} \quad g(n)=(g_1(n), ..., g_t(n)),  
\end{align*}
and we define $F_i(x_1\Gamma_1, ..., x_t\Gamma_t) := F'_i(x_i\Gamma_i)$. With this definition, we can realize each $f_{i,nil}$ as a $p$-periodic nilsequence $f_{i,nil}(n)=F_i(g(n)\Gamma)$ of degree $s$ and complexity $M_0 t$ on the same nilmanifold $G/\Gamma$ using the same $p$-periodic sequence $g$ for all $1\leqslant i\leqslant t$.

The next step is to obtain irrationality on the nilsequences $f_{1,nil}$, ..., $f_{t, nil}$. In doing so, we apply the proof of Theorem 5.1 of \cite{candela_sisask_2012}, which we rerun here for completeness. Given a growth function $\F_1$ to be chosen later, we use Proposition 5.2 of \cite{candela_sisask_2012} to obtain $M_1\in [M_0, O_{M_0, t, \F_1}(1)]$ and a $p$-periodic polynomial $g'\in\poly(\ZZ,G'_\bullet)$ on some nilmanifold $G'/\Gamma'$ of complexity $O_{M_1}(1)$ satisfying $g'(n)\Gamma = g(n)\Gamma$. By abuse of notation, we let $F_i$ denote now its restriction to $G'/\Gamma'$ for each $1\leqslant i\leqslant t$. It is $O_{M_1}(1)$-Lipschitz on $G'/\Gamma'$. Therefore the nilsequence $f_{i, nil}$ has complexity $M\leqslant \F_2(M_1)$ for some function $\F_2$. Letting $\F_1(x)=\F(\F_2(x))$ thus guarantees that $g'$ is $\F(M)$-irrational. To guarantee $\|f_{i,nil}\|_{U^s}\leqslant\frac{1}{\F(M)}$, we pick $\F_0$ so that $\F_0(M_0)\geqslant \F(M)$ using $M = O_{M_1}(1)=O_{M_0,t,\F}(1)$. Combining all the bounds, we have $M=O_{s,t,\epsilon, \F}(1)$, as desired.

In their statement of Theorem 3.4 of \cite{candela_sisask_2012}, the authors only considered functions from $\FF_p$ to $[0,1]$. However, the statement works for arbitrary 1-bounded functions from $\FF_p$ to $\CC$ by splitting them into the real and imaginary part, and the positive and negative part. This way, we split a 1-bounded function from $\FF_p$ to $\CC$ into four 1-bounded functions from $\FF_p$ to $[0,1]$, implying the 4-boundedness of $f_{i,nil}$, $f_{i,sml}$ and $f_{i,unf}$.

\end{proof}

\section{Baker-Campbell-Hausdorff formula}
This section describes some useful consequences of Baker-Campbell-Hausdorff formula and contains the same material as Appendix C in \cite{green_tao_2010a}, which we restate here for completeness.

Let $G$ be a $s$-step nilpotent, connected, simply-connected Lie group with Lie algebra $\mathfrak{g}$ and the exponential map $\exp:\mathfrak{g}\to G$. For any $X_1, X_2\in\mathfrak{g}$, we have
\begin{align*}
    \exp(X_1)\exp(X_2) = \exp(X_1 + X_2 + \frac{1}{2}[X_1, X_2] + \sum\limits_\alpha c_\alpha X_\alpha),
\end{align*}
where $c_\alpha=\frac{c_{1,\alpha}}{c_{2,\alpha}}\in\QQ$ for integers $c_{1,\alpha}, c_{2,\alpha}\ll_s 1$, and $X_\alpha$ is a Lie bracket of $k_1=k_{1,\alpha}$ copies of $X_1$ and $k_2=k_{2,\alpha}$ copies of $X_2$ for some $k_1, k_2\geqslant 1$ and $k_1+k_2\geqslant 3$.

In particular, for any $g_1, g_2\in G$ and $x\in\RR$, we have
\begin{align}\label{Baker-Campbell-Hausdorff 1}
    (g_1 g_2)^x = g_1^x g_2^x \prod_\alpha g_\alpha^{Q_\alpha(x)},
\end{align}
where $g_\alpha$ is an iterated commutator of $k_1=k_{1,\alpha}$ copies of $g_1$ and $k_2 = k_{2,\alpha}$ copies of $g_2$ for $k_1, k_2\geqslant 1$, and $Q_\alpha:\RR\to\RR$ is a polynomial of degree at most $k_1+k_2$ satisfying $Q_\alpha(0)=0$.

Moreover, for any $g_1, g_2\in G$ and $x_1, x_2\in\RR$, we have
\begin{align}\label{Baker-Campbell-Hausdorff 2}
    [g_1^{x_1}, g_2^{x_2}] = [g_1, g_2]^{x_1 x_2}\prod_\alpha g_\alpha^{Q_\alpha(x,y)},
\end{align}
where $g_\alpha$ is an iterated commutator of $k_1=k_{1,\alpha}$ copies of $g_1$ and $k_2 = k_{2,\alpha}$ copies of $g_2$ for $k_1, k_2\geqslant 1$, $k_1+k_2\geqslant 3$ whereas $Q_\alpha:\RR\times\RR\to\RR$ is a polynomial of degree at most $k_1$ in $x_1$ and $k_2$ in $x_2$, which moreover satisfies $Q_\alpha(x_1,0)=Q_\alpha(0,x_2)=0$.

\section{Scaling a polynomial sequence}
The following lemma is a stronger version of Lemma 5.3 in \cite{candela_sisask_2012}, which itself specializes Lemma A.8 of \cite{green_tao_2010a}.
\begin{lemma}\label{scaling a polynomial sequence}
Let $G/\Gamma$ be a filtered nilmanifold of degree $s$ and $g\in\poly(\ZZ,G_\bullet)$ be given by $g(n)=\prod\limits_{i=0}^s g_i^{{n}\choose{i}}$. Define $h(n)=g(pn)=\prod\limits_{i=0}^s h_i^{{n}\choose{i}}$. Then $$h_i = g_i^{p^i}\mod G_{i+1}$$ for any $i\in\NN_+$.
\end{lemma}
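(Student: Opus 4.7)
Proof proposal: Fix $i \geq 1$. The plan is to pass to the quotient $\bar{G} := G/G_{i+1}$, which is a filtered nilpotent Lie group of degree $i$. Denoting images under the projection with bars, the filtration property gives $[G, G_i] = [G_1, G_i] \subseteq G_{i+1}$, so the subgroup $\bar{G}_i := G_i/G_{i+1}$ is central in $\bar{G}$. Since $\bar{g}_j = 1$ for $j > i$, the reduction of $g$ modulo $G_{i+1}$ takes the form $\bar{g}(n) = \prod_{j=0}^i \bar{g}_j^{\binom{n}{j}}$, and centrality of $\bar{g}_i$ lets me factor
\[
\bar{g}(n) = \tilde{g}(n) \cdot \bar{g}_i^{\binom{n}{i}}, \qquad \tilde{g}(n) := \prod_{j=0}^{i-1} \bar{g}_j^{\binom{n}{j}}.
\]

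Substituting $n \mapsto pn$ and using the identity $\binom{pn}{i} = p^i \binom{n}{i} + Q_i(n)$ for an integer-valued polynomial $Q_i$ of degree strictly less than $i$, I would rewrite
\[
\bar{g}(pn) = \tilde{g}(pn) \cdot \bar{g}_i^{Q_i(n)} \cdot \bar{g}_i^{p^i \binom{n}{i}},
\]
where the reordering uses only that $\bar{g}_i$ commutes with itself and is central in $\bar{G}$. The key step is to argue that $A(n) := \tilde{g}(pn) \cdot \bar{g}_i^{Q_i(n)}$ is a polynomial sequence on $\bar{G}_\bullet$ of degree $\leq i-1$, so that its Taylor expansion in $\bar{G}$ contains no $\binom{n}{i}$-term.

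Each factor is individually such a sequence. First, $\tilde{g}(p\cdot)$ is because the identity $\partial_h(\tilde{g}(p\cdot)) = (\partial_{ph}\tilde{g})(p\cdot)$ shows that pre-composition with $n \mapsto pn$ preserves polynomial sequences together with their degree. Second, $\bar{g}_i^{Q_i(\cdot)}$ is handled by expanding $Q_i(n) = \sum_{k<i} c_k \binom{n}{k}$ with $c_k \in \ZZ$ and using commutativity of $\bar{g}_i$ with itself to obtain the Taylor expansion $\prod_{k<i} \bar{g}_i^{c_k \binom{n}{k}}$, whose coefficients lie in $\bar{G}_i \subseteq \bar{G}_k$. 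Because $\bar{g}_i^{Q_i(\cdot)}$ takes values in the central subgroup $\bar{G}_i$, a direct calculation gives $\partial_h(\tilde{g}(p\cdot) \cdot \bar{g}_i^{Q_i(\cdot)}) = \partial_h(\tilde{g}(p\cdot)) \cdot \partial_h(\bar{g}_i^{Q_i(\cdot)})$, and a short induction on the order of the discrete derivative shows that $A$ inherits degree $\leq i-1$.

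Consequently $A$ admits a Taylor expansion $\prod_{j=0}^{i-1} \bar{k}_j^{\binom{n}{j}}$ on $\bar{G}_\bullet$, and combining with the trailing factor $\bar{g}_i^{p^i \binom{n}{i}}$ produces a decomposition of $\bar{g}(pn)$ in which every coefficient sits in the correct filtration subgroup. Uniqueness of Taylor expansions (Lemma \ref{taylor expansion}) then identifies the $\binom{n}{i}$-coefficient of $\bar{g}(pn)$ as $\bar{g}_i^{p^i}$, giving $h_i \equiv g_i^{p^i} \pmod{G_{i+1}}$ as required. The only mildly delicate point in the plan is preservation of the degree bound under a product with a central-valued factor, but this is a routine discrete-derivative check and does not present a real obstacle.
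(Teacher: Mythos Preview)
There is a genuine gap in the derivative argument for $\tilde g(p\cdot)$. You assert that $\tilde g(n)=\prod_{j\leqslant i-1}\bar g_j^{\binom{n}{j}}$ is a polynomial sequence of degree $\leqslant i-1$, i.e.\ that its $i$-fold discrete derivatives vanish, and then transport this to $\tilde g(p\cdot)$. But on a degree-$i$ filtered group, having trivial $i$-th Taylor coefficient is \emph{not} the same as having trivial $i$-fold derivatives. Concretely, take a degree-$3$ filtration with $[g_1,g_2]$ nontrivial in $G_3/G_4$ (e.g.\ the lower central series on $4\times 4$ upper unitriangular matrices). Then $\tilde g(n)=g_1^{\,n}g_2^{\binom{n}{2}}$ has third Taylor coefficient $1$ by construction, yet a direct computation gives
\[
\partial_{h_1,h_2,h_3}\tilde g \;=\; [g_1,g_2]^{\,2h_1h_2h_3}\quad\text{in }G/G_4,
\]
which is nontrivial. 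So $\tilde g$ is genuinely a degree-$i$ sequence on $\bar G_\bullet$, and your induction on the order of the derivative cannot conclude that $A$ has vanishing $i$-fold derivative.

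What you actually need is the weaker (and true) statement that the $i$-th \emph{Taylor coefficient} of $\tilde g(pn)$ vanishes in $\bar G$; combined with the centrality of $\bar g_i$, this does give the result. But this statement is essentially equivalent to the lemma itself applied to $\tilde g$ in place of $g$, so one cannot avoid doing some real work here. The paper establishes it by a Baker--Campbell--Hausdorff bookkeeping argument: when one expands $\binom{pn}{k}=p^k\binom{n}{k}+\sum_{l<k}a_{k,l}\binom{n}{l}$ and reorders the resulting product into Taylor form, every commutator that appears comes from swapping a term $g_k^{\,a_{k,l}\binom{n}{l}}$ with $l<k$ past another term, so the polynomial degree of its exponent is strictly smaller than its filtration level. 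Hence any commutator contributing to the $\binom{n}{i}$-coefficient already lies in $G_{i+1}$, and only $g_i^{\,p^i}$ survives. Your centrality-based framing is a clean way to package the endgame, but the BCH step (or something equivalent) is where the content lies.
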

\begin{proof}
We fix $i\in\NN_+$. Since we only care about the value of $h_i$ mod $G_{i+1}$, we can quotient out $G$ by $G_{i+1}$, so that 
\begin{align*}
    g(n) = \prod_{k=0}^{i}g_k^{{n}\choose{k}}\mod G_{i+1}.
\end{align*}
By observing that ${{pn}\choose{k}}=p^k{{n}\choose{k}}+\sum\limits_{l=1}^{k-1}a_{k,l}{{n}\choose{l}}$ for some ${a_{k,l}\in\ZZ}$, we rewrite
\begin{align*}
    h(n) = \prod_{k=0}^{i} g_k^{p^k{{n}\choose{k}}+\sum\limits_{l=1}^{k-1}a_{k,l}{{n}\choose{l}}} \mod G_{i+1}.
\end{align*}
After rearranging the terms of $g(n)$ using (\ref{Baker-Campbell-Hausdorff 1}) and (\ref{Baker-Campbell-Hausdorff 2}), we obtain
\begin{align*}
    h(n) = \left(\prod_{k=0}^{i-1}h_k^{{n}\choose{k}}\right) \left(g_i^{p^i}\prod_\alpha g_\alpha\right)^ {{{n}\choose{i}}} \mod G_{i+1}.
\end{align*}
for some $h_k \in G_k$ and some commutators $g_\alpha$.

The important observation here is that each commutator $g_\alpha$ is obtained iteratively from (\ref{Baker-Campbell-Hausdorff 1}) or (\ref{Baker-Campbell-Hausdorff 2}) applied to elements $\Tilde{g}_{i_1}^{{n}\choose{l_1}}$, $\Tilde{g}_{i_2}^{{n}\choose{l_2}}$ belonging to $G_{i_1}$ and $G_{i_2}$ respectively, where $1\leqslant l_1\leqslant i_1$ and $1\leqslant l_2 \leqslant i_2$. However, we do not have $l_1 = i_1$ and $l_2 = i_2$ simultaneously because we never commute the elements $g_{i_1}^{p^{i_1}{{n}\choose{i_1}}}$ and $g_{i_2}^{p^{i_2}{{n}\choose{i_2}}}$ with each other. Without loss of generality, assume then that $l_2<i_2$.
If $g_\alpha$ therefore is a $(k_1+k_2)$-fold commutator consisting of $k_1$ copies of $\Tilde{g}_{i_1}$ and $k_2$ copies of $\Tilde{g}_{i_2}$, then $g_\alpha\in G_{i_1 k_1+i_2 k_2}$ while $Q_\alpha$ is a polynomial of degree at most $$k_1 l_1 + k_2 l_2\leqslant k_1 i_1 + k_2 (i_2-1)\leqslant k_1 i_1 + k_2 i_2 -1.$$
If $g_\alpha^{Q_\alpha(n)}$ thus contributes to the Taylor coefficient of ${{n}\choose{i}}$ in $h$, then $k_1 l_1 + k_2 l_2\geqslant i$, implying that $k_1 i_1 + k_2 i_2 \geqslant i+1$. Hence $g_\alpha\in G_{i+1}$, as claimed.

\end{proof}


\bibliography{library}
\bibliographystyle{alpha}

\end{document}